\newcommand{\bim}{\mathfrak{M}}
\newcommand{\bimL}{\mathfrak{L}}
\newcommand{\bimR}{\mathfrak{R}}
\newcommand{\nuke}{\mathcal{N}}
\newcommand{\img}[1]{\mathop{\mathrm{IMG}}\left(#1\right)}
\newcommand{\limg}[1][G]{\mathcal{X}_{#1}}
\newcommand{\arr}{\rightarrow}
\newcommand{\C}{\mathbb{C}}
\newcommand{\CP}{\mathbb{P}\C^2}
\newcommand{\Z}{\mathbb{Z}}
\newcommand{\R}{\mathbb{R}}
\newcommand{\wt}[1]{\widetilde{#1}}
\newcommand{\M}{\mathcal{M}}
\newcommand{\lims}[1][G]{\mathcal{J}_{#1}}
\newcommand{\bex}{\mathtt{x}}
\newcommand{\bel}{\mathtt{L}}
\newcommand{\ber}{\mathtt{R}}
\newcommand{\bee}{\mathtt{e}}
\newcommand{\si}{\mathsf{s}}
\newcommand{\group}{\mathcal}
\title{Mating, paper folding, and an endomorphism of $\CP$}
\author{Volodymyr Nekrashevych}
\newtheorem{theorem}{Theorem}[section]
\newtheorem{proposition}[theorem]{Proposition}
\newtheorem{corollary}[theorem]{Corollary}
\newtheorem{lemma}[theorem]{Lemma}
\theoremstyle{definition}
\newtheorem{defi}{Definition}[section]
\begin{document}
\maketitle 
\begin{abstract}
We are studying topological properties of the Julia set of the map $F(z, p)=\left(\left(\frac{2z}{p+1}-1\right)^2,
\left(\frac{p-1}{p+1}\right)^2\right)$ of the complex projective plane
$\CP$ to itself. We show a relation of this rational function with an
uncountable family of ``paper folding'' plane filling curves..
\end{abstract}

\tableofcontents

\section{Introduction}

We study topology of the Julia set of the map
\[F(z, p)=\left(\left(\frac{2z}{p+1}-1\right)^2,
\left(\frac{p-1}{p+1}\right)^2\right)\] of the projective plane $\CP$
to itself. In particular, we show an interesting 
connection between this map and an uncountable class of plane-filling curves coming from folding a
strip of paper. One of the goals of the paper is to show
how techniques of iterated monodromy groups can be used to
obtain interesting topological  properties of dynamical
systems. All our results are proved using algebraic computations with
self-similar groups. 

Let us start from the end of the article, and describe the plane-filling curves.
Take a long narrow strip of paper and fold it in two. Then repeat the procedure
several times. Note that each time you have a choice of two directions of folding.
Then unfold it so that you get right angles at the creases. You
will get something like it is shown on Figure~\ref{fig:realpaper}.

\begin{figure}[h]
\centering
\includegraphics{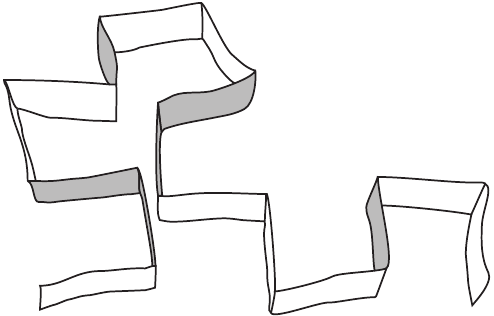}
\caption{Folded paper}
\label{fig:realpaper}
\end{figure}

Let us record the way we folded the paper as a sequences of letters
$\bel$, $\ber$, standing for ``left'' and ``right'', respectively. 

Fold now two equal strips of paper in the same way (described by the
same sequence of letters $\bel$, $\ber$), and rotate one
with respect to the other by $180^\circ$. Put them down so that their
endpoints touch, see Figure~\ref{fig:realpaper2}.

\begin{figure}[h]
\centering
\includegraphics{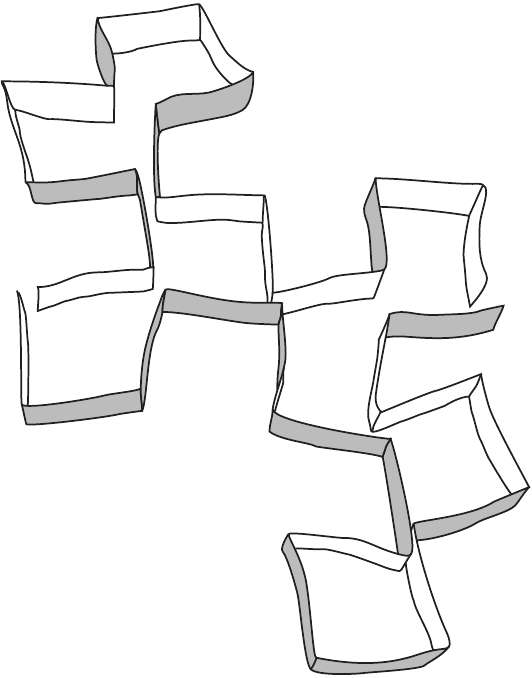}
\caption{Two strips}
\label{fig:realpaper2}
\end{figure}

You will get a closed curve $\gamma$, bounding a connected maze of square
rooms. See two examples of such mazes on Figure~\ref{fig:maze}. The rooms are
shaded black. Two red dots mark the endpoints of the strips of paper,
the green dots mark their midpoints (i.e., the creases of the first folding).

\begin{figure}[h]
\centering
\includegraphics{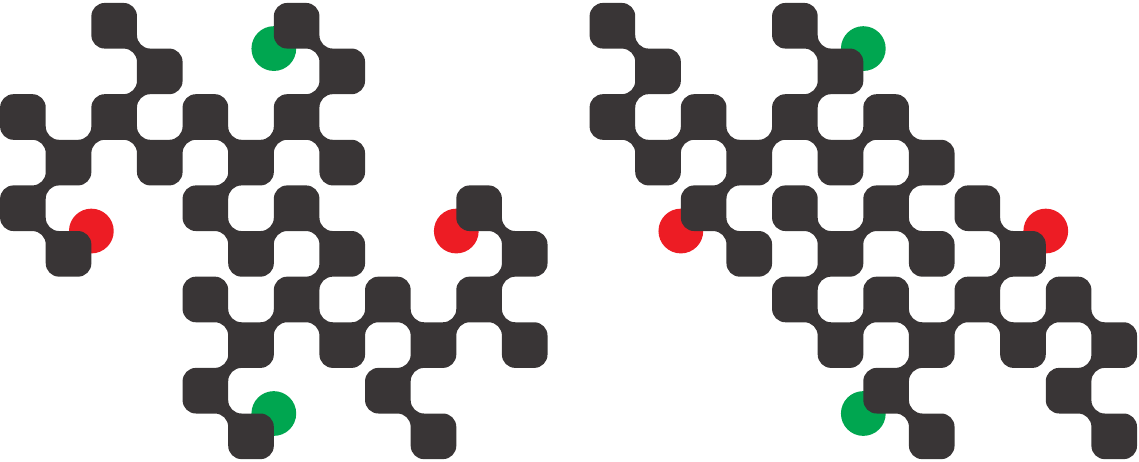}
\caption{Mazes}
\label{fig:maze}
\end{figure}

The marked points are vertices of a square (this easily
follows from the construction). Let us choose an infinite sequence
$w=X_1X_2\ldots$ of letters $\bel$ and $\ber$. Let us draw rescaled closed
curves $\gamma_{w_n}$
corresponding to finite sequences $w_n=X_1X_2\ldots X_n$ of instructions
in such a way that the marked points stay at the vertices of a fixed square $Q$.
 Let us parametrize the 
curves $\gamma_{w_n}$ uniformly (proportionally to the arclength) by
$t\in [0, 1]$, so that $\gamma_{w_n}|_{[0, 1/2]}$ and
$\gamma_{w_n}|_{[1/2, 1]}$ are copies of the folded paper strip. Then
the vertices of $Q$ are $\gamma_{w_n}(0)$,
$\gamma_{w_n}(1/2)$ (the endpoints of the strips), and
$\gamma_{w_n}(1/4)$, $\gamma_{w_n}(3/4)$ (their midpoints).

It follows from the description of the 
folding procedure  that the maze $\gamma_{w_{n+1}}$ is
obtained from the maze $\gamma_{w_n}$  by replacing each wall by a
corner (so that the old wall is the hypotenuse and the new walls are
legs of an isosceles right triangle).
We get then a sequence of curves converging uniformly to some limit curve
$\gamma_{X_1X_2\ldots}$.

The best known and studied example is the \emph{Heighway dragon curve}, which
corresponds to a constant sequence $w=\bel\bel\bel\cdots$ (or
$\ber\ber\ber\cdots$). It
was defined for the first time by NASA physicists J.~Heighway,
B.~Banks, and W.~Harter, and popularized by M.~Gardner in Scientific
American. It
is also called sometimes the ``Jurassic Park Fractal'', as the curves
$\gamma_{\bel^n}|_{[0, 1/2]}$ appear at the beginning of each chapter of
``Jurassic Park'' by M.~Crichton.
The closed version $\gamma_w$ is called
sometimes the \emph{twin-dragon
  curve}. See the images of
these curves on Figure~\ref{fig:dragon}. In~\cite[p.~190]{knuth} a
relation of the twin dragon curve to numeration systems on complex numbers
is discussed. 

\begin{figure}[h]
\centering
\includegraphics{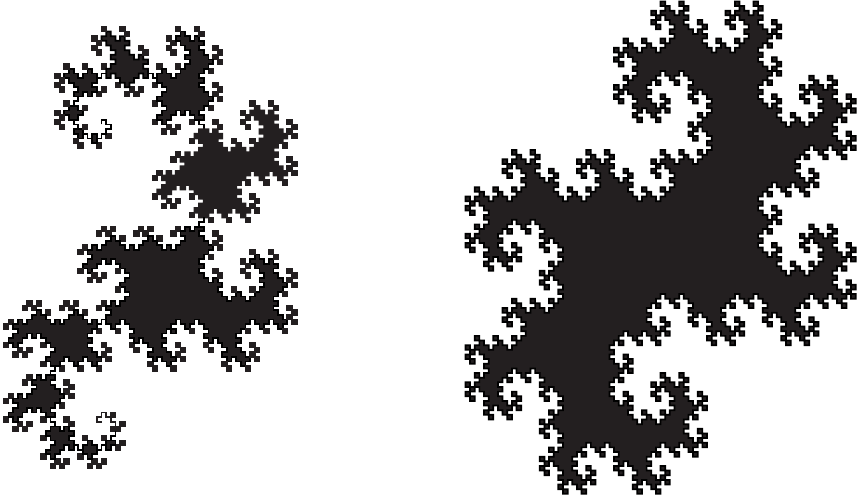}
\caption{Dragon and Twin Dragon curves}
\label{fig:dragon}
\end{figure}

Consider the group $H$ of transformations of the plane generated by
rotations by $180^\circ$ around the vertices of the square $Q$ (recall
that it is the square whose vertices are the endpoints and the
midpoints of the strips of paper). The fundamental domain of the group
$H$ is the square $Q'$ of twice bigger area such that vertices of $Q$
are midpoints of the sides of $Q'$. The quotient $\R^2/H$ of the plane by the
action of $H$ is homeomorphic to the sphere, and can be realized as
the pillowcase obtained from the square $Q'$ by folding its corners over
the sides or the square $Q$.

If we take the curve $\gamma_{w_n}$, then its image $\gamma_{w_n}/H$ on the
pillowcase $\R^2/H$ is a nice Eulerian path tracing a square grid on
the pillow, see Figure~\ref{fig:mazer}. The figure shows how pieces of the
curve $\gamma_{w_n}$ that are outside the fundamental domain $Q'$ are
moved inside by elements of $H$ (actually, just by the generators). 

\begin{figure}[h]
\centering
\includegraphics{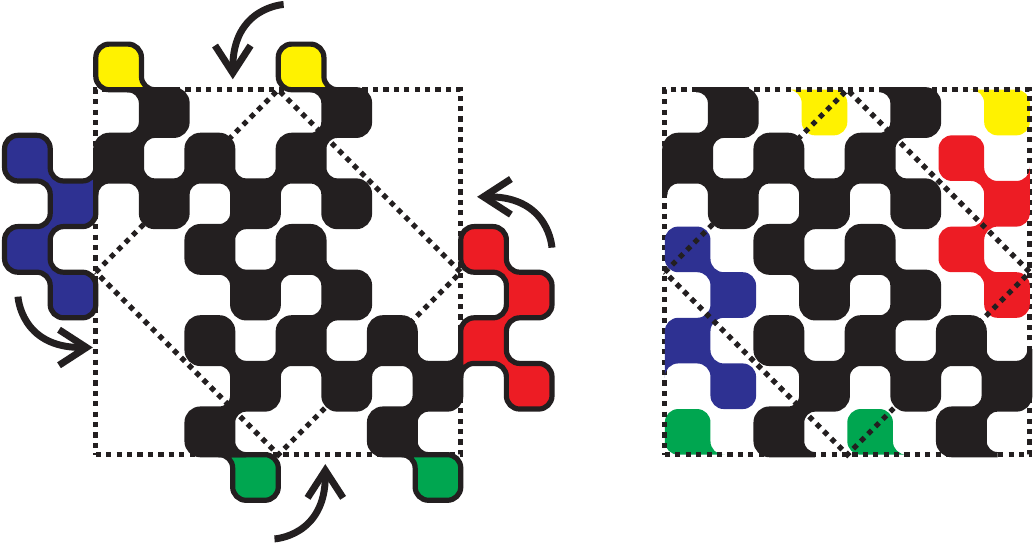}
\caption{Curve on the pillowcase $\R^2/H$}
\label{fig:mazer}
\end{figure}

For every infinite sequence $w=X_1X_2\ldots$ the image
$\gamma_w/H$ of the curve $\gamma_w$ is
a curve passing through every point of the sphere $\R^2/H$. 

It follows directly from the construction, that the curves
$\gamma_{X_1X_2\ldots X_n}|_I$ for $I=[0, 1/4], [1/4, 1/2], [1/2,
3/4], [3/4, 1]$ are similar (with the similarity coefficient
$1/\sqrt{2}$) to the curves $\gamma_{X_2X_3\ldots X_n}|_{[0, 1/2]}$
and $\gamma_{X_2X_3\ldots X_n}|_{[1/2, 1]}$. The partition of
$\gamma_{X_1X_2\ldots X_n}$ into the defined above sub-curves
correspond to splitting the original strip of paper in two (and
``forgetting'' about the first folding). Moreover, the similarities
$\gamma_{X_1X_2\ldots X_n}|_{[0, 1/4]}\arr\gamma_{X_2X_3\ldots
  X_n}|_{[0, 1/2]}$, $\gamma_{X_1X_2\ldots X_n}|_{[1/4, 1/2]}\arr\gamma_{X_2X_3\ldots
  X_n}|_{[1/2, 1]}$, $\gamma_{X_1X_2\ldots X_n}|_{[1/2, 3/4]}\arr\gamma_{X_2X_3\ldots
  X_n}|_{[0, 1/2]}$, $\gamma_{X_1X_2\ldots X_n}|_{[3/4, 1]}\arr\gamma_{X_2X_3\ldots
  X_n}|_{[1/2, 1]}$ are restrictions of a branched self-covering
$S_{X_1}:\R^2/H\arr\R^2/H$. See
Figure~\ref{fig:F} for a description of $S_{X_1}$.
In the limit $S_{X_1}$ induces a piece-wise similarity map from $\gamma_{X_1X_2\ldots}$ to
$\gamma_{X_2X_3\ldots}$.

\begin{figure}[h]
\centering
\includegraphics{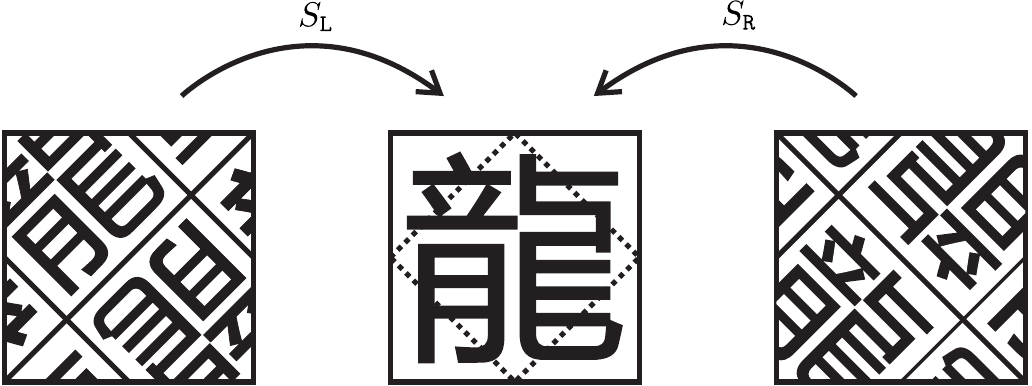}
\caption{Pillowcase folding}
\label{fig:F}
\end{figure}

Each of the curves $\gamma_{X_1X_2\ldots X_n}$ divides the sphere into
two parts: it separates squares of different color in a checkerboard
coloring of the pillow $\R^2/\Z^2$. The squares of one color
are connected to each other by the corridors of the maze in a
tree-like fashion. It is
reasonable to assume that in the limit the curve
$\gamma_{X_1X_2\ldots}$ goes around a dendrite that is a limit of the
sequence of the trees bounded by $\gamma_{X_1X_2\ldots X_n}$. See, for
example~\cite[II.7]{mandelbrot}, where relation between plane-filling
curves and ``rivers'' they bound is explored.

The map $S_{X_1}$ induces a degree two branched covering of the dendrite
bounded by $\gamma_{X_2X_3\ldots}$ by the dendrite bounded by
$\gamma_{X_1X_2\ldots}$.

The case of the twin-dragon curve was analyzed in
detail by
J.~Milnor in~\cite{milnor:dragons}. He showed that the dendrites into which
the curve $\gamma_{\ber\ber\ber\ldots}/H$ separates the sphere can be
naturally identified with the Julia set of the polynomial
$f(z)=z^2+c$, where $c\approx-0.228+1.115i$ is a root of the polynomial $c^3+2c^2+2c+2$.
Moreover, the curve $\gamma_{\ber\ber\ber\ldots}/H$ goes around
the dendrite in the same way as the \emph{Caratheodory loop} goes
around the Julia set, so that the sphere $\R^2/H$ is the \emph{mating}
of the polynomial $z^2+c$ with itself: it is obtained by gluing two Julia
sets along the Caratheodory loops so that one loop is a complex
conjugate of the other.

The general paper folding curves $\gamma_w$ were studied in~\cite{davisknuth:dragons} where
a connection with numeration systems on complex numbers was described
(and closely related to the ``pillowcase folding'' maps
$S_\bel, S_\ber:\R^2/H\arr\R^2/H$).

In our paper we show a relation of the sphere-filling curves
$\gamma_w$ with dynamics of the map
\[F(z, p)=\left(\left(\frac{2z}{p+1}-1\right)^2,
\left(\frac{p-1}{p+1}\right)^2\right)\]
of the projective plane $\CP$ to itself. Note that on the second
coordinate $F$ is the rational function $f:p\mapsto
\left(\frac{p-1}{p+1}\right)^2$, whereas on the first coordinate
iterations of $F$ are compositions of polynomials
$h_p:z\mapsto\left(\frac{2z}{p+1}-1\right)^2$, where $p$ runs through
a forward orbit of iterations of $f$. Intersections of the Julia set
of $F$ with planes $p=p_0$ are then Julia sets of the non-autonomous
iterations
$\C\stackrel{h_{p_0}}{\arr}\C\stackrel{h_{p_1}}{\arr}\C\stackrel{h_{p_2}}{\arr}\cdots$,
where $p_{n+1}=\left(\frac{p_n-1}{p_n+1}\right)^2$. These Julia sets,
which we will denote $J(p_0)$, are dendrites, see
Figure~\ref{fig:pjuliasets}, if $p_0$ belongs to the Julia set of $f$.

We show that the dendrites $J(p_0)$ are precisely the dendrites
around which the loops $\gamma_w$ go. Moreover, the pillow $\R^2/\Z^2$
can be obtained by gluing two copies of $J(p_0)$ along the
Caratheodory loop going around $J(p_0)$ (one loop is glued to the other using
reflection with respect to a diameter). The curve $\gamma_w$ is the
image of the Caratheodory loop. Moreover, the construction is
dynamical: the map $h_{p_0}$ agrees with the double coverings
$\gamma_{X_1X_2\ldots}\arr\gamma_{X_2X_3\ldots}$.

The paper is organized as follows. We start with a short reminder of
the main notions and techniques of self-similar
groups. Section~\ref{s:julsp} is devoted to the study of dynamics of
the endomorphism $F:\CP\arr\CP$. We start with computation of the
iterated monodromy group of $F$ in Theorem~\ref{th:imgF}. It is a
self-similar group acting on a degree 4 rooted tree. A computationally
more convenient group is an index two extension of $\img{F}$, which is
defined in Subsection~\ref{ss:2extension}. We denote it
$\overline{\group{G}}$, and it is the iterated monodromy group of the
quotient of the dynamical system $F:\CP\arr\CP$ by complex
conjugation.

The group $\img{F}$ contains a natural subgroup $\group{G}$
corresponding to non-autonomous iterations $h_p$ of polynomials on the
first coordinate. It is not transitive on the levels of the rooted
tree, and we get an uncountable family of quotients of $\group{G}$
coming from restricting of $\group{G}$ to invariant binary subtrees. A
similar family of groups was studied
in~\cite{nek:ssfamilies,nek:dendrites,nek:genus,nek:nonuniform}.

The graphs of the action of $\group{G}$ on the levels of binary
sub-trees are approximations of the Julia sets $J(p)$ of the
non-autonomous iterations of $h_p$ (i.e., the corresponding slices of the Julia
set of $F$). They are trees, in accordance with
the fact that $J(p)$ are dendrites. We study these trees
in~\ref{ss:schreiergraphs} using recursive description of the generators of
the self-similar group $\group{G}$. In particular, we describe
inductive algorithms of constructing these graphs, see
Corollaries~\ref{cor:rule1} and~\ref{cor:rule2}.

In the next subsection we study external angles to the Julia sets
$J(p)$, i.e., the Caratheodory loops around them. The bundle of
Caratheodory loops is the limit space of the subgroup $\group{R}$ of
$\img{F}$ generated by the loops not intersecting the Julia sets
$J(p)$. We derive from the structure of the group $\group{R}$ how the
bundle of Caratheodory loops is glued together from a Cantor set of
circles, and which external angles land on the points of the line $z=p$. In
particular, we show that for a countable set of parameters $p$
(equal to the backward orbit of the unique real fixed point of 
$f(p)=\left(\frac{1-p}{1+p}\right)^2$) there are two external rays
to $J(p)$ landing on $(p, p)$, and that in all the other cases such ray is
unique.

In Section~\ref{s:matings} we define matings of the non-autonomous
iterations $h_p$. Namely, for every $p$ in the Julia set of
$f(p)=\left(\frac{1-p}{1+p}\right)^2$ consider the corresponding slice
$J(p)$ of the Julia set of $F$, and the Caratheodory loop $\gamma$
around it. Take then another copy of $J(p)$, and glue them together
along the Caratheodory loops so that one loop is a mirror image of the
other with respect to the diameter containing an external angle
landing on $(p, p)$. Note that in the case when $p$ belongs to the
backward orbit of the real fixed point of $f$, there are two such
rays, and we have therefore two possible choices for the mating.

We define the mating and study it in purely algebraic
terms. We construct an ``amalgam'' of the group $\group{G}$ with
itself, generating a group $\widehat{\group{G}}$ by two copies of
$\group{G}$. Then the inclusion of the two copies of $\group{G}$
induce semi-conjugacies of the limit dynamical systems. We show that
these semi-conjugacies realize the matings as described in the
previous paragraph.

We study then the limit dynamical system of $\widehat{\group{G}}$,
i.e., the obtained matings. We show that the group
$\widehat{\group{G}}$ contains a virtually abelian subgroup
$\group{H}$ such that the inclusion
$\group{H}\hookrightarrow\widehat{\group{G}}$ induces a conjugacy of
the limit dynamical systems. Then we show that the limit space
of $\widehat{\group{G}}$ is a direct product of the Cantor set
$\{\bel, \ber\}^\infty$ with the sphere
$\C/H$, where $H$ is the group of affine transformations of the form
$z\mapsto \pm z+a+ib$ for $a, b\in\Z$, i.e., the pillowcase described
above. The limit dynamical system acts as the
binary one-sided shift on the Cantor set and as
multiplication by $1+i$ or $1-i$ on the pillowcase $\C/H$ (the choice of the
coefficient $1\pm i$ depends on the first letter of the corresponding
element of the one-sided shift).

It follows that the constructed matings are \emph{Latt\`es examples},
though non-autonomous, as we can choose one of two multiplications.

Since $\group{G}<\widehat{\group{G}}$ the graphs of action of
$\group{G}$ on the levels of the tree are sub-graphs of the graphs of
action of $\widehat{\group{G}}$. The graphs of action of
$\widehat{\group{G}}$ are square grids on the pillowcases. We show that
the graphs of action of the two copies of $\group{G}$ partition the edges
of the grid into two disjoint sub-trees, see
Figure~\ref{fig:schrdouble}. These partitions of a square grid into
two subtrees converge to the decomposition of the pillowcase $\C/H$
into two dendrites. 

In Section~\ref{s:paperfolding} we relate the obtained
results about the mating with the paper folding curves. Namely, we
show that the curve separating the two subtrees of the square grid on
$\C/H$ coincide with the paper-folding curves $\gamma_v$ described in
this introduction, and that in the limit the curves $\gamma_v$
converge to the image of the Caratheodory loop in the mating.

The last section~\ref{s:tuning} describes structure of the slices of
the Julia set of $F$ that correspond to the values of $p$ belonging to
the boundaries of the Fatou components of
$f(p)=\left(\frac{p-1}{p+1}\right)^2$. We show that they are obtained
by ``flattening'' the boundaries of the Fatou components of the
polynomials $16z^2(1-z)^2$ and $(2z^2-4z+1)^2$. In other terminology
they are obtained by (rotated) tuning of these polynomials by the
polynomial $z^2-2$. As usual, the proof is carried out using just algebraic
computations with the iterated monodromy groups.

\section{Self-similar groups}
We present here, in a very condensed form, the main definitions and
results of the theory of self-similar and iterated monodromy
groups. For a more detailed account, see~\cite{nek:book,nek:filling,nek:bath,nek:models}.

\subsection{Covering bisets}
\begin{defi}
Let $G$ be a group. A \emph{$G$-biset} is a set $\bim$ together with commuting
left and right actions of $G$ on $\bim$. In other words, we have two
maps $G\times\bim\arr\bim:(g, x)\mapsto g\cdot x$ and $\bim\times
G\arr\bim:(x, g)\mapsto x\cdot g$ satisfying $1\cdot x=x\cdot 1=x$ for
all $x\in\bim$, and:
\[g_1\cdot(g_2\cdot x)=(g_1g_2)\cdot x, \quad(x\cdot g_1)\cdot
g_2=x\cdot (g_1g_2)
\quad (g_1\cdot x)\cdot g_2=g_1\cdot (x\cdot g_2)\]
for all $g_1, g_2\in G$, $x\in\bim$.

We say that $\bim$ is a \emph{covering biset} if the right action of
$G$ on $\bim$ is free, i.e., if $x\cdot g=x$ implies $g=1$. We also
assume then that the number of right orbits is finite.
\end{defi}

Sometimes we also consider $G_1-G_2$ bisets $\bim$, which are sets
with commuting left $G_1$-action and right $G_2$-action.

\begin{defi}
The isomorphism class of a pair $(G, \bim)$, where $G$ is a group and
$\bim$ is a covering $G$-biset is called a \emph{self-similar
  group}. Here two pairs $(G_1, \bim_1)$ and $(G_2, \bim_2)$ are
\emph{isomorphic} (the corresponding self-similar groups are called
\emph{equivalent}) if there exists an isomorphism $\phi:G_1\arr G_2$
and a bijection $f:\bim_1\arr\bim_2$ such that $f(g_1\cdot x\cdot
g_2)=\phi(g_1)\cdot f(x)\cdot \phi(g_2)$ for all $g_1, g_2\in G_1$ and
$x\in\bim_1$.
\end{defi}

Let $\bim_1, \bim_2$ be $G$-bisets. Then their \emph{tensor
  product}, denoted $\bim_1\otimes\bim_2$, is the quotient of
$\bim_1\times\bim_2$ by the equivalence relation $x_1\otimes g\cdot
x_2=x_1\cdot g\otimes x_2$, with the actions $g_1\cdot
(x_1\otimes x_2)\cdot g_2=(g_1\cdot x_1)\otimes (x_2\cdot g_2)$. It is
easy to show that tensor product of two covering bisets is a covering
biset. It is also easy to see that
$(\bim_1\otimes\bim_2)\otimes\bim_3$ is naturally isomorphic to $\bim_1\otimes(\bim_2\otimes\bim_3)$.

Let $\bim$ be a covering $G$-biset. Consider the disjoint union
$\bim^*=\bigcup_{n\ge 0}\bim^{\otimes n}$ of the
tensor powers $\bim^{\otimes n}$, $n\ge 0$, where $\bim^{\otimes 0}$
is the group $G$ with the natural left and right actions on itself. Denote
by $T_\bim$ the set of orbits of the right action of $G$ on $\bim^*$. Then
$G$ acts on $T_\bim$ from the left. The set $T_\bim$ has a natural structure of
a rooted tree, where the right orbit of $x_1\otimes x_2\otimes\cdots\otimes
x_n$ is connected by an edge to the right orbit of $x_1\otimes x_2\otimes\cdots\otimes x_n\otimes
x_{n+1}$ for $x_i\in\bim$. The left action of $G$ on $T_\bim$ is an action
by automorphisms of the rooted tree (the root is the unique right orbit
of the action of $G$ on itself).

The action of $G$ on $T_\bim$ is not faithful in general. Let $N$ be
the kernel of the action. Denote then by $\bim/N$ the set of right
$N$-orbits. It is easy to see that $h\cdot x$ and $x$ belong to one right $N$-orbit
for every $x\in\bim$ and $h\in N$, and that the left and right actions
of $G$ on $\bim$ descend to left and right actions of $G/N$ on
$\bim/N$. We call the self-similar group $(G/N, \bim/N)$ the
\emph{faithful quotient} of the self-similar group $(G, \bim)$.

\subsection{Wreath recursions and virtual endomorphisms}
\label{ss:wreathvirtend}

Let $(G, \bim)$ be a self-similar group. We say that $X=\{x_1, x_2, \ldots,
x_d\}\subset\bim$ is a \emph{basis} if it is a transversal of right $G$-orbits. In other
words, it is such a subset of $\bim$ that for every $x\in\bim$ there
exists a unique $x_i\in X$ such that $x=x_i\cdot g$ for some $g\in
G$. Note that $g$ is also uniquely determined by $x$, since the right
action is free.

For every $n$ the set $X^n=\{a_1\otimes a_2\otimes\cdots\otimes
a_n\;:\;a_i\in X\}$ is a basis of $\bim^{\otimes n}$. Consequently,
$X^*=\bigcup_{n\ge 0}X^n$ is a basis of $\bim^*$. Here $X^0$ consists
of a single \emph{empty word} identified with the identity element of
$G=\bim^{\otimes 0}$. We will usually write $a_1\otimes
a_2\otimes\cdots\otimes a_n$ just as a word $a_1a_2\ldots a_n$.
We get a natural bijection between the set
$T_\bim$ of right orbits of $\bim^*$ and the set $X^*$ of finite words
over $X$. The vertex adjacency on the tree $T_\bim$ corresponds to a
similar adjacency of elements of $X^*$: a word $v\in X^*$ is adjacent to the
words of the form $vx$ for $x\in X$. In other words, we consider $X^*$
as the right Cayley graph of the free monoid generated by $X$. The
action of $G$ on $T_\bim$ is transformed hence to an action of $G$ on
$X^*$. We call this action the \emph{self-similar action} associated with
the basis $X$ of the biset $\bim$.

Let $g\in G$ and $x\in X$, then there exist unique $y\in X$ and $h\in
G$ such that $g\cdot x=y\cdot h$. The induced map $x\mapsto y$ is a
permutation coinciding with the restriction of the action of $g$
to the first level $X\subset X^*$ of the rooted tree $X^*$. Let us
denote this permutation by $\sigma_g\in Symm(X)$. Denote also
$h=g|_x$. We get a map
\[\Phi:g\mapsto \sigma_g(g|_x)_{x\in X}\]
from $G$ to the semidirect product $Symm(X)\ltimes G^X$. The
semi-direct product is called the \emph{(permutational) wreath
  product} of the symmetric group $Symm(X)$ with $G$. 

It is easy to check that $\Phi:G\arr Symm(X)\ltimes G^X$ is a
homomorphism. We call it the \emph{wreath recursion} associated with
the self-similar group $(G, \bim)$ (and the basis $X$). If we change
the basis $X$ to another basis $Y$ and identify $G^X$ and $G^Y$ with
$G^{|X|}=G^{|Y|}$ using some bijections $X\longleftarrow\{1, 2, \ldots, d\}\arr
Y$ (i.e., \emph{orderings} of $X$ and $Y$),
then the wreath recursions associated with $X$ and $Y$ differ from
each other by a post-composition with an inner automorphism of the
wreath product $Symm(d)\ltimes G^d$. 

If the left action of $G$ on the set of right orbits of $\bim$ (i.e.,
on the first level of the tree $T_\bim$) is
transitive, then the self-similar group is uniquely determined by
the associated virtual endomorphism. Let $x\in X$, and denote
by $G_x$ the stabilizer of the vertex $x$ of the tree $X^*$. Then the
\emph{associated virtual endomorphism} $\phi_x$ is the homomorphism
$\phi_x:G_x\arr G$ defined by the condition
\[g\cdot x=x\cdot \phi_x(g).\]

The biset $\bim$ is reconstructed (up to isomorphism) from the
associated virtual endomorphism in the following way. Let
$\phi:G_1\arr G$ be a virtual endomorphism (where $G_1$ is a subgroup
of finite index in $G$). Let $\bim_\phi$ be the set of formal
expressions $[\phi(g_1)g_2]$, for $g_1, g_2\in G$, where $[\phi(g_1)g_2]=[\phi(h_1)h_2]$ if and only
if $h_1^{-1}g_1\in G_1$ and $\phi(h_1^{-1}g_1)=h_2g_2^{-1}$.
This convention agrees with identification of an expression
$[\phi(g_1)g_2]$ with the partial 
transformation
\[[\phi(g_1)g_2]:x\mapsto \phi(xg_1)g_2,\]
of $G$.
Note that $\phi(xg_1)g_2=\phi(xh_1)h_2$ is equivalent to
$\phi(xg_1)=\phi(xg_1)\phi(g_1^{-1}h_1)h_2g_2^{-1}$, hence to $\phi(h_1^{-1}g_1)=h_2g_2^{-1}$.

The set $\bim_\phi$ is invariant with respect to pre- and
post-composition with right translations $x\mapsto xg$. We get hence a
natural $G$-biset structure on $\bim_\phi$. It is given by the formulas
\[[\phi(g_1)g_2]\cdot g=[\phi(g_1)(g_2g)],\qquad
g\cdot[\phi(g_1)g_2)]=[\phi(gg_1)g_2].\]

It is easy to see that if $\phi=\phi_x$ is the virtual endomorphism
associated with a covering biset $\bim$, then $\bim$ is isomorphic to
$\bim_\phi$. Since the action of $G$ on the set of right
orbits is transitive, every element $y\in\bim$ can be written as
$y=g_1\cdot x\cdot g_2$, and then the isomorphism
maps $y$ to $[\phi(g_1)g_2]$.

\subsection{Iterated monodromy groups}

Let $M_1, M_0$ be path connected and locally path connected
topological spaces or orbispaces. A \emph{topological correspondence} is a pair of
maps $f, \iota:M_1\arr M_0$, where $f$ is a finite degree covering map
and  $\iota$ is a continuous map.

Choose a basepoint $t\in M_0$.
Let $\bim$ be the set of pairs $(z, \ell)$, where $z\in f^{-1}(t)$,
and $\ell$ is homotopy class of a path in $M_0$ from $t$ to
$\iota(z)$. The set $\bim$ has a natural structure of a covering
$\pi_1(M_0, t)$-biset. Namely, for a loop $\gamma\in\pi_1(M_0, t)$,
denote by $(z, \ell)\cdot\gamma$ the element $(z, \ell\gamma)$. Here
and in the sequel we compose paths as functions: in a product
$\ell\gamma$ the path $\gamma$ is passed first, then $\ell$.
Denote by $\gamma\cdot (z, \ell)$ the element $(y,
\iota(\gamma_z)\ell)$, where $\gamma_z$ is the unique lift of $\gamma$
by $f$ that starts at $z$, and $y$ is the end of $\gamma_z$.

A basis of $\bim$ is any set $\{(z_1, \ell_1), (z_2, \ell_2), \ldots,
(z_d, \ell_d)$, where $d=\deg f$, and $f^{-1}(t)=\{z_1, z_2, \ldots, z_d\}$.

The faithful quotient of the self-similar group $(\pi_1(M_0, t),
\bim)$ is called the \emph{iterated monodromy group} of the
correspondence $f, \iota:M_1\arr M_0$.

The virtual endomorphism associated with the biset $\bim$ is equal to
$\iota_*:\pi_1(M_1)\arr\pi_1(M_0)$, where $\pi_1(M_1)$ is identified
with a subgroup finite index in $\pi_1(M_0)$ by the isomorphism
$f_*$. It is well defined up to compositions with inner automorphisms
of $\pi_1(M_0)$, as any virtual endomorphism associated with a
self-similar group.

If $f:M\arr M$ is a branched self-covering, then we may transform it
into a topological correspondence by removing from $M$ the closure $P$ of the
union of the forward orbits of branch points of $M$. If $P$
is not big enough, in particular, if it does not disconnect $M$, then
we can consider the topological correspondence $f, \iota:M_1\arr M_0$,
where $M_0=M\setminus P$, $M_1=f^{-1}(M_0)$, and $\iota:M_1\arr M_0$
is the identical embedding. This is done, for example, if $f$ is a
post-critically finite rational function, or a
post-critically finite endomorphism of $\CP$ (which means that $P$ is
a union of a finite number of varieties).
In these cases we represent the elements of $\bim$ just as paths
$\ell=\iota(\ell)$, since their endpoints are uniquely determined.

\subsection{Contracting self-similar groups}

Let $\bim$ be a covering $G$-biset, and let $X$ be a basis of
$\bim$. For every $v\in X^*$ and $g\in G$ denote by $g|_v$ the unique
element such that $g\cdot v=u\cdot g|_v$ for some $u\in X^*$. We call
it the \emph{section} of $g$ in $v$.

\begin{defi}
The self-similar group $(G, \bim)$ (with a chosen basis $X$ of $\bim$)
is said to be contracting if there
exists a finite set $\nuke\subset G$ such that for every $g\in G$
there exists $n$ such that $g|_v\in\nuke$ for every $v\in X^k$ such
that $k\ge n$. The smallest set $\nuke$ satisfying the above condition
is called the \emph{nucleus} of the group.
\end{defi}

It is proved in~\cite[Corollary~2.11.7]{nek:book} that the property of a biset to be hyperbolic does
not depend on the choice of a basis $X$. The nucleus, however,
depends on $X$.

Since a biset $\bim$ and a basis $X$ is uniquely determined, up to an
isomorphism, by the associated wreath recursion, we call a wreath
recursion $G\arr Symm(X)\ltimes G^X$ contracting if the corresponding
self-similar group is contracting. Sometimes we say that a $G$-biset
$\bim$ is \emph{hyperbolic} if the self-similar group $(G, \bim)$ is
contracting. It is easy to see that if $(G, \bim)$ is contracting,
then its faithful quotient is also contracting.

The nucleus $\nuke$ satisfies the property that $g|_x\in\nuke$ for all
$g\in\nuke$ and $x\in X$. We will often represent $\nuke$ as an
\emph{automaton} using its \emph{Moore diagram}. It is the oriented
graph with the set of vertices $\nuke$ in which for every $g\in\nuke$
and $x\in X$ we have an arrow from $g$ to $g|_x$ labeled by $x|y$,
where $y$ is the image of $x$ under the action of $g$ on the first
level of the tree $X^*$, i.e., we have $g\cdot x=y\cdot g|_x$.

Let $(G, \bim)$ be a contracting self-similar group, and let
$X\subset\bim$ be a basis. Consider the space $X^{-\omega}$ of
left-infinite sequences $\ldots x_2x_1$ of elements of $X$. We say
that $\ldots x_2x_1, \ldots y_2y_1\in X^{-\omega}$ are
\emph{asymptotically equivalent} if there exists a sequence $g_n\in G$
taking a finite number of values such that $g_n(x_n\ldots
x_1)=y_n\ldots y_1$ (with respect to the action of $G$ on $X^*$). The
quotient of the topological space $X^{-\omega}$ by the asymptotic
equivalence relation is called the \emph{limit space} of the group
$(G, \bim)$, and is denoted $\lims$. The shift  $\ldots x_2x_1\mapsto\ldots x_3x_2$ agrees with
the asymptotic equivalence relation, so that it induces a continuous
map $\si:\lims\arr\lims$. We call the pair $(\lims, \si)$ the
\emph{limit dynamical system} of the self-similar group.

One can show, see~\cite[Theorem~3.6.3]{nek:book}, that two sequences $\ldots x_2x_1$ and $\ldots
y_2y_1$ are asymptotically equivalent if and only if there exists an
oriented path $\ldots e_2e_1$ of arrows in the Moore diagram of the
nucleus such that $e_n$ is labeled by $x_n|y_n$.

Consider now $X^{-\omega}\times G$, where $G$ is discrete. We write
elements of the space $X^{-\omega}\times G$ as $\ldots x_2x_1\cdot g$
for $x_i\in X$ and $g\in G$. Two sequences $\ldots x_2x_1\cdot g$ and
$\ldots y_2y_1\cdot h$ are asymptotically equivalent if there exists a
sequence $g_k\in G$ taking a finite set of values such that $g_n\cdot
x_n\ldots x_2x_1\cdot g=y_n\ldots y_2y_1\cdot h$ in $\bim^{\otimes n}$
for all $n$. One can  show that $\ldots x_2x_1\cdot g$ and $\ldots
y_2y_1\cdot h$ are asymptotically equivalent if and only if there
exists an oriented path $\ldots e_2e_1$ in the Moore diagram of the
nucleus such that $e_n$ is labeled by $x_n|y_n$ for every $n$ and the
last vertex of the path is $hg^{-1}$.

The quotient of $X^{-\omega}\times G$ by the asymptotic
equivalence relation is called the \emph{limit $G$-space} and is
denoted $\limg$. The group $G$ acts naturally on $X^{-\omega}\times G$
by right multiplication. This action agrees with the asymptotic
equivalence relation, so that it induces a right action of $G$ on
$\limg$ by homeomorphisms.

For every $x\in\bim$ and $\ldots x_2x_1\cdot g\in X^{-\omega}\times G$
the asymptotic equivalence class of $\ldots x_2x_1\cdot g\otimes
x=\ldots x_2x_1y\cdot h$, where $h\in G$ and $x\in X$ are such that
$g\cdot x=y\cdot h$, is uniquely determined by $x$ and the asymptotic
equivalence class of $\ldots x_2x_1\cdot g$. It follows that we get a
well defined continuous map $\xi\mapsto\xi\otimes x$ of $\limg$ to
itself.

The biset structure of $\bim$ agrees with the maps
$\xi\mapsto\xi\otimes x$ on $\limg$, so that $\left(\xi\cdot g_1\otimes
x\right) g_2=\xi\otimes (g_1\cdot x\cdot g_2)$ for all
$\xi\in\limg$, $g_1, g_2\in G$, $x\in\bim$. Moreover, we have the
following rigidity theorem, see~\cite[Theorem~3.4.13]{nek:book}.

\begin{theorem}
\label{th:uniquenesslimg}
Let $\bim$ be a hyperbolic $G$-biset.
Let $\mathcal{X}$ be a metric space such that $G$ acts on $\mathcal{X}$
co-compactly and properly by isometries from the right. Suppose that for every
$x\in\bim$ we have a continuous strictly contracting map
$\xi\mapsto\xi\otimes x$ such that $\left(\xi\cdot g_1\otimes x\right)\cdot
  g_2=\xi\otimes (g_1\cdot x\cdot g_2)$ for all $\xi\in\mathcal{X}$, $g_1,
  g_2\in G$, and $x\in\bim$. Then there exists a homeomorphism
  $\Phi:\mathcal{X}\arr\limg$ such that $\Phi(\xi\cdot g)=\Phi(\xi)\cdot g$ and
  $\Phi(\xi\otimes x)=\Phi(\xi)\otimes x$ for all $\xi\in\mathcal{X}$, $g\in
  G$, and $x\in\bim$.
\end{theorem}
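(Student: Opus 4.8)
The plan is to construct the homeomorphism $\Phi$ by the natural ``coding'' recipe and then to identify its fibres with the asymptotic equivalence relation. First I would fix a basis $X=\{x_1,\dots,x_d\}\subset\bim$. Since the finitely many maps $\xi\mapsto\xi\otimes x_i$ are all strictly contracting they share a common Lipschitz constant $\lambda<1$; writing an arbitrary $x\in\bim$ as $x'\cdot g$ with $x'\in X$ and using that $G$ acts by isometries gives $d(\xi\otimes x,\eta\otimes x)\le\lambda\,d(\xi,\eta)$ for every $x\in\bim$, and hence $d(\xi\otimes v,\eta\otimes v)\le\lambda^{|v|}d(\xi,\eta)$ for every $v\in X^*$. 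Fix a basepoint $\xi_0$ and put $C=\max_{x\in X}d(\xi_0,\xi_0\otimes x)$. For $\ldots x_2x_1\cdot g\in X^{-\omega}\times G$ the points $(\xi_0\otimes x_n\otimes\cdots\otimes x_1)\cdot g$ form a Cauchy sequence, since passing from level $n$ to level $n+1$ moves the point by at most
\[
d\bigl((\xi_0\otimes x_{n+1})\otimes(x_n\cdots x_1),\ \xi_0\otimes(x_n\cdots x_1)\bigr)\le\lambda^{n}C ,
\]
and, by the same estimate, its limit, which I call $F(\ldots x_2x_1\cdot g)$, does not depend on $\xi_0$. This estimate also shows that $F\colon X^{-\omega}\times G\arr\mathcal{X}$ is continuous, that $F(\eta\cdot g)=F(\eta)\cdot g$, and, using continuity of $\xi\mapsto\xi\otimes x$ together with the biset identity $(\xi\cdot g_1\otimes x)\cdot g_2=\xi\otimes(g_1\cdot x\cdot g_2)$, that $F(\eta\otimes x)=F(\eta)\otimes x$ for all $x\in\bim$.

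Next I would show that $F$ is constant on asymptotic equivalence classes; this is the one step where the contraction is used essentially. If $\ldots x_2x_1\cdot g$ and $\ldots y_2y_1\cdot h$ are asymptotically equivalent, then by the nucleus description of the relation (\cite[Theorem~3.6.3]{nek:book}, in the $X^{-\omega}\times G$ version recalled above) there is a left-infinite path $\ldots,v_n,v_{n-1},\ldots,v_0$ in the Moore diagram of $\nuke$ with $v_n\cdot x_n=y_n\cdot v_{n-1}$ for all $n$ and $v_0=hg^{-1}$, whence $v_n\cdot(x_n\cdots x_1)=(y_n\cdots y_1)\cdot v_0$ in $\bim^{\otimes n}$. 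Expanding the left side by the biset axioms,
\[
\xi_0\otimes\bigl(v_n\cdot(x_n\cdots x_1)\bigr)=\bigl((\xi_0\otimes y_n)\cdot v_{n-1}\bigr)\otimes(x_{n-1}\cdots x_1),
\]
\[
\xi_0\otimes(x_n\cdots x_1)=(\xi_0\otimes x_n)\otimes(x_{n-1}\cdots x_1),
\]
and since $\eta\mapsto\eta\otimes(x_{n-1}\cdots x_1)$ is $\lambda^{n-1}$-Lipschitz while $(\xi_0\otimes y_n)\cdot v_{n-1}$ and $\xi_0\otimes x_n$ both lie in the finite set $(\xi_0\otimes X)\cup(\xi_0\otimes X)\cdot\nuke$, of diameter $M$ say,
\[
d\Bigl(\xi_0\otimes\bigl(v_n\cdot(x_n\cdots x_1)\bigr),\ \xi_0\otimes(x_n\cdots x_1)\Bigr)\le\lambda^{n-1}M\longrightarrow 0 .
\]
Therefore $F(\ldots x_2x_1\cdot g)=\lim_n\xi_0\otimes\bigl((y_n\cdots y_1)\cdot v_0g\bigr)=F(\ldots y_2y_1\cdot h)$ because $v_0g=h$. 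Thus $F$ descends to a continuous, $G$-equivariant, $\otimes$-compatible map $\Phi\colon\limg\arr\mathcal{X}$.

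It remains to prove that $\Phi$ is bijective, and this is the technical heart of the argument. A preliminary observation is that $\Phi$ is the \emph{only} continuous equivariant $\otimes$-compatible map $\limg\arr\mathcal{X}$: for any fixed $\zeta_0\in\limg$ the sequence $\zeta_0\otimes(x_n\cdots x_1)\cdot g$ tends to the class of $\ldots x_2x_1\cdot g$ (take $\zeta_0$ to be a tail of the sequence at each step, using contraction in $\limg$), so $\Phi$ is forced on every point by the value $\Phi(\zeta_0)$ and the above limit, which is basepoint-independent because $\otimes$ contracts on $\mathcal{X}$ as well. For surjectivity one establishes, using cocompactness together with the uniform contraction, the tiling identity $\mathcal{X}=\bigcup_{x\in\bim}\mathcal{X}\otimes x=\bigcup_{x\in X,\,g\in G}(\mathcal{X}\otimes x)\cdot g$; iterating it and combining with $\limg=\bigcup_{|w|=n,\,g}(\lims\otimes w)\cdot g$ and $\operatorname{diam}\bigl(\Phi(\lims)\otimes w\bigr)\le\lambda^{|w|}\operatorname{diam}\Phi(\lims)$, a point of $\mathcal{X}$ is exhibited as the intersection of a nested sequence of tiles $\Phi(\lims)\otimes w_n\cdot g_n$, whose coordinates assemble into a preimage. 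For injectivity, if $\Phi$ identifies the classes of $\ldots x_2x_1\cdot g$ and $\ldots y_2y_1\cdot h$, then their common image lies in the nested tiles coding each class; the combinatorics of two level-$n$ tiles meeting at a common point is controlled by the finite nucleus, and produces a sequence $g_n\in G$ taking finitely many values with $g_n\cdot(x_n\cdots x_1)\cdot g=(y_n\cdots y_1)\cdot h$ for all $n$, that is, the two classes coincide. Finally $\Phi$ is a continuous equivariant bijection between two spaces carrying proper, cocompact $G$-actions by isometries, so it is automatically proper, hence a closed map, hence a homeomorphism.

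The construction of $\Phi$ and the verification that it is continuous, equivariant and $\otimes$-compatible are routine. The hard part — and the only place where cocompactness and properness of the $G$-action, together with the finiteness of the nucleus, are genuinely needed — is the bijectivity: proving the tiling identity for $\mathcal{X}$ (surjectivity) and reconstructing the asymptotic equivalence from the overlap combinatorics of tiles (injectivity).
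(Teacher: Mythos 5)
First, a point of comparison: the paper does not prove this theorem at all — it is quoted from \cite[Theorem~3.4.13]{nek:book} — so there is no in-paper proof to measure your argument against; I can only assess it on its own terms. The first half of your proposal is sound and is the standard construction: the definition $F(\ldots x_2x_1\cdot g)=\lim_n(\xi_0\otimes x_n\otimes\cdots\otimes x_1)\cdot g$, its independence of $\xi_0$, continuity, equivariance, compatibility with the maps $\otimes x$, and the nucleus-path computation showing that $F$ is constant on asymptotic equivalence classes are all correct. One caveat even here: to extract your uniform constant $\lambda<1$ you must read ``strictly contracting'' as ``Lipschitz with constant less than $1$''; mere strict decrease of distances gives no uniform ratio, even for a single map.

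The genuine gap is in the bijectivity, and it is not just missing detail. Your surjectivity argument rests on the tiling identity $\mathcal{X}=\bigcup_{x\in\bim}\mathcal{X}\otimes x$, which you claim ``one establishes using cocompactness together with the uniform contraction''. This identity does not follow from the hypotheses as quoted. Take $G=\Z$ with the adding-machine biset, so $\limg=\R$ with $\Z$ acting by integer translations, $\xi\otimes x_0=\xi/2$ and $\xi\otimes x_1=(\xi+1)/2$; let $\mathcal{X}=\R\times\{0,1\}$ with metric $d\bigl((s,i),(t,j)\bigr)=|s-t|+10|i-j|$, the same translation action on the first coordinate, and $(s,i)\otimes m=(s\otimes m,\,0)$ for every $m\in\bim$, so that all images land in the first copy. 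These maps are $\tfrac12$-Lipschitz, the action is isometric, proper and cocompact, and the displayed compatibility identity holds, yet $\R\times\{1\}$ meets no image of any $\otimes m$ and $\mathcal{X}$ is disconnected while $\limg=\R$ is connected. So the covering condition is genuinely an additional hypothesis (in \cite{nek:book} it is built into the self-similarity structure on $\mathcal{X}$, and the quotation here drops it, along with local compactness); your proof must assume it, it cannot ``establish'' it, and as written this step fails. The injectivity sketch has a related weakness: from $\Phi$ identifying the classes of $\ldots x_2x_1\cdot g$ and $\ldots y_2y_1\cdot h$ you must produce exact identities $g_n\cdot(x_n\cdots x_1)\cdot g=(y_n\cdots y_1)\cdot h$ in $\bim^{\otimes n}$ with $g_n$ ranging in a finite set; metric proximity of the two level-$n$ points does not by itself put the two cylinders in the same left $G$-orbit, and the finiteness of candidate $g_n$ is exactly where properness of the action (not ``the finite nucleus'') has to be invoked and spelled out. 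Your closing observation, that a continuous equivariant bijection between proper cocompact $G$-spaces is proper and hence a homeomorphism, is fine.
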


\subsection{Contracting correspondences}

Let $f, \iota:M_1\arr M_0$ be a topological correspondence. Its
\emph{limit space} $M_\infty$ is subspace of all sequences $(x_1, x_2,
\ldots)\in M_1^\infty$ such that $f(x_n)=\iota(x_{n+1})$. For example, if $\iota$ is an
identical embedding, then $M_\infty$ is the intersection of the
domains of all iterations of the partial map $f$.

The shift $(x_1, x_2, \ldots)\mapsto (x_2, x_3, \ldots)$ is a
continuous self-map on $M_\infty$, which we will denote $f_\infty$.

\begin{defi}
Let $f, \iota:M_1\arr M_0$ be a topological correspondence. We say
that it is \emph{contracting} if $M_0$ is a compact length metric space
(i.e., there is a notion of length of arcs such that distance between
two points is the infimum of lengths of arcs connecting them), and
$\iota$ is contracting with respect to the length metric on $M_0$ and
the lift of the length metric from $M_0$ to $M_1$ by $f$.
\end{defi}

In particular, if $f$ is expanding, and $\iota$ is an identical
embedding, then the correspondence $f, \iota:\M_1\arr M_0$ is contracting.

One can show that the iterated
monodromy group of a contracting topological correspondence is a
contracting self-similar group, see~\cite{nek:models}.

\begin{theorem}
\label{th:limspcontracting}
The limit dynamical system of a contracting topological correspondence $\mathcal{F}$
is topologically conjugate to the limit dynamical system of the
iterated monodromy group of $\mathcal{F}$.
\end{theorem}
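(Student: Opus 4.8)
The natural strategy is to verify the hypotheses of the rigidity theorem (Theorem~\ref{th:uniquenesslimg}) by building the limit $G$-space structure directly on a model space constructed from the limit space $M_\infty$ of the correspondence $\mathcal{F}=(f,\iota)$, and then to descend the resulting conjugacy to the quotient $\lims$. So I would let $G=\pi_1(M_0,t)$, let $\bim$ be the associated covering biset, and let $G^{\mathrm{faith}}=G/N$ be its faithful quotient, which is the iterated monodromy group. The first task is to produce a space $\mathcal{X}$ with a free, proper, cocompact right $G$-action (or $G/N$-action) together with contracting maps $\xi\mapsto\xi\otimes x$ satisfying the biset compatibility. The candidate is the ``universal cover'' of $M_\infty$: concretely, the space of pairs (a point of $M_\infty$, a homotopy-type datum, i.e., a lift to the universal cover of $M_0$), or more cleanly, a path space $\{(\text{path }\ell\text{ in }M_0\text{ from }t,\ (x_1,x_2,\ldots)\in M_\infty)\}$ modulo homotopy, on which $G$ acts by precomposition of $\ell$ with loops.

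The key steps, in order, would be: (1) Make precise the model space $\mathcal{X}$ and the right $G$-action, and check it is free, proper, and cocompact --- cocompactness uses that $M_0$ is compact and the correspondence is contracting, so that only boundedly many ``sheets'' of the path datum are needed near any point of $M_\infty$. (2) Define $\xi\otimes x$ for $x\in\bim$: given a point represented by $(\ell,(x_1,x_2,\ldots))$ and $x=(z,m)$ a pair consisting of a point $z\in f^{-1}(t)$ and a path $m$ from $t$ to $\iota(z)$, one appends to the $M_\infty$-sequence the preimage branch selected by the homotopy class, and modifies $\ell$ accordingly; verify the biset identity $(\xi\cdot g_1\otimes x)\cdot g_2=\xi\otimes(g_1\cdot x\cdot g_2)$ by unwinding the definitions of left and right $G$-action on $\bim$ from the iterated monodromy group construction. (3) Prove $\xi\mapsto\xi\otimes x$ is \emph{strictly contracting}: this is where the contracting hypothesis on $\iota$ (that $\iota$ shrinks the length metric lifted through $f$) is essential --- appending a preimage coordinate corresponds to applying a branch of $f^{-1}$ followed by $\iota$, and the composition is a uniform contraction on the lifted length metric, which then descends to a metric on $\mathcal{X}$. (4) Apply Theorem~\ref{th:uniquenesslimg} to obtain a $G$-equivariant homeomorphism $\Phi\colon\mathcal{X}\to\limg$ commuting with $\otimes x$. (5) Pass to the quotient by the $G$-action (or by its kernel $N$): the quotient of $\mathcal{X}$ by $G$ is $M_\infty$ itself with the shift $f_\infty$, while the quotient of $\limg$ by $G$ is $\lims$ with the shift $\si$, and the $\otimes x$-structure intertwines the two shift maps; hence $\Phi$ descends to a topological conjugacy $(M_\infty,f_\infty)\cong(\lims,\si)$, and since $\lims$ for $(G,\bim)$ and for its faithful quotient coincide, we are done.

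The main obstacle I anticipate is Step~(3) combined with the cocompactness half of Step~(1): one must arrange a genuine metric on $\mathcal{X}$ --- not just on fibers --- for which the right $G$-action is by isometries and each $\otimes x$ is uniformly strictly contracting, and simultaneously for which the action is cocompact. The tension is that the natural "path-length" metric on the model space wants to be infinite-diameter (paths can be long), so one has to truncate using the fact that the correspondence is contracting: long paths get absorbed into the tail of the $M_\infty$-sequence under the asymptotic identification, and one needs a quantitative version of this (an exponential estimate on how fast appended coordinates forget path-length) to get a bona fide cocompact isometric action. This quantitative contraction estimate, essentially a consequence of the definition of a contracting correspondence together with uniform local triviality of the covering $f$, is the technical heart; once it is in place, Theorem~\ref{th:uniquenesslimg} does the rest mechanically. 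An alternative, if one wishes to avoid constructing $\mathcal{X}$ by hand, is to cite the result of~\cite{nek:models} that the iterated monodromy group of a contracting correspondence is contracting, and then identify $M_\infty$ with $\lims$ by exhibiting the encoding map $X^{-\omega}\to M_\infty$ (send $\ldots x_2x_1$ to the nested intersection of preimage neighborhoods) and checking it realizes the asymptotic equivalence relation via the Moore-diagram criterion stated above; I would present the $\mathcal{X}$-based argument as the main line since it yields the equivariant refinement $\limg\cong\mathcal{X}$ for free.
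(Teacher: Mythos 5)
The paper does not actually prove Theorem~\ref{th:limspcontracting}: it is imported from~\cite{nek:models} (together with the preceding assertion that the iterated monodromy group of a contracting correspondence is a contracting group), so there is no in-paper argument to compare against. Your outline is essentially the standard proof from that source and from~\cite[Theorem~5.5.3]{nek:book}: build the $G$-space of pairs (point of $M_\infty$, homotopy class of a path from $t$ to its image in $M_0$), let $\otimes(z,\ell)$ act by selecting the $f$-lift of the path starting at $z$, applying $\iota$, prepending the new coordinate and concatenating with $\ell$, invoke the rigidity theorem, and pass to $G$-quotients, using that $\lims$ of $(G,\bim)$ and of its faithful quotient coincide. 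Two remarks. First, in your main line you must still establish, before invoking Theorem~\ref{th:uniquenesslimg}, that the biset $\bim$ is hyperbolic; this is exactly the companion fact quoted from~\cite{nek:models}, so it should be cited (or proved) there, not only in your ``alternative'' route. Second, the tension you describe in Steps~(1) and~(3) is largely illusory: cocompactness does not require the fiber direction to have bounded diameter --- the $G$-invariant metric is the lifted length metric on the path coordinate combined with a metric on the compact factor $M_\infty$, and the quotient is compact because $M_\infty$ (a closed subset of $M_1^\infty$ with $M_1$ compact) is compact; no truncation of path length is needed. The only quantitative input is that $\iota$ is uniformly contracting for the metric lifted through $f$, which makes each $\otimes x$ strictly contracting in the combined metric; with that, the rest of your plan goes through as stated.
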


The correspondence between the contracting self-similar groups and
expanding self-coverings is functorial in a precise way, see~\cite{nek:filling}. For
example, any embedding of self-similar contracting groups (preserving
self-similarity) induces a semi-conjugacy of their limit dynamical systems.

\section{The Julia set of an endomorphism of $\CP$}
\label{s:julsp}

\subsection{The endomorphism and its iterated monodromy group}

Consider the following map on $\C^2$:
\[F(z, p)=\left(\left(\frac{2z}{p+1}-1\right)^2,
\left(\frac{p-1}{p+1}\right)^2\right).\]

It can be extended to an endomorphism of $\CP$ given in
homogeneous coordinates by the formula
\[F[z:p:u]=[(2z-p-u)^2:(p-u)^2:(p+u)^2].\]
Note that this map has no points of indeterminacy, since
$(2z-p-u)^2=(p-u)^2=(p+u)^2=0$ implies $p=u=z=0$. The Jacobian of
the map $F$ is
\[\left|\begin{array}{ccc}4(2z-p-u) & 0 & 0\\ -2(2z-p-u)^2 & 2(p-u) & 2(p+u)\\
-2(2z-p-u) & -2(p-u) &
2(p+u)\end{array}\right|=32(2z-p-u)(p-u)(p+u),\] hence the
critical locus consists of three lines $2z-p-u=0$, $p=u$, and
$p+u=0$. Their orbits under the action of $F$ are
\[\{2z-p-u=0\}\mapsto\{z=0\}\mapsto\{z=u\}\mapsto\{z=p\}\mapsto\{z=p\},\]
\[\{p=-u\}\mapsto\{u=0\}\mapsto\{p=u\}\mapsto\{p=0\}\mapsto\{p=u\}.\]
We see that the post-critical set of $F$ is the union of the six
lines $z=0$, $z=u$, $z=p$, $p=0$, $p=u$, and $u=0$. (Or, in affine
coordinates, $z=0$, $z=1$, $z=p$, $p=0$, $p=1$, and the line at
infinity.)

The map $F$ is a particular case of a general class of post-critically finite
skew-product maps related to the Teichm\"uller theory of
post-critically finite branched self-coverings of the
sphere (\emph{Thurston maps}). See~\cite{bartnek:rabbit}, where the map $F$ was (somewhat
implicitly) constructed,
and~\cite{koch:french,buffepsteinkochpilgrim:pullback} where other different
classes of similar examples are studied.

Denote by $J_2$ the Julia set of $F$, i.e., the set of points without
neighborhoods on which the sequence $F^{\circ n}$ is normal. Denote by
$J_1$ the support of the measure of maximal entropy of $F$, which
coincides with the attractor of backward iterations of $F$. Both sets
are completely $F$-invariant and we have $J_1\subset J_2$, see more in~\cite{fornsibon:higher}.

\begin{proposition}
The limit dynamical system of the iterated monodromy group of $F$
is topologically conjugate with the action of $F$ on $J_1$.
\end{proposition}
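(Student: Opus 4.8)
The plan is to invoke Theorem~\ref{th:limspcontracting}, which reduces the statement to showing that $F$, viewed as a topological correspondence in the sense of the previous section, is a contracting correspondence whose limit space is (canonically) the pair $(J_1, F|_{J_1})$. First I would set up the correspondence: since $F$ is post-critically finite, the post-critical set $P$ (the union of the six lines computed above) is an algebraic subvariety, so we may take $M_0 = \CP\setminus P$, $M_1 = F^{-1}(M_0)$, and $\iota$ the identical embedding. As noted in the section on iterated monodromy groups, this is legitimate provided $P$ does not disconnect $\CP$, which holds because $P$ has real codimension two. The iterated monodromy group $\img{F}$ is then the faithful quotient of the self-similar group $(\pi_1(M_0, t), \bim)$ attached to this correspondence.

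Next I would verify that the correspondence is contracting in the sense of the definition given above. The key input is that $F$ is an \emph{expanding} endomorphism of $\CP$: one expects this from the fact that $F$ is post-critically finite with $P$ a proper subvariety, so that on the complement of $P$ (equipped with a suitable orbifold/length metric, e.g. a metric coming from the Fubini--Study metric modified near $P$) some iterate of $F$ strictly expands arc length. Once $F$ is expanding and $\iota$ is an identical embedding, the excerpt tells us directly that the correspondence $F, \iota : M_1\arr M_0$ is contracting, hence (by the cited result of~\cite{nek:models}) $\img{F}$ is a contracting self-similar group, and Theorem~\ref{th:limspcontracting} applies to give a topological conjugacy between the limit dynamical system of $\img{F}$ and the limit dynamical system $(M_\infty, f_\infty)$ of the correspondence.

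The final step is to identify $(M_\infty, f_\infty)$ with $(J_1, F|_{J_1})$. By the description of the limit space of a correspondence, when $\iota$ is an identical embedding $M_\infty$ is the set of sequences $(x_1, x_2,\ldots)$ with $x_n\in M_1$ and $F(x_n) = x_{n+1}$; since $F$ has no points of indeterminacy and is a finite-degree self-covering of all of $\CP$, each such sequence is just a backward orbit under $F$ avoiding $P$, and the shift $f_\infty$ is conjugate to $F$ acting on the projective (inverse) limit of $(\CP, F)$. The point is that this inverse-limit solenoid, with its shift, projects onto $\CP$ and the image of its ``attracting'' part is precisely $J_1$: indeed $J_1$ was defined as the support of the measure of maximal entropy, equivalently as the attractor of backward iterations of $F$, so the first-coordinate projection $M_\infty\arr J_1$ is a homeomorphism intertwining $f_\infty$ with $F|_{J_1}$. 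Here one uses that $J_1\cap P$ has measure zero / is negligible for the identification, so removing $P$ does not change the limit space. I expect the main obstacle to be exactly this last point: proving carefully that the expanding length metric is well defined on $M_0$ and extends appropriately across $P$, and that the coding by backward orbits avoiding $P$ genuinely recovers all of $J_1$ rather than a dense invariant subset — in other words, that the contraction of $\iota$ holds globally and that the natural projection $M_\infty\arr J_1$ is a bijection and not merely a surjection with small fibers. This is where one must combine the dynamical description of $J_1$ from~\cite{fornsibon:higher} with the combinatorial rigidity furnished by Theorem~\ref{th:uniquenesslimg}.
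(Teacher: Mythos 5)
Your proposal correctly identifies the reduction to Theorem~\ref{th:limspcontracting}, but the central analytic step is missing and, as stated, would fail. You claim that because $F$ is post-critically finite, it should be expanding on $M_0=\CP\setminus P$ with respect to ``a metric coming from the Fubini--Study metric modified near $P$''. Post-critical finiteness does not give this, and for this particular $F$ it is false on all of $\CP\setminus P$: the second coordinate is $f(p)=\left(\frac{p-1}{p+1}\right)^2$, which has the superattracting $2$-cycle $\{0,1\}$ and hence Fatou components, so over those regions no metric can make the iterates of $F$ uniformly expand arc length. Expansion can only be arranged on a neighborhood of $J_1$, which lies over the Julia set of $f$. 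This is exactly what the paper does: it takes a backward-invariant neighborhood $U$ of the Julia set of $f$, lets $W$ be its preimage under $(z,p)\mapsto p$, puts an orbifold structure on $W$ with isotropy of order $2$ along the lines $z=0$, $z=1$, $z=p$ (this is also needed to make $F:W_1\to W$ an orbifold covering, a point your setup with plain punctured spaces glosses over), and then proves contraction of the inclusion $\iota:W_1\hookrightarrow W$ via Kobayashi hyperbolicity: $W$ is a bundle over $U$ whose fibers are hyperbolic orbifolds (fundamental group $\Z/2*\Z/2*\Z/2$), so by Kobayashi's theorems $W$ is hyperbolic, $\iota$ is Kobayashi-contracting, and $F:W_1\to W$ is a local isometry. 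That hyperbolicity argument is the actual content of the proof, and your proposal does not supply a substitute for it.

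A secondary inaccuracy: you describe the limit space $M_\infty$ of the correspondence as the inverse limit of backward orbits (a solenoid) and then argue it projects homeomorphically to $J_1$ using measure-of-maximal-entropy considerations. By the definition in the paper, $M_\infty$ consists of sequences with $f(x_n)=\iota(x_{n+1})$, i.e.\ \emph{forward} orbits staying in $M_1$, not backward orbits; and the identification of the resulting limit dynamical system with the action of $F$ on $J_1$ is part of what the cited result (\cite[Theorem~5.5.3]{nek:book}) provides once an expanding orbifold metric on a neighborhood of $J_1$ is produced, rather than something to be re-derived by hand from the dynamical description of $J_1$.
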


\begin{proof}
By~\cite[Theorem~5.5.3]{nek:book} (see also
Theorem~\ref{th:limspcontracting} in our paper),
it is enough to construct an orbifold metric
on a neighborhood of $J_1$ with respect to which $F$ is expanding.
Let $U$ be an open subset of
$\C\setminus\{0, 1\}$ containing the Julia set of the rational
function $f(p)=\left(\frac{p-1}{p+1}\right)^2$ and such
that $f^{-1}(U)\subset U$. Consider the inverse image $W$ of $U$ in $\CP$ under the
projection map $(z, p)\mapsto p$ of $\C^2\subset\CP$ onto $\C$. Note
that the lines $p=0$ and $p=1$ are disjoint from $W$, hence the
intersection points of the lines $z=p$, $z=0$, $z=1$, $p=0$
and $p=1$ do not belong to $W$. Consider the orbifold with the
underlying space $W$, where the lines $z=0$, $z=1$ and $z=p$
are singular with the isotropy
groups of order 2 uniformized in an atlas of the orbifold as rotations
by $180^\circ$ in the $z$-planes (that are projected to the identity map on the
$p$-plane).
The function $F$ can be realized as a covering $F:W_1\arr W$ of a
sub-orbispace $W_1$ of $W$, where $W_1$ is the orbifold with the
underlying space $F^{-1}(W)$ and singular lines $z=0$, $z=1$, $z=p$,
$z=p+1$ of order two (also uniformized by a rotation in the $z$-planes).

The orbifold $W$ is a locally
trivial bundle over the set $U$ with hyperbolic fibers (as the
fundamental group of every fiber is free product of three copies of the
group of order two). It follows from Proposition~3.2.2 and
Theorem~3.2.15 of~\cite{kobayashi:spaces} that the orbifold $W$ is Kobayashi
hyperbolic (i.e., that its universal covering is Kobayashi
hyperbolic). Consequently, the embedding of orbifolds $\iota:W_1\arr
W$ is contracting with respect to the Kobayashi metrics on $W_1$ and
$W$, while the map $F:W_1\arr W$ is a local
isometry. Theorem~\ref{th:limspcontracting} finishes the proof.
\end{proof}

An important property of the map $F$, greatly facilitating its
study is a skew-product structure: the second coordinate
$\left(\frac{p-1}{p+1}\right)^2$ of $F(z, p)$ depends only on $p$.
See Figure~\ref{fig:pjul} for the Julia set of
$f(p)=\left(\frac{p-1}{p+1}\right)^2$ together with marked
post-critical points $0$
and $1$.

\begin{figure}[h]
\centering
\includegraphics{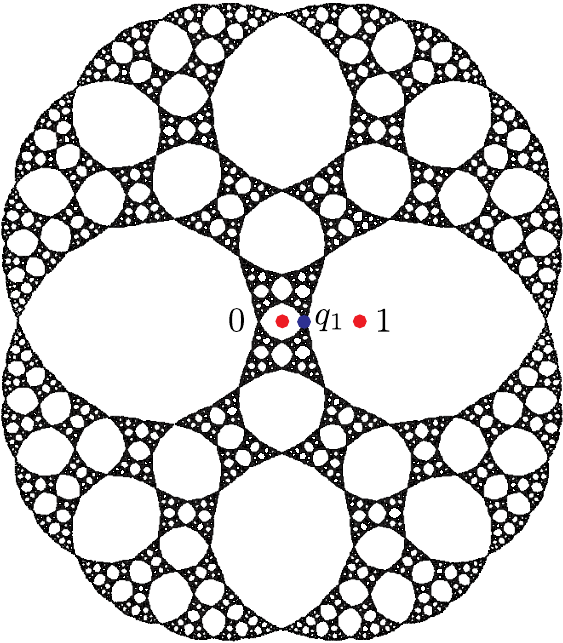}
\caption{Julia set of $\left(\frac{p-1}{p+1}\right)^2$}
\label{fig:pjul}
\end{figure}

On the first coordinate we have a quadratic polynomial
$h_p(z)=(2z/(p+1)-1)^2$ in $z$, depending on the parameter $p$. This
makes it possible, in particular, to draw the intersections of the
Julia set of $F$ with the $z$-lines 
$p=p_0$. See Figures~\ref{fig:pjuliasets} and~\ref{fig:pjuliav},
where some slices of the Julia set of $F$ are shown. We will denote by
$J_1(q)$ the intersection of the Julia set $J_1$ of $F$ with the line $p=q$.

\begin{figure}[h]
\centering
\includegraphics{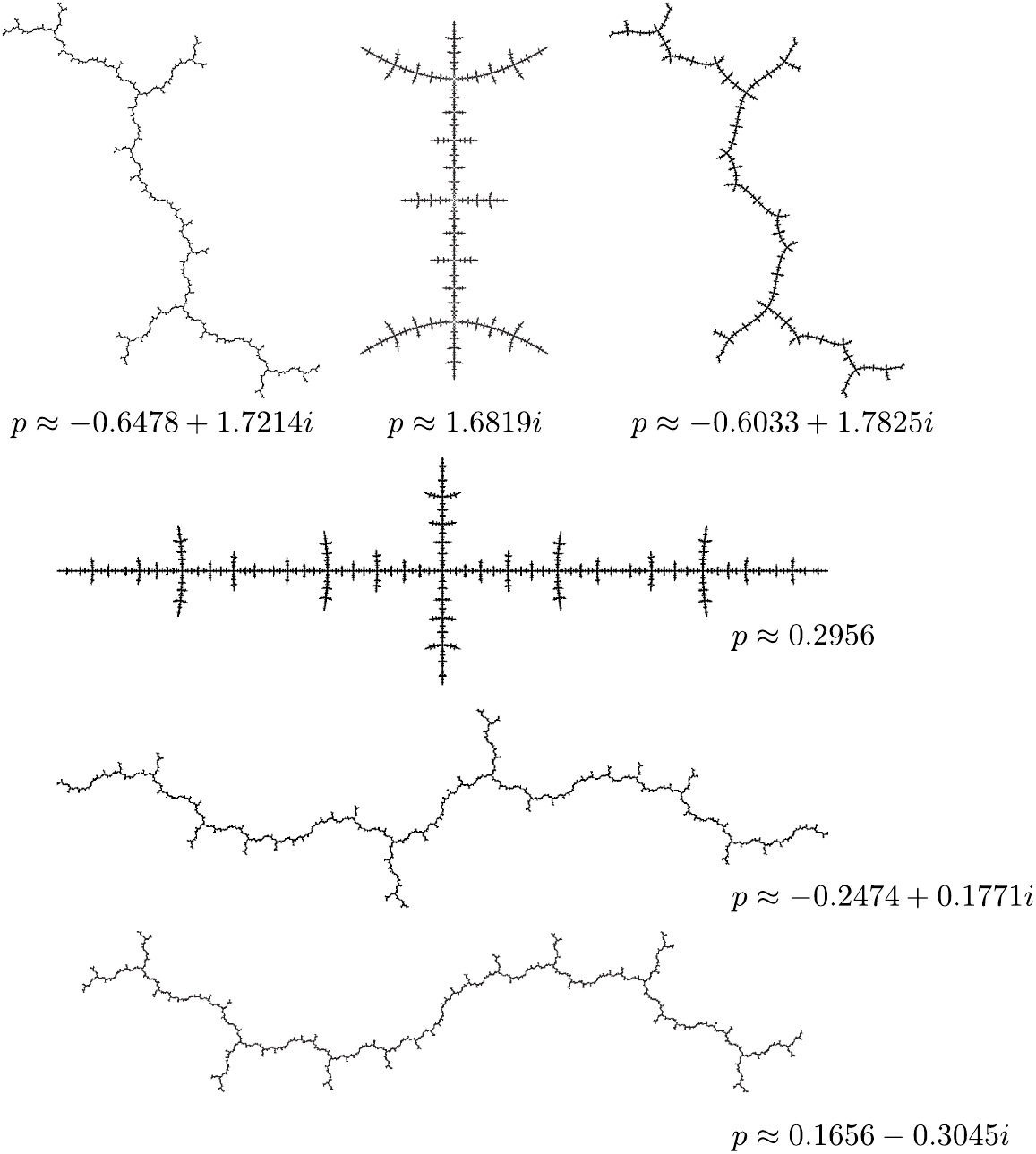}
\caption{Slices of the Julia set $J_1$ of
$F$}\label{fig:pjuliasets}
\end{figure}

\begin{figure}[h]
\centering
\includegraphics{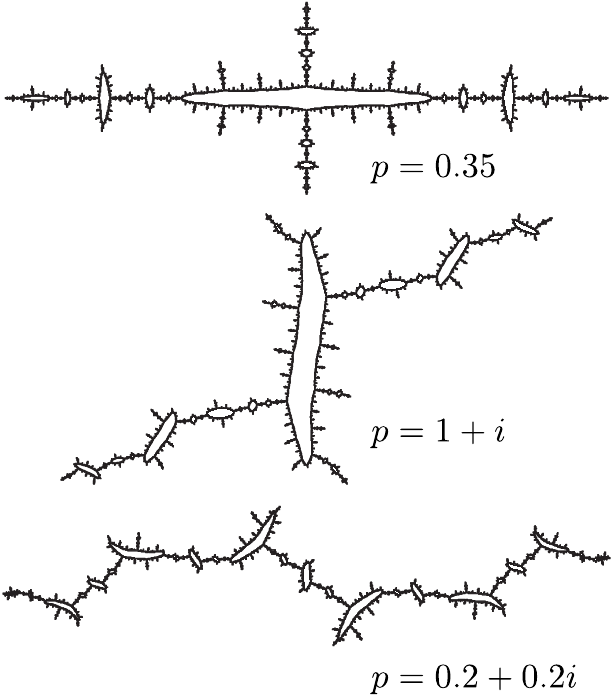}
\caption{Slices of the Julia set $J_2$ of $F$}\label{fig:pjuliav}
\end{figure}

The rational function $\left(\frac{p-1}{p+1}\right)^2$ has three
fixed points \[q_0\approx -0.6478 + 1.7214i,\quad \overline{q_0}\approx-0.6478-1.7214i, \quad
q_1\approx 0.2956.\] The corresponding polynomials
$h_q(z)=\left(\frac{2z}{q+1}-1\right)^2$, for $q\in\{q_0, \overline{q_0},
q_1\}$, are post-critically finite, with
the dynamics
\[0\mapsto 1\mapsto q\mapsto q\]
on the post-critical set. These polynomials and their iterated
monodromy groups were studied in~\cite{bartnek:rabbit}.

Consider the polynomial for $q_0\approx -0.6478 + 1.7214i$. Its
iterated monodromy group is generated by
\begin{eqnarray}
\label{eq:abc1}
\alpha &=& \sigma,\\
\label{eq:abc2}
\beta &=& (1, \alpha),\\
\label{eq:abc3}
\gamma &=& (\gamma, \beta),
\end{eqnarray}
where $\alpha$, $\beta$ and $\gamma$ are loops around $0, 1$ and
$p$, respectively (see a general formula for iterated monodromy
groups of quadratic polynomials in~\cite{bartnek:mand}).

We can interpret the complement $\M$ of the post-critical set of $F$ as
the configuration space of pairs of complex numbers $(z, p)$ which are
different from $\infty, 0, 1$ and from each other.
We can identify the loops $\alpha,
\beta, \gamma$ with the loops in the configuration space $\M$ coming from
$p$ staying and $z$ traveling along the loops $\alpha, \beta, \gamma$
in the line $p=q_0$.

Let $S$ and $P$ be the elements of the fundamental group of $\M$ uniquely
determined by the following relations (see~\cite{nek:dendrites} for details)
\begin{align}\label{eq:conj1}
P\alpha P^{-1}= & \beta\alpha\beta^{-1},
&\quad S\alpha S^{-1}= & \alpha\gamma\alpha\gamma^{-1}\alpha^{-1},\\
\label{eq:conj2} P\beta P^{-1}= & \beta\alpha\beta\alpha^{-1}\beta^{-1},&\quad
S\beta S^{-1}= & \beta,\\
\label{eq:conj3} P\gamma P^{-1}= &
\gamma,&\quad S\gamma S^{-1}= &
\alpha\gamma\alpha^{-1}.
\end{align}

Denote also $T=\gamma S^{-1}P^{-1}\beta\alpha$. We have then:
\begin{align}T\alpha T^{-1}= & \alpha,\\
T\beta
T^{-1} = & \gamma \beta \gamma^{-1},\\
T\gamma T^{-1}= & \gamma\beta\gamma\beta^{-1}\gamma^{-1}.
\end{align}

\begin{proposition}
The virtual endomorphism associated with $F$ is given by
\begin{gather*}\phi(\alpha^2)=1,\quad\phi(\beta)=1,\quad\phi(\gamma)=\gamma,\\
\phi(\alpha^{-1}\beta\alpha)=\alpha,\quad\phi(\alpha^{-1}\gamma\alpha)=\beta,\\
\phi(S^2)=\beta\alpha\gamma S^{-1}P^{-1},\quad\phi(T)=P,\quad\phi(P^2)=1.
\end{gather*}
\end{proposition}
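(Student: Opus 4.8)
The plan is to compute the virtual endomorphism $\phi=\iota_*$ directly from the geometry of the correspondence, using the skew-product structure of $F$. The key observation is that $\pi_1(\M)$ is generated by the three ``first-coordinate'' loops $\alpha,\beta,\gamma$ (with $z$ moving in the line $p=q_0$) together with the two ``second-coordinate'' generators $S,P$, and $F$ restricted to the first coordinate is the quadratic polynomial $h_p$ with post-critical dynamics $0\mapsto 1\mapsto p\mapsto p$. Thus on the subgroup generated by $\alpha,\beta,\gamma$ the virtual endomorphism of $F$ should agree with the standard virtual endomorphism of a quadratic polynomial with this post-critical portrait, which is exactly the rule $\alpha=\sigma$, $\beta=(1,\alpha)$, $\gamma=(\gamma,\beta)$ quoted from \cite{bartnek:rabbit,bartnek:mand}; reading off the stabilizer of the basepoint over $z$ and applying $g\cdot x=x\cdot\phi(g)$ gives $\phi(\alpha^2)=1$, $\phi(\beta)=1$, $\phi(\gamma)=\gamma$, $\phi(\alpha^{-1}\beta\alpha)=\alpha$ (i.e. the $\alpha$-conjugate section of $\beta$), and $\phi(\alpha^{-1}\gamma\alpha)=\beta$. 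So the first block of formulas is essentially bookkeeping once the basis over the degree-$4$ tree is fixed compatibly with the degree-$2$ structure in $z$ and the degree-$2$ structure in $p$.

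For the second block I would exploit that $p\mapsto f(p)=\left(\frac{p-1}{p+1}\right)^2$ is itself a post-critically finite quadratic rational map whose iterated monodromy group is generated by loops around $0$ and $1$, and that $S,P$ are (by their defining relations \eqref{eq:conj1}--\eqref{eq:conj3}) lifts of these base loops: $P$ is a loop around $\{p=1\}$ and $S$ a loop around $\{p=0\}$ in the configuration space, with the stated conjugation actions recording the monodromy of the $z$-fiber as $p$ travels around those lines. The virtual endomorphism of $f$ for the portrait $0\mapsto 1\mapsto p_1\mapsto p_1$ has the form $\phi(P^2)=1$ and $\phi(S^2)=$ (a path representing a loop around $1$ conjugated into the right generator), which after translating through the fiber coordinates becomes $\phi(S^2)=\beta\alpha\gamma S^{-1}P^{-1}$; similarly $\phi(T)=P$ comes from $T=\gamma S^{-1}P^{-1}\beta\alpha$ being, after the identification, exactly the lift whose image is $P$. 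Concretely I would pick explicit connecting paths $\ell_i$ for a basis $(z_i,\ell_i)$ of the biset, lift each of $\alpha^2,\beta,\gamma,\alpha^{-1}\beta\alpha,\alpha^{-1}\gamma\alpha,S^2,P^2,T$ through $F$, and read off where the distinguished sheet goes and what residual loop in $\M$ remains — that residual loop is $\phi$ of the generator.

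The main obstacle will be the second block: keeping track of the interaction between the $z$-monodromy and the $p$-monodromy. Since $F$ is a genuine skew product only on the second coordinate, a loop in which $p$ moves (like $S$ or $P$) drags the $z$-fiber with it, so lifting $S^2$ or $T$ forces one to compose a fiber-monodromy braid (expressed via the conjugation relations \eqref{eq:conj1}--\eqref{eq:conj3} and the formulas for $T$) with the base virtual endomorphism of $f$. Getting the connecting paths consistent so that these compositions collapse to the clean right-hand sides $\beta\alpha\gamma S^{-1}P^{-1}$ and $P$ — rather than to those expressions multiplied by some unwanted conjugating word — is where all the care goes; this is exactly the kind of computation that the defining relations for $S,P,T$ were set up (in \cite{nek:dendrites}) to make tractable, and I would lean on them heavily rather than recomputing braids from scratch. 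Once the paths are fixed, verifying each identity is a finite check of lifts, and the proposition follows.
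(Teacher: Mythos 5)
Your first block is fine and is exactly what the paper does: on $\langle\alpha,\beta,\gamma\rangle$ the domain of $\phi$ is generated by $\alpha^2,\beta,\gamma,\alpha^{-1}\beta\alpha,\alpha^{-1}\gamma\alpha$ and the values are read off from the standard quadratic recursion \eqref{eq:abc1}--\eqref{eq:abc3}. The gap is in the second block. What you propose there --- choose connecting paths, lift $S^2$, $T$, $P^2$ through $F$, and read off the residual loops --- is precisely the part you never carry out, and you yourself flag the unresolved difficulty (``getting the connecting paths consistent \dots is where all the care goes''). Moreover, this route is not really available as stated: in this paper $S$, $P$, $T$ are \emph{defined} only algebraically, by their conjugation action \eqref{eq:conj1}--\eqref{eq:conj3} on $\langle\alpha,\beta,\gamma\rangle$, so before any lifting you would first have to produce and justify geometric representatives. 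Your asserted identifications are also not reliable: the image of $S$ in the $p$-plane is indeed a loop around $0$, but the image of $P$ goes around both $0$ and $1$ (see the computation of $\img{f}$ in Proposition~\ref{pr:quotSP}), not around $p=1$ alone, which shows the bookkeeping you defer is exactly where errors would enter.

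The missing idea is the one the paper states in the first sentence of its proof: an element of $\pi_1(\M)$ is uniquely determined by its conjugation action on the normal subgroup generated by $\alpha,\beta,\gamma$. This converts the second block into a purely algebraic finite check, with no path lifting at all: since $\alpha=\phi(\alpha^{-1}\beta\alpha)$, $\beta=\phi(\alpha^{-1}\gamma\alpha)$, $\gamma=\phi(\gamma)$, one computes, e.g., $\phi(S^2)\alpha\phi(S^{-2})=\phi(S^2\alpha^{-1}\beta\alpha S^{-2})$ by rewriting $S^2\alpha^{-1}\beta\alpha S^{-2}$ as a word in $\alpha,\beta,\gamma$ via \eqref{eq:conj1}--\eqref{eq:conj3} (using $S^2=(\alpha\gamma)^2\cdot(\text{central part})$ on the fiber group), applies the already-known first block, and then matches the resulting conjugation action with the precomputed action of a candidate element ($\tau^{-1}S^{-1}P^{-1}=\beta\alpha\gamma S^{-1}P^{-1}$ for $S^2$, $P$ for $T$, and the identity for $P^2$); uniqueness of the conjugation action then pins down $\phi(S^2)$, $\phi(T)$, $\phi(P^2)$ exactly, with no ambiguity by an ``unwanted conjugating word''. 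Without either this identification mechanism or an actually executed geometric lift, your proposal does not establish the three formulas $\phi(S^2)=\beta\alpha\gamma S^{-1}P^{-1}$, $\phi(T)=P$, $\phi(P^2)=1$.
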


\begin{proof}
The elements of the fundamental group of
$\M$ are uniquely determined by their action by conjugation on the
normal subgroup generated by $\alpha, \beta$ and $\gamma$. Let us use
this fact (following~\cite{bartnek:rabbit} and~\cite{nek:dendrites}) to compute the virtual endomorphism of
$\pi_1(\M)$ associated with the partial self-covering $F$.

The domain of the restriction of the virtual endomorphism onto
$\langle\alpha, \beta, \gamma\rangle$ (associated with the first
coordinate of the recursion~\eqref{eq:abc1}--\eqref{eq:abc3})
is generated by $\alpha^2, \beta, \gamma,
\alpha^{-1}\beta\alpha$ and $\alpha^{-1}\gamma\alpha$. The action of
the virtual endomorphism is given by
\begin{gather*}\phi(\alpha^2)=1,\quad\phi(\beta)=1,\quad\phi(\gamma)=\gamma,\\
\phi(\alpha^{-1}\beta\alpha)=\alpha,\quad
\phi(\alpha^{-1}\gamma\alpha)=\beta.
\end{gather*}

The domain of the induced virtual endomorphism is generated by the
above generators of the domain of $\phi$ and the automorphisms $S^2$,
$P^2$ and $T=\gamma S^{-1}P^{-1}\beta\alpha$.

Denote $\tau=\gamma^{-1}\alpha^{-1}\beta^{-1}$. A direct computation shows that $\tau$
commutes with $S$ and with $P$.
We also have
\[\tau^{-1}S^{-1}P^{-1}\alpha
PS\tau=\beta\gamma\beta^{-1}\gamma^{-1}\alpha\gamma\beta
\gamma^{-1}\beta,\]
\[\tau^{-1}S^{-1}P^{-1}\beta
PS\tau=\beta\gamma\beta\gamma^{-1}\beta^{-1},\]
and
\[\tau^{-1}S^{-1}P^{-1}\gamma PS\tau=\beta\gamma\beta^{-1}.\]

We have
\begin{multline*}
\phi(S^2)\alpha\phi(S^{-2})=\phi(S^2\alpha^{-1}\beta\alpha
S^{-2})=\\
\phi((\alpha\gamma)^2\alpha^{-1}(\alpha\gamma)^{-2}
\beta(\alpha\gamma)^2\alpha(\alpha\gamma)^{-2})=\\
\beta\gamma\beta^{-1}\gamma^{-1}\alpha\gamma\beta\gamma^{-1}\beta^{-1},
\end{multline*}
\[\phi(S^2)\beta\phi(S^{-2})=\phi(S^2\alpha^{-1}\gamma\alpha S^{-2})=
\phi((\alpha\gamma)^2\alpha^{-1}\gamma\alpha(\alpha\gamma)^{-2})=
\beta\gamma\beta\gamma^{-1}\beta^{-1},\]
\[\phi(S^2)\gamma\phi(S^{-2})=\phi(S^2\gamma S^{-2})=
\phi(\alpha\gamma\alpha\gamma\alpha^{-1}\gamma^{-1}\alpha^{-1})=
\beta\gamma\beta^{-1}.\] We get that
\[\phi(S^2)=\tau^{-1}S^{-1}P^{-1}.\]

We have
\[
\phi(T)\alpha\phi(T^{-1})=\phi(T\alpha^{-1}\beta\alpha T^{-1})=\\
\phi(\alpha^{-1}\gamma\beta\gamma^{-1}\alpha^{-1})=\beta\alpha\beta^{-1},
\]
\[
\phi(T)\beta\phi(T^{-1})=\phi(T\alpha^{-1}\gamma\alpha T^{-1})=\\
\phi(\alpha^{-1}\gamma\beta\gamma\beta^{-1}\gamma^{-1}\alpha)=
\beta\alpha\beta\alpha^{-1}\beta^{-1},
\]
and
\[
\phi(T)\gamma\phi(T^{-1})=\phi(T\gamma T^{-1})=
\phi(\gamma\beta\gamma\beta^{-1}\gamma^{-1})=\gamma,
\]
which implies that $\phi(T)=P$.

It remains to compute $\phi(P^2)$.
\[
\phi(P^2)\alpha\phi(P^{-2})=\phi(P^2\alpha^{-1}\beta\alpha
P^{-2})=\phi(\beta\alpha\beta\alpha^{-1}\beta^{-1})=\alpha,
\]
\begin{multline*}
\phi(P^2)\beta\phi(P^{-2})=\phi(P^2\alpha^{-1}\gamma\alpha P^{-2})=\\
\phi(\beta\alpha\beta\alpha^{-1}\beta^{-1}\alpha^{-1}\beta^{-1}\gamma
\beta\alpha\beta\alpha\beta^{-1}\alpha^{-1}\beta^{-1})=\beta,
\end{multline*}
\[
\phi(P^2)\gamma\phi(P^{-2})=\phi(P^2\gamma P^{-2})=\phi(\gamma)=\gamma,
\]
which implies that $\phi(P^2)=1$.
\end{proof}

\begin{theorem}
\label{th:imgF} The iterated monodromy group $\img{F}$ is generated by
the wreath recursion
\begin{eqnarray*}
\alpha &=& \sigma(\beta, \beta^{-1}, \beta\alpha,
\alpha^{-1}\beta^{-1}),\\
\beta &=& (1, \beta\alpha\beta^{-1}, \alpha, 1),\\
\gamma &=& (\gamma, \beta, \gamma, \beta),\\
P &=& \pi,\\
S &=& \sigma\pi(P^{-1}\tau^{-1}, P^{-1}, S^{-1}\tau^{-1}, S^{-1}),
\end{eqnarray*}
where $\sigma=(12)(34)$, $\pi=(13)(24)$, and $\tau=\gamma^{-1}\alpha^{-1}\beta^{-1}$.
\end{theorem}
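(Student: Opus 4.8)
The plan is to reconstruct the biset $\bim$ from the virtual endomorphism $\phi$ computed in the preceding proposition, by choosing an explicit basis $X=\{x_1,x_2,x_3,x_4\}$ of right coset representatives. Concretely, I would take the index-four subgroup $G_1=\mathrm{dom}(\phi)$ of $\pi_1(\M)$ generated by $\alpha^2,\beta,\gamma,\alpha^{-1}\beta\alpha,\alpha^{-1}\gamma\alpha,S^2,P^2,T$, and choose coset representatives reflecting the two independent ``halvings'' of the dynamics: one from the quadratic polynomial $h_p$ on the $z$-coordinate (giving the letters $\alpha$ versus $1$), and one from the degree-two rational map $f$ on the $p$-coordinate (giving the letters coming from $P$, respectively $S$ after the twist by $\tau$). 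A natural choice is $x_1 = [\phi(1)\cdot 1]$, $x_2=[\phi(1)\cdot\alpha]$, $x_3=[\phi(1)\cdot$(a $P$-representative)$]$, $x_4=[\phi(1)\cdot$(the $\alpha$-and-$P$ combination)$]$, so that the first-level permutation of $\alpha$ is $\sigma=(12)(34)$ and that of $P$ is $\pi=(13)(24)$; this matches the partition of $\gamma_{X_1\cdots X_n}$ into four similar sub-curves described in the introduction.

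Having fixed $X$, the computation is mechanical: for each generator $g\in\{\alpha,\beta,\gamma,P,S\}$ and each $x_i\in X$, write $g\cdot x_i = x_j\cdot h$ where $x_j$ is determined by the first-level permutation and $h\in G$ is read off from $\phi$ via the rule $[\phi(g_1)g_2]$ and the defining relation $g\cdot x = x\cdot\phi_x(g)$ on the stabilizer. For instance, $\phi(\beta)=1$, $\phi(\alpha^2)=1$, $\phi(\alpha^{-1}\beta\alpha)=\alpha$, $\phi(\alpha^{-1}\gamma\alpha)=\beta$, $\phi(\gamma)=\gamma$ directly give the sections of $\alpha,\beta,\gamma$; the entries $\phi(T)=P$, $\phi(P^2)=1$, and $\phi(S^2)=\tau^{-1}S^{-1}P^{-1}$ (equivalently $\beta\alpha\gamma S^{-1}P^{-1}$ up to the identity $\tau^{-1}=\beta\alpha\gamma\cdot$(word)) control $P$ and $S$. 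One then verifies that the resulting assignment
\begin{align*}
\alpha &= \sigma(\beta,\beta^{-1},\beta\alpha,\alpha^{-1}\beta^{-1}),\\
\beta &= (1,\beta\alpha\beta^{-1},\alpha,1),\\
\gamma &= (\gamma,\beta,\gamma,\beta),\\
P &= \pi,\\
S &= \sigma\pi(P^{-1}\tau^{-1},P^{-1},S^{-1}\tau^{-1},S^{-1})
\end{align*}
is a homomorphism $\pi_1(\M)\to\mathrm{Symm}(X)\ltimes\pi_1(\M)^X$ reproducing $\phi$ on $G_1$; equivalently, passing to $\img{F}$ (the faithful quotient) gives the stated wreath recursion. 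Since the self-similar group with transitive action on the first level is determined up to equivalence by its virtual endomorphism (as recalled in Subsection~\ref{ss:wreathvirtend}), this wreath recursion and $\phi$ carry the same information, so it suffices to check that $\phi_{x_1}$ for the displayed recursion equals the $\phi$ of the proposition.

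The main obstacle is bookkeeping, not concept: the section $\beta\alpha$ of $\alpha$ at $x_3$, and the appearance of the twisting element $\tau=\gamma^{-1}\alpha^{-1}\beta^{-1}$ inside the recursion for $S$, mean that the naive coset representatives have to be adjusted so that the outputs of $\phi$ land in the subgroup generated by $\alpha,\beta,\gamma$ (or by $S,P$ and these) in exactly the normalized form stated. Concretely one must track, for the generator $S$ with first-level permutation $\sigma\pi=(14)(23)$, how $S\cdot x_i$ decomposes; the factor $\tau^{-1}$ from $\phi(S^2)=\tau^{-1}S^{-1}P^{-1}$ gets distributed between the $x_1$- and $x_3$-coordinates, which is why $S=\sigma\pi(P^{-1}\tau^{-1},P^{-1},S^{-1}\tau^{-1},S^{-1})$ has $\tau^{-1}$ only in the odd-indexed slots. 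I would organize the verification as: (1) confirm the first-level permutations $\sigma_\alpha=(12)(34)$, $\sigma_\beta=\mathrm{id}$, $\sigma_\gamma=\mathrm{id}$, $\sigma_P=(13)(24)$, $\sigma_S=(14)(23)$ are consistent with $\phi$; (2) compute $g|_{x_1}$ for each generator and check it equals $\phi(g)$ when $g\in G_1$, using the eight relations of the proposition; (3) verify the remaining three sections of each generator are forced by the biset axiom $(gh)|_{x}=g|_{h\cdot x}\,\cdot\, h|_x$ together with the relations~\eqref{eq:conj1}--\eqref{eq:conj3} and $\tau\in Z(\langle S\rangle)\cap Z(\langle P\rangle)$; (4) pass to the faithful quotient, which only identifies elements acting trivially on $X^*$ and hence does not alter the displayed formulas. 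No genuinely hard step arises; the risk is purely in sign/inversion errors in the long conjugation words, so I would double-check the $S$-row against the identity $T=\gamma S^{-1}P^{-1}\beta\alpha$ and $\phi(T)=P$ as an independent consistency check.
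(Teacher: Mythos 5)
Your overall strategy is exactly the paper's: rebuild the covering biset $\bim_\phi$ from the virtual endomorphism of the preceding proposition, fix a transversal of the right orbits, and read off the wreath recursion from $g\cdot x_i=x_j\cdot h$. The gap is in the one step that is not mechanical, namely the choice of the transversal. First, the basis you write down is not a basis: elements of the form $[\phi(1)\cdot g]$ are right translates $x_1\cdot g$ of a single element, so all four of them lie in one right $\pi_1(\M)$-orbit; a basis must consist of left translates $[\phi(g_i)1]=g_i\cdot[\phi(1)1]$, where $g_1=1,g_2,g_3,g_4$ represent the four cosets of $\mathrm{dom}\,\phi$. Second, and more importantly, even after this repair your suggested representatives (``$\alpha$'', a ``$P$-representative'', their combination) do not reproduce the stated formulas: changing the representative within a coset conjugates the recursion by an element of $G^X$, so one gets an equivalent but different recursion (for instance, with the representative $\alpha$ in place of the paper's choice, the section of $\alpha$ at the first basis element is $1$, not $\beta$, and the section at the third is $\phi(\alpha^{-1}P^{-1}\alpha P)$ rather than $\beta\alpha$). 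The paper's proof hinges on the specific choice $\bel_0=[\phi(1)1]$, $\bel_1=\tau\cdot\bel_0$, $\ber_0=P\cdot\bel_0$, $\ber_1=P\tau\cdot\bel_0$ with $\tau=\gamma^{-1}\alpha^{-1}\beta^{-1}$ (which lies in the same coset as $\alpha$, since $\alpha^{-1}\tau=(\alpha^{-1}\gamma^{-1}\alpha)\alpha^{-2}\beta^{-1}\in\mathrm{dom}\,\phi$); the point of this choice is precisely that $\tau$ commutes with $P$ and $S$, which is what forces $P=\pi$ with all sections trivial and places the factors $\tau^{-1}$ in the first and third slots of the $S$-row. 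You acknowledge that ``the naive coset representatives have to be adjusted,'' but identifying that adjustment is the actual content of the proof of the stated formulas, and your proposal leaves it unspecified.

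A secondary remark: the step ``verify that the resulting assignment is a homomorphism $\pi_1(\M)\to Symm(X)\ltimes\pi_1(\M)^X$'' is either unnecessary or unavailable. If the formulas are computed from $\bim_\phi$ with an honest basis, as in the paper, well-definedness is automatic and there is nothing to verify; if instead you posit the formulas and want to compare their virtual endomorphism with $\phi$, you first need a homomorphism from $\pi_1(\M)$ to the group they generate, which requires either a presentation of $\pi_1(\M)$ (not available here) or the biset computation you were trying to shortcut. So the workable version of your plan collapses back to the paper's argument, with the explicit transversal $(\bel_0,\tau\cdot\bel_0,P\cdot\bel_0,P\tau\cdot\bel_0)$ as the missing ingredient; your consistency checks via $\phi(T)=P$ and $T=\gamma S^{-1}P^{-1}\beta\alpha$ are sensible but do not substitute for it.
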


Note that it follows from the recursions that the elements
$\alpha, \beta, \gamma, P$ of the iterated monodromy group are
involutions, hence the wreath recursion can be written as
\begin{eqnarray*}
\alpha &=& \sigma(\beta, \beta, \beta\alpha,
\alpha\beta),\\
\beta &=& (1, \beta\alpha\beta, \alpha, 1),\\
\gamma &=& (\gamma, \beta, \gamma, \beta),\\
P &=& \pi,\\
S &=& \sigma\pi(P\tau^{-1}, P, S^{-1}\tau^{-1}, S^{-1}),
\end{eqnarray*}
where $\tau=\gamma\alpha\beta$.

Note that the subgroup $\langle\alpha, \beta, \gamma\rangle$ of
$\img{F}$ is self-similar. It is the subgroup generated by loops of
the form $\ell\gamma\ell^{-1}$, where $\ell$ is a path, and $\gamma$
is a loop inside the plane $p=q$ for some $q\in\C\setminus\{0, 1\}$.
We denote this subgroup $\group{G}$.

\begin{proof}
Denote by $\bel_0$ the element $[\phi(1)1]$ of the biset $\bim_\phi$. Denote then
\[\bel_1=\tau\cdot\bel_0,\quad\ber_0=P\cdot\bel_0,\quad\ber_1=P\tau\cdot\bel_0,\]
and order the basis of the biset associated with $\phi$ in the
sequence $(\bel_0, \bel_1, \ber_0, \ber_1)$.

We have then
\[\alpha\cdot\bel_0=\tau\cdot\bel_0\cdot\phi(\tau^{-1}\alpha)=
\bel_1\cdot\phi(\beta\alpha\gamma\alpha)=\bel_1\cdot\beta,\]
\[\alpha\cdot\bel_1=\bel_0\cdot\phi(\alpha\tau)=
\bel_0\cdot\phi(\alpha\gamma^{-1}\alpha^{-1}\beta^{-1})=\bel_0\cdot\beta^{-1},\]
\[\alpha\cdot\ber_0=P\tau\cdot\bel_0\cdot\phi(\tau^{-1}P^{-1}\alpha
P)=\bel_1\cdot\phi(\beta\alpha\gamma\alpha^{-1}\beta^{-1}\alpha\beta\alpha)=
\bel_1\cdot \beta\alpha,\] and
\[\alpha\cdot\ber_1=P\cdot\bel_0\cdot\phi(P^{-1}\alpha
P\tau)=\ber_0\cdot\phi(\alpha^{-1}\beta^{-1}\alpha\beta\alpha\gamma^{-1}\alpha^{-1}\beta^{-1})=
\ber_0\cdot \alpha^{-1}\beta^{-1}.\] Consequently,
\[\alpha=\sigma(\beta, \beta^{-1}, \beta\alpha,
\alpha^{-1}\beta^{-1}).\]

We have
\[\beta\cdot\bel_0=\bel_0\cdot\phi(\beta)=\bel_0\cdot 1,\]
\[\beta\cdot\bel_1=\tau\cdot\bel_0\cdot\phi(\tau^{-1}\beta\tau)=
\bel_1\cdot\phi(\beta\alpha\gamma\beta\gamma^{-1}\alpha^{-1}\beta^{-1})=
\bel_1\cdot\beta\alpha\beta^{-1},\]
\[\beta\cdot\ber_0=P\cdot\bel_0\cdot\phi(P^{-1}\beta
P)=\ber_0\cdot\phi(\alpha^{-1}\beta\alpha)=\ber_0\cdot\alpha,\]
and
\[\beta\cdot\ber_1=P\tau\cdot\bel_0\cdot\phi(\tau^{-1}P^{-1}\beta
P\tau)=\ber_1\cdot\phi(\beta\alpha\gamma\alpha^{-1}\beta\alpha\gamma^{-1}\alpha^{-1}\beta^{-1})=
\ber_1\cdot 1,\] hence
\[\beta=(1, \beta\alpha\beta^{-1}, \alpha, 1).\]

We have
\[\gamma\cdot\bel_0=\bel_0\cdot\phi(\gamma)=\bel_0\cdot\gamma,\]
\[\gamma\cdot\bel_1=\tau\cdot\bel_0\cdot\phi(\tau^{-1}\gamma\tau)=
\bel_1\cdot\phi(\beta\alpha\gamma\alpha^{-1}\beta^{-1})=\bel_1\cdot\beta,\]
and also
\[\gamma\cdot\ber_0=\ber_0\cdot\gamma,\quad\gamma\cdot\ber_1=\ber_1\cdot\beta,\]
since $P$ commutes with $\gamma$.

Since $\phi(P^2)=1$ and $P$ commutes with $\tau$, we have
\[P=\pi.\]

Finally,
\begin{multline*}S\cdot\bel_0=P\tau\cdot\bel_0\cdot\phi(\tau^{-1}P^{-1}S)=
\ber_1\cdot\phi(\beta\alpha\gamma \cdot P^{-2}\beta\alpha \cdot
\alpha^{-1}\beta^{-1}PS\gamma^{-1}\cdot\gamma)=\\
\ber_1\cdot\phi(P^{-2}\cdot\beta\alpha\gamma\beta\alpha\cdot
T^{-1}\cdot\gamma)=\ber_1\cdot\beta\alpha P^{-1}
\gamma=\ber_1\cdot\tau^{-1}P^{-1}.
\end{multline*}
Since $T$ commutes with $\alpha$, we have:
\begin{multline*}
S\cdot\bel_1=P\cdot\bel_0\cdot\phi(P^{-1}S\tau)=\\
\ber_0\cdot\phi(P^{-2}\beta\alpha\cdot
\alpha^{-1}\beta^{-1}PS\gamma^{-1}\cdot\alpha^{-1}\beta^{-1})=
\ber_0\cdot\phi(P^{-2}\beta\alpha T^{-1}\alpha^{-1}\beta^{-1})=\\
\ber_0\cdot\phi(\beta T^{-1}\beta^{-1})=\ber_0\cdot P^{-1}.
\end{multline*}
Since $S$ commutes with $\alpha\gamma$ and $T$ commutes with
$\alpha$, we have:
\begin{multline*}S\cdot\ber_0=\tau\cdot\bel_0\cdot\phi(\tau^{-1}SP)=\\
\bel_1\cdot\phi(\beta\alpha\gamma S^2\gamma^{-1}\cdot \gamma
S^{-1}P^{-1}\beta\alpha\cdot \alpha^{-1}\beta^{-1} P^2)=
\bel_1\cdot\phi(\beta S^2\alpha\gamma\alpha^{-1} T\beta^{-1})=\\
\bel_1\cdot \tau^{-1}S^{-1}P^{-1} P=\bel_1\cdot\tau^{-1} S^{-1},
\end{multline*}
and, since $P$ and $S$ commute with $\tau$ and $\gamma$ commutes
with $P$:
\begin{multline*}
S\cdot\ber_1=\bel_0\cdot\phi(SP\tau)=
\bel_0\cdot\phi(S^2\gamma^{-1}\cdot\gamma
S^{-1}P^{-1}\beta\alpha\cdot\alpha^{-1}\beta^{-1}
P^2\gamma^{-1}\alpha^{-1}\beta^{-1})=\\
\bel_0\cdot\phi(S^2\gamma^{-1}
T\alpha^{-1}\beta^{-1}\gamma^{-1}\alpha^{-1}\beta^{-1} P^2)=
\bel_0\cdot \tau^{-1}S^{-1}P^{-1}\gamma^{-1} P
\alpha^{-1}\beta^{-1}=\\ \bel_0\cdot \tau^{-1}S^{-1}\tau=\bel_0\cdot
S^{-1},
\end{multline*}
which implies that
\[S=\sigma\pi(P^{-1}\tau^{-1}, P^{-1}, S^{-1}\tau^{-1}, S^{-1}),\]
which finishes the proof.
\end{proof}

Computation (for example using the GAP packages~\cite{barth:GAP}
or~\cite{muntyansavchuk:gap})
gives the following nucleus of $\img{F}$:
\begin{multline}\label{eq:nucleusimg}\{1, \alpha, \beta, \gamma, \alpha^\beta,
 \beta^\alpha, \gamma^\alpha, \gamma^\beta, \gamma^{\alpha\beta},
P, \gamma P, \gamma^{\alpha\beta} P\}
\cup\\
\{\alpha\beta, \alpha\gamma, \beta\gamma, \tau, \alpha\tau, \beta\tau,\\
S, \alpha S, S\beta, S\gamma,  \alpha\beta S, S\alpha\beta, S\gamma\beta,
S\beta\gamma,  S\gamma\alpha, S\alpha^\beta,
\gamma^{\alpha\beta} S, \tau S, \beta\tau S, \alpha S\alpha\beta,\\
P\alpha, P\beta, P\alpha\beta, P\tau
\}^{\pm 1}\end{multline}
consisting of 60 elements.

\subsection{An index 2 extension of $\img{F}$}
\label{ss:2extension}

\begin{defi}
\label{def:overlineG}
Denote by $\overline{\group{G}}$ the group generated by the elements
\begin{alignat*}{3}
\alpha &=\sigma,&\quad \beta&=(1, \alpha, \alpha, 1),&\quad\gamma&=(\gamma,
\beta, \gamma, \beta),\\
a&=\pi,&\quad b&=(a, a, \alpha a, \alpha a),&\quad c&=(\beta b,
\beta b,
\gamma c, \gamma c),
\end{alignat*}
where $\sigma=(12)(34)$ and $\pi=(13)(24)$.
\end{defi}

It is easy to check that $a^2=b^2=c^2=1$,
\begin{equation}\label{eq:commutationrel}
a\beta a=\alpha\beta\alpha,\quad b\gamma b=\beta\gamma\beta,\quad
c\beta c=\gamma\beta\gamma,
\end{equation} and that $a, b, c$ commute with the
remaining generators $\alpha, \beta, \gamma$.

Denote by $(\bee_{00}, \bee_{01}, \bee_{10}, \bee_{11})$ the ordered
basis of the biset in the definition of the group
$\overline{\group{G}}$.
Then $\sigma(\bee_{i, j})=\bee_{i, (1-j)}$ and $\pi(\bee_{i,
  j})=\bee_{(1-i), j}$.

Recall that by $\group{G}$ we denote the subgroup of $\img{F}$
generated by $\alpha, \beta, \gamma$. We will see below that it is
equivalent as a self-similar group to the subgroup of
$\overline{\group{G}}$ generated by $\alpha, \beta, \gamma$.

\begin{proposition}
\label{pr:imgind2} The group $\img{F}$ is isomorphic (as a
self-similar group) to an index two subgroup of the group
$\overline{\group{G}}$. The
isomorphism maps $\alpha, \beta, \gamma$ to the corresponding
generators of $\group{G}$ and maps $S$ and $P$ to $ac\alpha\gamma$ and
$\beta ba$, respectively. The bisets of the wreath recursions
for $\img{F}$ (as in Theorem~\ref{th:imgF}) and $\overline{\group{G}}$
(as in Definition~\ref{def:overlineG}) are identified with each other
using the equalities
\[\{\bel_0=\bee_{00},\quad
\bel_1=\bee_{01}\cdot\beta,\quad \ber_0=\bee_{10}\cdot a,\quad
\ber_1=\bee_{11}\cdot\beta\alpha a\}.\]
\end{proposition}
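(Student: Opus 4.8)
The plan is to verify the claimed isomorphism by a direct computation with wreath recursions, using the explicit dictionary between the two bases provided in the statement. Concretely, I would first define, inside $\overline{\group{G}}$, the elements $P' = \beta b a$ and $S' = ac\alpha\gamma$, and then compute their wreath recursions with respect to the ordered basis $(\bee_{00}, \bee_{01}, \bee_{10}, \bee_{11})$ from Definition~\ref{def:overlineG}, using the recursions for $\alpha, \beta, \gamma, a, b, c$ together with the commutation relations~\eqref{eq:commutationrel} and the fact that $a, b, c$ commute with $\alpha, \beta, \gamma$. The point is to show that, after the change of basis $\bel_0 = \bee_{00}$, $\bel_1 = \bee_{01}\cdot\beta$, $\ber_0 = \bee_{10}\cdot a$, $\ber_1 = \bee_{11}\cdot\beta\alpha a$, the recursions for $\alpha, \beta, \gamma, P', S'$ become exactly those listed in Theorem~\ref{th:imgF} for $\alpha, \beta, \gamma, P, S$. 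Since a self-similar group is determined up to equivalence by its wreath recursion (biset plus basis), this establishes that the subgroup $\langle\alpha, \beta, \gamma, P', S'\rangle \le \overline{\group{G}}$ is isomorphic as a self-similar group to $\img{F}$.

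The mechanical core of the argument is the following. A change of basis $x_i \mapsto \bee_i\cdot g_i$ conjugates a wreath recursion: if $g\cdot \bee_i = \bee_{j}\cdot h$ in the old basis, then in the new basis $g\cdot (\bee_i g_i) = \bee_j (h g_i) = (\bee_j g_j)\cdot (g_j^{-1} h g_i)$, so the new section of $g$ at $x_i$ is $g_j^{-1}(g|_{\bee_i})g_i$ where $j = \sigma_g(i)$ in the old labeling. So I would first rewrite the recursions for $\alpha, \beta, \gamma$ from Definition~\ref{def:overlineG} in the basis $(\bel_0, \bel_1, \ber_0, \ber_1)$ — using $g_0 = 1$, $g_1 = \beta$, $g_2 = a$, $g_3 = \beta\alpha a$ — and check they match the (involutive) forms $\alpha = \sigma(\beta,\beta,\beta\alpha,\alpha\beta)$, $\beta = (1,\beta\alpha\beta,\alpha,1)$, $\gamma = (\gamma,\beta,\gamma,\beta)$. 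Then I would do the same for $P' = \beta ba$: since $\beta$ and $a$ are given explicitly and $b = (a,a,\alpha a, \alpha a)$, one computes $\beta ba = (\text{perm } \pi)(\dots)$; the claim $P = \pi$ (trivial sections) is what should fall out after the basis change. Finally $S' = ac\alpha\gamma$ is the longest computation: multiply the four recursions, simplify the permutation part to $\sigma\pi$, simplify the four sections using $a^2=b^2=c^2=1$, the relations in~\eqref{eq:commutationrel}, and $\tau = \gamma\alpha\beta$, and then apply the basis change to land on $S = \sigma\pi(P\tau^{-1}, P, S^{-1}\tau^{-1}, S^{-1})$.

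Having matched the recursions, two things remain. First, I must check that $\langle \alpha,\beta,\gamma,P',S'\rangle$ really has index two in $\overline{\group{G}}$: the quotient recursion on the first level, or rather the sign of the first-level permutation together with a parity of the generators, gives a homomorphism $\overline{\group{G}}\to \Z/2$; one shows $\alpha,\beta,\gamma$ and $S', P'$ all lie in the kernel (equivalently, $a$ is the nontrivial coset representative and $a\notin\langle\alpha,\beta,\gamma,P',S'\rangle$), while $\overline{\group{G}} = \langle \alpha,\beta,\gamma,P',S'\rangle\cup a\langle\alpha,\beta,\gamma,P',S'\rangle$ because $b = a\cdot(aba)$ and $c = a\cdot(aca)$ and one checks $aba, aca$ lie in the index-two subgroup (using $S',P'$ to recover them, or directly from $P' = \beta ba \Rightarrow ba = \beta P' \Rightarrow aba = a\beta P'$, hmm — so actually it is cleanest to note $ba \in \langle\alpha,\beta,\gamma,P'\rangle\cdot\{1,a\}$ and likewise for $c$ via $S'$). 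Second, I must confirm that the map on generators extends to a group isomorphism onto this subgroup and not merely a recursion-level matching — but this is automatic: equivalence of self-similar groups (Definition in Section~2) is precisely an isomorphism of groups intertwining the bisets, so matching wreath recursions after a basis change delivers exactly the isomorphism $\img{F}\xrightarrow{\sim}\langle\alpha,\beta,\gamma,P',S'\rangle$ sending $\alpha,\beta,\gamma,P,S$ to $\alpha,\beta,\gamma,\beta ba, ac\alpha\gamma$.

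The main obstacle is purely computational bookkeeping in the $S' = ac\alpha\gamma$ case: $c = (\beta b, \beta b, \gamma c, \gamma c)$ already has sections involving $b$ (hence, recursively, $\alpha a$), so expanding $ac\alpha\gamma$ and then conjugating by the basis-change elements $1, \beta, a, \beta\alpha a$ produces long words that must be reduced to the target forms using all of the relations $a^2=b^2=c^2=1$, $\eqref{eq:commutationrel}$, $[a,\alpha]=[a,\beta]=\dots$ as appropriate, and the identity $\tau=\gamma\alpha\beta=\gamma^{-1}\alpha^{-1}\beta^{-1}$ (valid since $\alpha,\beta,\gamma$ are involutions). I would organize this by computing the sections of $S'$ at each of the four vertices separately, in each case first determining the image vertex under the first-level permutation $\sigma\pi$, then reading off the raw section, then applying the conjugation $g_j^{-1}(\cdot)g_i$. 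A sanity check at the end: the resulting recursions for $\alpha,\beta,\gamma,P,S$ must be consistent with the already-stated relations~\eqref{eq:conj1}--\eqref{eq:conj3} and with $T = \gamma S^{-1}P^{-1}\beta\alpha$ being the element described after Theorem~\ref{th:imgF}, which gives an independent check that no error crept into the multiplications.
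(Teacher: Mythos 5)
Your proposal is correct and follows essentially the same route as the paper: the paper's proof likewise conjugates the recursion of Definition~\ref{def:overlineG} by $(1,\beta,a,\beta\alpha a)$ (the stated change of basis) and then verifies by direct computation that $\beta ba$ and $ac\alpha\gamma$ satisfy exactly the wreath recursion of Theorem~\ref{th:imgF} for $P$ and $S$. The index-two and ``matching recursions yields the isomorphism'' points you discuss are not even spelled out in the paper's proof, so your treatment is a compatible supplement rather than a divergence.
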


\begin{proof}
Recall, that the group $\overline{\group{G}}$ is generated by
\[\alpha=\sigma,\quad\beta=(1, \alpha, \alpha, 1),\quad\gamma=(\gamma,
\beta, \gamma, \beta),\]
\[a=\pi,\quad b=(a, a, \alpha a, \alpha a),\quad
c=(\beta b, \beta b, \gamma c, \gamma c).\]

We have
\[\beta^a=\alpha\beta\alpha,\quad \gamma^b=\beta\gamma\beta,\quad
\beta^c=\gamma\beta\gamma,\] and $a, b, c$ commute with the
remaining generators $\alpha, \beta, \gamma$.

Conjugating the right hand side of the recursion by $(1, \beta, a,
\beta\alpha a)$, we get
\[\alpha=\sigma(\beta, \beta, a\alpha\beta a,
a\beta\alpha a)=\sigma(\beta, \beta, \beta\alpha, \alpha\beta),\]
\[\beta=(1, \beta\alpha\beta, a\alpha a, 1)=(1, \beta\alpha\beta,
\alpha, 1),\]
\[\gamma=(\gamma, \beta, a\gamma a, a\alpha\beta\alpha
a)=(\gamma, \beta, \gamma, \beta),\]
\[a=\pi(a, \alpha a, a, \alpha a),\]
\[b=(a, \beta a\beta , a\alpha aa, a\alpha\beta \alpha a\beta\alpha
a)=(a, \beta\alpha\beta\alpha a, \alpha a, \alpha a),\]
\[c=(\beta b, \beta \beta b\beta, a\gamma c a, a\alpha\beta\gamma c\beta\alpha
a)=(\beta b, \beta b, \gamma aca, \alpha \gamma \alpha aca).\]

Let us show that $S=ac\alpha\gamma$, and $P=\beta ba$ satisfy the
wreath recursion of Proposition~\ref{pr:imgind2}.

We have, using commutation of the involutions $a$ and $\alpha$:
\[\beta ba=\pi(a\alpha a\alpha aa, \alpha a\alpha a, aa,
\beta\alpha\beta\beta\alpha\beta\alpha a\alpha a)=\pi,\]
which agrees with the condition $P=\pi$.

We have
\begin{multline*}ac\alpha\gamma=\pi(a, \alpha a, a, \alpha
a)(\beta b, \beta b, \gamma aca, \alpha\gamma\alpha
aca)\sigma(\beta, \beta, \beta\alpha, \alpha\beta)(\gamma, \beta,
\gamma, \beta)=\\
\pi\sigma(\alpha a b \gamma, a b\beta, \gamma\alpha
ca\beta\alpha\gamma, \gamma ca\alpha)=
\pi\sigma(ab\beta\cdot\beta\alpha\gamma, ab\beta, \gamma\alpha
ca\beta\alpha\gamma, \gamma\alpha ca),
\end{multline*}
which also agrees with
\[S=\pi\sigma(P^{-1}\tau^{-1}, P^{-1}, S^{-1}\tau^{-1}, S^{-1}),\]
and finishes the proof.
\end{proof}

\subsection{Properties of the groups $\group{G},
  \overline{\group{G}}$, and $\img{F}$}
\label{ss:propertiesofgroups}

Note that for every element $g\in\group{G}$ and for every $\bee_{ij}$ we
have $g\cdot\bee_{ij}=\bee_{ik}\cdot h$ for some $h\in\group{G}$ and
$k\in\{0, 1\}$. In other words, the self-similarity biset of
$\group{G}$ is a disjoint union (``direct sum'') of the
bisets $\bim_0=\{\bee_{00}, \bee_{01}\}\cdot\group{G}$ and $\bim_1=\{\bee_{10},
\bee_{11}\}\cdot\group{G}$.

Let us denote $E_i=\{\bee_{i0}, \bee_{i1}\}$ for $i\in\{0, 1\}$.
We will also denote $E_\emptyset=\{\emptyset\}$ and
\[E_{i_1i_2\ldots i_n}=E_{i_1}E_{i_2}\ldots E_{i_n}\subset\{\bee_{00},
\bee_{01}, \bee_{10}, \bee_{11}\}^n.\]

Then for every sequence $w=i_1i_2\ldots\{0, 1\}^\omega$ the subtree
\[T_w=\bigcup_{n\ge 0}E_{i_1i_2\ldots i_n}\]
of the tree $T=\{\bee_{00}, \bee_{01}, \bee_{10}, \bee_{11}\}^*$
is invariant under the action of the group $\group{G}$.

Let us identify $T_w$, for $w=x_1x_2\ldots$, with the binary tree $\{0, 1\}^*$ by the
isomorphism
\[\bee_{x_1i_1}\bee_{x_2i_2}\ldots\bee_{x_ni_n}\mapsto i_1i_2\ldots
i_n.\]
Denote by $\group{G}_w$ the restriction of the action of $\group{G}$
onto the subtree $T_w$, seen as an automorphism group of the binary
tree.

Then it follows directly from the wreath recursion for $\group{G}$
that the group $\group{G}_w$ is generated by automorphisms $\alpha_w,
\beta_w, \gamma_w$ (images of $\alpha, \beta, \gamma$)
which are defined by the following recursions.
\[\alpha_w=\sigma,\quad\gamma_w=(\gamma_{\overline{w}},
\beta_{\overline{w}}),\] and
\[\beta_w=\left\{\begin{array}{ll}(1, \alpha_{\overline{w}}) &
\text{if $x_1=0$,}\\
(\alpha_{\overline{w}}, 1) & \text{if $x_1=1$.}\end{array}\right.
\]
Here $\overline w=x_2x_3\ldots$ is the shift of $w$.

The group $\group{G}$ is the universal group of the family
$\{\group{G}_w\;:\;w\in\{0, 1\}^\omega\}$, i.e.,
$\group{G}$ is the quotient of the free group
$\langle\alpha, \beta, \gamma\;|\;\emptyset\rangle$ by the normal subgroup
$R=\bigcap_{w\in\{0, 1\}^\omega}R_w$, where $R_w$ is the kernel of the
natural epimorphism $\alpha\mapsto\alpha_w$, $\beta\mapsto\beta_w$,
$\gamma\mapsto\gamma_w$ of the free group
$\langle\alpha, \beta, \gamma\;|\;\emptyset\rangle$ onto the group $\group{G}_w$.
This follows from the fact that the subtrees $T_w$ cover the tree $T$.

\begin{proposition}
The group $\group{G}$ is contracting with the nucleus $\nuke=\{1, \alpha, \beta,
\gamma\}$.
\end{proposition}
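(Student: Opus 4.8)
The plan is to read everything off the wreath recursion, working with $\group{G}$ in its realization as the subgroup $\langle\alpha,\beta,\gamma\rangle$ of $\overline{\group G}$ (equivalent as a self-similar group to $\group{G}<\img{F}$ by the discussion preceding the statement), so that with respect to the ordered basis $(\bee_{00},\bee_{01},\bee_{10},\bee_{11})$ and $\sigma=(12)(34)$ one has
\[\alpha=\sigma,\qquad\beta=(1,\alpha,\alpha,1),\qquad\gamma=(\gamma,\beta,\gamma,\beta).\]
Using exactly this basis is the one delicate point: only here does $\alpha=\sigma$ have all its sections trivial, and it is this that keeps the nucleus down to $\{1,\alpha,\beta,\gamma\}$ rather than something larger. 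From the recursion one reads off $\alpha|_x=1$, $\beta|_x\in\{1,\alpha\}$ and $\gamma|_x\in\{\beta,\gamma\}$ for every letter $x$, so $g|_x\in\nuke$ for all $g\in\nuke$, where $\nuke=\{1,\alpha,\beta,\gamma\}$; moreover $\nuke=\nuke^{-1}$ (the generators are involutions), $1\in\nuke$, and $\nuke$ generates $\group{G}$. Consequently every product of $n$ elements of $\nuke$ has all of its sections (at every level) again a product of $n$ elements of $\nuke$.

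Next I use the standard criterion: a self-similar group with a state-closed symmetric generating set $\nuke\ni1$ is contracting with nucleus contained in $\nuke$ provided there is an integer $k$ such that $(g_1g_2)|_v\in\nuke$ for all $g_1,g_2\in\nuke$ and all words $v$ with $|v|\ge k$. The one-line proof: for $g=g_1g_2\cdots g_n$ with $g_i\in\nuke$ one has $g|_v=g_1|_{v^{(1)}}g_2|_{v^{(2)}}\cdots g_n|_{v^{(n)}}$ with $v^{(i)}=(g_{i+1}\cdots g_n)(v)$, and since $g_{i+1}(v^{(i+1)})=v^{(i)}$ one gets $g_i|_{v^{(i)}}\,g_{i+1}|_{v^{(i+1)}}=(g_ig_{i+1})|_{v^{(i+1)}}$, which lies in $\nuke$ as soon as $|v|\ge k$; hence each depth-$k$ step shortens the product by one, so $g|_w\in\nuke$ whenever $|w|\ge(n-1)k$. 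Since every element of $\group{G}$ is a finite product of generators, this is exactly what is needed.

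For the pair check, using $\alpha^2=\beta^2=\gamma^2=1$ and that products with $1$ are harmless, only the six products $\alpha\beta,\beta\alpha,\alpha\gamma,\gamma\alpha,\beta\gamma,\gamma\beta$ remain. A direct computation with the recursion gives
\[\alpha\beta=\sigma(1,\alpha,\alpha,1),\quad\beta\alpha=\sigma(\alpha,1,1,\alpha),\quad\alpha\gamma=\sigma(\gamma,\beta,\gamma,\beta),\quad\gamma\alpha=\sigma(\beta,\gamma,\beta,\gamma),\]
so these four already have all of their first-level sections in $\nuke$, and
\[\beta\gamma=(\gamma,\alpha\beta,\alpha\gamma,\beta),\qquad\gamma\beta=(\gamma,\beta\alpha,\gamma\alpha,\beta),\]
whose first-level sections lie in $\nuke\cup\{\alpha\beta,\alpha\gamma,\beta\alpha,\gamma\alpha\}$, hence by the previous line all of whose second-level sections lie in $\nuke$. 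Thus $k=2$ works, and the criterion yields that $\group{G}$ is contracting with nucleus contained in $\{1,\alpha,\beta,\gamma\}$.

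Finally, $\alpha,\beta,\gamma$ are pairwise distinct and nontrivial (evident from their action on the first two levels), and each occurs as a section of $\gamma$ at arbitrarily deep vertices: $\gamma|_{\bee_{00}^n}=\gamma$, $\gamma|_{\bee_{00}^n\bee_{01}}=\beta$ and $\gamma|_{\bee_{00}^n\bee_{01}\bee_{01}}=\alpha$ for every $n\ge0$ (and $1$ trivially). Hence any set satisfying the defining property of a nucleus must contain $\{1,\alpha,\beta,\gamma\}$, which together with the previous paragraph gives equality. The only genuine obstacle is packaging the reduction-to-pairs criterion cleanly; the remaining steps are a short finite computation, the only trap being to work in the basis in which $\alpha$ acts by $\sigma$ alone.
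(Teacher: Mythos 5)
Your proof is correct and takes essentially the same route as the paper: both work with the basis in which $\alpha=\sigma$, $\beta=(1,\alpha,\alpha,1)$, $\gamma=(\gamma,\beta,\gamma,\beta)$, and both reduce the contraction claim to checking that sections of products of two elements of $\{1,\alpha,\beta,\gamma\}$ land back in this set at level two. The only differences are that you prove the reduction-to-pairs criterion inline where the paper simply cites \cite[Lemma~2.11.2]{nek:book}, and you additionally verify minimality of the nucleus, which the paper leaves implicit.
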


\begin{proof}
By~\cite[Lemma~2.11.2]{nek:book} we have to show that sections of
$\nuke\cdot\{\alpha, \beta,
\gamma\}$ eventually belong to $\nuke$. But sections of the elements
$\alpha, \beta$ in words of length more than one are trivial, while
$\gamma$ is of order two. Hence, sections of the elements of
$\nuke\cdot\{\alpha, \beta, \gamma\}$ in words of length two belong to
$\nuke$.
\end{proof}

The following proposition is a direct corollary of the wreath
recursion defining the groups $\overline{\group{G}}$ and $\group{K}$.

\begin{proposition}
\label{pr:groupK}
Denote by $\group{K}$ the self-similar group generated by
\[\wt a=\sigma,\quad \wt b=(\wt a, \wt a),\quad \wt c=(\wt b, \wt c),\]
where $\sigma$ is the transposition. The map $g\mapsto\wt g$ from
$\overline{\group{G}}$ to $\group{K}$ defined
by
\[a\mapsto\wt a,\quad
b\mapsto\wt b,\quad c\mapsto\wt c\]
and $g\mapsto 1$ for $g\in\group{G}$ extends to an epimorphism
$\overline{\group{G}}\arr\group{K}$. Together with the map
$\bee_{ij}\mapsto j$ it generates an epimorphism of bisets.

The image of the subtree $T_w$ under the action of an element
$h\in\overline{\group{G}}$ is the subtree $T_{\wt h(w)}$.
\end{proposition}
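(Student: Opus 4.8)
The plan is to realise the degree-four tree $T=\{\bee_{00},\bee_{01},\bee_{10},\bee_{11}\}^{*}$ on which $\overline{\group{G}}$ acts as a bundle over the binary tree $T'=\{0,1\}^{*}$ on which $\group{K}$ acts, and to obtain $\psi\colon\overline{\group{G}}\arr\group{K}$ as the induced action on the base. Concretely, let $q\colon\{\bee_{00},\bee_{01},\bee_{10},\bee_{11}\}\arr\{0,1\}$ send the block $E_{i}=\{\bee_{i0},\bee_{i1}\}$ to $i$ (this is the basis map in the statement); it extends to a morphism of rooted trees $q\colon T\arr T'$ collapsing $\bee_{i_{1}j_{1}}\cdots\bee_{i_{n}j_{n}}$ to $i_{1}\cdots i_{n}$, so that $T_{w}=\bigcup_{n}E_{i_{1}\cdots i_{n}}$ is exactly the $q$-preimage of the set of prefixes of $w=i_{1}i_{2}\cdots$. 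I will call an automorphism $g$ of $T$ \emph{descending} if there is a (necessarily unique) automorphism $\bar{g}$ of $T'$ with $q\circ g=\bar{g}\circ q$ on vertices; the descending automorphisms form a subgroup of $\operatorname{Aut}(T)$, the assignment $g\mapsto\bar g$ is a homomorphism, and it respects sections: $\overline{g|_{v}}=\bar g|_{q(v)}$. Since the subgroup $\group{G}=\langle\alpha,\beta,\gamma\rangle$ fixes the first coordinate of every letter (as observed just before the proposition), each of $\alpha,\beta,\gamma$ is descending with $\bar\alpha=\bar\beta=\bar\gamma=1$. Thus the whole statement reduces to one claim: $a,b,c$ are descending, with $\bar a=\wt a$, $\bar b=\wt b$, $\bar c=\wt c$. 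Granting it, $\overline{\group{G}}$ consists of descending automorphisms, $g\mapsto\bar g$ is the required epimorphism onto $\langle\wt a,\wt b,\wt c\rangle=\group{K}$, it kills $\group{G}$, and it sends $a,b,c$ to $\wt a,\wt b,\wt c$.

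The descent claim is the only real computation, and I would prove it --- together with $\bar\alpha=\bar\beta=\bar\gamma=1$ --- by a single induction on the tree level, reading off sections from Definition~\ref{def:overlineG}. At level one, $\sigma=(12)(34)$ fixes each block $E_{i}$ while $\pi=(13)(24)$ interchanges $E_{0}$ and $E_{1}$, so at the root $\alpha,\beta,\gamma,b,c$ act trivially on $\{0,1\}$ and $a$ acts as the transposition, matching the root permutations of $1,1,1,\wt b,\wt c$ and $\wt a=\sigma$. For the inductive step, observe that the section of each of $\alpha,\beta,\gamma,a,b,c$ at any letter is a word in $\alpha,\beta,\gamma,a,b,c$, hence descending by the induction hypothesis, and that its image under $g\mapsto\bar g$ depends only on the block $E_{i}$ containing the letter, not on $j$: for $a$ every section is trivial; for $b=(a,a,\alpha a,\alpha a)$ the two sections over $E_{0}$ are $a$ (image $\wt a$) and those over $E_{1}$ are $\alpha a$ (image $\bar\alpha\bar a=\wt a$); for $c=(\beta b,\beta b,\gamma c,\gamma c)$ the two sections over $E_{0}$ are $\beta b$ (image $\bar\beta\bar b=\wt b$) and those over $E_{1}$ are $\gamma c$ (image $\bar\gamma\bar c=\wt c$). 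Comparing with $\wt b=(\wt a,\wt a)$ and $\wt c=(\wt b,\wt c)$ gives $\bar b=\wt b$ and $\bar c=\wt c$, and $\bar a=\wt a$; for $\beta=(1,\alpha,\alpha,1)$ and $\gamma=(\gamma,\beta,\gamma,\beta)$ the two sections over a block need not coincide as automorphisms of $T$, but both have trivial image, so $\bar\beta=\bar\gamma=1$. The self-referential recursions for $\gamma$ and $c$ cause no trouble, because the level-$n$ statement for $\gamma$ (resp.\ $c$) uses only the level-$(n-1)$ statement for its sections $\gamma,\beta$ (resp.\ $\beta b,\gamma c$).

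It then remains to package this into a biset epimorphism and to deduce the last sentence. I would define $\bim\arr\mathfrak{N}$ on the self-similarity bisets of $\overline{\group{G}}$ and $\group{K}$ by $\bee_{ij}\cdot g\mapsto q(\bee_{ij})\cdot\psi(g)$; this is well defined because the basis is a transversal of the free right action, and onto because $q$ and $\psi$ are. Right $\overline{\group{G}}$-equivariance over $\psi$ is built in, and left equivariance is exactly the identity $\psi(g)\cdot q(\bee_{ij})=q(\bee_{i'j'})\cdot\psi(g|_{\bee_{ij}})$ whenever $g\cdot\bee_{ij}=\bee_{i'j'}\cdot g|_{\bee_{ij}}$, which is precisely $q\circ g=\bar g\circ q$ together with $\overline{g|_{\bee_{ij}}}=\bar g|_{q(\bee_{ij})}$. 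Finally, for $h\in\overline{\group{G}}$ the relation $q\circ h=\wt h\circ q$ on vertices gives
\[
h(T_{w})=h\bigl(q^{-1}(\{\text{prefixes of }w\})\bigr)=q^{-1}\bigl(\wt h(\{\text{prefixes of }w\})\bigr)=q^{-1}(\{\text{prefixes of }\wt h(w)\})=T_{\wt h(w)},
\]
using that the tree automorphism $\wt h$ carries the ray of $w$ onto the ray of $\wt h(w)$. The hard part --- really the only part that needs care --- is the inductive descent check of the second paragraph: specifically the point that the sections lying over one $q$-block must agree only after applying $g\mapsto\bar g$, not beforehand (this is what occurs for $\beta$ and $\gamma$), and the bookkeeping that keeps the induction consistent with the recursions for $\gamma$ and $c$; everything else is formal. (Conceptually, $\psi$ is the morphism of iterated monodromy groups induced by projecting $F$ onto its second coordinate $p\mapsto\bigl(\tfrac{p-1}{p+1}\bigr)^{2}$, but the argument above does not rely on this.)
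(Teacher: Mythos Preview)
Your proof is correct and is exactly the natural unpacking of what the paper asserts without argument: the paper gives no proof, stating only that the proposition ``is a direct corollary of the wreath recursion defining the groups $\overline{\group{G}}$ and $\group{K}$.'' Your descent framework (automorphisms of $T$ that factor through $q$) and the level-by-level verification of the six generators is the obvious way to make that corollary precise, and your observation that sections over a block need only agree \emph{after} applying $g\mapsto\bar g$ (as happens for $\beta$ and $\gamma$) is the one point where a careless reader might stumble.

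One small discrepancy worth flagging: the statement records the basis map as $\bee_{ij}\mapsto j$, while you take $q(\bee_{ij})=i$ and call it ``the basis map in the statement.'' Your choice is the correct one and the paper's $j$ is evidently a typo: since $a=\pi$ swaps the first index and $\alpha=\sigma$ swaps the second, compatibility with $a\mapsto\wt a=\sigma$ and $\alpha\mapsto 1$ forces the projection to read off the first index. This is also the only choice consistent with the final sentence about $T_w$, since $T_w=q^{-1}(\text{prefixes of }w)$ for your $q$.
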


\begin{proposition}
\label{pr:kernel}
For any element $g$ of the kernel of the epimorphism
$\overline{\group{G}}\arr\group{K}$ there exists $n$ such that
$g|_v\in\group{G}$ for all words $v$ of length greater than $n$.
\end{proposition}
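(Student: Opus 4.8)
The plan is to combine the biset epimorphism of Proposition~\ref{pr:groupK} with contraction of $\overline{\group{G}}$, reducing the statement to a finite computation with the nucleus.

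\textbf{Kernel closed under sections.} Write $\psi\colon\overline{\group{G}}\arr\group{K}$ for the epimorphism of Proposition~\ref{pr:groupK} (so $\group{G}\le\ker\psi$), and let $\bar v\in\{0,1\}^*$ be the image of a vertex $v$ of the $4$-regular tree under the accompanying morphism of bisets. Since a morphism of bisets intertwines the two wreath recursions, $\psi(g|_v)=\psi(g)|_{\bar v}$ for all $g\in\overline{\group{G}}$ and all $v$; in particular $\psi(g)=1$ forces $\psi(g|_v)=1$ for every $v$, so $\ker\psi$ is invariant under taking sections. Thus it suffices to show that for $g\in\ker\psi$ the sections $g|_v$ eventually avoid $\ker\psi\setminus\group{G}$.

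\textbf{Contraction of $\overline{\group{G}}$.} By Proposition~\ref{pr:imgind2}, $\img{F}$ has index $2$ in $\overline{\group{G}}$, and $a=\pi\notin\img{F}$ (seen from the homomorphism $\overline{\group{G}}\arr\Z/2\Z$ counting the occurrences of $a,b,c$ modulo $2$, whose kernel contains the generators $\alpha,\beta,\gamma,ac\alpha\gamma,\beta ba$ of $\img{F}$); hence $\overline{\group{G}}=\img{F}\sqcup\img{F}a$. As $a=\pi$ has trivial sections, $(ga)|_v=g|_{a(v)}\in\img{F}$ for every $g\in\img{F}$ and every nonempty $v$, so every section of every element of $\overline{\group{G}}$ eventually lies in the (finite) nucleus of $\img{F}$; therefore $\overline{\group{G}}$ is contracting, with a finite nucleus $\overline{\nuke}$.

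\textbf{Finishing.} It then remains to prove $\overline{\nuke}\cap\ker\psi\subseteq\group{G}$: granting this, for $g\in\ker\psi$ pick $n$ with $g|_v\in\overline{\nuke}$ for all $|v|>n$, and conclude $g|_v\in\overline{\nuke}\cap\ker\psi\subseteq\group{G}$. To establish the inclusion I would compute $\overline{\nuke}$ explicitly, transporting the nucleus~\eqref{eq:nucleusimg} of $\img{F}$ through the identification of bisets of Proposition~\ref{pr:imgind2} ($\bel_0=\bee_{00}$, $\bel_1=\bee_{01}\beta$, $\ber_0=\bee_{10}a$, $\ber_1=\bee_{11}\beta\alpha a$). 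Under $\psi$ the elements $\alpha,\beta,\gamma$ (and $\tau=\gamma\alpha\beta$) map to $1$, while $P\mapsto\beta ba$ and $S\mapsto ac\alpha\gamma$ map to $\wt b\wt a$ and $\wt a\wt c$, which are nontrivial in $\group{K}$; since every word in the list~\eqref{eq:nucleusimg} contains at most one letter among $S,P$ and never an $S$ together with a $P$, those of them lying in $\ker\psi$ are precisely the words in $\alpha,\beta,\gamma$, i.e. elements of $\group{G}$. The change of basis above conjugates sections by the tuple $(1,\beta,a,\beta\alpha a)$, and since $a$ normalizes $\group{G}$ (by~\eqref{eq:commutationrel}) this conjugation sends $\group{G}$ into $\group{G}$, giving $\overline{\nuke}\cap\ker\psi\subseteq\group{G}$.

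The part I expect to be the main obstacle is the last one: pinning down the nucleus $\overline{\nuke}$ precisely and carrying out the bookkeeping of the basis change between Theorem~\ref{th:imgF} and the standard basis of $\overline{\group{G}}$, then verifying that each nucleus element in $\ker\psi$ is a word in $\alpha,\beta,\gamma$. (An alternative to the nucleus computation is to argue directly from the reduction of the wreath recursion modulo $\group{G}$ — where $a$ has trivial sections, every section of $b$ is $\equiv a$, and the sections of $c$ are $\equiv b$ or $\equiv c$ — which shows that the $\{a,b,c\}$-content of a section of a kernel element can only persist along the single branch whose image in the binary tree is $11\cdots1$, where the vanishing of $\psi(g)$ forces it to be trivial.)
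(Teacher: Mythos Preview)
Your parenthetical alternative at the end --- tracking the $a,b,c$-content of sections modulo $\group{G}$ --- is essentially the paper's proof and is the right way to go. The paper formalizes it by passing to $\wt{\group{K}}=\langle\wt a,\wt b,\wt c\mid \wt a^2=\wt b^2=\wt c^2=1\rangle$, checking that the induced binary recursion there is contracting with nucleus $\{1,\wt a,\wt b,\wt c\}$, and using that a word is trivial in $\group{K}$ iff all its sections in $\wt{\group{K}}$ become trivial after finitely many levels; normality of $\group{G}$ in $\overline{\group{G}}$ then transports this back.

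Your main line, however, has a genuine gap in Step~2. You assert $(ga)|_v=g|_{a(v)}\in\img F$ in the $\bee$-basis, but $\img F$ is \emph{not} closed under sections in that basis. For example $P=\beta b a=\pi(a,\alpha a,a,\alpha a)$ in the $\bee$-basis, so $P|_{\bee_{00}}=a\notin\img F$. What \emph{is} true is that $\img F$ is self-similar in the $\bel/\ber$-basis of Proposition~\ref{pr:imgind2} --- but there $a$ no longer has trivial sections (it becomes $\pi(a,a\alpha,a,\alpha a)$, see the computation in \S\ref{s:matings}). You cannot have both at once, so the deduction that ``every section of every element of $\overline{\group{G}}$ eventually lies in the nucleus of $\img F$'' fails. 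Even if $\overline{\group{G}}$ is contracting, its nucleus will contain elements of the coset $\img F\cdot a$, and your check that each word in~\eqref{eq:nucleusimg} carries at most one $S$ or $P$ says nothing about those. So Step~3 would require a fresh computation of the actual nucleus of $\overline{\group{G}}$ rather than a transport of~\eqref{eq:nucleusimg}.
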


\begin{proof}
It is easy to check that the wreath recursion
\[\wt a=\sigma,\quad \wt b=(\wt a, \wt a),\quad \wt c=(\wt b, \wt c)\]
is contracting on the group given by the presentation $\wt{\group{K}}=\langle\wt a,
\wt b, \wt c\;|\; (\wt a)^2=(\wt b)^2=(\wt c)^2=1\rangle$ with the
nucleus $\{1, \wt a, \wt b, \wt c\}$.

It follows from~\cite[Proposition~2.13.2]{nek:book} that a product $\wt g$ of the
generators $\wt a, \wt b, \wt c$ is trivial in $\group{K}$ if and only
if there exists $n$ such that $\wt g$ belongs to the kernel of the $n$th
iterate of the wreath recursion on $\wt{\group{K}}$. Let $g$ be a
product of the generators of $\overline{\group{K}}$ and let $\wt g$ be
the word obtained from $g$ by removing all generators $\alpha, \beta,
\gamma$ and applying the homomorphism $h\mapsto\wt h$ to every
letter $a, b, c$. Then the word $\wt g$ represents a trivial element
of $\overline{K}$. Let $n$ by such that $\wt g$ belongs to the kernel
of the $n$th iterate of the wreath recursion on $\wt{\group{K}}$. Then
it follows from the wreath recursion defining $\overline{\group{G}}$ and normality of
$\group{G}$ in $\overline{\group{G}}$ that the sections of $g$ in all
words of length $v$ belong to $\group{G}$.
\end{proof}

It follows directly from the interpretation of the generators $S$ and
$P$ of $\img{F}$ that the image of the subgroup
$\img{F}<\overline{\group{G}}$ in the quotient $\group{K}$ is
isomorphic the iterated monodromy group of $f(p)=\left(\frac{p-1}{p+1}\right)^2$.
Consequently, $\img{f}$ is isomorphic to the self-similar group generated by
\begin{equation}\label{eq:imgf}
S=\sigma(P, S^{-1}),\qquad P=\sigma.
\end{equation}
The corresponding epimorphism of the self-similarity
bisets acts by the rule $\bel_i\mapsto\bel$ and
$\ber_i\mapsto\ber$, where $(\bel, \ber)$ is the ordered basis
associated with the above recursion for $\img{f}$.

\begin{proposition}
\label{pr:quotSP}
The transformation $\kappa$ of
the space $\{\bel, \ber\}^{-\omega}$ changing in every sequence
$w\in\{\bel, \ber\}^{-\omega}$ each letter $\bel$ to $\ber$ and vice
versa induces a homeomorphism of the limit space of $\img{f}$, corresponding
to the complex conjugation on the Julia set of $f$.
\end{proposition}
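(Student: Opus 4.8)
The plan is to verify that the letter-swap map $\kappa$ is well-defined on the limit space $\lims[\img{f}]$ and that it conjugates the limit dynamical system to the quotient of $F$'s dynamics by complex conjugation, identifying the latter with the action of $f$ on its Julia set. First I would recall that the limit space of $\img{f}$ is, by Theorem~\ref{th:limspcontracting}, topologically conjugate to the Julia set of $f$ with the dynamics of $f$ on it (the correspondence $p\mapsto\left(\frac{p-1}{p+1}\right)^2$ is expanding on a neighborhood of its Julia set by the standard argument, or one notes it is a known post-critically finite rational map). So the statement reduces to: the map on sequences $w\mapsto\kappa(w)$ respects the asymptotic equivalence relation, descends to a homeomorphism of $\lims[\img{f}]$, and corresponds under the conjugacy of Theorem~\ref{th:limspcontracting} to complex conjugation $p\mapsto\overline p$.

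The key step is to show that complex conjugation is a symmetry of the correspondence defining $\img{f}$ and to compute its effect on the self-similar structure. From the recursion \eqref{eq:imgf}, $S=\sigma(P, S^{-1})$ and $P=\sigma$, I would first find the nucleus of this group; it is small (this is the group whose Schreier graphs approximate the Julia set of $\left(\frac{p-1}{p+1}\right)^2$). The generators $P$ and $S$ are monodromy loops around the two post-critical points $0$ and $1$ of $f$; complex conjugation is an orientation-reversing homeomorphism of $\C$ fixing the real axis, and since the post-critical set $\{0,1,q_1,\infty\}$ is invariant under conjugation (note $q_1\approx 0.2956$ is real), conjugation induces an automorphism of the relevant fundamental groupoid. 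Tracking which basepoint and which sheet of $f$ gets sent where, conjugation should act on the degree-2 self-covering by swapping the two preimages of the (real) basepoint — this is exactly the letter swap $\bel\leftrightarrow\ber$ on the first coordinate — and on the loops $P, S$ by the inverse-conjugation induced by complex conjugation. Concretely I expect that conjugation normalizes $\img{f}$ inside its ``extended'' version, and that it intertwines the biset map in the way that forces $\kappa$ to preserve the asymptotic equivalence relation: if $\ldots x_2 x_1$ and $\ldots y_2 y_1$ are asymptotically equivalent via $g_n$, then $\ldots \kappa(x_2)\kappa(x_1)$ and $\ldots\kappa(y_2)\kappa(y_1)$ are asymptotically equivalent via the conjugates of $g_n$ by the conjugation symmetry (which still take finitely many values). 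This gives a continuous map $\kappa:\lims[\img{f}]\arr\lims[\img{f}]$, and since $\kappa$ is an involution on sequences it is a homeomorphism. That $\kappa$ commutes with the shift $\si$ is immediate because the letter swap is applied coordinatewise.

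Finally I would pin down that $\kappa$ realizes complex conjugation rather than some other symmetry. The cleanest route is the rigidity theorem, Theorem~\ref{th:uniquenesslimg}, or rather its corollary Theorem~\ref{th:limspcontracting}: the conjugacy $\Phi$ from $\julia{f}$ to $\lims[\img{f}]$ is canonical, so it suffices to check that the symmetry of the correspondence induced by $z\mapsto\overline z$ is transported by $\Phi$ to $\kappa$. Because $\Phi$ is built from the monodromy action and conjugation acts on the monodromy exactly by the letter swap together with the automorphism of $\pi_1$ computed above, these two symmetries agree on a dense set of points (e.g.\ the images of the preimages of the basepoint at every level, which are the periodic-type points of $\lims[\img{f}]$), hence everywhere by continuity. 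The main obstacle I anticipate is the bookkeeping in the previous paragraph: correctly identifying the action of complex conjugation on the generators $P, S$ and on the basis $(\bel, \ber)$ — in particular checking that conjugation genuinely swaps the two sheets (equivalently, that the basepoint can be chosen real and that the two $f$-preimages of a real basepoint are swapped by conjugation), and that under this identification the swap $\bel\leftrightarrow\ber$ is compatible with the recursion \eqref{eq:imgf} in the sense of inducing an automorphism of the self-similar group. Once that compatibility is in hand, everything else is formal: well-definedness on the limit space follows from the finiteness of the value set of the conjugating elements, and the identification with complex conjugation follows from rigidity together with agreement on a dense set.
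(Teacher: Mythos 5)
Your core idea is the same as the paper's: choose a real basepoint (the paper takes $-1$), note that $f$ has real coefficients so conjugation commutes with $f$ and fixes the basepoint, and observe that conjugation swaps the two connecting paths to the preimages $\pm i$, which is exactly the letter swap $\bel\leftrightarrow\ber$. Where you diverge is in how the argument is finished. The paper never determines how conjugation acts on the generators $S,P$, never invokes the rigidity theorem, and makes no density argument: it computes $\img{f}$ explicitly from the picture (Figure~\ref{fig:img}), checks the recursion~\eqref{eq:imgf} against the chosen paths $\ell_\bel,\ell_\ber$, and then uses the standard description of the point encoded by a sequence $\ldots X^{(1)}X^{(0)}$ as the limit of the concatenation $\ell_0\ell_1\cdots$ of successive lifts of $\ell_{X^{(k)}}$. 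Since conjugation maps this whole lifted-path system for $w$ onto the one for $\kappa(w)$, the points encoded by $w$ and $\kappa(w)$ are complex conjugate, full stop; well-definedness of $\kappa$ on $\lims[\img{f}]$ is then automatic because the coding map identifies exactly the asymptotically equivalent sequences, so no separate verification via conjugated sequences $g_n$ is needed. Your route can be made to work, but it carries extra bookkeeping that the paper's argument avoids: complex conjugation is orientation-reversing, so its effect on $S$ and $P$ (inversion up to conjugation) has to be tracked if you want $\kappa$ to come from an automorphism of the self-similar structure, and the appeal to Theorem~\ref{th:uniquenesslimg} plus ``agreement on a dense set'' is heavier than what is required --- the identification with $p\mapsto\overline p$ falls out directly from the path-coding once the basepoint and connecting paths are chosen as above. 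What your approach buys is a more structural statement (an automorphism of the self-similar group inducing $\kappa$), but for the proposition as stated the paper's direct geometric computation is shorter and sidesteps the points where your sketch is still tentative.
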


\begin{proof}
Let us compute the iterated monodromy group $\img{f}$ directly, in
order to understand the geometric meaning of the elements $\bel$ and $\ber$.

The post-critical set of $f$ is
$\{\infty, 0, 1\}$. Take $-1$ as the basepoint. Let $S$ and $P$ be
the loops going in the positive direction around $0$ and around
both $0$ and $1$, respectively, as it is shown on the top part
of Figure~\ref{fig:img}. Connect the basepoint $1$ with its
preimages $\pm i$ by straight segments.

\begin{figure}[h]
\centering
\includegraphics{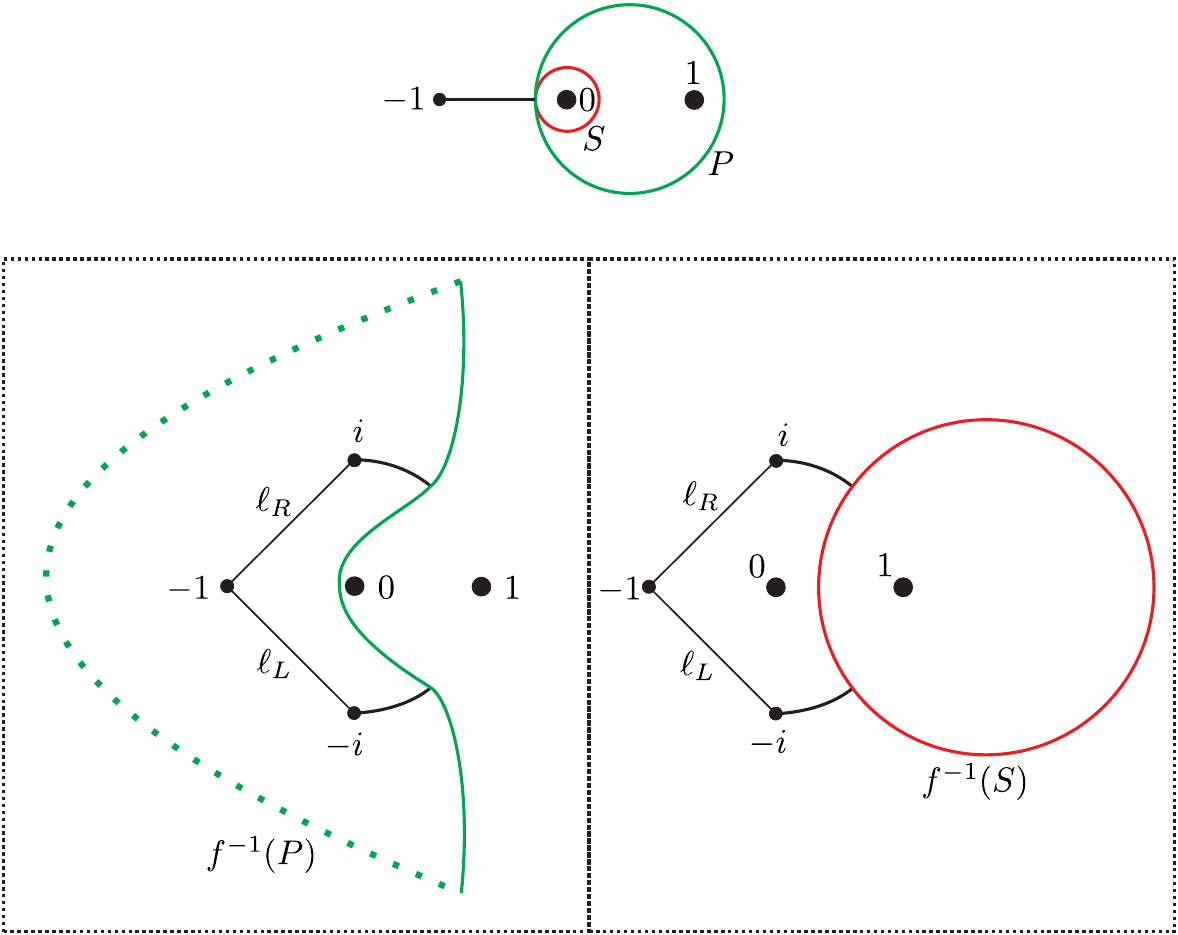}
\caption{Computation of $\img{f}$} \label{fig:img}
\end{figure}

The bottom part of
Figure~\ref{fig:img} shows the inverse images of the generators
under the action of the rational function. We see that if we label
the path connecting the basepoint $-1$ to $i$ by $\ell_{\ber}$ and
the path connecting $-1$ to $-i$ by $\ell_{\bel}$, then the associated
biset is defined by
\[P\cdot\bel=\ber,\quad P\cdot\ber=\bel,\]
and
\[S\cdot\ber=\bel\cdot
P,\quad S\cdot\bel=\ber\cdot S^{-1},\] which agrees with the wreath
recursion~\eqref{eq:imgf}.

Recall that a sequence $\ldots X^{(1)}X^{(0)}\in\{\bel,
\ber\}^{-\omega}$ represents the point of the Julia set equal to
the limit of the path $\ell_0\ell_1\cdots,$ where $\ell_k$ is a
continuation of the path $\ell_{k-1}$ and is a lift of the path
$\ell_{X^{(k)}}$ by the $k$th iteration of the rational function.
Since the rational function $\left(\frac{p-1}{p+1}\right)^2$ has
real coefficients, the basepoint $-1$ is real, and complex
conjugation permutes the paths $\ell_{\ber}$ and $\ell_{\bel}$,
the transformation $\kappa$ maps a sequence corresponding to a
point $z$ to the sequence corresponding to the conjugate point
$\overline z$.
\end{proof}

Recall that $J_1(q)$ denotes the intersection of the Julia set $J_1$ with
the $z$-line $p=q$.

\begin{proposition}
\label{pr:slices}
Each connected component of the limit space $\lims[\group{G}]$ of the group $\group{G}$
consists of points represented by the sequences of the form $\ldots
X^{(2)}_{i_2}X^{(1)}_{i_1}$, where $w=\ldots X^{(2)}X^{(1)}\in\{\ber,
\bel\}^{-\omega}$ is fixed and $i_k\in\{0, 1\}$ are arbitrary.
The connected component corresponding to $w\in\{\ber,
\bel\}^{-\omega}$ is homeomorphic $J_1(p_0)$, where $p_0$ is the
point of the Julia set of $f$ encoded by the sequence $w$.
\end{proposition}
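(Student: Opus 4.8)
The plan is to combine the structural description of the self-similarity biset of $\group{G}$ (its splitting into $\bim_0$ and $\bim_1$) with the identification of $\img{F}$ as an index-two subgroup of $\overline{\group{G}}$ and the earlier Proposition on the limit dynamical system of $F$. The decomposition of the biset forces each left-infinite word over $\{\bee_{00}, \bee_{01}, \bee_{10}, \bee_{11}\}$ that occurs in the tree on which $\group{G}$ acts — equivalently, each point of $\lims[\group{G}]$ — to lie in some invariant subtree $T_w$. Indeed, since $g\cdot \bee_{ij} = \bee_{ik}\cdot h$ never changes the first index, the sequence of first indices $\ldots i_2 i_1$ (which is exactly the image of the point under the epimorphism $\overline{\group{G}}\arr\group{K}$ restricted to $\group{G}$, i.e. the sequence $w\in\{\bel,\ber\}^{-\omega}$ encoding a point of $\lims[\img{f}] = \julia{f}$) is invariant under the $\group{G}$-action. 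This gives the first assertion: the asymptotic equivalence relation of $\group{G}$ cannot identify sequences with different first-index sequences $w$, so each connected component is contained in the fiber over a fixed $w$, and these fibers are precisely the sets of sequences $\ldots X^{(2)}_{i_2} X^{(1)}_{i_1}$ with $w$ fixed and $i_k$ arbitrary.

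Next I would identify the fiber over $w$ — equivalently the limit space $\lims[\group{G}_w]$ of the restricted group $\group{G}_w$ acting on the binary tree $T_w \cong \{0,1\}^*$ — with the Julia-set slice $J_1(p_0)$. The cleanest route is the rigidity Theorem~\ref{th:uniquenesslimg} (or rather its topological-correspondence counterpart Theorem~\ref{th:limspcontracting}): the restriction of the contracting self-covering $F$ to the line $p=q$ is not a self-map, but the whole non-autonomous system $\C\xrightarrow{h_{p_0}}\C\xrightarrow{h_{p_1}}\cdots$ is encoded by a bundle of bisets, and the tower $T_w$ with the generators $\alpha_w,\beta_w,\gamma_w$ is exactly the ``fibered'' iterated monodromy data of this tower. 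Concretely, one checks that the sequence of bisets $\bim_{x_1},\bim_{x_2},\ldots$ obtained by restricting $\bim_0,\bim_1$ along $w$ realizes the monodromy of the tower $h_{p_0},h_{p_1},\ldots$ over the punctured $z$-lines, so that the limit space of this biset tower is, by the (non-autonomous version of the) fundamental theorem on limit spaces of contracting groups, homeomorphic to $\bigcap_n \mathrm{dom}(h_{p_{n-1}}\circ\cdots\circ h_{p_0})$, which is by definition $J_1(p_0)$. Since $F$ was shown to be a contracting correspondence and its restriction to $W$ still is, the contraction hypotheses needed for the limit-space theorem hold uniformly in the fiber.

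The step I expect to be the main obstacle is making the ``fibered'' version of the limit-space theorem precise: the standard results (Theorems~\ref{th:limspcontracting} and~\ref{th:uniquenesslimg}) are for a single contracting biset / self-covering, whereas here we genuinely have a non-autonomous tower indexed by the orbit $p_0\mapsto p_1\mapsto\cdots$ of $f$, and one needs a version for sequences of bisets $(\bim_{x_n})$ with a uniform contraction constant. I would handle this either by citing the treatment in \cite{nek:ssfamilies} or \cite{nek:dendrites} (where exactly this family of groups $\group{G}_w$ and their Julia-set realizations were studied), or by a direct argument: embed the tower's limit space into $\lims[\overline{\group{G}}]$ (which by the later sections is the pillowcase bundle, hence compact metric), restrict the contracting metric to the fiber $T_w$, and apply the rigidity statement fiberwise, noting that the contraction constant of $F$ on $W$ does not depend on the fiber. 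A secondary, more routine point is checking that the parametrization ``$p_0$ is the point of $\julia f$ encoded by $w$'' is consistent with Proposition~\ref{pr:quotSP} and the epimorphism $\overline{\group{G}}\arr\group{K}\cong\img f$ of Proposition~\ref{pr:groupK}; this amounts to tracing through the basis identifications $\bel_0=\bee_{00}$, etc., from Proposition~\ref{pr:imgind2} and observing that the first index of $\bee_{ij}$ is exactly the $\{\bel,\ber\}$-coordinate that records the forward orbit of $p_0$ under $f$.
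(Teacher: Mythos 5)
Your first half (the fibering of $\lims[\group{G}]$ over sequences $w\in\{\bel,\ber\}^{-\omega}$) matches the paper, but note that you only show each connected component is \emph{contained} in a fiber; to conclude that each fiber actually is a single component you still need connectedness of the fibers, which the paper gets from level-transitivity of $\group{G}_w$ on the binary subtree $T_w$. That is a small omission. The serious problem is in the second half, where the step you yourself flag as the ``main obstacle'' is precisely the step that is never carried out: you replace the identification of the fiber with $J_1(p_0)$ by an appeal to a non-autonomous (fibered) version of the limit-space theorem that is neither in this paper nor proved in your sketch, and your fallback ``direct argument'' does not work as stated. Theorem~\ref{th:uniquenesslimg} and Theorem~\ref{th:limspcontracting} apply to a self-similar group with a single biset, whereas $\group{G}_w$ is not self-similar (the shift moves $w$), so the rigidity statement cannot be applied ``fiberwise'' without a genuinely new argument; also $\lims[\overline{\group{G}}]$ is not the pillowcase bundle (that is $\lims[\widehat{\group{G}}]$). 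Furthermore, your assertion that the limit of the tower is ``by definition $J_1(p_0)$'' is not the paper's definition: $J_1(p_0)$ is defined as the slice $\{p=p_0\}$ of $J_1$, the attractor of the global map $F$, so even granting a non-autonomous theorem you would still owe an argument relating the fiberwise Julia set to the slice of $J_1$.

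The paper avoids all of this. Since $\lims[\img{F}]$ is already identified with $J_1$, it suffices to show that the semiconjugacy $\lims[\group{G}]\arr\lims[\img{F}]=J_1$ induced by the inclusion $\group{G}<\img{F}$ is injective on each fiber, i.e., that $\img{F}$ introduces no identifications between sequences with the same $\{\bel,\ber\}$-part beyond those already made by $\group{G}$. This is done by taking a sequence $g_k\in\img{F}$ witnessing an $\img{F}$-identification inside a fiber, projecting to $\group{K}\cong\img{f}$, using hyperbolicity of $f$ (so $\lims[\img{f}]$ has no singular points) to force the projections to be trivial, and then invoking Proposition~\ref{pr:kernel} to place the (sections of the) $g_k$ in $\group{G}$. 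This injectivity argument is the actual content of the proposition, and it is entirely absent from your proposal; without it, or a fully developed non-autonomous theory imported from elsewhere, the homeomorphism of the component with $J_1(p_0)$ is not established.
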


\begin{proof}
Since $\group{G}$ is a self-similar subgroup of $\img{F}$, the
equivalence relation associated with $\group{G}$ is a sub-relation of
the asymptotic equivalence relation associated with $\img{F}$. The group
$\group{G}$ changes only the indices of the symbols $X^{(k)}_{i_k}$,
hence $\group{G}$-equivalent sequences are of the form  $\ldots
X^{(2)}_{i_2}X^{(1)}_{i_1}$, $\ldots
X^{(2)}_{j_2}X^{(1)}_{j_1}$ for some $i_k, j_k\in\{0, 1\}$ and
$X^{(k)}\in\{\bel, \ber\}$. Since $\group{G}$ is level-transitive on each of the
subtrees, the image of the set $\{\ldots
X^{(2)}_{i_2}X^{(1)}_{i_1}\;:\;\ldots i_2i_1\in\{0, 1\}^{-\omega}\}$
in the limit space of $\group{G}$ is connected (by the argument
similar to that of~\cite[Section~3.5]{nek:book}), hence is a connected
component.

It remains to show that the equivalence relation associated with
$\img{F}$ restricted to the set $\{\ldots
X^{(2)}_{i_2}X^{(1)}_{i_1}\;:\;\ldots i_2i_1\in\{0, 1\}^{-\omega}\}$
coincides with the restriction of the equivalence associated with
$\group{G}$, i.e., that the group $\img{F}$ does not
introduce new identifications inside the connected components of the
limit space of $\group{G}$. Suppose that the sequences
$\ldots X^{(2)}_{i_2}X^{(1)}_{i_1}$, $\ldots
X^{(2)}_{j_2}X^{(1)}_{j_1}$ are equivalent with respect to the action
of $\img{F}$. It means that there exists a sequence $g_k$ of elements
of $\img{F}$ assuming a finite set of values and such that
$g_k\cdot X^{(k)}_{i_k}=X^{(k)}_{j_k}\cdot g_{k-1}$ for all $k\ge
1$. The limit space of $\img{f}$ has no singular points, since the
rational function $f(p)=\left(\frac{p-1}{p+1}\right)^2$ is
hyperbolic. It follows that the images of $g_k$ in $\group{K}$ are
trivial. But this implies, by Proposition~\ref{pr:kernel} that the
elements $g_k$ belong to $\group{G}$, i.e., that the sequences are
equivalent with respect to the action of the group $\group{G}$.
\end{proof}

\subsection{The Schreier graphs of the groups $\group{G}_w$}
\label{ss:schreiergraphs}

Recall that for $v=i_1i_2\ldots i_n\in\{0, 1\}^n$, we denoted by $E_v$ the set of words
of the form $\bee_{i_1j_1}\bee_{i_2j_2}\ldots\bee_{i_nj_n}$, where
$j_1j_2\ldots j_n\in\{0, 1\}^n$. We will denote the word
$\bee_{i_1j_1}\bee_{i_2j_2}\ldots\bee_{i_nj_n}$ just
$j_1j_2\ldots j_n$, for simplicity of notation. This notation agrees
with the interpretation of $E_v$ as the $n$th level of the tree on
which the group $\group{G}_w$ acts.

Let $v\in\{0, 1\}^*$. Denote by $\Gamma_v$ the Schreier graph of the
action of $\group{G}$ on the set $E_v$, i.e., the graph with the set of
vertices $E_v$ in which to elements $w_1, w_2\in X_v$ are adjacent if
and only if $g(w_1)=w_2$ for some $g\in\{\alpha, \beta, \gamma\}$. We
label the corresponding edge of $\Gamma_v$ by $g$.

Note that the graph $\Gamma_v$ is the Schreier graph of the action of
the group $\group{G}_w$ on the $n$th level of the tree, where $w$ is any
infinite word starting with $v$, and $n$ is the length of $v$.

Note that for every generator $g\in\{\alpha, \beta, \gamma\}$ of $\group{G}$
there exists a unique word $z_{g, v}\in E_v$ of length $n$ and a
generator $h\in\{\alpha, \beta, \gamma\}$ such that $h|_{z_{g,
v}}=g$. For the remaining pairs $h\in\{\alpha, \beta, \gamma\}$
and $u\in E_v$ we have $h|_u=1$.

Namely, we have, for $v\in\{0, 1\}^n$:
\[z_{\alpha, v}=\underbrace{00\ldots 0}_{\text{$n-2$ times}}1x',\quad
z_{\beta, v}=\underbrace{00\ldots 0}_{\text{$n-1$ times}}1, \quad
z_{\gamma, v}=\underbrace{00\ldots 0}_{\text{$n$ times}},\] where
$x'=1-x$ is the letter different from the last letter $x$ of $v$.
If the word $v$ has length less than $2$, one has to take the
endings of length $|v|$ in the right hand sides of the equalities.

\begin{proposition}
Let $v, u\in\{0, 1\}^*$ be arbitrary finite words. Consider for each
word $w\in\{0, 1\}^{|v|}$ a copy
$\Gamma_{u, w}$ of the edge-labeled graph $\Gamma_u$. Connect, for
each $g\in\{\alpha, \beta, \gamma\}$ and $w\in\{0, 1\}^{|v|}$ the
copy of $z_{g, u}$ in $\Gamma_{u, w}$ with the copy of $z_{g, u}$
in $\Gamma_{u, g(w)}$ by an edge labeled by the element
$h\in\{\alpha, \beta, \gamma\}$ such that $h|_{z_{g, u}}=g$. The
obtained graph is isomorphic to $\Gamma_{uv}$.

In this graph the vertex $z_{g, uv}$ is the copy of $z_{h, u}$ in
$\Gamma_{u, w}$ for $h\in\{\alpha, \beta, \gamma\}$ and $w\in\{0,
1\}^{|v|}$ such that $h|_w=g$.
\end{proposition}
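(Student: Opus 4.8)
The statement describes how the Schreier graph $\Gamma_{uv}$ of $\group{G}$ on the level $E_{uv}$ decomposes, when we split the word $uv$ as a concatenation: the ``outer'' coordinates $v$ (of length $|v|$) index a collection of copies of $\Gamma_u$, and the ``inner'' edges glue these copies together according to the action of the generators on the first $|v|$ coordinates. The natural approach is to unwind the tensor-product/wreath-recursion structure of the biset of $\group{G}$, using the fact (recorded just before the proposition) that each generator $g\in\{\alpha,\beta,\gamma\}$ has exactly one ``active'' section equal to a generator, located at the distinguished vertex $z_{g,\cdot}$, and all its other sections are trivial.

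First I would set up the bijection on vertex sets. Since the self-similarity biset of $\group{G}$ splits as $\bim_0\oplus\bim_1$ with $E_i=\{\bee_{i0},\bee_{i1}\}$, a word in $E_{uv}$ of length $|u|+|v|$ can be written (with the paper's simplified notation, suppressing the first subscripts) as a pair $(w', w)$ with $w'\in\{0,1\}^{|u|}$ reading the first $|u|$ binary coordinates and $w\in\{0,1\}^{|v|}$ reading the last $|v|$ coordinates — but one must be careful about \emph{which end} of the word corresponds to $u$ versus $v$, because the recursion peels letters off the front. Here $\Gamma_{uv}$ is the Schreier graph indexed by the prefix $uv$, and acting on a level, the generators act on the top of the tree; the appropriate reading is that an element of $E_{uv}$ is $(\text{letter in }E_u\text{-part})^{|u|}$ followed by a word in the subtree below, which after $|u|$ steps of the recursion is governed by a conjugate (actually trivial-or-a-generator) section living ``at level $|v|$''. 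So I would define the map sending a vertex of $\Gamma_{uv}$ to the pair (its copy label $w\in\{0,1\}^{|v|}$, its image vertex in $\Gamma_u$) and check it is a bijection by counting: $|E_{uv}| = 2^{|u|+|v|} = 2^{|v|}\cdot|E_u|$.

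Next I would verify that edges match. An edge of $\Gamma_{uv}$ labeled $h\in\{\alpha,\beta,\gamma\}$ joins $x$ to $h(x)$. Using $h\cdot\bee_{i_1j_1}\cdots = \bee_{i_1j_1'}\cdots\cdot(h|_{\bee_{i_1j_1}\cdots})$ and iterating the recursion for $|v|$ steps, I would show: either the path from the top down to depth $|v|$ never hits the active vertex of $h$ (equivalently, the relevant length-$|v|$ prefix of $x$ is not $z_{h,v}$), in which case the section at depth $|v|$ is trivial, $h$ only permutes the outer coordinates, and the edge is precisely one of the ``inner'' connecting edges between $\Gamma_{u,w}$ and $\Gamma_{u,g(w)}$ described in the statement — with $g=h|_{z_{h,v}}$ the surviving generator; or the prefix equals $z_{h,v}$, the outer coordinates are fixed, and the edge lies inside a single copy $\Gamma_{u,w}$, where it is exactly the $\Gamma_u$-edge labeled $g=h|_{z_{h,v}}$. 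The bookkeeping of which generator $h$ has which section $g$ is governed by the explicit formulas for $z_{\alpha,v}, z_{\beta,v}, z_{\gamma,v}$ preceding the proposition, and this is where the second (``In this graph the vertex $z_{g,uv}$ is\ldots'') assertion falls out: $z_{g,uv}$ is characterized by being the unique length-$(|u|+|v|)$ prefix-realization of the generator $g$ as a section, and tracing through the two-step structure places it in $\Gamma_{u,w}$ as the copy of $z_{h,u}$, where $h$ is determined by $h|_w = g$ at depth $|v|$.

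The main obstacle is purely organizational rather than deep: keeping straight the two ``clocks'' — the $|u|$ coordinates versus the $|v|$ coordinates — and in particular confirming that the relevant sections after descending $|v|$ levels are either trivial or a single generator (never a longer word), which is exactly the content of the contracting statement that sections of $\alpha,\beta$ beyond depth one are trivial and $\gamma$ is an involution with section $\beta$ or $\gamma$. So I would lean on the recursion $\alpha_w=\sigma$, $\gamma_w=(\gamma_{\overline w},\beta_{\overline w})$, $\beta_w=(1,\alpha_{\overline w})$ or $(\alpha_{\overline w},1)$ together with the already-established location formulas for $z_{g,v}$, and the whole verification reduces to a finite case check on $\{\alpha,\beta,\gamma\}$ that the permutation/section data on the first $|v|$ letters is exactly the data defining the glued graph. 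I expect no genuinely hard step, only the need to fix conventions carefully so that ``copy indexed by $w$'' and ``$z_{g,u}$ inside that copy'' are unambiguous.
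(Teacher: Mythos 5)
Your overall strategy — write a vertex of $\Gamma_{uv}$ as a concatenation $w_1w_2$ with $w_1\in E_u$, $w_2\in E_v$, and exploit the fact that on each level every generator has at most one nontrivial section, located at the distinguished vertex — is the same as the paper's, but the central edge-matching step is stated incorrectly, and as written it does not verify the proposition. The copies $\Gamma_{u,w}$ are indexed by the \emph{suffix} $w=w_2$ of length $|v|$, while the generator acts from the top of the tree through the length-$|u|$ prefix; so the relevant case distinction is on $w_1$ and on the words $z_{g,u}$. You must therefore iterate the recursion $|u|$ steps, not $|v|$, and compare with $z_{g,u}$, not with $z_{h,v}$ (the length-$|v|$ prefix of a word of $E_{uv}$ does not even lie in $E_v$ unless $|u|=0$). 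Worse, the two cases are attached to the wrong edge types. The correct dichotomy is: if $h|_{w_1}=1$, then $h(w_1w_2)=h(w_1)w_2$, so the edge stays \emph{inside} the single copy indexed by $w_2$ and is precisely the $\Gamma_u$-edge labeled $h$; if instead $w_1=z_{g,u}$ with $h|_{z_{g,u}}=g\neq 1$, then $h$ fixes $w_1$ and $h(z_{g,u}w_2)=z_{g,u}\,g(w_2)$, so the edge is the \emph{connecting} edge between the copies indexed by $w_2$ and $g(w_2)$, attached at the copies of $z_{g,u}$ and labeled $h$. Your text asserts the opposite assignment, and the phrase ``section at depth $|v|$ is trivial \ldots\ with $g=h|_{z_{h,v}}$ the surviving generator'' is self-contradictory: when the section is trivial there is no surviving generator, and the connecting edges of the statement are exactly those along which the copy index moves by a nontrivial section. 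Since this verification is essentially the entire content of the proposition, this is a genuine flaw, not a cosmetic one.

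If you repair the bookkeeping you recover the paper's short argument verbatim. Two small points should then be recorded explicitly: first, that the acting generator $h$ fixes $z_{g,u}$ (this follows from the explicit formulas for $z_{\alpha,u}, z_{\beta,u}, z_{\gamma,u}$ and the recursion, since $\beta$ and $\gamma$ have trivial root permutations and the intermediate sections act trivially along these words), which is what makes the connecting edge join the two copies at the same $\Gamma_u$-vertex; second, for the last assertion, that $f|_{w_1w_2}=(f|_{w_1})|_{w_2}$ forces $z_{g,uv}=z_{h,u}\,z_{g,v}$ where $h$ is the generator with $h|_{z_{g,v}}=g$, i.e.\ $z_{g,uv}$ is the copy of $z_{h,u}$ in the copy indexed by $w=z_{g,v}$ with $h|_w=g$ — which is what the proposition claims (your final paragraph states this correctly, but it is inconsistent with the conventions of your edge-matching paragraph).
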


Note that the copies $\Gamma_{u, w}$ of $\Gamma_u$ are connected in
$\Gamma_{uv}$ in the same way as the vertices $w$ are connected in the
graph $\Gamma_v$.

\begin{proof}
Let $w_1w_2\in X_{uv}$ and $|w_1|=|u|, |w_2|=|v|$. It follows from the
definition of the words $z_{\alpha, u}, z_{\beta, u}, z_{\gamma, u}$
that a generator $g\in\{\alpha, \beta, \gamma\}$ changes the end of
length $|w_2|$ in the word $w_1w_2$ only when $w_1=z_{h, u}$ for some
$h\in\{\alpha, \beta, \gamma\}$, and then we have
$g(w_1w_2)=w_1h(w_2)$. In all the other cases $g(w_1w_2)=g(w_1)w_2$,
since $g|_{w_1}=1$.
\end{proof}

In the case $|v|=1$ we get the following inductive rule of
constructing the graphs $\Gamma_u$.

\begin{corollary}
\label{cor:rule1} In order to get $\Gamma_{ux}$ one has to
take two copies $\Gamma_u^{(0)}$ and $\Gamma_u^{(1)}$ of $\Gamma_u$
and connect by an edge the copies of
the vertices $z_{\alpha, u}$. The obtained graph is $\Gamma_{ux}$.
The vertex $z_{\alpha, ux}$ is the copy of $z_{\beta, u}$ in
$\Gamma_u^{(1-x)}$. The vertex $z_{\beta, ux}$ is the copy of
$z_{\gamma, u}$ in $\Gamma_u^{(1)}$. The vertex $z_{\gamma, ux}$
is the copy of $z_{\gamma, u}$ in $\Gamma_u^{(0)}$.
\end{corollary}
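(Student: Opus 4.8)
The plan is to specialize the preceding proposition to the case $|v|=1$, so that $v=x$ for a single letter $x\in\{0,1\}$, and to read off what the general recipe produces. The proposition says: for each word $w\in\{0,1\}^{|v|}$ we take a copy $\Gamma_{u,w}$ of $\Gamma_u$, and we add, for each generator $g\in\{\alpha,\beta,\gamma\}$ and each $w$, an edge from the copy of $z_{g,u}$ in $\Gamma_{u,w}$ to the copy of $z_{g,u}$ in $\Gamma_{u,g(w)}$, labeled by the unique $h\in\{\alpha,\beta,\gamma\}$ with $h|_{z_{g,u}}=g$. So the first step is simply to record that when $|v|=1$ there are exactly two words $w\in\{0,1\}^1$, namely $0$ and $1$, hence exactly two copies of $\Gamma_u$; call them $\Gamma_u^{(0)}=\Gamma_{u,0}$ and $\Gamma_u^{(1)}=\Gamma_{u,1}$.

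The second step is to determine, for each $g\in\{\alpha,\beta,\gamma\}$, how $g$ acts on the one-letter word $w\in\{0,1\}$ and what the corresponding connecting edge is. By the wreath recursion for $\group{G}$ (equivalently, the recursions for $\alpha_w,\beta_w,\gamma_w$ displayed just before Corollary~\ref{cor:rule1}), on the first level $\alpha=\sigma$ acts as the transposition, while $\beta$ and $\gamma$ act trivially on the first letter (their activity is pushed to deeper levels). Hence: for $g=\alpha$, the action $w\mapsto\alpha(w)=1-w$ is nontrivial, producing an edge between $\Gamma_u^{(0)}$ and $\Gamma_u^{(1)}$ joining the two copies of $z_{\alpha,u}$; for $g=\beta$ and $g=\gamma$, the action on $w$ is trivial, so $g(w)=w$ and the "connecting edge" goes from the copy of $z_{g,u}$ in $\Gamma_{u,w}$ to itself — i.e., it is already present inside each $\Gamma_u^{(w)}$ and adds nothing new. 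Thus the only genuinely new edge is the single $\alpha$-labeled edge between the two copies of $z_{\alpha,u}$, and one must identify the label $h$ with $h|_{z_{\alpha,u}}=\alpha$: from the explicit formula $z_{\alpha,u}$ sits over the first-level letter opposite to the last letter of $u$, and tracing the wreath recursion shows the label is $\alpha$ itself (since $\alpha\cdot x = (1-x)\cdot(\cdots)$ with the relevant section being $\alpha$ after one more step via $\beta$; this is exactly the content of $\alpha$ being generated from $\beta$ which is generated from $\gamma$). This recovers the first two sentences of the corollary.

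The third step is to track the three distinguished vertices $z_{\alpha,ux}$, $z_{\beta,ux}$, $z_{\gamma,ux}$ through the isomorphism. The proposition's last sentence says $z_{g,uv}$ is the copy of $z_{h,u}$ in $\Gamma_{u,w}$ where $h\in\{\alpha,\beta,\gamma\}$ and $w\in\{0,1\}^{|v|}$ satisfy $h|_w=g$. With $|v|=1$ we need, for each target $g$, the pair $(h,w)$ with $h|_w=g$ (section after one level). Reading the wreath recursion: $\gamma=(\gamma,\beta,\gamma,\beta)$ on the four-letter alphabet descends on the binary subtree to $\gamma_w=(\gamma_{\bar w},\beta_{\bar w})$, so $\gamma|_0=\gamma$ and $\gamma|_1=\beta$; and $\beta_w=(1,\alpha_{\bar w})$ or $(\alpha_{\bar w},1)$ depending on $x_1$, so the nontrivial section of $\beta$ is $\alpha$, occurring in the letter $1-x$. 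Hence: $z_{\gamma,u}$ with $h=\gamma,w=0$ gives $\gamma|_0=\gamma$, so $z_{\gamma,ux}$ is the copy of $z_{\gamma,u}$ in $\Gamma_u^{(0)}$; $z_{\gamma,u}$ with $h=\gamma,w=1$ gives $\gamma|_1=\beta$, so $z_{\beta,ux}$ is the copy of $z_{\gamma,u}$ in $\Gamma_u^{(1)}$; and $z_{\beta,u}$ with $h=\beta$ and the appropriate $w$ gives $\beta|_{1-x}=\alpha$, so $z_{\alpha,ux}$ is the copy of $z_{\beta,u}$ in $\Gamma_u^{(1-x)}$. This matches the remaining three sentences of the corollary exactly.

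The only real obstacle is keeping the indexing conventions straight: the binary-subtree relabeling $\bee_{x_1i_1}\cdots\bee_{x_ni_n}\mapsto i_1\cdots i_n$, the "endings of length $|v|$" caveat for short words, and the direction conventions for sections must all be applied consistently so that the letter $1-x$ (rather than $x$) appears in the right place; once these bookkeeping choices are fixed, the corollary is an immediate transcription of the proposition with $v$ a single letter, and no further computation is needed.
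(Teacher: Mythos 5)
Your proof is correct and takes essentially the paper's route: the paper offers no separate argument, presenting the corollary as the immediate specialization of the preceding proposition to $|v|=1$, and your bookkeeping (only $\alpha$ acts nontrivially on the first level, $\gamma|_0=\gamma$, $\gamma|_1=\beta$, and $\beta|_{1-x}=\alpha$ for the appended type letter $x$) is exactly what that specialization requires. One side remark is inaccurate: the label of the new connecting edge is the generator $h$ with $h|_{z_{\alpha,u}}=\alpha$, which is $\gamma$ for $|u|\ge 2$ (and $\beta$ for $|u|=1$), not $\alpha$; since the corollary does not assert this label, the slip does not affect the statement.
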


In the opposite case (when $|u|=1$) we get the
following rule.

\begin{corollary}
\label{cor:rule2} In order to get $\Gamma_{xv}$ one has to replace
in $\Gamma_v$ each vertex $w$ by a pair of vertices $0w$ and $1w$,
connected by an edge (labeled by $\alpha$), connect $(1-x)w$ to
$(1-x)\alpha(w)$ by an edge (labeled by $\beta$), connect $0w$ to
$0\gamma(w)$ and $1w$ to $1\beta(w)$ by edges labeled by
$\gamma$.
\end{corollary}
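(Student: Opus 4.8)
The plan is to derive Corollary~\ref{cor:rule2} as the special case $|u|=1$ of the preceding Proposition. Writing $u=x\in\{0,1\}$ and keeping $v\in\{0,1\}^*$ arbitrary, the Proposition tells us that $\Gamma_{xv}$ is built from copies $\Gamma_{x,w}$ of $\Gamma_x$, one for each $w\in\{0,1\}^{|v|}$, joined in the pattern prescribed by $\Gamma_v$. So the first step is to identify the base graph $\Gamma_x$ explicitly: it is the Schreier graph of $\langle\alpha,\beta,\gamma\rangle$ on the two-element set $E_x=\{0,1\}$. From the wreath recursion one reads off that $\alpha=\sigma$ swaps $0$ and $1$, while $\beta$ and $\gamma$ act trivially on the first level; hence $\Gamma_x$ is a single edge between the two vertices $0$ and $1$, labeled $\alpha$ (with $\beta,\gamma$ acting as loops, which we suppress). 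This is why each vertex $w$ of $\Gamma_v$ gets "doubled" into the pair $0w,1w$ joined by an $\alpha$-edge.

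The second step is to unwind the connecting edges. For each generator $g\in\{\alpha,\beta,\gamma\}$ and each $w$, the Proposition connects the copy of $z_{g,x}$ in $\Gamma_{x,w}$ to the copy of $z_{g,x}$ in $\Gamma_{x,g(w)}$ by an edge labeled by the $h$ with $h|_{z_{g,x}}=g$. Using the formulas for $z_{g,v}$ with $n=|v|=1$: $z_{\gamma,x}=0$, $z_{\beta,x}=1$, and $z_{\alpha,x}=x'=1-x$ (taking endings of length $1$). One then consults the Moore-diagram data encoded in the wreath recursion of $\group{G}$ to see which $h$ restricts to $g$ at these vertices: $\gamma|_0=\gamma$ (from $\gamma=(\gamma,\beta,\gamma,\beta)$ restricted to the binary subtree, $\gamma_w=(\gamma_{\overline w},\beta_{\overline w})$), so the edge coming from "$g=\gamma$" is labeled $\gamma$ and joins vertex $0$ in $\Gamma_{x,w}$ to vertex $0$ in $\Gamma_{x,\gamma(w)}$, i.e., $0w$ to $0\gamma(w)$; similarly $\beta|_1=\alpha_{\overline w}$ restricted appropriately — here one must be careful, since $\beta_w=(1,\alpha_{\overline w})$ or $(\alpha_{\overline w},1)$ depending on $x_1$, so the vertex at which $\beta$ has nontrivial section is $1$ when $x_1=0$ and $0$ when $x_1=1$; matching this against the claimed statement "connect $(1-x)w$ to $(1-x)\alpha(w)$" requires tracking the index convention in $z_{\gamma,ux}$ versus the $0/1$ labeling of $E_{xv}$. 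Finally $\alpha|_{1-x}$: since $\alpha=\sigma$ acts only on the first level and has section $\alpha_{\overline w}$ (well, trivial sections below level one for the plain $\alpha$, but in the recursion $\alpha=\sigma(\beta,\beta,\beta\alpha,\alpha\beta)$ the sections are the $\beta$'s), so the "$g=\alpha$" edge is labeled by whichever of $\alpha,\beta,\gamma$ restricts to $\alpha$ at $z_{\alpha,x}=1-x$, which turns out to be $\alpha$ itself restricted to the appropriate coordinate; the net effect is the doubling edge $0w$--$1w$.

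The third step is to reconcile the labels with the statement. The claim in the corollary is: the doubling edge $0w$--$1w$ is labeled $\alpha$; the edge $(1-x)w$--$(1-x)\alpha(w)$ is labeled $\beta$; the edges $0w$--$0\gamma(w)$ and $1w$--$1\beta(w)$ are labeled $\gamma$. I would verify each of these three assertions by plugging the $n=1$ values of $z_{g,x}$ into the section formulas, exactly as in the proof of the Proposition, where "$g$ changes the end of length $|w_2|=|v|$ in $w_1w_2$ only when $w_1=z_{h,x}$, and then $g(w_1w_2)=w_1h(w_2)$." Here $w_1$ is a single letter in $\{0,1\}$ and $w_2=w$, so the rule reads: $\gamma$ acts as $\gamma$ on $w$ after $0$ and as $\beta$ on $w$ after $1$; $\beta$ acts (as $\alpha$) on $w$ after the letter $1-x$; $\alpha$ swaps the leading letter. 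Translating these into adjacencies gives precisely the three bullet points. The statement's claim that the vertex $z_{\gamma,ux}$ etc. arises as the appropriate copy is the "moreover" part of the Proposition specialized to $|u|=1$, and follows immediately from the identity $h|_w=g$ read in the $\group{G}_w$ recursion.

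The main obstacle I anticipate is purely bookkeeping: keeping the roles of the two words straight (in $\Gamma_{xv}$ the word of length $1$ is on the \emph{left}, so it is the prefix, and the doubling happens on the outermost/leftmost coordinate), and correctly handling the degenerate "endings of length $|v|$" clause for $z_{g,v}$ when $|v|<2$, since for $|v|=1$ one has $z_{\alpha,x}=z_{\beta,x}$-pattern collisions that must be resolved by the "take endings" convention. There is also the dependence of $\beta_w$ on the first letter $x_1$ of $w$, which introduces the $(1-x)$ (rather than a fixed $0$ or $1$) in the $\beta$-labeled connecting edge; I would make sure the corollary's $(1-x)$ matches the section formula $\beta_w=(1,\alpha_{\overline w})$ for $x_1=0$ and $(\alpha_{\overline w},1)$ for $x_1=1$. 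Once these conventions are pinned down, the corollary is a direct transcription of the Proposition with $|u|=1$ and $\Gamma_x$ a single $\alpha$-edge, and no further computation is needed.
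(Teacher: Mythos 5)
Your proposal is correct and follows essentially the paper's own route: the paper states Corollary~\ref{cor:rule2} without a separate proof, as the $|u|=1$ specialization of the preceding Proposition, which is exactly what you do (equivalently, reading the top level of the binary recursion $\alpha_w=\sigma$, $\beta_w=(1,\alpha)$ or $(\alpha,1)$, $\gamma_w=(\gamma,\beta)$, with $z_{\alpha,x}=1-x$, $z_{\beta,x}=1$, $z_{\gamma,x}=0$). One slip in your second paragraph should be noted: the generator restricting to $\alpha$ at $z_{\alpha,x}=1-x$ is $\beta$, so the ``$g=\alpha$'' connecting edge is the $\beta$-labeled edge from $(1-x)w$ to $(1-x)\alpha(w)$, not the doubling edge $0w$--$1w$, which is internal to each copy of $\Gamma_x$; your third paragraph assigns all three labels correctly, so the argument as a whole stands.
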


We get nice pictures of the graphs $\Gamma_v$ when we draw
the edges labeled by $\alpha$, $\beta$, and $\gamma$ in such a way
that they have equal length and for every vertex $w$ the edge
labeled by $\beta$ incident with $w$ (if it exists) is obtained
from the edge labeled by $\alpha$ by rotation by $\pi/2$ around
$w$, while the edge labeled by $\gamma$ is obtained from the edge
labeled by $\alpha$ by rotation by $-\pi/2$. We can also use the
opposite agreement (changing the signs of $\pi/2$ and $-\pi/2$).
See, for instance, Figure~\ref{fig:gammav}, where some graphs
$\Gamma_v$ are constructed in this way. Figure~\ref{fig:schr2} shows different graphs $\Gamma_v$ for $|v|=6$.

\begin{figure}[h]
\centering
\includegraphics{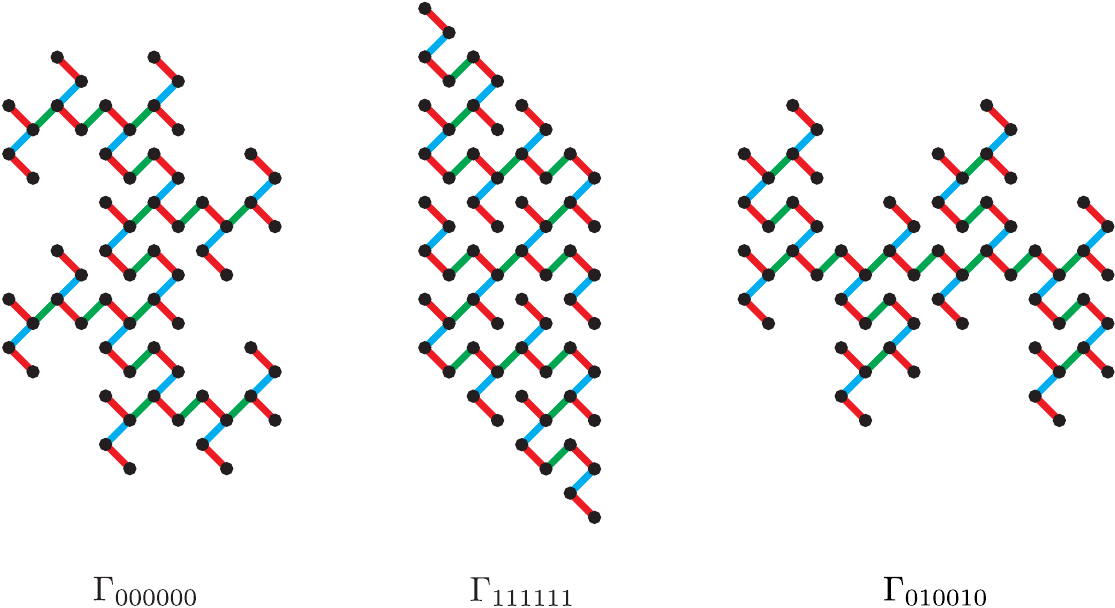}
\caption{Graphs $\Gamma_v$}\label{fig:gammav}
\end{figure}

\begin{figure}[h]
\centering
\includegraphics{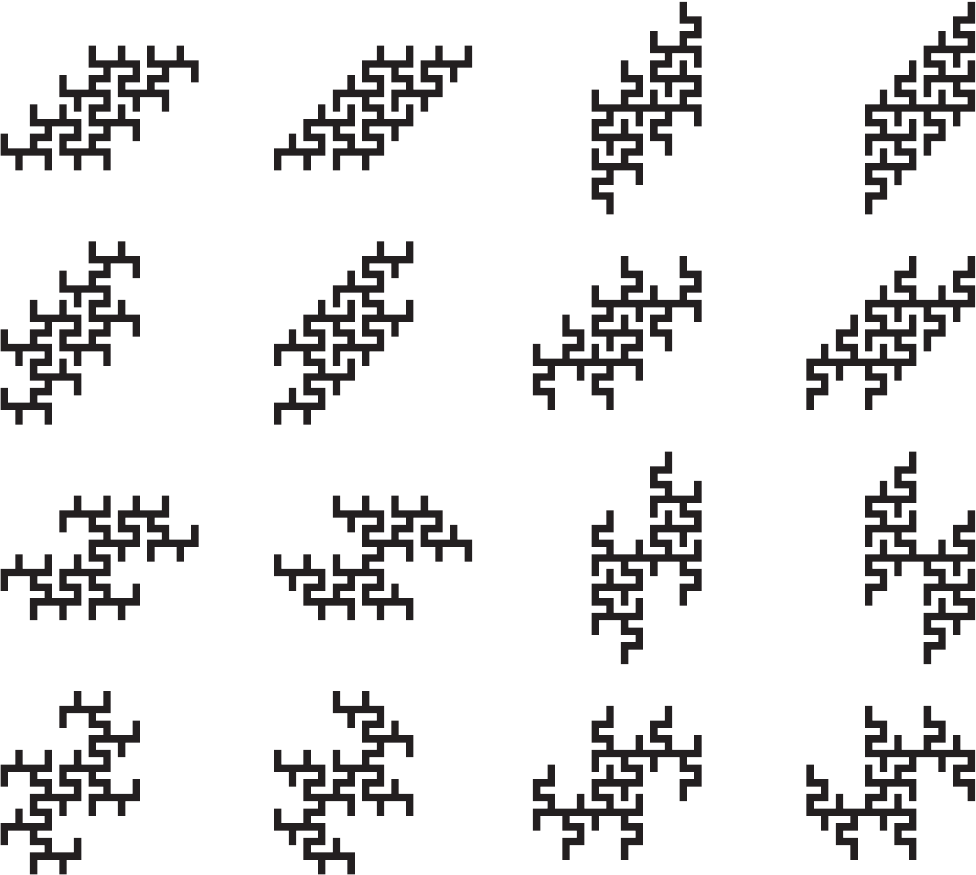}
\caption{Graphs $\Gamma_v$ for $|v|=6$}\label{fig:schr2}
\end{figure}

The rule from Corollary~\ref{cor:rule2} is shown then on
Figure~\ref{fig:rule2}.
Note that in the transition from $\Gamma_v$ to $\Gamma_{1v}$ the
relative position of the edges labeled by $\alpha$ (connecting
$0w$ with $1w$), $\beta$ (connecting $0w$ with $0\alpha(w)$) and
$\gamma$ (connecting $0w$ with $0\gamma(w)$ and $1w$ with
$1\beta(w)$) is inverted. This can be corrected by taking mirror image
of $\Gamma_{1v}$.

\begin{figure}[h]
\centering
\includegraphics{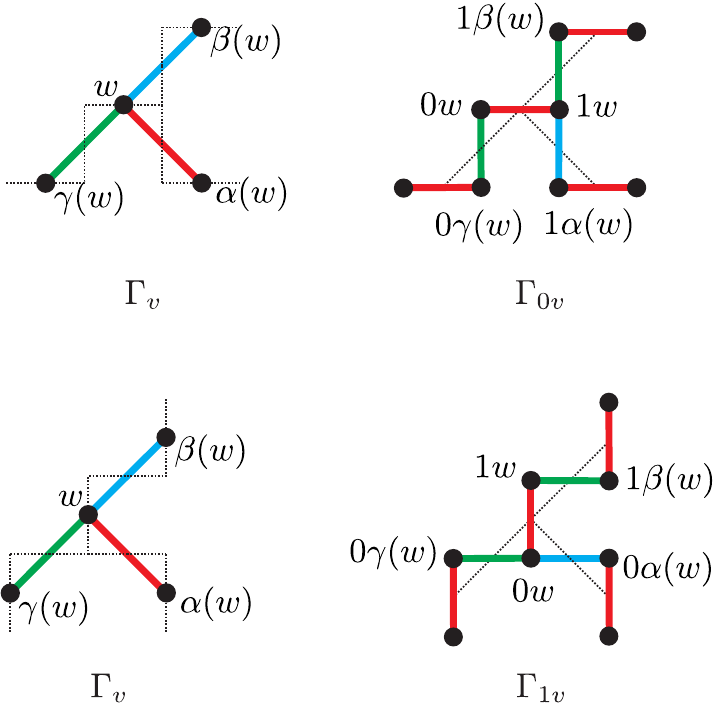}
\caption{Inductive construction of $\Gamma_v$}\label{fig:rule2}
\end{figure}

The inductive rule, shown on Figure~\ref{fig:rule2} can be used to
prove many properties of the graphs $\Gamma_v$, but we will use a
more unified approach later.

\subsection{External angles}

The group generated by the binary adding machine (odometer) $\tau=\sigma(1, \tau)$ is
the iterated monodromy group of the polynomial $z^2$. For every
quadratic polynomial $h(z)$ the loop around infinity generates a
self-similar cyclic subgroup of $\img{h}$ equivalent as a
self-similar group to the group generated by the adding machine. The
obtained embedding $\img{z^2}\arr\img{h}$ induces a surjection
from the circle (the Julia set of $z^2$) onto the Julia set of
$h$, which agrees with the dynamics (i.e., is a
semiconjugacy). This semiconjugacy coincides with the classical
\emph{Caratheodory loop}, see~\cite{milnor:dragons}, i.e., to the
extension to the boundary of the biholomorphic conjugacy from the
action of $z^2$ on the complement of the unit disc to the action of $h$ on
the complement of its filled Julia set (i.e., of the set of points that
have bounded $h$-orbits). 

By analogy with the Caratheodory loop,
let us consider the subgroup $\group{R}=\langle P, S, \tau\rangle<\img{F}$ and the
semiconjugacy of the corresponding limit spaces.

The generators of the subgroup $\group{R}$ are given by the wreath recursion
\begin{eqnarray*}
P &=& \pi,\\
S &=& \pi\sigma(P\tau^{-1}, P, S^{-1}\tau^{-1}, S^{-1}),\\
\tau &=& \sigma(1, \tau, 1, \tau).
\end{eqnarray*}

\begin{proposition}
The nucleus of the group $\group{R}=\langle P, S, \tau\rangle$ is the set
\[\nuke=\{1, S, S^{-1}, P, \tau, \tau^{-1}, S\tau, S^{-1}\tau^{-1}, P\tau, P\tau^{-1}\}.\]
\end{proposition}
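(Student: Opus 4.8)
The plan is to pin down $\nuke$ by the three standard properties of a nucleus: closure under taking sections, the contraction criterion of \cite[Lemma~2.11.2]{nek:book} (which forces $\group{R}$ to be contracting with nucleus contained in $\nuke$), and the fact that every element of $\nuke$ occurs as a section of arbitrarily large level of a fixed group element (which forces the nucleus to be no smaller). The whole computation hinges on three relations in $\group{R}$: $P^{2}=1$, which is clear since $P=\pi$ has only trivial sections; $P\tau=\tau P$, which follows at once from the recursion, because $P\tau P=\pi\sigma\bigl(\pi\cdot(1,\tau,1,\tau)\cdot\pi\bigr)$ and the coordinate permutation $\pi=(13)(24)$ leaves the tuple $(1,\tau,1,\tau)$ fixed, so $P\tau P=\sigma(1,\tau,1,\tau)=\tau$; and $S\tau=\tau S$. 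For the last relation the quickest self-contained argument is to note that the commutator $g=[\tau,S]$ has trivial permutation part (an easy check: $\sigma\cdot\pi\sigma\cdot\sigma\cdot\pi\sigma=1$ in the Klein four-group generated by $\sigma$ and $\pi$), and that a direct evaluation of its four sections, using $P\tau=\tau P$, gives $g|_{2}=g|_{3}=g|_{4}=1$ and $g|_{1}=S^{-1}g^{-1}S$; since conjugation preserves the trivial permutation part, every section of $g$ again fixes the first level, so $g$ fixes every vertex of the tree, hence $g=1$ because $\group{R}<\img{F}$ acts faithfully. Geometrically this is no surprise: $S$ and $P$ are represented by loops in $\M$ that move only the coordinate $p$, while $\tau$ moves only $z$.

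Granting these relations, the section-closedness of $\nuke$ is a short list of computations. All sections of $P$ are trivial; the sections of $\tau$ are $1,\tau$, those of $\tau^{-1}$ are $1,\tau^{-1}$, and hence the sections of $P\tau$ lie in $\{1,\tau\}$ and those of $P\tau^{-1}$ in $\{1,\tau^{-1}\}$. The sections of $S$ are exactly $P\tau^{-1},P,S^{-1}\tau^{-1},S^{-1}$, read off directly from the recursion, so the sections of $S^{-1}$ are $S,\ \tau S=S\tau,\ P,\ \tau P=P\tau$. Finally one gets $S\tau=\pi(P,P,S^{-1},S^{-1})$ and, using the two commutation relations, $S^{-1}\tau^{-1}=\pi(S,S,P,P)$, so the sections of $S\tau$ lie in $\{P,S^{-1}\}$ and those of $S^{-1}\tau^{-1}$ in $\{S,P\}$. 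In every case the result is again in $\nuke$, so $\nuke$ is the state set of a well-defined automaton.

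Since $\nuke$ is symmetric, contains $1$, and contains all the generators $P,S^{\pm1},\tau^{\pm1}$, it now suffices by \cite[Lemma~2.11.2]{nek:book} (invoked exactly as it was for $\group{G}$ earlier in the paper) to verify that for every $g\in\nuke$ and every generator $s$ the sections of $g\cdot s$ of sufficiently large level belong to $\nuke$; this yields that $\group{R}$ is contracting and that its nucleus is contained in $\nuke$. Because $\nuke$ is already section-closed, the level-one sections of $g\cdot s$ are products of two elements of $\nuke$, and the finitely many such products that are not themselves in $\nuke$ settle after one or two further levels; for instance $S\cdot S=(S^{-1}P\tau^{-1},\,S^{-1}P\tau^{-1},\,PS^{-1}\tau^{-1},\,P\tau^{-1}S^{-1})$, and the level-one sections of $S^{-1}P\tau^{-1}$ are $P,P,S,S\in\nuke$. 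This part is a routine finite check.

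For the reverse inclusion, note that if $h\in\nuke\subset\group{R}$ satisfies $h|_{v}=h$ for some nonempty word $v$, and $g=h|_{w}$ for some word $w$, then $g=h|_{v^{n}w}$ for every $n$, so $g$ occurs as a section of $h$ at words of arbitrarily large length; since $\group{R}$ is contracting, every such deep section lies in its nucleus, whence $g$ lies in the nucleus. Thus every element of $\nuke$ lying on, or reachable by a directed path from, a directed cycle in the Moore diagram of $\nuke$ belongs to the nucleus. Now $\tau$ and $\tau^{-1}$ are fixed by section-taking ($\tau|_{2}=\tau$, $\tau^{-1}|_{1}=\tau^{-1}$), and $S|_{4}=S^{-1}$, $S^{-1}|_{1}=S$ form a directed $2$-cycle; from these one reaches $1=\tau|_{1}$, $P=S|_{2}$, $P\tau^{-1}=S|_{1}$, $S^{-1}\tau^{-1}=S|_{3}$, $S\tau=S^{-1}|_{2}$, and $P\tau=S^{-1}|_{4}$. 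Hence every element of $\nuke$ lies in the nucleus, and with the previous paragraph the nucleus of $\group{R}$ equals $\nuke$. The one step that is not mechanical is the commutation $S\tau=\tau S$ --- without it $\nuke$ would fail to be section-closed, since $S^{-1}|_{2}$ would be a genuinely new element $\tau S$ --- so establishing that relation is the real content; the rest is bookkeeping in a finite automaton.
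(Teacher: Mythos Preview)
Your approach is the same as the paper's: verify that $\nuke$ is symmetric and state-closed, then invoke \cite[Lemma~2.11.2]{nek:book} to show the nucleus is contained in $\nuke$. The paper simply cites the commutation $S\tau=\tau S$ (it is proved earlier in the paper, where $\tau=\gamma^{-1}\alpha^{-1}\beta^{-1}$ is shown to commute with $S$ and $P$ by a direct computation in the fundamental group), and the paper omits the reverse inclusion entirely; your proof supplies both, which is an improvement.

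There is, however, a computational slip in your commutator argument. Taking $g=[\tau,S]=\tau S\tau^{-1}S^{-1}$ and computing sections with the paper's conventions, one finds
\[
g|_{3}=\tau\,P\tau^{-1}P=1,\qquad g|_{4}=P\tau^{-1}P\tau=1
\]
using $P\tau=\tau P$, but \emph{two} sections are nontrivial:
\[
g|_{1}=\tau S^{-1}\tau^{-1}S=S^{-1}g^{-1}S,\qquad g|_{2}=S^{-1}\tau^{-1}S\tau=(\tau S)^{-1}g^{-1}(\tau S),
\]
both conjugates of $g^{-1}$. Your inductive argument still goes through once you replace ``$g|_{2}=g|_{3}=g|_{4}=1$ and $g|_{1}=S^{-1}g^{-1}S$'' by ``$g|_{3}=g|_{4}=1$ and $g|_{1},g|_{2}$ are conjugates of $g^{-1}$'': every section at every level is then a conjugate of $g^{\pm 1}$ or trivial, hence has trivial permutation part, so $g$ acts trivially on the tree and is $1$ in the faithful quotient. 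The rest of your proof (state-closedness, the contraction check, and the Moore-diagram argument for minimality) is correct and complete.
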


\begin{proof}
It follows directly from the recursion (and the fact that $\tau$
commutes with $P$ and $S$) that $\nuke$ is a symmetric state-closed
set (a subset $A$ of a self-similar group is called
\emph{state-closed} if for every $g\in A$ and $x\in X$ we have
$g|_x\in A$).
We have to prove that the sections of the elements \[\{S, S^{-1},
\tau, \tau^{-1}, S\tau, S^{-1}\tau^{-1}\}\cdot\{S, \tau\}\] eventually belong to $\nuke$
(sections of $P$ are trivial in non-empty
words, so we do not have to consider it).
But this follows from the equalities
\begin{align*}S^2&=(PS^{-1}\tau^{-1}, PS^{-1}\tau^{-1}, S^{-1}P\tau, S^{-1}P\tau),\\
S\tau^{-1}&=\pi(P\tau^{-1}, P\tau^{-1}, S^{-1}\tau^{-1}, S^{-1}\tau^{-1}),\\
S^2\tau&=\sigma(PS^{-1}\tau^{-1}, PS^{-1}, S^{-1}P\tau, S^{-1}P),\\
\tau^2&=(\tau, \tau, \tau, \tau),\\
S\tau^2&=\pi\sigma(P, P\tau, S^{-1}, S^{-1}\tau).
\end{align*}
\end{proof}

See the Moore diagram of the nucleus on Figure~\ref{fig:moore}.

\begin{figure}[h]
\centering
\includegraphics{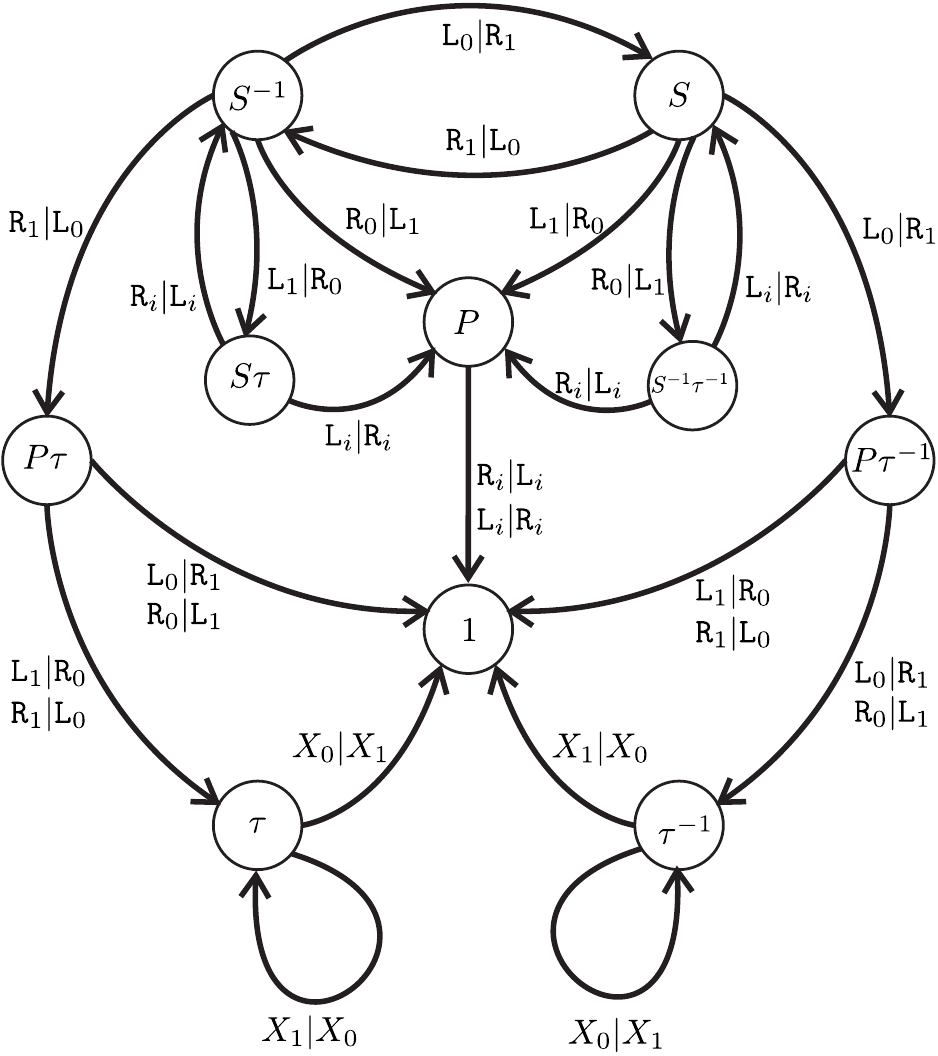}
\caption{Moore diagram of the nucleus}\label{fig:moore}
\end{figure}

Let $w=\ldots X^{(3)}_{i_3}X^{(2)}_{i_2}X^{(1)}_{i_1}$ be an element of
$\{\bel_0, \bel_1, \ber_0, \ber_1\}^{-\omega}$, where
$X^{(k)}\in\{\bel, \ber\}$ and $i_k\in\{0, 1\}$. Denote then
\[p(w)=\ldots X^{(3)}X^{(2)}X^{(1)}\in\{\bel, \ber\}^{-\omega},\]
and
\[\theta(w)=\sum_{k=1}^{\infty}\frac{i_k}{2^k}\in\R/\Z.\]

\begin{proposition}
\label{pr:asequivangles}
Two sequences are asymptotically equivalent with respect to
$\group{R}$ if and only if they are equal to sequences $w_1, w_2\in\{\bel_0, \bel_1, \ber_0,
\ber_1\}^{-\omega}$ such that one of the conditions is satisfied
\begin{enumerate}
\item \[p(w_1)=p(w_2),\quad\theta(w_1)=\theta(w_2),\]
\item 
\[p(w_1)=(\ber\bel)^{-\omega},\quad p(w_2)=(\bel\ber)^{-\omega},\quad
\theta(w_1)=\theta(w_2)+\frac 23,\]
\item 
\[p(w_1)=(\ber\bel)^{-\omega}\bel,\quad
p(w_2)=(\bel\ber)^{-\omega}\ber,\quad\theta(w_1)=\theta(w_2)+\frac 13,\]
\item there exists a non-empty word $v\in\{\bel, \ber\}^*$ such that
\[p(w_1)=(\ber\bel)^{-\omega}\bel v,\quad
p(w_2)=(\bel\ber)^{-\omega}\ber v',\quad\theta(w_1)=\theta(w_2)+\frac 1{2^{|v|}3},\]
where $v'$ is obtained from $v$ by changing the first letter.
\end{enumerate}
\end{proposition}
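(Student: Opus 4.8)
The plan is to use the characterization of asymptotic equivalence in terms of the Moore diagram of the nucleus (the result quoted from~\cite[Theorem~3.6.3]{nek:book}, adapted to the $G$-space version): two sequences $w_1 = \ldots x_2 x_1$ and $w_2 = \ldots y_2 y_1$ over $X = \{\bel_0, \bel_1, \ber_0, \ber_1\}$ are asymptotically equivalent with respect to $\group{R}$ if and only if there is a left-infinite path $\ldots e_2 e_1$ in the Moore diagram of the nucleus $\nuke = \{1, S, S^{-1}, P, \tau, \tau^{-1}, S\tau, S^{-1}\tau^{-1}, P\tau, P\tau^{-1}\}$ with $e_k$ labeled $x_k \mid y_k$. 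So the entire problem reduces to reading off all left-infinite labeled paths in the (finite) Moore diagram on Figure~\ref{fig:moore}, and then translating the two output tracks back into the data $(p(w), \theta(w))$. First I would fix notation: an edge of the Moore diagram is labeled by a pair $\bex_i \mid \bey_j$ with $\bex, \bey \in \{\bel, \ber\}$, $i, j \in \{0, 1\}$; the first components $\bex, \bey$ feed the $p$-tracks and the second components $i, j$ feed the binary expansions defining $\theta$.

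Next I would organize the nucleus into the three obvious blocks according to which permutation the element induces on the alphabet $\{\bel_0, \bel_1, \ber_0, \ber_1\}$: the identity $1$ and $\tau^{\pm 1}$ (which fix the $\bel/\ber$ coordinate), the elements $S^{\pm 1}, S\tau, S^{\pm 1}\tau^{\mp 1}$ (which involve $\sigma\pi$ or $\sigma$, hence mix $\bel \leftrightarrow \ber$), and $P, P\tau^{\pm 1}$ (which involve $\pi$ alone). Paths that eventually stay inside the first block give $p(w_1) = p(w_2)$; since the only nontrivial loop on $\{1, \tau^{\pm 1}\}$ is the $\tau$-loop (which is the adding machine on the $i$-coordinate), such a path forces $\theta(w_1) = \theta(w_2)$, giving case (1). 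The remaining cases come from paths that either stay forever in the ``mixing'' part or enter it and then fall back into the trivial part. Here the key combinatorial fact, which I would extract directly from the recursions $S^2 = (PS^{-1}\tau^{-1}, \ldots)$, $S\tau^{-1} = \pi(P\tau^{-1}, \ldots)$, etc., is that the $\bel/\ber$-track of any bi-infinite mixing path must be $(\ber\bel)^{-\omega}$ (reading right to left) on one side and $(\bel\ber)^{-\omega}$ on the other — this is exactly the statement that the only relevant orbit of the $\bel/\ber$ dynamics under the mixing generators is the period-two orbit corresponding to the real fixed point of $f$. Once the $p$-tracks are pinned down to $(\ber\bel)^{-\omega}$ and $(\bel\ber)^{-\omega}$, the $i$-track constraint becomes: the associated cyclic section computation shows the "defect" in the $\theta$-values is governed by the relation $S^2 \tau = \sigma(PS^{-1}\tau^{-1}, \ldots)$, whose section structure encodes multiplication/addition on dyadics. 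Tracking the carry gives the $\frac{2}{3} = 0.\overline{10}_2$ discrepancy of case (2); allowing the mixing path to be finite (length $0$, $1$, or $|v|+1$ before dropping into the trivial block) produces the rescaled discrepancies $\frac{1}{3}$ and $\frac{1}{2^{|v|}3}$ of cases (3) and (4), with the finite tail $v$ (resp.\ $v'$ with first letter flipped) being exactly the part of the $p$-track that is no longer forced to alternate, and the first-letter flip coming from the single $\sigma$-transition that exits the mixing block.

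The main obstacle I anticipate is the bookkeeping in case (4): one must verify that for \emph{every} nonempty $v \in \{\bel, \ber\}^*$ there genuinely is such a path (existence), that the first letter of $v'$ must be the opposite of that of $v$ and no other letters change, and that the $\theta$-discrepancy is precisely $\frac{1}{2^{|v|}3}$ and not some other dyadic-plus-$\frac13$ quantity — this requires carefully composing $|v|$ steps through the $P$-block (which does the letter-flip and halves the discrepancy at each step via $P\tau^{\pm1}$) after the one $S$-transition that creates the initial $\frac13$, and confirming the carry arithmetic closes up. A secondary check is completeness: one must argue there are no \emph{other} bi-infinite or eventually-trivial paths in the Moore diagram beyond those enumerated — i.e.\ that the mixing block has no cycles other than the ones feeding cases (2)–(4), and that the only way a path can be bi-infinite is to stay in $\{1, \tau^{\pm1}\}$ on the left (case 1) or in the period-two mixing locus. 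This is a finite case analysis on the $10$-vertex diagram, so it is mechanical, but it is where an error would hide; I would do it by listing the out-neighbors of each of the $10$ nucleus elements under the four input letters and checking strongly connected components.
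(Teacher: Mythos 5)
Your overall strategy is the paper's own: reduce asymptotic equivalence with respect to $\group{R}$ to left-infinite labelled paths in the Moore diagram of the ten-element nucleus, split the nucleus into the blocks $\{1,\tau^{\pm1}\}$, $\{S,S\tau,S^{-1},S^{-1}\tau^{-1}\}$, $\{P,P\tau^{\pm1}\}$, and translate the two label tracks into the data $(p(w),\theta(w))$. Your treatment of cases (1)--(3) matches the paper (up to the slight imprecision that a path which lived in the mixing block and then dropped out also ``eventually stays inside the first block''; you do handle those paths separately afterwards, so this is only a wording slip).

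The genuine problem is your mechanism for case (4). The $P$-block is transient and has \emph{no internal arrows}: the sections of $P$, $P\tau$, $P\tau^{-1}$ all lie in $\{1,\tau,\tau^{-1}\}$ (indeed $P=\pi$ has trivial sections, and $P\tau\cdot X_1=Y_0\cdot\tau$, $P\tau\cdot X_0=Y_1$), so a path can visit the $P$-block at most once, namely at the single step right after it leaves the alternating $S$-part. Hence exactly one letter --- the first letter of $v$ --- is flipped there, and the remaining $|v|-1$ letters are read at vertices of $\{1,\tau^{\pm1}\}$, where the $\bel/\ber$ coordinate is untouched. Your description ``composing $|v|$ steps through the $P$-block (which does the letter-flip and halves the discrepancy at each step via $P\tau^{\pm1}$)'' is therefore structurally wrong: taken literally it would flip \emph{every} letter of $v$, contradicting the very conclusion (only the first letter of $v'$ differs) you are trying to establish, and it attributes the factor $2^{-|v|}$ to the wrong source. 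In the paper's computation the factor comes from the relations $S\cdot\bel_i=\tau^{-1}\cdot\ber_i\cdot P$, $S\cdot\ber_i=\tau^{-1}\cdot\bel_i\cdot S^{-1}$: each step of the infinite alternating $S$-part emits a carry $\tau^{\mp1}$, and pushing these carries to the right yields the alternating series $\pm\left(\frac12-\frac14+\frac18-\cdots\right)=\pm\frac13$ anchored at the position where the path exits the mixing block, i.e.\ $|v|$ binary places to the left of the end, whence $\frac1{2^{|v|}3}$. (Relatedly, the relation you invoke, $S^2\tau=\sigma(PS^{-1}\tau^{-1},\ldots)$, is not the one driving this bookkeeping.) The exhaustive out-neighbour listing you propose at the end would expose and repair all of this, but as written your case-(4) argument would not go through.
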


\begin{proof}
Note that the wreath recursion for $\tau$ and $P$ is written in terms
of the self-similarity biset as
\[\tau\cdot X_0=X_1,\quad\tau\cdot X_1=X_0\cdot\tau,\]
\[P\cdot\ber_i=\bel_i,\quad P\cdot\bel_i=\ber_i,\]
where $X$ is one of the symbols $\bel, \ber$ and $i$ is one of the
symbols $0, 1$.

Note also that $\tau S=\pi(P, P, S^{-1}, S^{-1})$, hence
\[S\cdot\bel_i=\tau^{-1}\cdot\ber_i\cdot P,\quad
S\cdot\ber_i=\tau^{-1}\cdot\bel_i\cdot S^{-1},\]
which implies
\[S^{-1}\cdot\bel_i=\tau\cdot\ber_i\cdot S,\quad
S^{-1}\cdot\ber_i=\tau\cdot\bel_i\cdot P,\]
since $S$ and $\tau$ commute.

Examining the nucleus of the group on Figure~\ref{fig:moore}, we see
that left-infinite (infinite in the past) paths labeled by $(w_1, w_2)$
in its Moore diagram belong to one of the following types:

\textbf{(I)} The path travels inside the set $\{1, \tau, \tau^{-1}\}$. In this case
the we have $p(w_1)=p(w_2)$ and $\theta(w_1)=\theta(w_2)$, and every
pair $(w_1, w_2)$ satisfying these two equalities can be obtained in
this way.

\textbf{(II)} Its vertices alternatively belongs to the sets $\{S, S\tau\}$
  and $\{S^{-1}, S^{-1}\tau^{-1}\}$.

If the last vertex of the path belongs to $\{S, S\tau\}$, then
$p(w_1)=(\ber\bel)^{-\omega}$ and $p(w_2)=(\bel\ber)^{-\omega}$.
If the last vertex belongs to $\{S^{-1}, S^{-1}\tau^{-1}\}$, then
$p(w_1)=(\bel\ber)^{-\omega}$ and $p(w_2)=(\ber\bel)^{-\omega}$, which
is symmetric with the first case.

\textbf{(III)} The last vertex of the path belongs to $\{P, P\tau,
  P\tau^{-1}\}$. Then either
 $p(w_1)=(\ber\bel)^{-\omega}\bel$ and
  $p(w_2)=(\bel\ber)^{-\omega}\ber$ (if the previous vertex belongs to $\{S,
  S\tau\}$), or $p(w_1)=(\bel\ber)^{-\omega}\ber$ and $p(w_2)=(\ber\bel)^{-\omega}\bel$
  (otherwise).

\textbf{(IV)} One of the vertices of the path (but not the last one) belong to
  $\{P, P\tau, P\tau^{-1}\}$. Then $p(w_1)=(\ber\bel)^{-\omega}\bel v$ and
  $p(w_2)=(\bel\ber)^{-\omega}\ber v'$, or
  $p(w_1)=(\bel\ber)^{-\omega}\ber v$ and
  $p(w_2)=(\ber\bel)^{-\omega}\bel v'$,
where $v'$ is obtained from $v$ by changing the first letter.

In the first case of \textbf{(II)}, if
$w_1=\ldots\ber_{i_4}\bel_{i_3}\ber_{i_2}\bel_{i_1}$ and
$w_2=\ldots\bel_{j_4}\ber_{j_3}\bel_{j_2}\ber_{j_1}$, then for any $n$
either
\[S\cdot\ber_{i_{2n}}\bel_{i_{2n-1}}\ldots\ber_{i_2}\bel_{i_1}=
\bel_{j_{2n}}\ber_{j_{2n-1}}\ldots\bel_{j_2}\ber_{j_1}\cdot S,\]
or
\[S\cdot\ber_{i_{2n}}\bel_{i_{2n-1}}\ldots\ber_{i_2}\bel_{i_1}=
\bel_{j_{2n}}\ber_{j_{2n-1}}\ldots\bel_{j_2}\ber_{j_1}\cdot S\tau,\]
or
\[\tau S\cdot\ber_{i_{2n}}\bel_{i_{2n-1}}\ldots\ber_{i_2}\bel_{i_1}=
\bel_{j_{2n}}\ber_{j_{2n-1}}\ldots\bel_{j_2}\ber_{j_1}\cdot S,\]
or
\[\tau S\cdot \ber_{i_{2n}}\bel_{i_{2n-1}}\ldots\ber_{i_2}\bel_{i_1}=
\bel_{j_{2n}}\ber_{j_{2n-1}}\ldots\bel_{j_2}\ber_{j_1}\cdot S\tau.\]

This implies that either
\[\tau^{-1}\cdot\bel_{i_{2n}}\cdot \tau\cdot\ber_{i_{2n-1}}
\ldots\tau^{-1}\cdot\bel_{i_2}\cdot
\tau\cdot \ber_{i_1}\cdot S=
\bel_{j_{2n}}\ber_{j_{2n-1}}\ldots\bel_{j_2}\ber_{j_1}\cdot S,\]
or
\[\tau^{-1}\cdot\bel_{i_{2n}}\cdot\tau\cdot\ber_{i_{2n-1}}\ldots\tau^{-1}\cdot\bel_{i_2}\cdot\tau\cdot
\ber_{i_1}\cdot S=
\bel_{j_{2n}}\ber_{j_{2n-1}}\ldots\bel_{j_2}\ber_{j_1}\cdot S\tau,\]
or
\[\bel_{i_{2n}}\cdot \tau\cdot\ber_{i_{2n-1}}\ldots\tau^{-1}\cdot
\bel_{i_2}\cdot \tau\cdot \ber_{i_1}\cdot S=
\bel_{j_{2n}}\ber_{j_{2n-1}}\ldots\bel_{j_2}\ber_{j_1}\cdot S,\]
or
\[\bel_{i_{2n}}\cdot \tau\cdot
\ber_{i_{2n-1}}\ldots\tau^{-1}\cdot\bel_{i_2}\cdot \tau\cdot
\ber_{i_1}\cdot S=
\bel_{j_{2n}}\ber_{j_{2n-1}}\ldots\bel_{j_2}\ber_{j_1}\cdot S\tau.\]

In all cases, as $n\to\infty$ we get
\[\theta(w_2)=\theta(w_1)+1/2-1/4+1/8-1/16+\cdots=\theta(w_1)+1/3\pmod{1},\]
i.e., $\theta(w_1)=\theta(w_2)+2/3$.

In the first case of \textbf{(III)} we have $w_1=\ldots
\ber_{i_5}\bel_{i_4}\ber_{i_3}\bel_{i_2}\bel_{i_1}$, and
$w_2=\ldots\bel_{j_5}\ber_{j_4}\bel_{j_3}\ber_{j_2}\ber_{j_1}$, and we
have
\[\tau^{k_1}S\cdot\ber_{i_{2n+1}}\bel_{i_{2n}}\ldots\ber_{i_3}\bel_{i_2}\bel_{i_1}=
\bel_{j_{2n+1}}\ber_{j_{2n}}\ldots\bel_{j_3}\ber_{j_2}\ber_{j_1}\cdot
P\tau^{k_2}\]
for $k_1\in\{0, 1\}$ and $k_2\in\{0, -1\}$.
Then
\[\tau^{k_1-1}\cdot\bel_{i_{2n+1}}\cdot\tau\cdot\ber_{i_{2n}}\ldots\tau^{-1}\cdot\bel_{i_3}\cdot
\tau\cdot\ber_{i_2}\cdot\tau^{-1}\cdot\ber_{i_1}\cdot
P=\bel_{j_{2n+1}}\ber_{j_{2n}}\ldots\bel_{j_3}\ber_{j_2}\ber_{j_1}\cdot
P\tau^{k_2},\]
which implies
\[\theta(w_2)=\theta(w_1)-1/2+1/4-1/8+\cdots=\theta(w_1)-1/3\pmod{1},\]
which proves case \textbf{(III)} of the proposition.

Consider now case \textbf{(IV)}. We have $w_1=\ldots
\ber_{i_5}\bel_{i_4}\ber_{i_3}\bel_{i_2}\bel_{i_1}u$ and
$w_2=\ldots\bel_{j_5}\ber_{j_4}\bel_{j_3}\ber_{i_2}\ber_{i_1}u'$ for
some $u, u'\in\{\bel_0, \bel_1, \ber_0, \ber_1\}^*$, and for every $n$ we have
\[\tau^{k_1}S\cdot\ber_{i_{2n+1}}\bel_{i_{2n}}\ldots\ber_{i_3}\bel_{i_2}\bel_{i_1}u=
\bel_{j_{2n+1}}\ber_{j_{2n}}\ldots\bel_{j_3}\ber_{j_2}\ber_{j_1}u',\]
for some $k_1\in\{0, 1\}$,
hence
\[\tau^{k_1-1}\cdot\bel_{i_{2n+1}}\cdot\tau\cdot\ber_{i_{2n}}\ldots\tau^{-1}\cdot\bel_{i_3}\cdot
\tau\cdot\ber_{i_2}\cdot\tau^{-1}\cdot\ber_{i_1}\cdot P\cdot
u=\bel_{j_{2n+1}}\ber_{j_{2n}}\ldots\bel_{j_3}\ber_{j_2}\ber_{j_1}u'.\]
This implies that
\[\theta(w_2)=\theta(w_1)+\frac{1}{2^{|u|}}\left(-\frac 12+\frac
    14-\frac 18+\cdots\right)=\theta(w_1)-\frac{1}{2^{|u|}\cdot 3},\]
which finishes the proof.
\end{proof}

Recall that the iterated monodromy group $\img{f}$ of
$f(p)=\left(\frac{p-1}{p+1}\right)^2$ is the image of the group
$\group{G}=\langle\alpha, \beta, \gamma, R, S\rangle$ under the
natural epimorphism $\overline{\group{G}}\arr\group{K}$
of self-similar groups described in Proposition~\ref{pr:groupK}. Note
that the image of the group $\group{R}$ under this epimorphism is also
$\img{f}$.
The asymptotic equivalence relation defined by $\img{f}$  is
generated by the identifications
\[(\ber\bel)^{-\omega}\sim(\bel\ber)^{-\omega},\quad
(\ber\bel)^{-\omega}\bel v\sim(\bel\ber)^{-\omega}\ber v',\] where
$v\in\{\bel, \ber\}^*$ is arbitrary and $v'$ is obtained from $v$
by changing the first letter. This follows also from
Proposition~\ref{pr:asequivangles} just by ignoring the indices,
i.e., the map $\theta$.

The natural projection $p:\{\bel_0, \bel_1, \ber_0,
\ber_1\}^{-\omega}\arr\{\bel, \ber\}^{-\omega}$ agrees with the
equivalence relations defined by the group $\group{R}$
and its quotient $\img{f}$, so that $p$ induces a surjective
continuous map $\wt p:\lims[\group{R}]\arr
\lims[\img{f}]$ of the limit
spaces. The fibers of the map $\wt p$ are circles by
Proposition~\ref{pr:asequivangles}. It follows that the limit space of
$\group{R}$ can be interpreted as the bundle over the Julia set of $f$
of the Caratheodory loops around the $p$-slices $J_1(p)$ of the Julia
set of $F$.

Let us describe the limit space of the group $\img{f}$
following~\cite[Section~3.10]{nek:book}. As
the zero step approximation of the tile of the group take a
rectangle. The vertices of the rectangle (which will correspond to
the boundary points of the tile) are labeled by the sequences
$(\ber\bel)^{-\omega}\bel, (\bel\ber)^{-\omega}\ber,
(\bel\ber)^{-\omega}, (\ber\bel)^{-\omega}$ in the given cyclic
order counterclockwise. Hence, the zero step approximation of the
limit space will be the rectangle with two pairs of vertices
identified. In order to get the next approximation of the tile one
has to take two copies of the previous approximation, append
$\ber$ to end of the labels of one of them and append $\bel$ to
the labels of the other. After that one has to identify the point
labeled by $(\bel\ber)^{-\omega}\ber\ber$ with the point labeled
by $(\ber\bel)^{-\omega}\bel\bel$ and the point labeled by
$(\bel\ber)^{-\omega}\ber\bel$ with the point labeled by
$(\ber\bel)^{-\omega}\bel\ber$.

See the sixth approximation of the tile on the middle picture of
Figure~\ref{fig:pjulc}. The two pairs of the boundary points of
the tile, which are identified in the limit space, are drawn close
to each other, so that we get a picture approximating the limit
space. The left-hand side part of Figure~\ref{fig:pjulc} shows the
the identifications of the vertices of 64 rectangles made in the
process of construction the approximation of the limit space.
Compare the obtained pictures with the Julia set of the rational
function $u\mapsto\frac{u^2+1}{u^2-1}$, shown on the
right-hand side of Figure~\ref{fig:pjulc}. This rational function
is conjugate to $f:p\mapsto\left(\frac{1-p}{1+p}\right)^2$ via the
identification $p=\frac{u-1}{u+1}$.

\begin{figure}[h]
\centering
\includegraphics{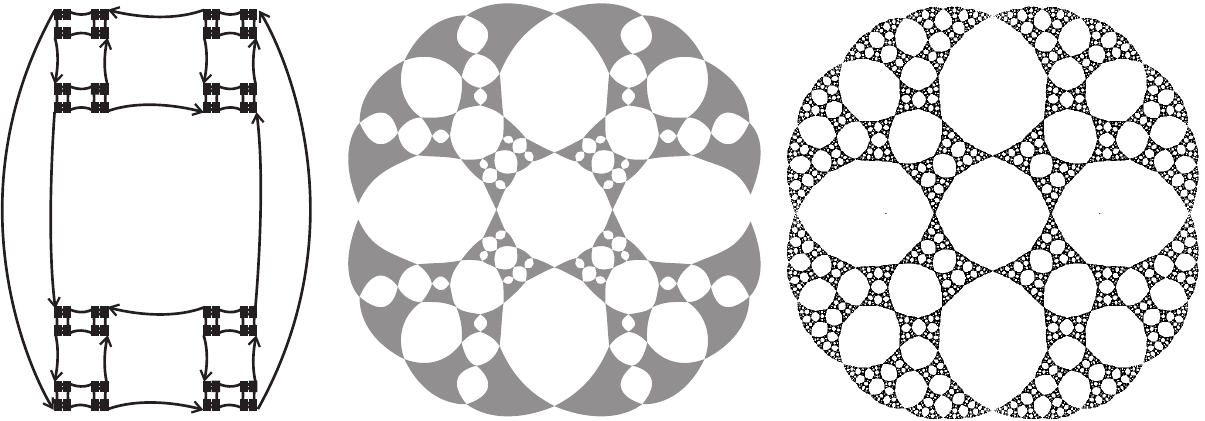}
\caption{The Julia set of $(p-1)^2/(p+1)^2$ and its combinatorial
model}\label{fig:pjulc}
\end{figure}

If we apply just the identifications (1) of
Proposition~\ref{pr:asequivangles} to the space $\{\bel_0, \bel_1,
\ber_0, \ber_1\}^{-\omega}$, i.e., if we consider the limit space
of $\langle\tau\rangle$, then we will get the direct product of
the Cantor set $\{\bel, \ber\}^{-\omega}$ with the circle $\R/\Z$.

Figure~\ref{fig:pjulc2} shows the remaining identifications producing
the limits space of $\group{R}$.
The arrows show which sequences $w\in\{\bel, \ber\}^{-\omega}$ are
identified, while the labels are the rotations applied to the
corresponding circles. Namely, if we have an arrow from $w_1$ to
$w_2$ labeled by $\theta_0$, then each point $\theta$ of the
circle above $w_1$ is identified with the point $\theta+\theta_0$
of the circle above $w_2$.
 The limit dynamical system acts on the Julia set of $f$ as
$f$ (equivalently, as the shift on $\{\bel, \ber\}^{-\omega}$), and on
the circles as the map $\theta\mapsto 2\theta$. Note that the
identifications described by Proposition~\ref{pr:asequivangles} and
Figure~\ref{fig:pjulc2} are such that the resulting map on the limit
space of $\group{R}$ is well defined.

\begin{figure}[h]
\centering
\includegraphics{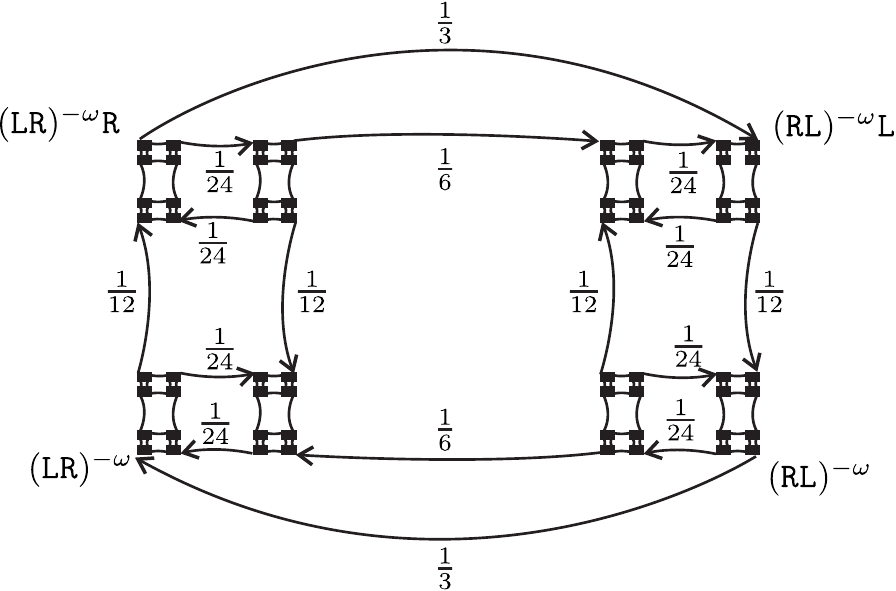}
\caption{Building the space of external angles}
\label{fig:pjulc2}
\end{figure}

The embedding $\group{R}<\img{F}$ induces a semiconjugacy of the
limit spaces \[\Phi:\lims[\group{R}]\arr \lims[\img{F}]=J_1.\] We call
points of $\lims[\group{R}]$ \emph{external rays}. We say that an
external ray $\zeta\in\lims[\group{R}]$ \emph{lands} on $(z, p)\in
J_1$ if $\Phi(\zeta)=(z, p)$. Points of $\lims[\group{R}]$ are encoded
by pairs $(\theta, w)$, where $\theta\in\R/\Z$ and $w\in\{\bel,
\ber\}^{-\omega}$ is a sequence representing a point $p\in J_1$. The
coordinate $\theta$ is called the \emph{angle} of the external
ray. Note that angle of an external ray may be not uniquely defined,
since a point of the Julia set of $f$ may be represented by different
sequences. On the other hand, difference between angles of two
external rays above the same point of the Julia set of $f$ is well
defined, since two circles are pasted to each other (in
Proposition~\ref{pr:asequivangles}) using a rotation.

\begin{proposition}
\label{pr:externalraysonpp}
Denote by $q_1$ the fixed point $\approx 0.2956$ of
$f(p)=\left(\frac{1-p}{1+p}\right)^2$. If $p_0$ belongs to the
backward orbit $\bigcup_{n\ge 0}f^{-n}(q_1)$ of $q_1$, then there are
two external rays landing on $(p_0, p_0)$. The difference of
angles of these external rays is equal to $\frac{1}{2^{k-1}3}$, where
$k$ is the smallest integer such that $f^k(p_0)=q_1$. In all
the other cases there is a unique ray landing on $(p_0, p_0)$.
\end{proposition}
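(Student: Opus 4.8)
The plan is to turn the statement into a question about asymptotic equivalence classes. By definition the external rays landing on $(p_0,p_0)$ are the points of the fibre $\Phi^{-1}(p_0,p_0)\subset\lims[\group{R}]$, so they are in bijection with the $\group{R}$-asymptotic equivalence classes among those sequences $s\in\{\bel_0,\bel_1,\ber_0,\ber_1\}^{-\omega}$ whose $\img{F}$-asymptotic class equals $(p_0,p_0)$. So I would first describe that set of sequences. By Proposition~\ref{pr:slices} a sequence $s$ represents a point of the slice $J_1(p_0)$ exactly when $p(s)$ codes $p_0$ in $\lims[\img{f}]$, and on such a slice the $\img{F}$-identifications coincide with the $\group{G}$-identifications; hence $s$ represents $(p_0,p_0)$ if and only if $p(s)$ codes $p_0$ and $s$ is $\group{G}$-equivalent to the sequence representing the point $z=p_0$ of the dendrite $J_1(p_0)$, i.e.\ the landing point of the loop $\gamma$. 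Since in the recursion of Theorem~\ref{th:imgF} one has $\gamma=(\gamma,\beta,\gamma,\beta)$, so that $\gamma$ fixes $\bel_0$ and $\ber_0$ with section $\gamma$, this ``diagonal sequence'' over a coding $w=\ldots X^{(2)}X^{(1)}$ of $p_0$ is $d_w:=\ldots X^{(2)}_0X^{(1)}_0$, with all indices equal to $0$; in particular $\theta(d_w)=0$.

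The key step is to prove that $z=p_0$ is an \emph{endpoint} of $J_1(p_0)$ for every $p_0\in\julia{f}$, equivalently that $d_w$ is the only sequence in its $\group{G}$-asymptotic class; then the sequences representing $(p_0,p_0)$ are exactly the $d_w$ with $w$ a coding of $p_0$. For this I would use the explicit nucleus $\nuke=\{1,\alpha,\beta,\gamma\}$ of $\group{G}$ acting on the binary subtrees. In the Moore diagram of $\nuke$ the only arrow ending at $\beta$ is $\gamma\arr\beta$ and it has input letter $1$, while the only arrows ending at $\alpha$ start at $\beta$; therefore a left-infinite path all of whose input letters are $0$ can contain neither $\beta$ (no input-$0$ arrow ends at $\beta$) nor $\alpha$ (an input-$0$ arrow ending at $\alpha$ would come from $\beta$). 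Such a path thus stays inside $\{1,\gamma\}$, where the only input-$0$ arrows are $1\arr 1$ and $\gamma\arr\gamma$, so it is constant and its output sequence is all zeros. Hence $d_w$ is asymptotically equivalent to no other sequence, and this argument is uniform in $w$.

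It remains to count the codings of $p_0$ and, when there are two, to compute the angular shift. By the description of the asymptotic relation of $\img{f}$ recalled just after Proposition~\ref{pr:asequivangles} (generated by $(\ber\bel)^{-\omega}\sim(\bel\ber)^{-\omega}$ and $(\ber\bel)^{-\omega}\bel v\sim(\bel\ber)^{-\omega}\ber v'$), a point of $\lims[\img{f}]$ has two codings when it occurs in one of these relations and one coding otherwise. The sequence $(\ber\bel)^{-\omega}$ is fixed by the shift and by the conjugation $\kappa$ of Proposition~\ref{pr:quotSP}, hence codes the unique real fixed point $q_1$ of $f$; iterating the identifications one checks that the points with two codings are precisely those of the backward orbit $\bigcup_{n\ge 0}f^{-n}(q_1)$, and that a point $p_0$ with $f^k(p_0)=q_1$ and $k$ minimal has codings $w_1=(\ber\bel)^{-\omega}\bel v$ and $w_2=(\bel\ber)^{-\omega}\ber v'$ with $|v|=k-1$ (degenerating to $w_1=(\ber\bel)^{-\omega}$, $w_2=(\bel\ber)^{-\omega}$ when $p_0=q_1$). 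Consequently, if $p_0$ is not in the backward orbit of $q_1$ there is a single representing sequence $d_w$ and one external ray; if $p_0$ is in it, the two representing sequences $d_{w_1},d_{w_2}$ both have angle $0$ over their own slice, and since cases (2)--(4) of Proposition~\ref{pr:asequivangles} glue the circles over $w_1$ and $w_2$ by a rotation through $\pm\frac{1}{2^{k-1}3}\neq 0$, the sequences $d_{w_1},d_{w_2}$ are $\group{R}$-inequivalent and give two distinct external rays landing on $(p_0,p_0)$, with difference of angles $\frac{1}{2^{k-1}3}$ (well-defined up to sign).

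The main obstacle I expect is the endpoint claim of the second paragraph: one must know that $z=p_0$ is an endpoint of $J_1(p_0)$ for \emph{all} $p_0\in\julia{f}$, not only for the three post-critically finite parameters $q_0,\overline{q_0},q_1$ (where $z=q_j$ is the $\beta$-fixed point of the corresponding quadratic). The nucleus computation handles this uniformly and is the place where the precise form of the wreath recursion for $\group{G}$ is essential; the remaining work---identifying the codings of $p_0$ and reading off the rotations from Proposition~\ref{pr:asequivangles}---is routine bookkeeping.
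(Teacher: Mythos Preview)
Your overall strategy mirrors the paper's: identify the sequences coding $(p_0,p_0)$, show that over each coding $w$ of $p_0$ they determine a single external ray, and then use Proposition~\ref{pr:asequivangles} to see that the rays over two different codings are distinct with the stated angular difference. The identification of the real fixed point with $(\ber\bel)^{-\omega}$ and the final bookkeeping with Proposition~\ref{pr:asequivangles} are fine.

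There is, however, a genuine gap in the ``endpoint'' step. You invoke the nucleus $\{1,\alpha,\beta,\gamma\}$ of Proposition~3.5 and analyse its Moore diagram, but that nucleus is computed for the basis of Definition~\ref{def:overlineG} (the $\bee_{ij}$-basis), where $\alpha=\sigma$, $\beta=(1,\alpha,\alpha,1)$. The function $\theta$, the symbols $\bel_i,\ber_i$, and Proposition~\ref{pr:asequivangles} are all stated for the basis of Theorem~\ref{th:imgF}, where $\alpha=\sigma(\beta,\beta,\beta\alpha,\alpha\beta)$ and the nucleus of $\group{G}$ is the much larger set the paper lists in its proof. In that basis your claim that $d_w$ is the \emph{unique} sequence in its $\group{G}$-class is false: the paper's Moore diagram analysis exhibits, for instance, $\ldots X_0^{(2)}X_0^{(1)}\sim\ldots X_1^{(2)}X_1^{(1)}$ and several further identifications over the special codings. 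So the Moore diagram argument you wrote does not apply to the objects you then feed into Proposition~\ref{pr:asequivangles}.

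The fix is short but has to be made explicit. Your Moore diagram computation is correct in the $\bee_{ij}$-basis and shows that, on each binary subtree, the all-zero sequence is the unique $\group{G}_w$-representative of the diagonal point. Since the number of $\langle\tau\rangle$-classes over $w$ mapping to $(p_0,p_0)$ is a basis-independent quantity (it is the cardinality of a fibre of the Caratheodory map $\lims[\langle\tau\rangle]\to J_1(p_0)$), this already yields exactly one external ray per coding. Then, back in the Theorem~\ref{th:imgF} basis, $d_w$ is one representative of that single ray (by the isotropy argument you gave), so the ray has $\theta=0$ there, and the last paragraph of your argument goes through. The paper instead stays in the Theorem~\ref{th:imgF} basis throughout, computes the larger nucleus, restricts its Moore diagram to arrows with input of index $0$, and checks directly that all surviving identifications give the same angle; your route via the simpler basis is cleaner once the change of basis is handled, but as written it conflates the two.
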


Recall that the line $z=p$ is contained in $J_1$ and is an $F$-invariant subset of
the post-critical locus of $F$.

\begin{proof}
It follows from the description of the asymptotic equivalence
relation of the group $\img{f}$ that the fixed points of $f$ are
encoded in the limit space by the sequences $\ber^{-\omega},
\bel^{-\omega}$ and
$(\ber\bel)^{-\omega}\sim(\bel\ber)^{-\omega}$. The transformation
$\kappa$ permutes the first two sequences and fixes the last one.
Since $\kappa$ corresponds to complex conjugation (see
Proposition~\ref{pr:quotSP}), we conclude
that the real fixed point of $f$ is encoded by the sequences
$(\ber\bel)^{-\omega}\sim(\bel\ber)^{-\omega}$. Hence, the points of
the backward orbit of $q_1$ are the points encoded by the sequences of
the form $(\ber\bel)^{-\omega}v$, for
$v\in\{\ber, \bel\}^*$.

It follows from the dynamics on the post-critical set of $F$ that
the points of the line $z=p$ are singular with the isotropy group
a conjugate of $\langle\gamma\rangle$. The wreath recursion in
Theorem~\ref{th:imgF} implies that the points encoded by the
sequences $\ldots X^{(2)}_0X^{(1)}_0$ for $X^{(k)}\in\{\bel,
\ber\}$ have isotropy group $\langle\gamma\rangle$, hence these
sequences encode the points $z=p_0$, where $p_0$ is encoded by $\ldots
X^{(2)}X^{(1)}$ in the limit space of $\img{f}$.

Let us see to which sequences of the form $\ldots
X^{(2)}_{i_2}X^{(1)}_{i_1}$ the sequence $\ldots X^{(2)}_0X^{(1)}_0$
can be equivalent. By Proposition~\ref{pr:slices}, such two sequences,
if they are equivalent with respect to $\img{F}$, then they are
equivalent with respect to $\group{G}$.

The nucleus of $\group{G}$ for the wreath recursion of
Theorem~\ref{th:imgF}
is equal to \[\{\alpha, \beta, \gamma, (\alpha\beta)^{\pm 1},
\alpha^\beta, (\alpha\gamma)^{\pm 1}, \beta^\alpha,
(\beta\gamma)^{\pm 1}, \gamma^\alpha, \gamma^\beta,
\gamma^{\alpha\beta}, \tau^{\pm 1}, (\alpha\tau)^{\pm 1},
(\beta\tau)^{\pm 1}\},\]
see~\eqref{eq:nucleusimg} on page~\pageref{eq:nucleusimg}     .

We are interested in the left-infinite paths in the Moore diagram of
the nucleus with the arrows labeled by pairs of the form $(X_0, X_i)$
for $X\in\{\bel, \ber\}$ and $i\in\{0, 1\}$. Removing all the other
arrows and removing all arrows which do not belong to any
left-infinite path, we get the graph shown on Figure~\ref{fig:moore2}.

\begin{figure}[h]
\centering
\includegraphics{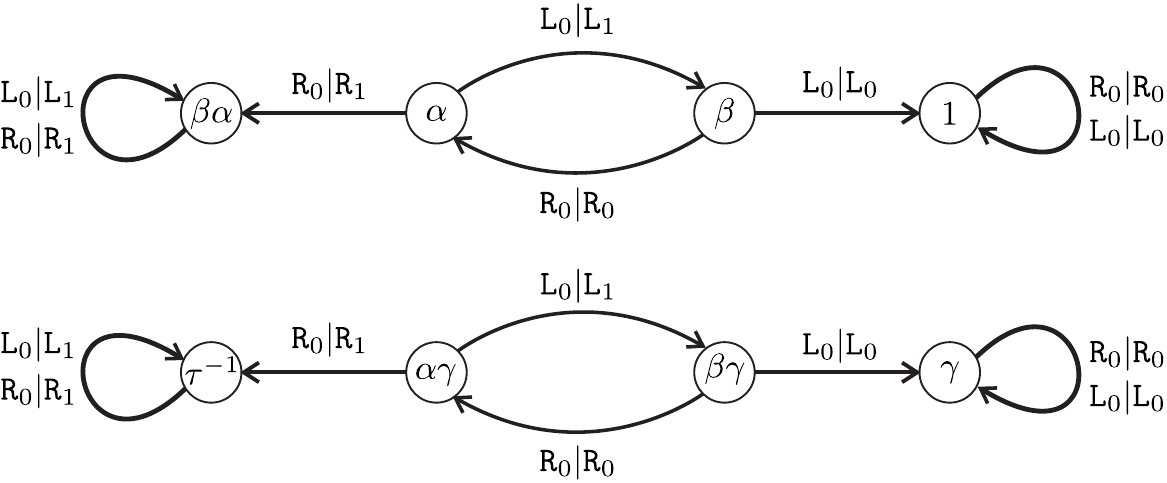}
\caption{}\label{fig:moore2}
\end{figure}

It follows that we have only the following non-trivial identifications
\[ (\bel_0\ber_0)^{-\omega}\ber_0X_0^{(n)}\ldots
X_0^{(2)}X_0^{(1)}\sim
(\bel_1\ber_0)^{-\omega}\ber_1X_1^{(n)}\ldots
X_1^{(2)}X_1^{(1)},\]
\[(\ber_0\bel_0)^{-\omega}\bel_0X_0^{(n)}\ldots
X_0^{(2)}X_0^{(1)}\sim (\ber_0\bel_1)^{-\omega}\bel_0X_0^{(n)}\ldots
X_0^{(2)}X_0^{(1)},\]
their shifts and the identification
\[\ldots X_0^{(2)}X_0^{(1)}\sim\ldots X_1^{(2)}X_1^{(1)}.\]
The last identification is trivial in terms of external angles.

It follows that the point $z=p_0$ is a landing point of one external
ray to the slice $p=p_0$ of the Julia set of $F$ except when $p_0$ is
in the backward orbit of the fixed point $q_1$, when it is a landing
point of exactly two external rays.

The remaining statements follow from
Proposition~\ref{pr:asequivangles}.
\end{proof}

Note that the points of the backward orbit of $q_1$ are
precisely the points where different Fatou components of $f$ touch
each other, i.e., the points
belonging to boundaries of two Fatou
components of $f$. This follows from the fact that the fixed point
$q_1$ belongs to the boundaries of the Fatou
components containing $0$ and $1$, and that every Fatou component of $f$ is mapped by some
iterations of $f$ onto the Fatou components containing $0$ and $1$.

See Figure~\ref{fig:rays}, where the external rays to the point $(q_1,
q_1)$ are shown.

\begin{figure}[h]
\centering
\includegraphics{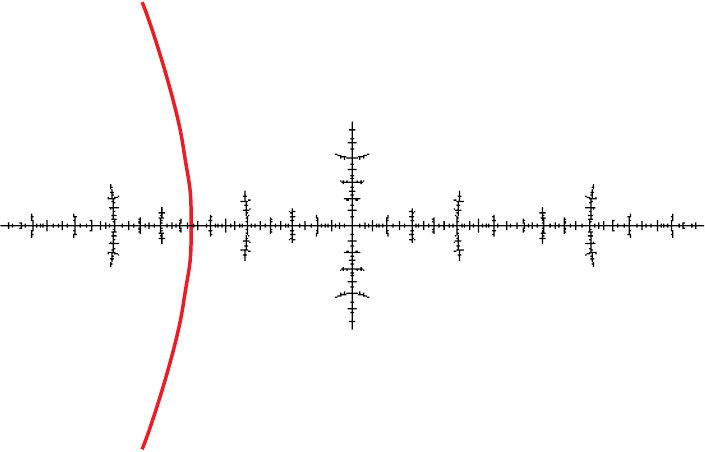}
\caption{External rays landing at $(q_1, q_1)$}
\label{fig:rays}
\end{figure}

\section{Matings}
\label{s:matings}

\subsection{An amalgam of $\group{G}$ with itself}

Consider a copy $\group{G}_1$ of the group $\group{G}$ generated by
\begin{eqnarray}
\label{eq:Ghat1}
\alpha_1 &=& \sigma(\beta_1, \beta_1, \beta_1\alpha_1,
\alpha_1\beta_1),\\
\label{eq:Ghat2}
\beta_1 &=& (1, \beta_1\alpha_1\beta_1, \alpha_1, 1),\\
\label{eq:Ghat3}
\gamma_1 &=& (\gamma_1, \beta_1, \gamma_1, \beta_1),
\end{eqnarray}
as in Theorem~\ref{th:imgF}.

Let us conjugate the right hand side of the recursion defining
$\alpha_1, \beta_1, \gamma_1$ by $\pi=(1,3)(2,4)$ (which corresponds
to changing each $\bel_i$ by $\ber_i$ and vice versa). We get then an
equivalent copy $\group{G}_2$ of $\group{G}$:
\begin{eqnarray}
\label{eq:Ghat4}
\alpha_2 &=& \sigma(\beta_2\alpha_2, \alpha_2\beta_2, \beta_2,
\beta_2),\\
\label{eq:Ghat5}
\beta_2 &=& (\alpha_2, 1, 1, \beta_2\alpha_2\beta_2),\\
\label{eq:Ghat6}
\gamma_2 &=& (\gamma_2, \beta_2, \gamma_2, \beta_2).
\end{eqnarray}

Note that
\[\gamma_1\alpha_1\beta_1=\sigma(1, \gamma_1\alpha_1\beta_1, 1, \gamma_1\alpha_1\beta_1).\]
Similarly,
\[\gamma_2\alpha_2\beta_2=\sigma(1, \gamma_2\alpha_2\beta_2, 1, \gamma_2\alpha_2\beta_2),\]
which implies that $\gamma_1\alpha_1\beta_1=\gamma_2\alpha_2\beta_2=\tau$.

Denote by $\widehat{\group{G}}$ the group generated by the set $\group{G}_1\cup\group{G}_2$.

\begin{lemma}
\label{lem:bgcommute}
The elements $\beta_1$ and $\beta_2$ act non-trivially on disjoint sets of
words and hence commute. The same is true for
$\gamma_1$ and $\gamma_2$.
\end{lemma}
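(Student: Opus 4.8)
The plan is to read off directly from the wreath recursions~\eqref{eq:Ghat1}--\eqref{eq:Ghat6} the set $\mathrm{supp}(g)=\{w\in X^*: g(w)\neq w\}$ of words on which each generator $g$ acts non-trivially, and to check that for each of the two pairs these sets lie in complementary cylinder sets, so that in particular they are disjoint. For an automorphism $g$ of the rooted tree over the four letters (ordered $1,2,3,4$ as in the recursions), $\mathrm{supp}(g)$ is controlled level by level: the root permutation records which first letters are moved, and the section $g|_x$ records recursively what happens below a fixed first letter $x$. Once disjointness of supports is in hand, commutation is automatic from an elementary lemma about bijections.

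For $\beta_1,\beta_2$ the disjointness is immediate. The recursion $\beta_1=(1,\beta_1\alpha_1\beta_1,\alpha_1,1)$ has trivial root permutation and trivial section at the letters $1$ and $4$, so $\beta_1$ fixes pointwise every word beginning with $1$ or $4$; hence $\mathrm{supp}(\beta_1)$ consists only of words beginning with $2$ or $3$. Dually $\beta_2=(\alpha_2,1,1,\beta_2\alpha_2\beta_2)$ fixes pointwise every word beginning with $2$ or $3$, so $\mathrm{supp}(\beta_2)$ consists only of words beginning with $1$ or $4$. Thus $\mathrm{supp}(\beta_1)\cap\mathrm{supp}(\beta_2)=\emptyset$.

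For $\gamma_1,\gamma_2$ I would prove by induction on $|w|$ that no word $w$ lies in both supports. The case $|w|=0$ is vacuous. Write $w=xw'$. If $x\in\{1,3\}$, then since $\gamma_i=(\gamma_i,\beta_i,\gamma_i,\beta_i)$ has trivial root permutation and section $\gamma_i$ at $x$, the map $\gamma_i$ moves $w$ iff $\gamma_i$ moves $w'$, and the inductive hypothesis applied to $w'$ gives the claim. If $x\in\{2,4\}$, then $\gamma_i$ moves $w$ iff $\beta_i$ moves $w'$, and the claim follows from the disjointness of $\mathrm{supp}(\beta_1)$ and $\mathrm{supp}(\beta_2)$ just established. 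Hence $\mathrm{supp}(\gamma_1)\cap\mathrm{supp}(\gamma_2)=\emptyset$. Finally, if $g,h$ are bijections of a set with disjoint supports, then $g$ maps $\mathrm{supp}(g)$ into itself (if $g(x)\neq x$ but $g(g(x))=g(x)$ then $g(x)=x$, a contradiction), and likewise $h$; so for $x\in\mathrm{supp}(g)$ both $x$ and $g(x)$ are fixed by $h$, whence $gh(x)=g(x)=hg(x)$; for $x\in\mathrm{supp}(h)$ the symmetric identity holds; and for all remaining $x$ both sides fix $x$. Therefore $gh=hg$, which applies to $\beta_1,\beta_2$ and to $\gamma_1,\gamma_2$.

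The only point that needs care is the $\gamma$-case: because $\gamma_1$ and $\gamma_2$ have identical first-level data (section $\beta_i$ at the letters $2,4$ and section $\gamma_i$ at the letters $1,3$), the disjointness of their supports is invisible at any single level and must be extracted by the recursive descent, which is exactly why the induction bottoms out in the statement already proved for $\beta_1,\beta_2$. Everything else is a direct inspection of~\eqref{eq:Ghat1}--\eqref{eq:Ghat6} together with the elementary commutation lemma; no contraction, nucleus, or limit-space input is needed.
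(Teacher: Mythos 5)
Your proof is correct and follows essentially the same route as the paper, which merely asserts that the disjointness of the $\beta_1,\beta_2$ supports is read off from the wreath recursion and that the $\gamma_1,\gamma_2$ case "follows from the first"; your recursive descent on the first letter (reducing the $\gamma$-supports to the $\beta$-supports) and the elementary disjoint-support commutation lemma are exactly the details the paper leaves implicit.
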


\begin{proof}
The first statement follows directly from the wreath recursion. The
second statement follows from the first.
\end{proof}

Computer computation using GAP 
shows that $\widehat{\group{G}}$ is contracting with the
following nucleus of $122$ elements
\begin{multline*}\{1, \alpha_i, \beta_i, \gamma_i, \alpha_i^{\beta_i},
\gamma_i^{\alpha_i}, \gamma_i^{\beta_i}, \beta_i^{\alpha_i}, \\
\alpha_i^{\beta_j}, \beta_i^{\alpha_j},
\gamma_i^{\alpha_j}, \gamma_i^{\beta_j},
\alpha_i^{\alpha_j\beta_j}, \gamma_i^{\alpha_i\beta_i},
 \beta_i^{\alpha_j\beta_j},\\
 \alpha_i^B,  \alpha_i^C,  B, C, B^{\alpha_i}, B^{\alpha_i\beta_i},
 C^{\alpha_i}, C^{\beta_i}, C^\tau,
\alpha_i\beta_j\alpha_j\beta_j
\}\cup\\
\{\alpha_i\beta_i, \alpha_i\gamma_i, \beta_i\gamma_i, \beta_i\alpha_j,
\beta_i\gamma_j, \alpha_i\gamma_j, \\
\tau, \beta_i\alpha_i\beta_j, \beta_i\alpha_i\alpha_j,
\alpha_i\beta_i\alpha_j\beta_j, \beta_i\gamma_i\alpha_j,
\alpha_i\tau,
\beta_i\tau,
\alpha_iB,
\beta_iC,
\beta_i\alpha_iB,\\
\beta_i\tau\beta_j,
C\tau,
\alpha_iC\tau,
\beta_iC\tau\}^{\pm 1}.\end{multline*}
Here $\{i, j\}=\{0, 1\}$ and $B=\beta_1\beta_2$, $C=\gamma_1\gamma_2$.

For the definition of $J_1(p)$ see the remark before Proposition~\ref{pr:slices}.

\begin{proposition}
\label{pr:mating}
The connected components of the limit space of the group $\widehat{\group{G}}$
are obtained by taking the slices $J_1(p)$ and $J_1(\overline{p})$
of the Julia set of $F$ and gluing one to the other along the
Caratheodory loop, where the external ray landing on $(p, p)$ is identified
with the external ray landing on $(\overline p, \overline p)$. If there are two
external rays landing on $(p, p)$ (i.e., when $p$ belongs to the grand
orbit of the real fixed point $q_1$ of $f$), then the Caratheodory loops are aligned
in such a way that only one external ray landing on $(p, p)$ is identified
with the external ray landing on $(\overline p, \overline p)$.
\end{proposition}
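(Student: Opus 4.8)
The plan is to combine the functoriality of the limit-space construction with an explicit analysis of the asymptotic equivalence relation of $\widehat{\group{G}}$ through its $122$-element nucleus, in the style of Propositions~\ref{pr:asequivangles} and~\ref{pr:externalraysonpp}. First I would record the fibre structure of $\lims[\widehat{\group{G}}]$. Every generator of $\widehat{\group{G}}$ acts on the first level of the tree by $\sigma=(12)(34)$ or by the identity, and both of these preserve the partition $\{\bel_0,\bel_1\}\sqcup\{\ber_0,\ber_1\}$ and act trivially on $\{\bel,\ber\}$ (note that, unlike in $\img{F}$, no generator of $\widehat{\group{G}}$ carries the permutation $\pi$). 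Hence $\widehat{\group{G}}$ preserves each subtree $T_w$, $w\in\{\bel,\ber\}^{-\omega}$, the map forgetting the indices descends to a continuous surjection $\lims[\widehat{\group{G}}]\arr\{\bel,\ber\}^{-\omega}$, and, since the restriction of $\widehat{\group{G}}$ to $T_w$ is level-transitive (it contains $\group{G}_1$), its fibres $\Sigma_w$ are exactly the connected components of $\lims[\widehat{\group{G}}]$; each $\Sigma_w$ is the quotient of $T_w\cong\{0,1\}^{-\omega}$ by the asymptotic equivalence of $\widehat{\group{G}}$.

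Next I would produce the two maps onto $\Sigma_w$ and the gluing data. Restricting $\group{G}_1$ and $\group{G}_2$ to $T_w$ gives self-similar groups $\group{G}_{1,w}$, $\group{G}_{2,w}$ on the binary tree; by Proposition~\ref{pr:slices} the limit space of $\group{G}_{1,w}$ is $J_1(p)$, where $p\in\julia{f}$ is the point encoded by $w$, and since $\group{G}_2$ is $\group{G}_1$ with its wreath recursion conjugated by $\pi$ (i.e.\ with $\bel$ and $\ber$ interchanged at every level) the group $\group{G}_{2,w}$ is equivalent to $\group{G}_{1,\kappa(w)}$, so its limit space is $J_1(\overline p)$ by Proposition~\ref{pr:quotSP}. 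As the asymptotic equivalences of $\group{G}_1$ and $\group{G}_2$ are sub-relations of that of $\widehat{\group{G}}$, restricting to $T_w$ yields continuous surjections $\Psi_p\colon J_1(p)\arr\Sigma_w$ and $\Psi_{\overline p}\colon J_1(\overline p)\arr\Sigma_w$. Moreover $\tau=\gamma_1\alpha_1\beta_1=\gamma_2\alpha_2\beta_2$ restricts on $T_w$ to the binary adding machine, and — as recalled at the start of ``External angles'' — the subgroup $\langle\tau\rangle$ realizes the Carath\'eodory loop of $J_1(p)$ inside the limit space of $\group{G}_{1,w}$ and, simultaneously, the Carath\'eodory loop of $J_1(\overline p)$ inside the limit space of $\group{G}_{2,w}$, both as images of one circle $\R/\Z$; being a single subgroup of $\widehat{\group{G}}$, $\langle\tau\rangle$ forces the images of these two loops to coincide in $\Sigma_w$. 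Consequently $\Psi_p$ and $\Psi_{\overline p}$ agree on the Carath\'eodory loops and factor through the formal mating of $J_1(p)$ with $J_1(\overline p)$ along them, defining a continuous surjection $\Psi$ from the mating onto $\Sigma_w$; rewriting the $\langle\tau\rangle$-parametrization in standard external angles, and using that $J_1(\overline p)$ is the complex conjugate of $J_1(p)$, exhibits this gluing as the ``mirror image'' alignment in the statement. It then remains to prove that $\Psi$ is injective and that its seam runs through the external ray(s) landing on $(p,p)$.

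Both remaining points I would settle by classifying the left-infinite paths in the Moore diagram of the $122$-element nucleus of $\widehat{\group{G}}$, with labels pairs of sequences over $\{\bel_0,\bel_1,\ber_0,\ber_1\}$, exactly as the $10$-state nucleus of $\group{R}$ was handled in Proposition~\ref{pr:asequivangles} and the reduced nucleus of $\img{F}$ in Proposition~\ref{pr:externalraysonpp}. Paths confined to the ``$\group{G}_1$-part'' of the nucleus (vertices that are products of $\alpha_1,\beta_1,\gamma_1$ and their conjugates) should produce exactly the $\group{G}_{1,w}$-identifications, and symmetrically for the ``$\group{G}_2$-part'', so that $\Psi_p$ and $\Psi_{\overline p}$ introduce no identifications beyond those of the mating; the remaining paths run through the mixed vertices built from $\tau$, $B=\beta_1\beta_2$, $C=\gamma_1\gamma_2$ and their conjugates, and tracking the accumulated shift of the index coordinate along such a path — the same computation that produces the angle differences $\pm\frac{1}{2^{k}3}$ in Proposition~\ref{pr:asequivangles} — shows that these identify a point of $J_1(p)$ with a point of $J_1(\overline p)$ precisely as the Carath\'eodory gluing prescribes, with the seam passing through the external ray landing on $(p,p)$ described in Proposition~\ref{pr:externalraysonpp}. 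When $p$ lies in the backward orbit of $q_1$ and two rays land on $(p,p)$, the analysis should select exactly one of them, and the two sequences $w$ encoding such a $p$ then give the two possible matings. Combining the path types with $\Psi_p,\Psi_{\overline p}$ shows $\Psi$ is a homeomorphism, which is the assertion.

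The main obstacle is this path classification: a $122$-state nucleus is substantially larger than those of Propositions~\ref{pr:asequivangles}–\ref{pr:externalraysonpp}, and the genuinely delicate points are (a) matching the two copies $\group{G}_1,\group{G}_2$ — which preserve the \emph{same} subtrees $T_w$ — with the two ``halves'' $J_1(p)$ and $J_1(\overline p)$ of the mating, and (b) reading off from the mixed paths the precise alignment of the two Carath\'eodory loops and reconciling it with the external-ray count of Proposition~\ref{pr:externalraysonpp}. As elsewhere in the paper this is made tractable by computing sections of the generators and of short products, using the commutations of Lemma~\ref{lem:bgcommute} together with the identity $\gamma_1\alpha_1\beta_1=\gamma_2\alpha_2\beta_2$, and by deleting from the Moore diagram every arrow that lies on no admissible left-infinite path, reducing it to a small explicit graph of the kind appearing in Figure~\ref{fig:moore2}.
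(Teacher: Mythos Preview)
Your first two paragraphs track the paper's proof closely: the fibration over $\{\bel,\ber\}^{-\omega}$, the identification of the two restricted groups $\group{G}_{1,w}$ and $\group{G}_{2,w}$ with $\group{G}_w$ and $\group{G}_{\kappa(w)}$ via Propositions~\ref{pr:slices} and~\ref{pr:quotSP}, and the use of $\tau=\gamma_1\alpha_1\beta_1=\gamma_2\alpha_2\beta_2$ to force the two Carath\'eodory parametrizations to coincide on $T_w$ --- this is exactly what the paper does, and it is already enough to determine the alignment. Since $\langle\tau\rangle$ sits inside both copies with the \emph{same} action on $T_w$, the point of index-sequence $\ldots 000$ has angle $0$ in both parametrizations; in the $\group{G}_1$-picture this is the ray landing on $(p,p)$, and in the $\group{G}_2$-picture (which is the $\group{G}_1$-picture over $\kappa(w)$) it is the ray landing on $(\overline p,\overline p)$. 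The paper then simply invokes Proposition~\ref{pr:asequivangles} to translate this into the language of external angles on $\lims[\group R]$ and to see which of the two rays is selected when $p$ is in the grand orbit of $q_1$.

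Where you diverge is in your third and fourth paragraphs: you propose to classify all left-infinite paths in the $122$-state nucleus of $\widehat{\group{G}}$ in order to verify that the surjection $\Psi$ from the formal mating onto $\Sigma_w$ is injective. The paper does not do this. Its proof is four sentences long and does not touch the $122$-state nucleus beyond recording its existence; the injectivity of $\Psi$ is not argued at all in this proof (it is effectively established later, once Proposition~\ref{pr:lattes} identifies each component with the pillowcase $\C/\group{H}$). Your route is a legitimate way to make the proposition self-contained, and the symmetries you list ($\beta_1\leftrightarrow\beta_2$, $\gamma_1\leftrightarrow\gamma_2$, Lemma~\ref{lem:bgcommute}, and the shared $\tau$) would indeed collapse the Moore diagram substantially --- but it is a much heavier computation than anything the paper actually carries out here, and you should be aware that the paper's own argument at this point is really just the observation in your second paragraph together with a pointer to Proposition~\ref{pr:asequivangles}.
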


Equivalently, the connected components of the limit space of
$\widehat{\group{G}}$ are obtained by taking two copies of $J_1(p)$,
and gluing the Caratheodory loop around one copy of $J_1(p)$ to
its mirror reflection along the diameter containing a ray landing on
$(p, p)$.

The image of the Caratheodory loop will be a curve going through every point of the
pillowcase $\C/\mathcal{H}$ induced by the embedding
$\langle\tau\rangle<\img{F}$.

\begin{proof}
It follows from Propositions~\ref{pr:slices} and~\ref{pr:quotSP} that the
connected components of the limit space of $\widehat{\group{G}}$ are
obtained by gluing together the slice $J_1(p)$ of the Julia set of $F$
with the slice $J_1(\overline{p})$. Since $\gamma_1\alpha_1\beta_1=\gamma_2\alpha_2\beta_2=\tau$, the
Caratheodory loop around $J_1(p)$ is identified with the Caratheodory
loop around $J_1(\overline{p})$ by the map induced by the map
$\ldots X^{(2)}_{i_2}X^{(1)}_{i_1}\mapsto\ldots
Y^{(2)}_{i_2}Y^{(1)}_{i_1}$ on the corresponding sets of
sequences. Here $\ldots i_2i_1\in\{0, 1\}^{-\omega}$ encodes the
points of the circle $\lims[\langle\tau\rangle]$ and $\ldots
X^{(2)}X^{(1)}=\kappa(\ldots Y^{(2)}Y^{(1)})$ is the sequence encoding
the point $p$. The identification rule of the circles of external rays
follows then from Proposition~\ref{pr:asequivangles}.
\end{proof}

Classically (see~\cite{milnor:dragons}) the identifications described in
Proposition~\ref{pr:mating} are called ``matings''. The only
difference is that in the case of the classical mating the polynomials
are monic, and the corresponding Caratheodory loops are reflected with
respect to the real axis (which corresponds to the angle 0 external
ray of a special fixed point of the polynomial). In our case we
reflect the Caratheodory loop with respect to the diameter containing
a ray landing on the points of the invariant line $(p, p)$. Since
there can be two rays landing on $(p, p)$, there are two possible
``rotated matings''.

As particular cases of components described in
Proposition~\ref{pr:mating} we get the mating of the polynomial
$h_{q_0}(z)=\left(\frac{2z}{q_0+1}-1\right)^2$, for $q_0\approx -0.6478 + 1.7214i$,
with itself (see a detailed analysis of this mating in~\cite{milnor:dragons}),
and two rotated matings of the polynomial
$h_{q_1}(z)=\left(\frac{2z}{q_1+1}-1\right)^2$, for $q_1\approx
0.2956$ with itself.

\subsection{The self-similarity biset of $\widehat{\group{G}}$}

Let $(\hat\bel_0, \hat\bel_1, \hat\ber_0, \hat\ber_1)$ be the ordered basis of the self-similarity
$\widehat{\group{G}}$-biset corresponding to the original wreath
recursion~\eqref{eq:Ghat1}--\eqref{eq:Ghat6}.

Then the $\group{G}_1$-biset $\{\hat\bel_0, \hat\bel_1, \hat\ber_0,
\hat\ber_1\}\cdot\group{G}_1$ is naturally isomorphic to the
self-similarity biset of $\group{G}$ (if we identify $\group{G}_1$ with $\group{G}$ in
the natural way). The isomorphism is given by the map
\[\hat\bel_0\mapsto\bel_0,\quad\hat\bel_1\mapsto\bel_1,\quad
\hat\ber_0\mapsto\ber_0,\quad\hat\ber_1\mapsto\ber_1,\]
where $\{\bel_0, \bel_1, \ber_0, \ber_1\}$ is the usual basis of the
self-similarity biset of $\group{G}$.

The $\group{G}_2$-biset $\{\hat\bel_0, \hat\bel_1, \hat\ber_0,
\hat\ber_1\}\cdot\group{G}_2$ is also isomorphic to the self-similarity biset
of $\group{G}$ via the mapping
\[\hat\bel_0\mapsto\ber_0,\quad\hat\bel_1\mapsto\ber_1,\quad
\hat\ber_0\mapsto\bel_0,\quad
\hat\ber_1\mapsto\bel_1.\]

The self-similarity biset of $\widehat{\group{G}}$ is a direct sum
(i.e., disjoint union) of the
biset $\bimL=\{\hat\bel_0, \hat\bel_1\}\cdot\widehat{\group{G}}$ and
$\bimR=\{\hat\ber_0, \hat\ber_1\}\cdot\widehat{\group{G}}$. Also denote for $i=1,2$
\[\bimL_i=\{\hat\bel_0, \hat\bel_1\}\cdot\group{G}_i,\qquad
\bimR_i=\{\hat\ber_0, \hat\ber_1\}\cdot\group{G}_i.\]
Let us identify $\group{G}_1$ and $\group{G}_2$ with $\group{G}$ in a
natural way, so that $\bimL_i$ and $\bimR_i$ become
$\group{G}$-bisets. Note that then $\bimL_1=\bimR_2$ and $\bimL_2=\bimR_1$.

Let $a=\pi(a, a\alpha, a, \alpha a)$, which is the element $a=\pi$
of $\overline{\group{G}}$ written with respect to the basis $\bel_0=\bee_{00}$,
$\bel_1=\bee_{01}\cdot\beta$, $\ber_0=\bee_{10}\cdot a$,
$\ber_1=\bee_{11}\cdot\beta\alpha_1a$, see
Proposition~\ref{pr:imgind2}. Then $a$ induces an
automorphism of $\group{G}$ by conjugation:
\[\alpha^a=\alpha,\quad\beta^a=\beta^\alpha,\quad\gamma^a=\gamma.\]

Let $\bim_0=\{\bee_{00}, \bee_{01}\}\cdot\group{G}$ and
$\bim_1=\{\bee_{10}, \bee_{11}\}\cdot\group{G}$ be the natural
$\group{G}$-bisets, see Subsection~\ref{ss:propertiesofgroups}.

\begin{proposition}
\label{pr:Ghatbiset}
Let $v=X^{(1)}X^{(2)}\ldots X^{(n)}\in\{\bimL, \bimR\}^n$, denote
\[x_i=\left\{\begin{array}{ll} 0 & \text{if $X^{(i)}=\bimL$,}\\ 1 &
    \text{if $X^{(i)}=\bimR$.}\end{array}\right.\]
Then the biset $X^{(1)}_1\otimes X^{(2)}_1\otimes\cdots\otimes
X^{(n)}_1$ is isomorphic to the biset
\[\bim_{x_1}\otimes\bim_{x_1+x_2}\otimes\bim_{x_2+x_3}\otimes\cdots\otimes
\bim_{x_{n-1}+x_n}\cdot a^{x_n},\]
where addition of indices is modulo two.

The $\group{G}$-biset $X^{(1)}_2\otimes X^{(2)}_2\otimes\cdots\otimes
X^{(n)}_2$ is isomorphic to the biset
\[\bim_{1+x_1}\otimes\bim_{x_1+x_2}\otimes\bim_{x_2+x_3}\otimes\cdots\otimes
\bim_{x_{n-1}+x_n}\cdot a^{1+x_n}.\]
\end{proposition}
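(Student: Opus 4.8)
\emph{Plan of proof.} The plan is to prove both assertions by induction on $n$, after transporting everything, via Proposition~\ref{pr:imgind2}, into the single biset $\overline{\bim}$ of $\overline{\group{G}}$, in which the element $a$ acts and in which the two $\group{G}$-sub-bisets $\bim_0,\bim_1$ sit explicitly. Write $\psi$ for the automorphism $g\mapsto a^{-1}ga$ of $\group{G}$; since $a^{2}=1$ we have $\psi^{-1}=\psi$, and $\psi(\alpha)=\alpha$, $\psi(\beta)=\beta^{\alpha}$, $\psi(\gamma)=\gamma$. For a $\group{G}$-biset $\M$ realized inside $\overline{\bim}$ let ${}^{\psi}\M$ (resp.\ $\M^{\psi}$) denote $\M$ with its left (resp.\ right) action precomposed with $\psi$, and let $\M\cdot a=\{m\cdot a:m\in\M\}$, which is again a $\group{G}$-sub-biset because $a$ normalizes $\group{G}$. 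With this notation the whole argument reduces to a single ``transition rule'' for tensoring.

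First I would dispose of the base case $n=1$ and the bookkeeping it rests on. From Proposition~\ref{pr:imgind2} one has $\bel_0=\bee_{00}$, $\bel_1=\bee_{01}\beta$, $\ber_0=\bee_{10}a$, $\ber_1=\bee_{11}\beta\alpha a$, and since $a$ normalizes $\group{G}$ this gives $\{\bel_0,\bel_1\}\cdot\group{G}=\bim_0$ and $\{\ber_0,\ber_1\}\cdot\group{G}=\bim_1\cdot a$. Because the recursion \eqref{eq:Ghat1}--\eqref{eq:Ghat3} defining $\group{G}_1$ is the $\group{G}$-recursion of Theorem~\ref{th:imgF}, the identifications $\hat\bel_i\mapsto\bel_i$, $\hat\ber_i\mapsto\ber_i$ yield $\bimL_1\cong\bim_0$ and $\bimR_1\cong\bim_1\cdot a$; conjugation by $\pi$ swaps $\hat\bel_i$ with $\hat\ber_i$ and turns \eqref{eq:Ghat1}--\eqref{eq:Ghat3} into \eqref{eq:Ghat4}--\eqref{eq:Ghat6}, so $\bimL_2\cong\bim_1\cdot a$ and $\bimR_2\cong\bim_0$. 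In all four cases, using $a^{2}=1$, this reads $X^{(1)}_1\cong\bim_{x_1}\cdot a^{x_1}$ and $X^{(1)}_2\cong\bim_{1+x_1}\cdot a^{1+x_1}$, i.e.\ the statement for $n=1$.

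The heart of the matter is the transition rule
\[(\bim_y\cdot a^{k})\otimes(\bim_z\cdot a^{l})\ \cong\ \bim_y\otimes\bigl(\bim_{z+k}\cdot a^{l}\bigr),\qquad k,l\in\{0,1\},\ \text{indices mod }2,\]
which I would derive from two ingredients. (a) Left translation $m\mapsto a\cdot m$ is a bijection $\bim_0\to\bim_1$ (because $a\cdot\bee_{ij}=\bee_{(1-i)j}$ and $a$ has trivial sections), it satisfies $a\cdot(g\cdot m)=\psi(g)\cdot(a\cdot m)$ and preserves the right action, hence it is an isomorphism ${}^{\psi}\bim_0\cong\bim_1$; symmetrically ${}^{\psi}\bim_1\cong\bim_0$. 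Since twisting the left action and right-translating by $a$ act on opposite sides and commute, this upgrades to ${}^{\psi}(\bim_j\cdot a^{l})\cong\bim_{j+1}\cdot a^{l}$. (b) The map $m\cdot a\mapsto m$ is an isomorphism $\bim_y\cdot a\cong\bim_y^{\psi}$, and combined with the standard natural isomorphism $\bim_y^{\psi}\otimes\M\cong\bim_y\otimes{}^{\psi}\M$ it gives $(\bim_y\cdot a)\otimes\M\cong\bim_y\otimes{}^{\psi}\M$ for every $\group{G}$-biset $\M$. Feeding $\M=\bim_z\cdot a^{l}$ into (b) and invoking (a) (together with the trivial exponent-$0$ case) produces the rule.

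Finally I would run the induction: append $X^{(n+1)}_i$ on the right of the length-$n$ biset supplied by the hypothesis, whose last tensor factor is $\bim_{x_{n-1}+x_n}\cdot a^{x_n}$ in the $\group{G}_1$ case (resp.\ $\bim_{x_{n-1}+x_n}\cdot a^{1+x_n}$ in the $\group{G}_2$ case), and apply the transition rule together with $X^{(n+1)}_1\cong\bim_{x_{n+1}}\cdot a^{x_{n+1}}$ (resp.\ $X^{(n+1)}_2\cong\bim_{1+x_{n+1}}\cdot a^{1+x_{n+1}}$); the exponents collapse mod $2$ to replace that factor by $\bim_{x_{n-1}+x_n}\otimes\bigl(\bim_{x_n+x_{n+1}}\cdot a^{x_{n+1}}\bigr)$ (resp.\ $\bim_{x_{n-1}+x_n}\otimes\bigl(\bim_{x_n+x_{n+1}}\cdot a^{1+x_{n+1}}\bigr)$), while the unchanged prefix is $\bim_{x_1}\otimes\bim_{x_1+x_2}\otimes\cdots\otimes\bim_{x_{n-2}+x_{n-1}}$ by hypothesis — exactly the asserted form for length $n+1$. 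I expect the only real subtlety to be ingredient (a), namely pinning down that passing from $\bim_0$ to $\bim_1$ is \emph{precisely} twisting the left action by $\psi$; a minor point to address at the outset is that $X^{(1)}_1\otimes\cdots\otimes X^{(n)}_1$ really is the $\group{G}_1$-tensor power (so that no ``mixed'' sections intervene), which is immediate from $\group{G}_1$ being self-similar. Everything else is formal biset algebra and bookkeeping of exponents modulo $2$.
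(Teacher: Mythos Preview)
Your proposal is correct and follows essentially the same route as the paper: identify $\bimL_1\cong\bim_0$, $\bimR_1\cong\bim_1\cdot a$, $\bimL_2\cong\bim_1\cdot a$, $\bimR_2\cong\bim_0$, observe that left-multiplication by $a$ swaps $\bim_0$ and $\bim_1$, and then push the powers of $a$ through the tensor product. The paper is terser --- it writes the whole product as $\bim_{x_1}\cdot a^{x_1}\otimes\cdots\otimes\bim_{x_n}\cdot a^{x_n}$ and collapses it in one line using $a\cdot\bim_j\cong\bim_{j+1}$ --- while you package the same step as an explicit transition rule via the twist $\psi$ and run an induction; but the content is identical.
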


\begin{proof}
We have $\bel_0=\bee_{00}$, $\bel_1=\bee_{01}\cdot\beta_1$, so $\bimL_1=\bim_0$.
We have $\ber_0=\bee_{10}\cdot a$ and
$\bel_1=\bee_{11}\cdot\beta\alpha a$, hence $\bimR_1$ is identified
with $\bim_1\cdot a$.
Consequently, $\bimL_2$ is isomorphic to $\bim_1\cdot a$, and  $\bimR_2$
is isomorphic to $\bim_0$.

Note that it follows from the wreath recursions defining
$\overline{\group{G}}$ that $a\cdot\bim_0\cong\bim_1$ and
$a\cdot\bim_1\cong\bim_0$.

Consequently, the biset $X^{(1)}_1\otimes X^{(2)}_1\otimes\cdots\otimes
X^{(n)}_1$ is isomorphic to the biset
$\bim_{x_1}\cdot a^{x_1}\otimes\bim_{x_2}\cdot
a^{x_2}\otimes\cdots\otimes\bim_{x_n}\cdot
a^{x_n}$, which is isomorphic to $\bim_{x_1}\otimes\bim_{x_1+x_2}\otimes\bim_{x_2+x_3}\otimes\cdots\otimes
\bim_{x_{n-1}+x_n}\cdot a^{x_n}$.

Similarly, the $\group{G}$-biset $X^{(1)}_2\otimes X^{(2)}_2\otimes\cdots\otimes
X^{(n)}_2$ is isomorphic to $\bim_{1+x_1}\cdot a^{1+x_1}\otimes\bim_{1+x_2}\cdot
a^{1+x_2}\otimes\cdots\otimes\bim_{x_n}\cdot
a^{1+x_n}$, which is isomorphic to
$\bim_{1+x_1}\otimes\bim_{x_1+x_2}\otimes\bim_{x_2+x_3}\otimes\cdots\otimes
\bim_{x_{n-1}+x_n}\cdot a^{x_n}$.
\end{proof}

\subsection{A virtually abelian subgroup of $\widehat{\group{G}}$}

Denote $A=\alpha_2$, $B=\beta_1\beta_2$, and $C=\gamma_1\gamma_2$.
We have then $\alpha_1=CAB=BAC$.

Let us pass to the basis
\[\bex_1=\hat\bel_0,\quad\bex_2=\hat\bel_1\cdot\beta_1,\quad
\bex_3=\hat\ber_0,\quad\bex_4=\hat\ber_1\cdot\beta_2,
\] i.e., conjugate the wreath recursion defining $\widehat{\group{G}}$ by $(1,
\beta_1, 1, \beta_2)$. We get then
\begin{eqnarray*}
CAB=\alpha_1 &=& \sigma(1, 1, AC, CA),\\
\beta_1 &=& (1, CAB, CAB, 1),\\
\gamma_1 &=& (\gamma_1, \beta_1, \gamma_1, \beta_1),\\
A=\alpha_2 &=& \sigma(BA, AB, 1, 1),\\
\beta_2 &=& (A, 1, 1, A),\\
\gamma_2 &=& (\gamma_2, \beta_2, \gamma_2, \beta_2).
\end{eqnarray*}

It follows that
\begin{eqnarray*}
A &=& \sigma(BA,
AB, 1, 1),\\
B &=& (A, CAB, CAB, A),\\
C &=& (C, B, C, B).
\end{eqnarray*}

\begin{proposition}
\label{pr:lattes} The subgroup $\group{H}=\langle A, B, C\rangle$
of $\widehat{\group{G}}$ is
isomorphic as a self-similar group to the group of affine transformations of $\C$ of the
form $z\mapsto \pm z+q$, where $q\in\Z[i]$. The isomorphism
identifies $A, B$ and $C$ with the affine transformations
\[z\cdot A=-z+1,\quad z\cdot B=-z+1+i,\quad z\cdot C=-z,\]
the basis of the self-similarity biset is identified with
the affine transformations
\begin{eqnarray*}
z\cdot\bex_1 &=& \frac 1{1+i}z=\frac{1-i}2z,\\
z\cdot\bex_2 &=& \frac{1}{1+i}(-z+i)=-\frac{1-i}2z+\frac{1+i}2,\\
z\cdot\bex_3 &=& \frac 1{1-i}z=\frac{1+i}2z,\\
z\cdot\bex_4 &=& \frac 1{1-i}(-z+1)=-\frac{1+i}2z+\frac{1+i}2.
\end{eqnarray*}
\end{proposition}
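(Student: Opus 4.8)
The statement is an identification of one self-similar group (given by an explicit wreath recursion over a $4$-letter alphabet) with a concrete group of affine maps of $\C$ together with an explicit biset. Since the wreath recursion determines the self-similar group (as a pair $(\group{H},\bim)$) up to equivalence, the plan is to produce the isomorphism by hand and then verify that it intertwines the two wreath recursions. First I would set up the candidate isomorphism: let $\group{H}_0$ denote the group of affine maps $z\mapsto\pm z+q$, $q\in\Z[i]$, acting on $\C$ from the right, and let the candidate biset be $\C$ with the four right actions $z\cdot\bex_i$ listed in the statement; note each $z\cdot\bex_i$ is a contraction by $1/\sqrt2$ whose images tile $\C$ modulo $\Z[i]$-translations, so this genuinely defines a covering biset with basis $\{\bex_1,\dots,\bex_4\}$. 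The map on generators is $A\mapsto(z\mapsto -z+1)$, $B\mapsto(z\mapsto -z+1+i)$, $C\mapsto(z\mapsto -z)$.

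The core of the argument is a direct computation of the wreath recursion of $\group{H}_0$ with respect to the basis $\{\bex_i\}$, checking it matches the displayed recursion
\begin{align*}
A &= \sigma(BA, AB, 1, 1),\\
B &= (A, CAB, CAB, A),\\
C &= (C, B, C, B).
\end{align*}
For each generator $g\in\{A,B,C\}$ and each basis element $\bex_i$ one computes $g$ composed (on the appropriate side) with $z\cdot\bex_i$, writes the result as $z\cdot\bex_{j}\cdot h$ for the unique $j$ and the unique $h\in\group{H}_0$, and reads off the permutation and the sections. For instance $A$ fixes the "left/right" split determined by the real part modulo the tiling in a way that produces the transposition $\sigma=(12)(34)$ on the first two versus last two letters, and the sections $BA$, $AB$, $1$, $1$ come out of composing the affine maps: $z\mapsto -z+1$ post-composed with $z\cdot\bex_1=\frac{1-i}2 z$ equals $\frac{1-i}2(-z+1)$, which one rewrites as $(z\cdot\bex_2)$ followed by some affine map, etc. This is the same kind of bookkeeping as in the Lattès-map computations of \cite{nek:book}, and I would organize it as a short table rather than prose. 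One also records that $\alpha_1=CAB=BAC$ holds in $\group{H}_0$ (both equal $z\mapsto z$ composed appropriately — in fact $CAB$ sends $z\mapsto -(-( -z)+1)+1+i = \cdots$, a translation), matching the relation noted before the proposition.

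Having matched the wreath recursions, I invoke the standard fact (from Subsection~\ref{ss:wreathvirtend}) that a self-similar group together with a chosen ordered basis is determined up to isomorphism by its wreath recursion: since $\group{H}=\langle A,B,C\rangle<\widehat{\group{G}}$ and $\group{H}_0$ have the same wreath recursion on the same alphabet, there is an isomorphism $\group{H}\to\group{H}_0$ of self-similar groups sending $A,B,C$ to the named affine maps and the basis $\{\bex_i\}$ to the named affine biset. The one genuine subtlety — which I expect to be the main obstacle — is that a wreath recursion presents a self-similar group only as a \emph{quotient} of the abstract group generated by the recursion; concretely, I must check that the homomorphism $\group{H}\to\group{H}_0$ is injective, i.e. that the action of $\group{H}$ on $\bex^*$ is faithful and agrees with the (faithful) action of $\group{H}_0$ on its biset. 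For $\group{H}_0$ faithfulness is clear because a nonidentity affine map $z\mapsto\pm z+q$ moves some point of $\C$, hence acts nontrivially deep in the tree; and $\group{H}\hookrightarrow\widehat{\group{G}}$, whose action on the tree is faithful by construction (iterated monodromy groups act faithfully). So both groups embed faithfully via the same wreath recursion and are therefore isomorphic as self-similar groups, which completes the proof. I would close by remarking that $\group{H}$ is virtually abelian — it contains the index-two translation subgroup $\langle CA,CB\rangle\cong\Z^2$ — which is the property advertised in the section title.
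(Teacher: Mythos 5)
Your plan is essentially the paper's proof: the paper's argument consists exactly of the generator-by-generator, letter-by-letter verification that the named affine maps intertwine with the maps $z\cdot\bex_i$ as the wreath recursion prescribes (e.g.\ $z\cdot A\cdot\bex_1=-\frac{1-i}2z+\frac{1-i}2=z\cdot\bex_2\cdot BA$, $z\cdot B\cdot\bex_1=z\cdot\bex_1\cdot A$, and so on), and it stops there, the identification of the two self-similar groups then being taken as read. Two repairs to your write-up are needed. First, the parenthetical claim that $CAB$ is a translation is false: your own formula evaluates to $z\cdot C\cdot A\cdot B=-z+i$, an involution (rotation by $180^\circ$ about $i/2$) --- as it must be, since $CAB$ is identified with the involution $\alpha_1$; the identity $CAB=BAC$ does hold, both sides being $z\mapsto-z+i$. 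Second, the faithfulness step, which you rightly single out as the only point beyond bookkeeping, is not justified as written: ``a nonidentity affine map moves some point of $\C$, hence acts nontrivially deep in the tree'' is a non sequitur, since for instance $C\colon z\mapsto-z$ moves every point except $0$ yet fixes every vertex of the first two levels of the tree. The correct short argument is this: a word $v$ of length $n$ is the affine contraction $z\mapsto\mu z+c$ with $|\mu|=2^{-n/2}$, and for $h\colon z\mapsto\epsilon z+q$ one computes $h\cdot v=v\cdot h'$ with $z\cdot h'=\epsilon z+\mu q+(1-\epsilon)c$; hence $h$ fixes the vertex $v$ if and only if $\mu q+(1-\epsilon)c\in\Z[i]$, and for $h\ne1$ a suitable choice of $v$ (e.g.\ $\bex_1^n$ with $n$ large, or $\bex_2\bex_1^n$ in the remaining case $\epsilon=-1$, $q=0$) makes this constant a nonzero number of modulus less than $1$, hence not in $\Z[i]$. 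With that inserted, your proof coincides with the paper's computation and in addition makes explicit the injectivity point that the paper leaves implicit.
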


For identification of permutational bisets with sets of (partial)
transformations, see Subsection~\ref{ss:wreathvirtend}. 
The biset structure comes from pre- and post-composition
with the group action.

\begin{proof}
We have $\bex_4=A\cdot\bex_3$ and $A\cdot\bex_4=\bex_3$,
\[z\cdot
A\cdot\bex_1=-\frac{1-i}2z+\frac{1-i}2=-\left(-\left(-\frac{1-i}2z+
\frac{1+i}2\right)+1+i\right)+1=
\bex_2\cdot BA,\] hence $A\cdot\bex_1=\bex_2\cdot BA$ and $A\cdot\bex_2=\bex_1\cdot AB$, which
agrees with the wreath recursion.

We have
\[z\cdot B\cdot\bex_1=\frac{1-i}2(-z+1+i)=-\frac{1-i}2z+1=z\cdot\bex_1\cdot A,\]
\[z\cdot B\cdot\bex_2=-\frac{1-i}2(-z+1+i)+\frac{1+i}2=\frac{1-i}2z-\frac{1-i}2=
z\cdot\bex_2\cdot CAB,\] since $z\cdot CAB=-z+i$,
\[z\cdot B\cdot\bex_3=\frac{1+i}2(-z+1+i)=-\frac{1+i}2z+i=z\cdot\bex_3\cdot CAB,\]
and
\[z\cdot B\cdot\bex_4=-\frac{1+i}2(-z+1+i)+\frac{1+i}2=\frac{1+i}2z+\frac{1-i}2=
z\cdot\bex_4\cdot A,\] which also agrees with the wreath
recursion.

Finally, it is easy to check that $z\cdot C\cdot\bex_1=z\cdot
\bex_1\cdot C$, $z\cdot C\cdot\bex_3=z\cdot\bex_3\cdot C$ and
\[z\cdot C\cdot\bex_2=\frac{1-i}2z+\frac{1+i}2=z\cdot\bex_2\cdot B,\]
and
\[z\cdot C\cdot\bex_4=\frac{1+i}2z+\frac{1+i}2=z\cdot\bex_4\cdot B.\]
\end{proof}

\subsection{The limit dynamical system of $\group{H}$}

\begin{proposition}
\label{pr:Hspacelim}
The limit $\group{H}$-space $\limg[\group{H}]$ is homeomorphic to the direct
product of $\C$ with the Cantor set $\{\bel, \ber\}^{-\omega}$ with the
natural (right) action of $\group{H}$ on $\C$ and trivial action on $\{\bel,
\ber\}^{-\omega}$. The self-similarity structure is given by
\begin{eqnarray*}
(z, \ldots y_2y_1)\otimes\bex_1 &=& \left(\frac{1-i}2z, \ldots y_2y_1\bel\right),\\
(z, \ldots y_2y_1)\otimes\bex_2 &=& \left(-\frac{1-i}2z+\frac{1+i}2, \ldots y_2y_1\bel\right),\\
(z, \ldots y_2y_1)\otimes\bex_3 &=& \left(\frac{1+i}2z, \ldots y_2y_1\ber\right),\\
(z, \ldots y_2y_1)\otimes\bex_4 &=& \left(-\frac{1+i}2z+\frac{1+i}2, \ldots y_2y_1\ber\right).\\
\end{eqnarray*}
\end{proposition}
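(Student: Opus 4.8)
The plan is to identify $\limg[\group{H}]$ via the rigidity theorem for limit $G$-spaces, Theorem~\ref{th:uniquenesslimg}, taking as model space $\mathcal{X}=\C\times\{\bel,\ber\}^{-\omega}$ with the right $\group{H}$-action that is the affine action of Proposition~\ref{pr:lattes} on the $\C$-factor and trivial on the Cantor factor, and with the maps $\xi\mapsto\xi\otimes\bex_i$ on the basis $\{\bex_1,\bex_2,\bex_3,\bex_4\}$ of the self-similarity biset $\bim$ of $\group{H}$ given by the four displayed formulas, extended to all of $\bim$ by $\xi\otimes(\bex_i\cdot g):=(\xi\otimes\bex_i)\cdot g$ for $g\in\group{H}$.

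First I would record that $\bim$ is hyperbolic. Since $\group{H}=\langle A,B,C\rangle$ is state-closed in the contracting group $\widehat{\group{G}}$ — the sections of $A,B,C$ read off the wreath recursion all lie in $\{1,A,B,C,AB,BA,CAB\}\subset\group{H}$ — the sections of any element of $\group{H}$ eventually fall into the finite set $\nuke(\widehat{\group{G}})\cap\group{H}$, so $\group{H}$ is contracting. Next I would equip $\C$ with the Euclidean metric, $\{\bel,\ber\}^{-\omega}$ with the standard ultrametric (distance $2^{-n}$ for sequences agreeing in their last $n$ letters), and $\mathcal{X}$ with the max metric, and verify the hypotheses of Theorem~\ref{th:uniquenesslimg}: $\group{H}$ acts by isometries (it moves only the $\C$-coordinate, through $z\mapsto\pm z+q$), properly (its translation subgroup $\Z[i]$ is discrete of index two) and cocompactly (the quotient is the pillowcase $\C/\group{H}$ times a Cantor set, hence compact), while $\mathcal{X}$ is itself a proper metric space; and each map $\xi\mapsto\xi\otimes x$ is continuous and strictly contracting with ratio $1/\sqrt2$, since on the $\C$-coordinate it is an affine map of linear part $\pm\frac{1\pm i}{2}$ of modulus $1/\sqrt2$ (post-composed with an isometry for a general $x=\bex_i\cdot g$), and on the Cantor coordinate it appends a fixed letter, which halves distances.

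The substantive step is the intertwining relation $\left(\xi\cdot g_1\otimes x\right)\cdot g_2=\xi\otimes(g_1\cdot x\cdot g_2)$. Writing $x=\bex_i\cdot g$ and using associativity of the right $\group{H}$-action on $\mathcal{X}$, this reduces to the identity $(\xi\cdot g_1)\otimes\bex_i=(\xi\otimes\bex_j)\cdot h$ whenever $g_1\cdot\bex_i=\bex_j\cdot h$ in $\bim$, and by a short multiplicativity argument it suffices to check it for $g_1\in\{A,B,C\}$. On the $\C$-coordinate this identity is exactly the computation $z\cdot g_1\cdot\bex_i=z\cdot\bex_j\cdot h$ already carried out in the proof of Proposition~\ref{pr:lattes}. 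On the Cantor coordinate it is automatic: $A,B,C$ and all sections $h$ act trivially there, $\bex_1,\bex_2$ both append $\bel$ and $\bex_3,\bex_4$ both append $\ber$, and the permutation parts of the wreath recursions for $A,B,C$ preserve the partition $\{1,2\}\sqcup\{3,4\}$, so both sides append the same letter. Theorem~\ref{th:uniquenesslimg} then produces a homeomorphism $\mathcal{X}\to\limg[\group{H}]$ intertwining both the $\group{H}$-actions and the $\otimes$-maps, which is the assertion.

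I do not expect a serious obstacle: the heart of the calculation is imported verbatim from Proposition~\ref{pr:lattes}, and the Cantor-factor bookkeeping is trivial. The only points that need genuine care are in the second step — confirming that the metric and topological hypotheses of the rigidity theorem really hold for the non-compact (but proper) space $\C\times\{\bel,\ber\}^{-\omega}$, and producing a single contraction constant uniform over all of $\bim$ rather than just on the basis — together with the conceptual check that the biset we attach to $\mathcal{X}$ is genuinely the self-similarity biset of $\group{H}$; the latter is forced, because the values of $\otimes$ on the basis, together with the relations $g_1\cdot\bex_i=\bex_j\cdot h$ coming from the wreath recursion, determine $\bim$ up to isomorphism.
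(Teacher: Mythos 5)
Your proposal is correct and follows exactly the route the paper takes: its proof is literally a one-line appeal to Proposition~\ref{pr:lattes} together with the rigidity Theorem~\ref{th:uniquenesslimg}, and your write-up just makes explicit the verifications (hyperbolicity of the biset via state-closedness of $\group{H}$ in $\widehat{\group{G}}$, the proper cocompact isometric action, the uniform contraction ratio $1/\sqrt2$, and the intertwining identities imported from the proof of Proposition~\ref{pr:lattes}) that the paper leaves to the reader.
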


\begin{proof}
Direct corollary of Proposition~\ref{pr:lattes} and Theorem~\ref{th:uniquenesslimg}.
\end{proof}

The orbispace $\C/\group{H}$ is a flat surface homeomorphic to the sphere
with four singular points, which are the images of the fixed
points $1/2, (1+i)/2, 0$, and $i/2$ of the transformations $A, B,
C$ and $CAB$, respectively. Let us denote these singular points by
$Z_A, Z_B, Z_C$ and $Z_{CAB}$, respectively. A fundamental domain
$D$ of $\group{H}$ is the rectangle with the vertices $i/2, 0, 1$
and $1+i/2$.

The natural map $D\arr\C/\group{H}$ folds this rectangle along the
segment connecting $1/2$ and $(1+i)/2$ in
two, so that we get a ``pillowcase'', whose vertices are the
points $Z_A, Z_B, Z_C$ and $Z_{CAB}$, see Figure~\ref{fig:fundamental}.

\begin{figure}[h]
\centering
\includegraphics{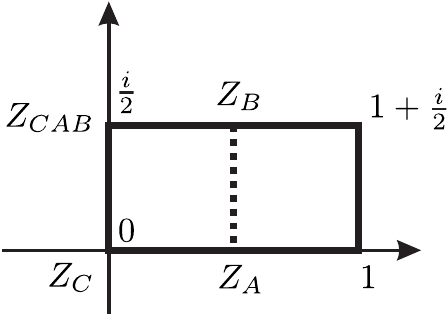}
\caption{Fundamental domain of $\group{H}$}
\label{fig:fundamental}
\end{figure}

The following is a direct corollary of the description of the limit
$\group{H}$-space given in Proposition~\ref{pr:Hspacelim}.

\begin{corollary}
The limit space $\lims[\group{H}]$ is homeomorphic to the direct product
$\C/\group{H}\times\{\bel, \ber\}^{-\omega}$. The shift map
$\si:\lims[\group{H}]\arr\lims[\group{H}]$ acts by the rule
\[\si(z, \ldots y_2y_1)=\left\{\begin{array}{ll} ((1-i)z, \ldots y_3y_2), & \text{if $y_1=\bel$},\\
((1+i)z, \ldots y_3y_2), & \text{if $y_1=\ber$.}\end{array}\right.\]
Here $z, (1-i)z$, and $(1+i)z$ are complex numbers representing the
corresponding points of the orbispace $\C/\group{H}$.
\end{corollary}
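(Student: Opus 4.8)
The plan is to read the statement off from Proposition~\ref{pr:Hspacelim}, using the general fact (see~\cite{nek:book}) that the limit space of a contracting self-similar group is the quotient of its limit $G$-space by the right action of the group, the quotient map being induced by $\ldots\bex_{i_2}\bex_{i_1}\cdot h\mapsto\ldots\bex_{i_2}\bex_{i_1}$. By Proposition~\ref{pr:Hspacelim} we may identify $\limg[\group{H}]$ with $\C\times\{\bel, \ber\}^{-\omega}$, on which $\group{H}$ acts by the affine transformations $z\mapsto\pm z+q$, $q\in\Z[i]$, of Proposition~\ref{pr:lattes} on the first factor and trivially on the Cantor factor. Since this action leaves the second coordinate untouched, the quotient is $(\C/\group{H})\times\{\bel, \ber\}^{-\omega}$, which is the desired identification of $\lims[\group{H}]$; the points of the orbispace $\C/\group{H}$ are represented by complex numbers exactly as in Proposition~\ref{pr:lattes}.

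Next I would identify the shift. A point $\xi\in\lims[\group{H}]$ is the class of a left-infinite word $\ldots\bex_{i_3}\bex_{i_2}\bex_{i_1}$ over the basis $\{\bex_1, \bex_2, \bex_3, \bex_4\}$, and $\si(\xi)$ is the class of $\ldots\bex_{i_3}\bex_{i_2}$. From the relation $\ldots\bex_{i_2}\bex_{i_1}\cdot 1=(\ldots\bex_{i_3}\bex_{i_2}\cdot 1)\otimes\bex_{i_1}$ in $\limg[\group{H}]$ together with the self-similarity formulas of Proposition~\ref{pr:Hspacelim}, the $\C$-coordinate of a lift of $\xi$ is the image of the $\C$-coordinate of the corresponding lift of $\si(\xi)$ under the affine contraction attached to $\bex_{i_1}$. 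Hence $\si$ acts on the $\C$-factor by the inverse of that contraction and on the Cantor factor by the one-sided shift $\ldots y_2 y_1\mapsto\ldots y_3 y_2$. Inverting the four maps of Proposition~\ref{pr:Hspacelim}, and recalling that $\bex_1, \bex_2$ append the letter $\bel$ while $\bex_3, \bex_4$ append $\ber$, then yields the asserted rule, with the multiplier $1\pm i$ determined by $y_1$.

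The one step that is not purely formal is the well-definedness of this transformation of $\C/\group{H}$: a point whose last Cantor letter is $\bel$ may be represented by words ending in $\bex_1$ or in $\bex_2$, so I must check that the inverses of the two corresponding affine contractions agree modulo $\group{H}$ (they differ by the involution $z\mapsto-z$ composed with a translation from $\Z[i]$), and symmetrically for $\ber$, $\bex_3$, $\bex_4$. This is a short computation with the explicit affine maps of Proposition~\ref{pr:lattes}; everything else is a direct translation of Proposition~\ref{pr:Hspacelim}, so I expect no real obstacle.
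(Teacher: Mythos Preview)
Your approach is exactly what the paper does: it states only that the corollary follows directly from Proposition~\ref{pr:Hspacelim}, and you have spelled out the two steps (passing to the quotient $\limg[\group{H}]/\group{H}$ and inverting the tensor contractions) that make this explicit, including the well-definedness check the paper leaves implicit. One small caution: when you actually invert the maps of Proposition~\ref{pr:Hspacelim} you will find that $\bex_1,\bex_2$ (which append $\bel$) have linear part $\frac{1-i}{2}=\frac{1}{1+i}$, so the inverse multiplier for $y_1=\bel$ is $1+i$, not $1-i$ as printed in the corollary; the labels $\bel$ and $\ber$ appear to be swapped in the displayed formula, so don't be alarmed when your computation disagrees with the stated rule.
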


\subsection{Schreier graphs of $\group{H}$ and $\widehat{\group{G}}$}

Consider the natural (right) action of $\group{H}$ on $\C$. Take the
basepoint $\xi=\frac{1+i}4$. It has trivial stabilizer in $\group{H}$,
hence we can consider the orbit of $\xi$ as a vertex set of the left
Cayley graph $\Gamma_{\group{H}}$ of $\group{H}$ with respect to the generating set
$A, B, C, CAB$. See the Cayley graph on Figure~\ref{fig:caley}. Here
edges corresponding to the generators $A, B, C$, and $CAB$ are orange,
blue, green, and red, respectively.

\begin{figure}[h]
\centering
\includegraphics{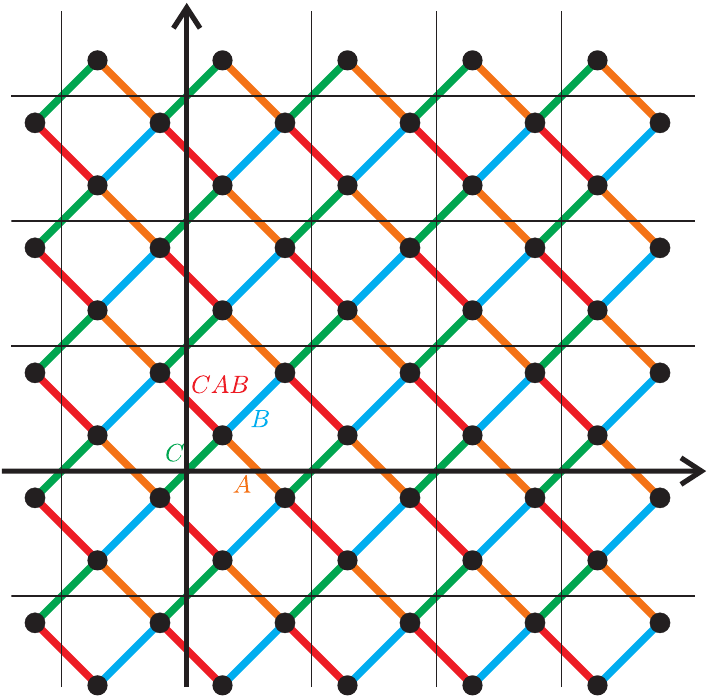}
\caption{Cayley graph of $\group{H}$}
\label{fig:caley}
\end{figure}

Denote by $\group{H}_n$ the subgroup of $\group{H}$ consisting of the affine
transformations of the form $z\mapsto \pm z+q$, where $q\in\Z[i]$
is a Gaussian integer divisible by $(1+i)^n$.

It follows then from Proposition~\ref{pr:lattes}, that the
Schreier graph of the action of $\group{H}$ on the $n$th
level of the tree consists of $2^n$ copies of the graph
$\Gamma_n(\group{H}):=\Gamma_{\group{H}}/\group{H}_n$.
A fundamental domain of $\group{H}_n$ is the rectangle with vertices
$0$, $(1+i)^n$, $i(1+i)^n/2$, and $(1+i/2)(1+i)^n$. Note that its
sides are either parallel to the real and imaginary axis (for even
$n$) or parallel to the diagonals $\Re(z)=\Im(z)$ and $\Re(z)=-\Im(z)$.

See Figure~\ref{fig:lattes} for the Schreier graphs
$\Gamma_5(\group{H})$ and $\Gamma_4(\group{H})$. Note that they have
four loops, which we will usually omit in the sequel, since we will
consider simplicial Schreier graphs only.

\begin{figure}[h]
\centering
  \includegraphics{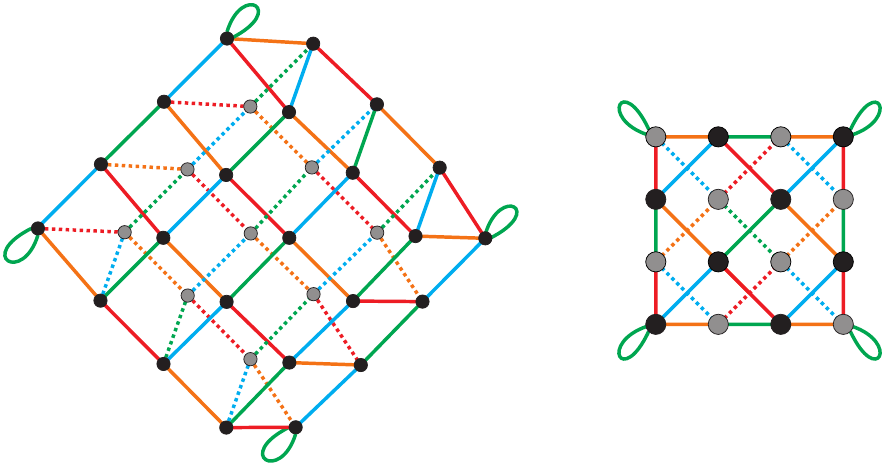}\\
  \caption{Schreier graphs $\Gamma_n(\group{H})$}\label{fig:lattes}
\end{figure}

Figure~\ref{fig:lattes2} shows a more convenient way of drawing
the Schreier graphs $\Gamma_n(\group{H})$ and their subgraphs. Here the
graphs $\Gamma_5(\group{H})$ and $\Gamma_4(\group{H})$ are drawn inside
fundamental domains of the action of $\group{H}_n$ on $\C$. In
order to get the Schreier graphs one has to fold the rectangle
into a square pillowcase (which corresponds to taking the quotient
$\C/\group{H}_n$).

\begin{figure}[h]
\centering
\includegraphics{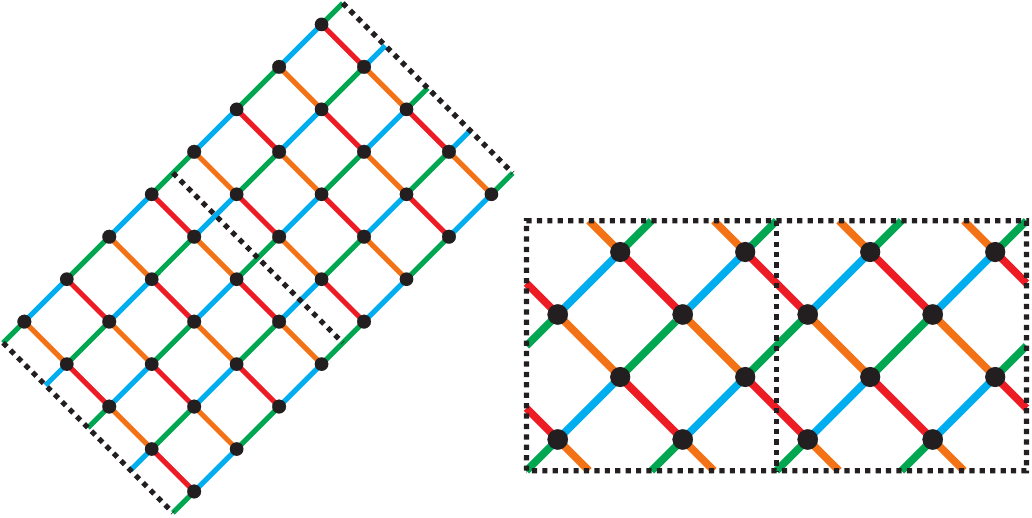}\\
\caption{Unfolded Schreier graphs
$\Gamma_{\group{H}_n}$}\label{fig:lattes2} \end{figure}

Denote for $v=X^{(1)}X^{(2)}\ldots X^{(n)}\in\{\bel, \ber\}^n$ by
$\Gamma_{1, v}$
and $\Gamma_{2, v}$ the corresponding connected components of the
Schreier graphs of the actions of $\group{G}_1$ and $\group{G}_2$ on
the $n$th level of the tree. More precisely, they are
the Schreier graphs of the actions of $\group{G}_i$
on the spaces of right orbits of the bisets
\[\{X_0^{(1)}, X_1^{(1)}\}\otimes\{X_0^{(2)},
X_1^{(2)}\}\otimes\cdots\otimes
\{X_0^{(n)}, X_1^{(n)}\}\cdot\group{G}_i.\]

The graphs $\Gamma_{1, v}$ and $\Gamma_{2, v}$ are isomorphic to the
Schreier graphs $\Gamma_{w_1}$ and $\Gamma_{w_2}$ of the group
$\group{G}$, where $w_1, w_2\in\{0, 1\}^*$ are determined by the rules
given in Proposition~\ref{pr:Ghatbiset}. Note that $w_2$ is obtained from $w_1$ by
changing the first letter.

By Lemma~\ref{lem:bgcommute}, the graphs $\Gamma_{1, v}$ and
$\Gamma_{2, v}$ have disjoint sets of edges such that their
union is the set of edges of $\Gamma_n(\group{H})$ (if we ignore the
loops). Namely, the red
edges of Figure~\ref{fig:caley} correspond to $\alpha_1$, the orange
ones to $\alpha_2$; each blue edge corresponds either to $\beta_1$ or
to $\beta_2$, each green edge either to $\gamma_1$ or to $\gamma_2$.

See Figure~\ref{fig:schrdouble} for an example of the subgraphs
$\Gamma_{1, v}$ and $\Gamma_{2, v}$ (colored red and black)
of $\Gamma_6(\group{H})$. We removed the edges corresponding
to loops in $\Gamma_6(\group{H})$.

\begin{figure}[h]
\centering
\includegraphics{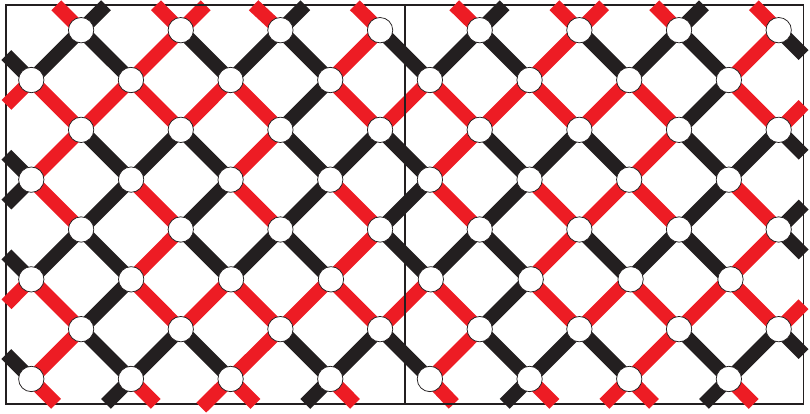}
\caption{A component of a Schreier graph of $\widehat{\group{G}}$}\label{fig:schrdouble}
\end{figure}

Note that each edge of $\Gamma_{\group{H}}$ is a diagonal of a
square with sides of length $1/2$ parallel to the real and
imaginary axes. These squares tile the plane and the pillowcases
$\C/\group{H}_n$ and each square of the tiling has precisely one diagonal
belonging to the Cayley graph $\Gamma_{\group{H}}$. By coloring the squares
containing the edges of $\Gamma_{1, v}$ and
$\Gamma_{2, v}$ in different colors (e.g., black and white), we
get a nice visualization of the partition of $\Gamma_n(\group{H})$ into
the trees $\Gamma_{1, v}$ and $\Gamma_{2, v}$, see
Figure~\ref{fig:schrshaded}. Here the squares whose
diagonals are loops of $\Gamma_n(\group{H})$ are colored blue. We will
call them \emph{singular}.

Let us denote by $K_{1, v}$ the
union of the squares whose diagonals belong to $\Gamma_{1, v}$ and by
$K_{2, v}$ the union of the squares whose diagonals
belong to $\Gamma_{2, v}$ and of the singular squares.

\begin{figure}[h]
\centering
\includegraphics{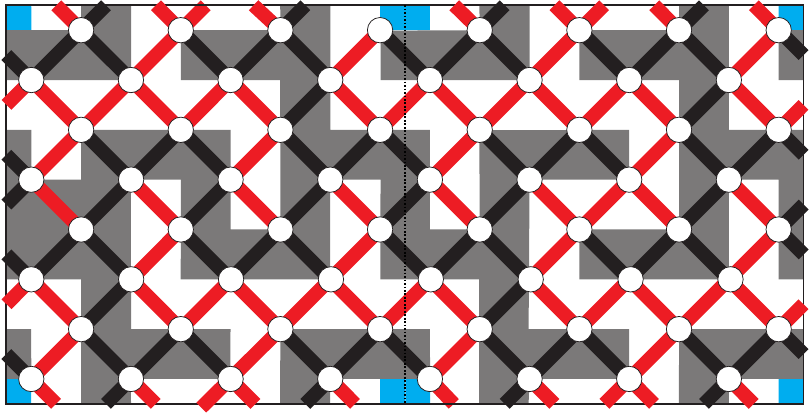}
\caption{Partition into sets $K_{i, v}$}\label{fig:schrshaded}
\end{figure}

\begin{proposition}
\label{pr:lambda}
Boundary between $K_{1, v}$ and $K_{2, v}$ is a closed broken
line $\lambda_v$ describing the action of $\tau$ on the vertex set of
$\Gamma_n(\group{H})$. Namely, for every vertex $u$ there are no vertices
of $\Gamma_n(\group{H})$ on $\lambda_v$ between $u$ and $\tau(u)$.
\end{proposition}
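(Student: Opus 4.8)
The statement has two parts: that $\lambda_v$ is a genuine closed broken line, and that the (cyclic) order in which it meets the vertices of $\Gamma_n(\group{H})$ is the cycle of the permutation $\tau$ (for a suitable orientation of $\lambda_v$). I would establish three facts in turn. First, \emph{$\lambda_v$ is a single Jordan curve on the pillowcase $\C/\group{H}_n$ meeting each vertex of $\Gamma_n(\group{H})$ exactly once.} Every grid square has a unique diagonal that is an edge of the Cayley graph $\Gamma_{\group{H}}$, and this diagonal is either a non‑loop edge of $\Gamma_{1,v}$, a non‑loop edge of $\Gamma_{2,v}$, or a loop of $\Gamma_n(\group{H})$; hence the squares of $K_{1,v}$ and of $K_{2,v}$ together tile the pillowcase. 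The set $K_{1,v}$ is a regular neighborhood of the spanning tree $\Gamma_{1,v}$ (assembled from squares glued along sides), hence connected and simply connected, i.e. a disk; likewise $K_{2,v}$ is a disk; so $\lambda_v=\partial K_{1,v}=\partial K_{2,v}$ is a single circle. At a vertex $u$ the four squares having $u$ as a corner carry the four Cayley edges at $u$ as diagonals (labelled $\alpha_1$, $\alpha_2$, and, by Lemma~\ref{lem:bgcommute}, a $\beta_1$‑ or $\beta_2$‑edge and a $\gamma_1$‑ or $\gamma_2$‑edge, possibly loops). The $\alpha_1$‑edge is always a non‑loop edge of $\Gamma_{1,v}$ and the $\alpha_2$‑edge a non‑loop edge of $\Gamma_{2,v}$, and they occupy opposite positions around $u$ (visible from Figure~\ref{fig:caley}); a short case check over the two remaining squares then shows that the squares lying in $K_{1,v}$ always form a cyclically consecutive block. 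Hence $\lambda_v$ has a single corner at $u$, so it passes through $u$ exactly once, and therefore induces a cyclic order on the $2^n$ vertices.

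Second, \emph{$\tau$ acts on the vertex set of $\Gamma_n(\group{H})$ as a single $2^n$‑cycle.} The element $\tau=\gamma_1\alpha_1\beta_1$ lies in $\group{G}_1$, hence preserves the invariant subtree whose $n$‑th level is the vertex set of $\Gamma_n(\group{H})$ (Proposition~\ref{pr:Ghatbiset}); and since $\tau=\sigma(1,\tau,1,\tau)$ with $\sigma=(12)(34)$ preserving each of the blocks $\{\hat\bel_0,\hat\bel_1\}$, $\{\hat\ber_0,\hat\ber_1\}$, the restriction of $\tau$ to this subtree is a conjugate of the binary odometer, hence a $2^n$‑cycle on level $n$. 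From $\tau^2=(\tau,\tau,\tau,\tau)$ one reads off that the level‑$(n+1)$ cycle is obtained from the level‑$n$ cycle by inserting, between each vertex $u$ and its successor $\tau(u)$, exactly one new vertex.

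Third, \emph{the cyclic order on $V(\Gamma_n(\group{H}))$ induced by $\lambda_v$ coincides with that induced by $\tau$}; this I would prove by induction on $n=|v|$. For $n\le 1$ there is nothing to check, as $\Gamma_n(\group{H})$ has at most two vertices. For the step one uses the recursive construction of the pieces $\Gamma_{1,v}$ and $\Gamma_{2,v}$ (Corollaries~\ref{cor:rule1} and~\ref{cor:rule2}): lengthening $v$ presents $\Gamma_{n+1}(\group{H})$ as two copies of $\Gamma_n(\group{H})$ joined by one $\alpha_1$‑edge and one $\alpha_2$‑edge, so $K_{i,v'}$ is two copies of $K_{i,v}$ together with the square of the new edge, and $\lambda_{v'}$ is obtained from the two copies of $\lambda_v$ by the ``replace a wall by a corner'' surgery at the two new squares. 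On the level of vertices this surgery interleaves the two copies of the $\lambda_v$‑order, inserting one new vertex between consecutive old ones; by the previous paragraph the $\tau$‑cycle is refined in exactly the same way, and tracing the position of the two new edges relative to the distinguished vertices of Corollaries~\ref{cor:rule1}/\ref{cor:rule2} (hence relative to the two copies of $\lambda_v$) one checks that the two interleavings agree.

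The main obstacle is precisely this last matching: one must align the geometric refinement of the broken line with the arithmetic refinement $\tau^2=(\tau,\tau,\tau,\tau)$ of the odometer, keeping careful track of orientations and of where the two new connecting edges ($\alpha_1$ and $\alpha_2$) sit with respect to the two copies of $\lambda_v$, so as to conclude that the vertex inserted between $u$ and $\tau(u)$ at level $n+1$ is the one prescribed by $\tau$. A secondary point requiring care is the singular squares (the loops of $\Gamma_n(\group{H})$, whose number stabilizes at four for $n\ge 3$): one must verify that placing them in $K_{2,v}$ is consistent with the local picture at the corner vertices, so that the single‑corner statement and the interleaving remain valid there.
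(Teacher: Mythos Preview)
Your approach is sound in outline but takes a much longer route than the paper. The paper gives a one-step \emph{local} argument that avoids induction entirely. It works not at the vertices of $\Gamma_n(\group{H})$ but at the \emph{faces} of the Cayley graph: each face is a square whose four sides are, in cyclic order, edges labelled $\alpha_2=A$, $B$, $\alpha_1=CAB$, $C$. The $\alpha_1$-side always belongs to $\Gamma_{1,v}$ and the $\alpha_2$-side to $\Gamma_{2,v}$; the $B$-side and the $C$-side each belong to one of the two graphs (by Lemma~\ref{lem:bgcommute}), giving four cases. The key point you are not exploiting is the \emph{second} factorisation $\tau=\gamma_2\alpha_2\beta_2$: if, say, the $B$-side is a $\beta_1$-edge then its endpoints are $\beta_2$-fixed, and similarly for $C$, so in each of the four cases one can read off directly which pair of vertices of the face is related by $\tau$. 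A glance at the four pictures (Figure~\ref{fig:taug}) then shows that in every case this $\tau$-arrow lies on the boundary between the $K_{1,v}$-squares and the $K_{2,v}$-squares. That is the whole proof.

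Your Parts~1 and~2 are correct and useful context (and indeed your cyclic-consecutiveness check at each vertex is essentially the local picture the paper draws), but your Part~3 is exactly the step the paper's argument makes unnecessary. The ``main obstacle'' you identify---matching the geometric interleaving of $\lambda_{v'}$ with the arithmetic refinement $\tau^2=(\tau,\tau,\tau,\tau)$---simply does not arise once you compute $\tau$ face by face. Your inductive scheme also has a wobble: $\Gamma_{n+1}(\group{H})$ is a double cover of $\Gamma_n(\group{H})$, not ``two copies joined by one $\alpha_1$-edge and one $\alpha_2$-edge''; it is the spanning trees $\Gamma_{i,v}$ that double-and-join in that way, while many $\alpha_1$- and $\alpha_2$-edges connect the two sheets. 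This is fixable, but it is another sign that the local argument is the natural one here.
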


Note that our choice to include the singular squares in $K_{2, v}$ is not
very important. It will change only the side on which the path
$\lambda_v$ goes around the singular point of $\C/\group{H}_n$.

\begin{proof}
We have
$\tau=\gamma_1\alpha_1\beta_1=\gamma_2\alpha_2\beta_2$. Consider the
little squares of $\Gamma_n(\group{H})$. Two of their sides (opposite
to each other) correspond to $\alpha_1=CAB$ and $\alpha_2=A$, the other two
sides correspond to $B$ and $C$. Each of the latter two edges may
belong either to $\Gamma_{1, v}$ or to $\Gamma_{2, v}$.
Figure~\ref{fig:taug} shows all four
possible cases. Note that if an edge corresponding to $B$ or $C$ belongs to
$\Gamma_{i, v}$, then its endpoints are fixed under the action of
$\beta_{1-i}$, $\gamma_{1-i}$, respectively. This information makes it
possible to determine for one of the pairs of vertices of the square
that one is the image of the other under the action of $\tau$, as it
is shown by black arrows on Figure~\ref{fig:taug}. If one of the edges
of the squares is a loop of $\Gamma_n(\group{H})$, then we assume that
it belongs to $\Gamma_{2, v}$ (according to our convention about the
set $K_{2, v}$). Note that different agreement does not change the
order in which $\lambda_v$ connects the vertices of $\Gamma_n(\group{H})$.

\begin{figure}[h]
\centering
\includegraphics{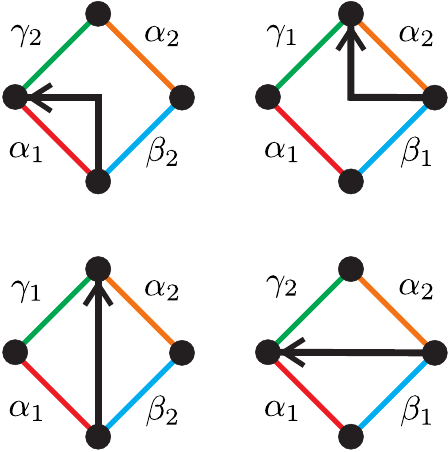}
\caption{Action of $\tau$}
\label{fig:taug}
\end{figure}

We see that the arrows describing the action of $\tau$ belong to the
boundary $\lambda_v$ between the sets $K_{1, v}$ and $K_{2, v}$.
\end{proof}

Consequently, partition of the pillowcase $\C/\group{H}_n$ into the
sets $K_{1, v}$ and $K_{2, v}$ is an approximation of the mating
described in Proposition~\ref{pr:mating}. The boundary $\lambda_v$
between the sets converges (as $v$ converges to a left-infinite sequence
$w\in\{\bel, \ber\}^{-\omega}$) to the map from the circle to
a connected component of $\lims[\widehat{\group{G}}]$
induced by the inclusion of $\langle\tau\rangle<\widehat{\group{G}}$.

See Figure~\ref{fig:pillow2s}, where two examples of the partition are
given.

\begin{figure}[h]
\centering
\includegraphics[width=5in]{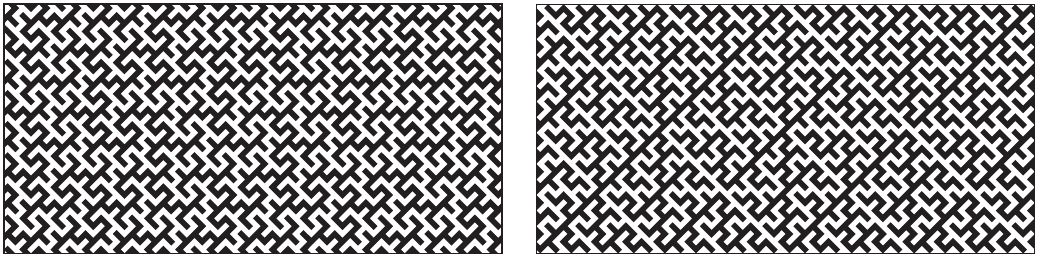}
\caption{``Pillowcase ornaments''}\label{fig:pillow2s}
\end{figure}

We orient $\lambda_v$ according to the action of $\tau$, so that
oriented segments go from $u$ to $\tau(u)$. Then $\lambda_v$ goes
around $\Gamma_{1, v}$ in positive direction, and around $\Gamma_{2,
  v}$ in the negative direction (if we orient $\C$ in the standard
way), see Figure~\ref{fig:taug}.

\subsection{Recursive rule of constructing $\Gamma_{1, v}$ and
  $\Gamma_{2, v}$}

It follows from Proposition~\ref{pr:Ghatbiset} that the graphs
$\Gamma_{1, v}$ and $\Gamma_{2, v}$ for $w=z_1z_2\ldots z_n\in\{\bel, \ber\}^n$ are isomorphic to the
Schreier graphs $\Gamma_{w_1}$ and $\Gamma_{w_2}$ of $\group{G}$, where
$w_1=x_1x_2\ldots x_n$, $w_2=y_1y_2\ldots y_n\in\{0, 1\}^n$ are
defined by the rule
\[x_k=y_k=\left\{\begin{array}{ll}0 & \text{if $z_{k-1}=z_k$,}\\
1 & \text{otherwise,} \end{array}\right.\] for $k\ge 2$, while
$x_1$ and $y_1$ are defined by the same rule with the assumption
$z_0=\bel$ and $z_0=\ber$, respectively.

Let us translate now the recursive rule from Corollary~\ref{cor:rule1}
of construction of the graphs $\Gamma_v$ in terms of the sequences
over the alphabet $\{\bel, \ber\}$ and subgraphs of the graph $\Gamma_{\group{H}}$.

Note that since the graphs $\Gamma_{i, v}$  are trees (as they are isomorphic to
$\Gamma_w$ for some $w$), they can be
lifted by the natural quotient map
$\Gamma_{\group{H}}\arr\Gamma_{\group{H}}/\group{H}_n$
to a subgraph of the Cayley graph $\Gamma_{\group{H}}$ of $\group{H}$.

On the initial step (for the empty word $v$) the graphs
$\Gamma_{i, \emptyset}$ consist of one vertex only, which is
marked by $z_{\alpha, \emptyset}, z_{\beta, \emptyset}$, and
$z_{\gamma, \emptyset}$ simultaneously. Choose a point in
$\Gamma_{\group{H}}$, which will be the lift of the graphs
$\Gamma_{\emptyset, i}$. We will add, for convenience, halves of the incident
edges of $\Gamma_{\group{H}}$, corresponding to $CAB, B, C$ for
$\Gamma_{1, \emptyset}$ and $A, B, C$ for $\Gamma_{2, \emptyset}$. Let us
denote the obtained graphs by $\Delta_{1, \emptyset}$ and
$\Delta_{2, \emptyset}$, respectively.

Suppose that we have constructed the graphs $\Delta_{1, v}$ and
$\Delta_{2, v}$, which are lifts of the graphs $\Gamma_{1, v}$
and $\Gamma_{2, v}$, respectively, with three marked vertices
$z_{\alpha, v}$, $z_{\beta, v}$, and $z_{\gamma, v}$ and halves of some
edges of $\Gamma_{\group{H}}$ attached to the marked vertices. Let $z_{\alpha,
  v}'$, $z_{\beta, v}'$, and $z_{\gamma, v}'$ be the other
(``hanging'') vertices of the half-edges.

Then the graphs $\Delta_{i, vx}$ for $i\in\{1, 2\}$, $x\in\{\bel,
\ber\}$ together with the marking are obtained by the following rule.

\medskip
{\it
Denote $\Delta_{i, v, 0}=\Delta_{i, v}$ and let $\Delta_{i, v, 1}$
be $\Delta_{i, v}$ rotated by $180^\circ$ around $z_{\alpha, v}'$.
Take the union of $\Delta_{i, v, 0}$ with the $\Delta_{i, v, 1}$,
connecting in this way the respective copies of $z_{\alpha, v}$ by
an edge.

The copy of $z_{\gamma, v}$ in $\Delta_{i, v, 0}$ is the vertex
$z_{\gamma, vx}$. The copy of $z_{\gamma, v}$ in $\Delta_{i, v, 1}$ is
the vertex $z_{\beta, vx}$. If the last letter of $v$ coincides with
$x$ (or if $v=\emptyset$, $x=\bel$, $i=1$, or $v=\emptyset, x=\ber, i=2$),
then the copy of $z_{\beta, v}$ in $\Delta_{i, v, 1}$ is
$z_{\alpha, vx}$, otherwise $z_{\alpha, vx}$ is the copy of $z_{\beta,
  v}$ in $\Delta_{i, v, 0}$. Remove the half-edge attached to the
other (unmarked) copy of $z_{\beta, v}$. The obtained graph is
$\Delta_{i, vx}$. The graph $\Gamma_{i, vx}$ is obtained from it
by removing the three half-edges attached to the marked vertices.
}
\medskip

See the first three steps of this recursion (for $i=1$) on
Figure~\ref{fig:LRL}.

\begin{figure}[h]
\centering
\includegraphics{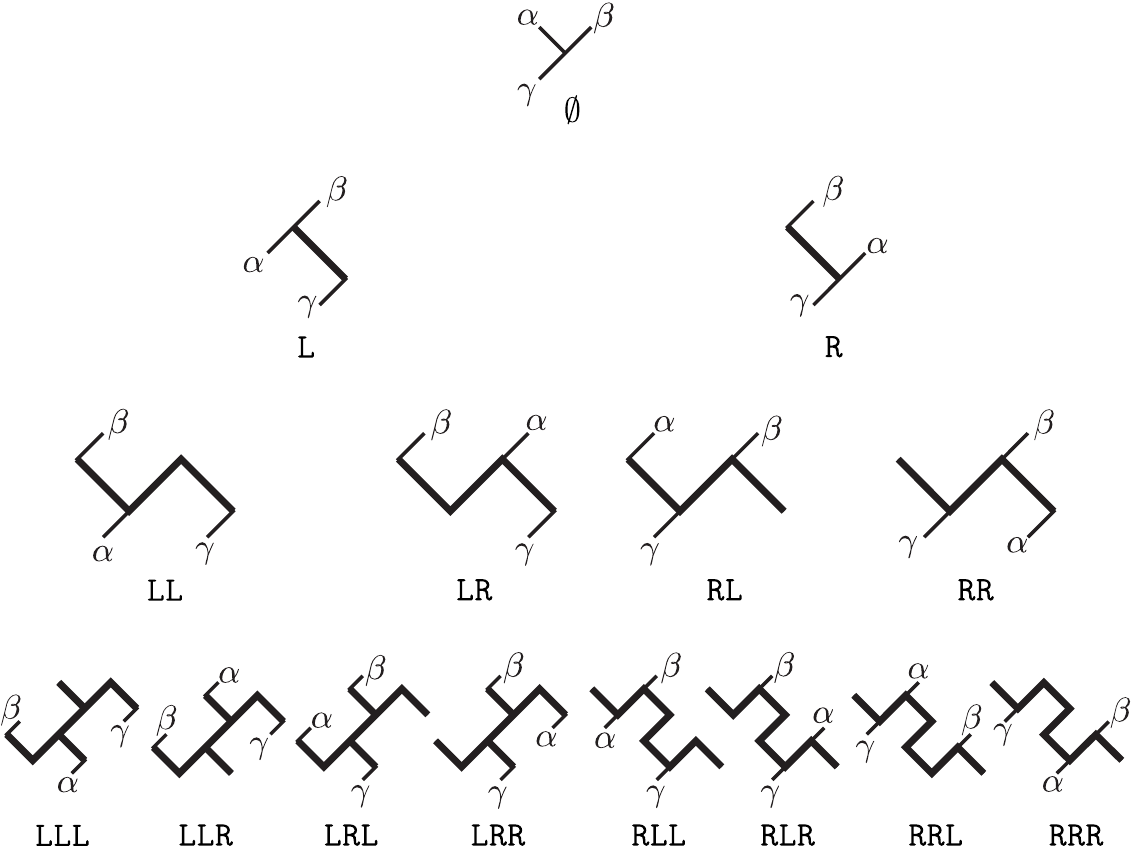}
\caption{Graphs $\Gamma_{1, v}$} \label{fig:LRL}
\end{figure}

Note that it follows directly from the construction, that the
points $z_{\alpha,
  v}'$, $z_{\beta, v}'$ and $z_{\gamma, v}'$ are vertices of a right
isosceles triangle. Orientation of the triangles $z_{\alpha,
  v}'z_{\beta, v}'z_{\gamma, v}'$ depends on the last letter of $v$:
it is counterclockwise if it is $\bel$ and clockwise if it is $\ber$.

\section{Paper-folding curves}
\label{s:paperfolding}
\subsection{Mazes associated with graphs $\Gamma_{1, v}$}

Consider again the Cayley graph $\group{H}$ drawn in $\C$, as on
Figure~\ref{fig:caley}. Consider the half-integral grid on $\C$, i.e.,
the tiling of the plane by the parallel translations
by the elements of $\Z[i]/2$ of the square with the vertices $0, 1/2,
i/2$, and $1/2+i/2$. The group $\group{H}$ acts freely on the set of these
squares with two orbits (corresponding to the two
colors of the checkerboard coloring). We get in this way a checkerboard
coloring of the pillowcases $\C/\group{H}_n$. Let $Q_n$ be the graph
consisting of the sides of the half-integral grid on $\C/\group{H}_n$.

The vertices of the graph $\Gamma_{1, v}$ are centers of squares of one
color in the checkerboard coloring of $\C/\group{H}_n$.
Since $\Gamma_{1, v}$ is a tree, there is a closed
Eulerian path $\rho_v$ in $Q_n$ without transversal
self-intersections, which goes around $\Gamma_{1, v}$, i.e., does not
intersect it transversally, see Figure~\ref{fig:euler}, where the
squares containing $\Gamma_{1, v}$ are colored red, and the other
squares are white. The path $\rho_v$ is the boundary of the white
region (after we glue the picture into a pillowcase).

Note that, unlike for the path $\lambda_v$, there are no problems in the
definition of $\rho_v$ concerning the singular points.

\begin{figure}[h]
\centering
\includegraphics[width=7cm]{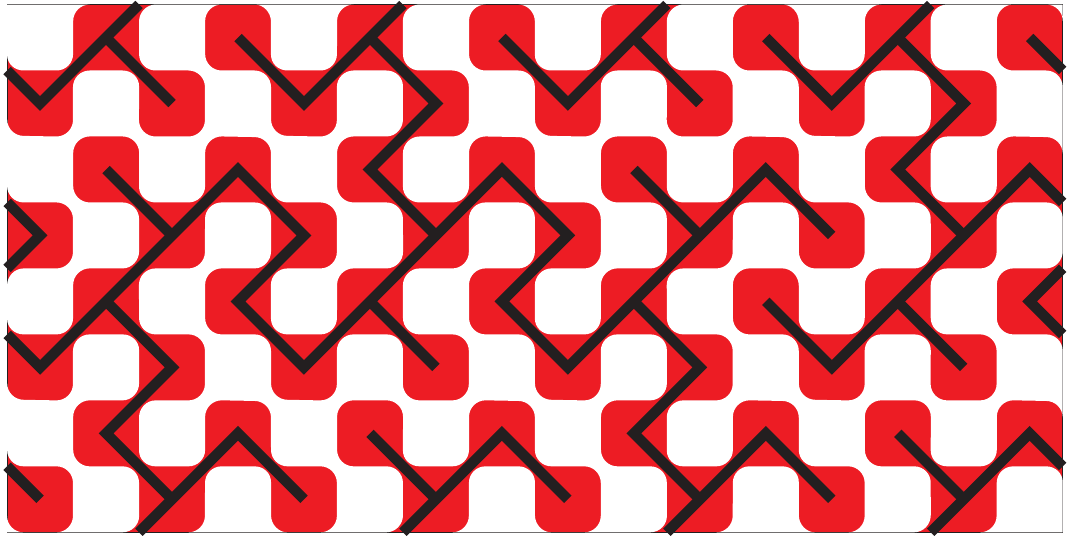}
\caption{Path $\rho_v$}
\label{fig:euler}
\end{figure}

See more examples of the paths $\rho_v$ on Figure~\ref{fig:labyrinth},
where their connected lifts to $\C$ are shown.

\begin{figure}[h]
\centering
\includegraphics[width=7cm]{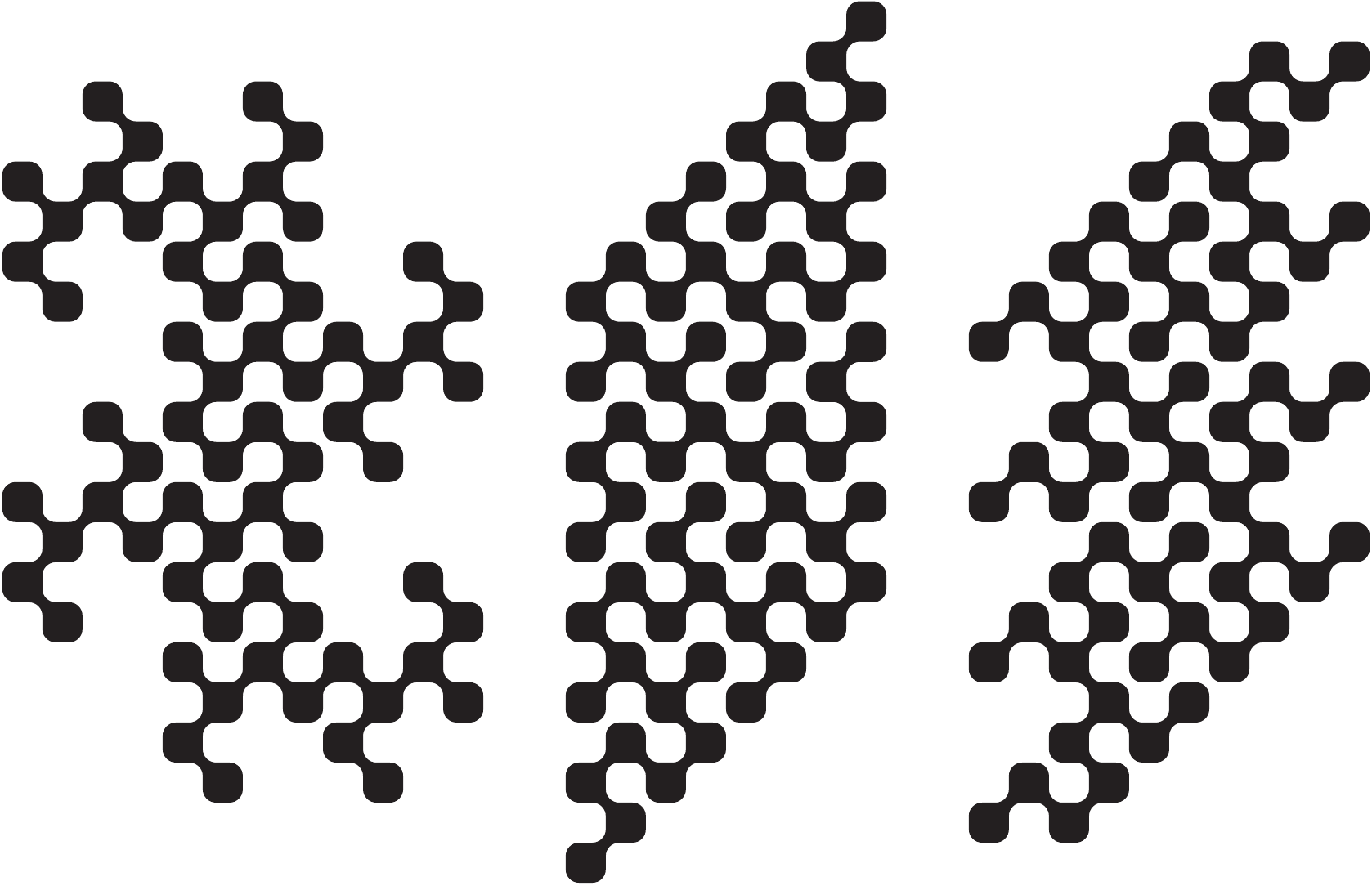}
\caption{Paths $\rho_v$ around $\Gamma_v$}
\label{fig:labyrinth}
\end{figure}

Recall that the curve $\lambda_v$ describes the action of $\tau$ on
the vertices of $\Gamma_{1, v}$. It is easy to check that $\rho_v$ is
obtained from $\lambda_v$ by the replacements of segments of
$\lambda_v$ between vertices of $\Gamma_{1, v}$ by curves shown on
Figure~\ref{fig:taup}. In particular, the curves $\rho_v$ also approximate the
plane-filling curves coming from the matings described in
Proposition~\ref{pr:mating}.

\begin{figure}[h]
\centering
\includegraphics{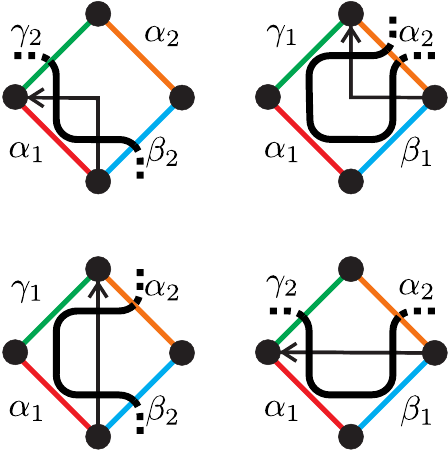}
\caption{Changing $\lambda_v$ to $\rho_v$}
\label{fig:taup}
\end{figure}

\subsection{Another pair of Schreier graphs}

On Figure~\ref{fig:euler}, the curve $\rho_n$ goes around $\Gamma_{1, v}$ bounding the red
cells of the checkerboard tiling of the pillowcase
$\C/\group{H}_n$. It also goes around the white cells, and these cells
are arranged into a tree around which $\rho_v$ travels. Let us try to
interpret this ``white'' tree in terms of the group $\group{G}$.

Let $\zeta_1=1/4+i/4$ and $\zeta_2=1/4-i/4$.
Let $\Sigma_1=\Gamma_{\group{H}}$ be the left Cayley graph
of $\group{H}$ with the set of vertices $\zeta_1\cdot\group{H}$ and the edges
corresponding to the generators $A, B, C, CAB$. Let $\Sigma_2$
be the left Cayley graph of $\group{H}$ with the set of vertices
$\zeta_2\cdot\group{H}$
and the edges corresponding to the generators $A, ABA, C, ABC$.

See Figure~\ref{fig:sigmai} for the graphs $\Sigma_1$, $\Sigma_2$.
Note that each edge of $\Sigma_1$ intersects exactly one edge of
$\Sigma_2$ and vice versa. Namely, the edges of $\Sigma_1$
corresponding to $A, B, C$, and $CAB$ intersect the edges of
$\Sigma_2$ corresponding to $A, ABA, C$, and $ABC$, respectively.

\begin{figure}[h]
\centering
\includegraphics{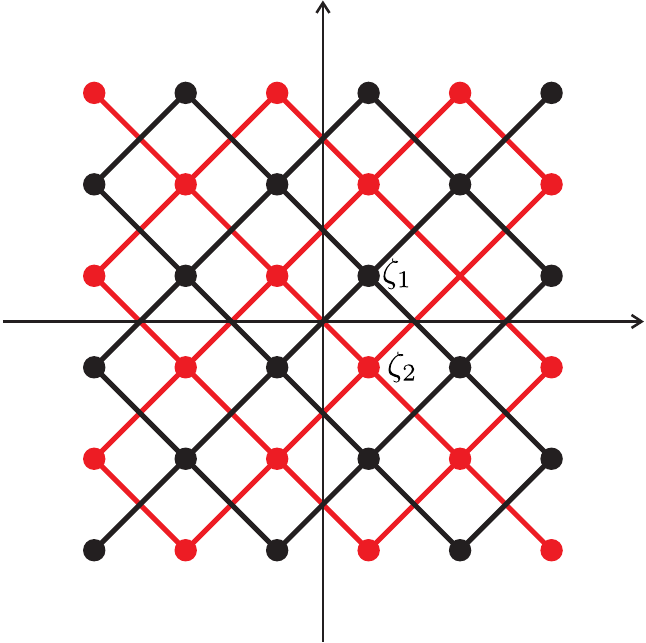}
\caption{}\label{fig:sigmai}
\end{figure}

For a given word $v=X^{(1)}X^{(2)}\ldots X^{(n)}\in\{\bel, \ber\}^*$ of
length $n$, denote by $\Sigma_{1, v}$ and $\Sigma_{2, v}$ the
Schreier graphs of the groups $\group{G}_1$ and $\group{G}_2$ acting on the set
$X^{(1)}\times X^{(2)}\times\cdots\times X^{(n)}$ (i.e., on the
set of the right orbits of the bisets $X^{(1)}_1\otimes
X^{(2)}_1\otimes\cdots\otimes X^{(n)}_1$ and $X^{(1)}_2\otimes
X^{(2)}_2\otimes\cdots\otimes X^{(n)}_2$, respectively) defined
with respect to the generating sets $\{\alpha_1=CAB,
\beta_1, \gamma_1\}$ and $\{\alpha_2=A, \alpha_2\beta_2\alpha_2,
\gamma_2\}$. We identify them with the corresponding sub-graphs of
the graphs $\Sigma_1/\group{H}_n$ and $\Sigma_2/\group{H}_n$, respectively.

\begin{proposition}
\label{pr:corridors}
The graphs $\Sigma_{1, v}$ and $\Sigma_{2, v}$ do
not intersect (as subsets of $\C/\group{H}_n$). In each pair of intersecting
edges of $\Sigma_1/\group{H}_n$ and $\Sigma_2/\group{H}_n$ one
edge belongs to one of the graphs
$\Sigma_{1, v}$ and $\Sigma_{2, v}$ and the other edge does
not belong to either graphs.
\end{proposition}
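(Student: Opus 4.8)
The plan is to reduce the statement to a single combinatorial claim about the crossing bijection between the edges of $\Sigma_1/\group{H}_n$ and $\Sigma_2/\group{H}_n$ recorded on Figure~\ref{fig:sigmai}, and then to verify that claim generator by generator, the only genuine content being two statements about the supports of $\beta_2$ and $\gamma_2$ which I would prove by induction on $|v|$ via the wreath recursion.

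To set up the reduction, first observe that $\Sigma_{1,v}$ is literally equal to $\Gamma_{1,v}$: both are the subgraph of $\Sigma_1/\group{H}_n=\Gamma_{\group{H}}/\group{H}_n$ carried by the edges labelled $\alpha_1=CAB$, $\beta_1$, $\gamma_1$. By Lemma~\ref{lem:bgcommute}, and since conjugation preserves disjointness of supports, the edges of $\Sigma_1/\group{H}_n$ split into those of $\Gamma_{1,v}$ (labels $\alpha_1,\beta_1,\gamma_1$) and those of $\Gamma_{2,v}$ (labels $\alpha_2,\beta_2,\gamma_2$): every $A$-edge is an $\alpha_2$-edge, every $CAB$-edge an $\alpha_1$-edge, every non-loop $B$-edge a $\beta_1$- or a $\beta_2$-edge, and every non-loop $C$-edge a $\gamma_1$- or a $\gamma_2$-edge. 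Dually, the edges of $\Sigma_2/\group{H}_n$ split into those of $\Sigma_{2,v}$ — the $A$-edges (which are $\alpha_2$-edges), the $ABA$-edges that are $\alpha_2\beta_2\alpha_2=\beta_2^{\alpha_2}$-edges, and the $C$-edges that are $\gamma_2$-edges — and a complementary subgraph $\Sigma_{2,v}'$ — the $ABC=\alpha_1^{C}$-edges, the $ABA$-edges that are $\beta_1^{\alpha_2}$-edges, and the $C$-edges that are $\gamma_1$-edges — because $\beta_1^{\alpha_2}$ and $\beta_2^{\alpha_2}$, and likewise $\gamma_1$ and $\gamma_2$, have disjoint supports. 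Now each edge of $\Sigma_1$ and each edge of $\Sigma_2$ is an interior diagonal of one of the little squares of $\Gamma_{\group{H}}$, each little square containing exactly one $\Sigma_1$-diagonal and one $\Sigma_2$-diagonal which cross at its centre, while distinct little squares have disjoint interiors; so two edges from the two graphs cross precisely when they are the two diagonals of a common little square, which is exactly the pairing of Figure~\ref{fig:sigmai}. Since moreover $\zeta_1\group{H}$ and $\zeta_2\group{H}$ are distinct $\group{H}$-orbits, the whole proposition follows once we establish the claim $(\star)$: \emph{the crossing bijection carries the edge set of $\Gamma_{2,v}$ onto that of $\Sigma_{2,v}$} (equivalently, it carries $\Gamma_{1,v}=\Sigma_{1,v}$ onto $\Sigma_{2,v}'$). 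Indeed, granting $(\star)$, the edges of $\Sigma_{2,v}$ sit in the little squares occupied by $\Gamma_{2,v}$, which are disjoint from those occupied by $\Gamma_{1,v}=\Sigma_{1,v}$, so $\Sigma_{1,v}$ and $\Sigma_{2,v}$ share neither a vertex nor a little square, hence do not meet; and in each crossing pair exactly one of the two edges lies in $\Sigma_{1,v}\cup\Sigma_{2,v}$ and the other lies in neither.

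For $(\star)$ I would argue generator by generator, using the pairing $A\leftrightarrow A$, $B\leftrightarrow ABA$, $C\leftrightarrow C$, $CAB\leftrightarrow ABC$. The pairs $(A,A)$ and $(CAB,ABC)$ are immediate: all $A$-edges of $\Sigma_1$ are $\alpha_2$-edges, hence in $\Gamma_{2,v}$, and all $A$-edges of $\Sigma_2$ are $\alpha_2$-edges, hence in $\Sigma_{2,v}$; all $CAB$-edges of $\Sigma_1$ are $\alpha_1$-edges, hence in $\Gamma_{1,v}$, and all $ABC$-edges of $\Sigma_2$ are $\alpha_1^{C}$-edges, hence in $\Sigma_{2,v}'$. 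It remains to treat the pairs $(C,C)$ and $(B,ABA)$. For a little square $Q$ whose $\Sigma_1$-diagonal is a $C$-edge at a vertex $w\in\zeta_1\group{H}$ and whose $\Sigma_2$-diagonal is a $C$-edge at a vertex $w'\in\zeta_2\group{H}$, one has to show that $\gamma_2$ moves $w$ if and only if $\gamma_2$ moves $w'$; for a little square whose $\Sigma_1$-diagonal is a $B$-edge at $w$ and whose $\Sigma_2$-diagonal is an $ABA$-edge at $w'$, that $\beta_2$ moves $w$ if and only if $\beta_2$ moves $w'\cdot\alpha_2$ (the latter being precisely the condition that the $ABA$-edge be a $\beta_2^{\alpha_2}$-edge, so that it lies in $\Sigma_{2,v}$). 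I would prove these two equivalences by induction on $n=|v|$, running the recursive doubling rule for $\Gamma_{i,v}$ from the preceding subsection (two copies joined by a $180^\circ$ rotation) in parallel with the analogous rule for $\Sigma_{i,v}$ furnished by Proposition~\ref{pr:Ghatbiset}: the base case $v=\varnothing$ is a single point, and the inductive step reduces to checking that one doubling step sends crossing pairs to crossing pairs, using the recursion $A=\sigma(BA,AB,1,1)$, $B=(A,CAB,CAB,A)$, $C=(C,B,C,B)$, $\beta_1=(1,CAB,CAB,1)$, $\beta_2=(A,1,1,A)$, $\gamma_i=(\gamma_i,\beta_i,\gamma_i,\beta_i)$ to decide which of $\beta_1,\beta_2$ (resp.\ $\gamma_1,\gamma_2$) acts non-trivially at the relevant vertices of the two doubled squares.

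I expect this inductive bookkeeping to be the main obstacle. It is elementary but delicate: one has to match the $180^\circ$-rotation doubling rule against the four-letter wreath recursion and, most carefully, identify geometrically which corner of a little square plays the role of $w'$, since that is what decides whether membership of the paired edge in $\Sigma_{2,v}$ is governed by $\beta_2$ acting at $w'$ or at $w'\cdot\alpha_2$. Finally, the four singular points of $\C/\group{H}_n$, where a few diagonals degenerate to loops, create no difficulty: loops are discarded from all the Schreier graphs, so the dichotomy of $(\star)$ concerns honest edges only, consistently with the remark after Proposition~\ref{pr:lambda}.
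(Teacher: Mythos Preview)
Your reduction to the claim $(\star)$ is correct and matches the paper's strategy, and your treatment of the pairs $(A,A)$ and $(CAB,ABC)$ is exactly right. The difference is in how you handle the pairs $(C,C)$ and $(B,ABA)$: you propose an induction on $|v|$ via the doubling rule, whereas the paper dispatches both cases in one line each, with no induction at all.

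The observation you are missing is that the two basepoints $\zeta_1=\tfrac{1+i}{4}$ and $\zeta_2=\tfrac{1-i}{4}$ both represent the \emph{identity} of $\group{H}$ in their respective Cayley graphs. Connecting them by the straight segment $\ell$ and transporting $\ell$ by $\group{H}$, one sees that for every $h\in\group{H}$ the points $\zeta_1\cdot h$ and $\zeta_2\cdot h$ represent the \emph{same} element of the level set under the two identifications. With this in hand the crossing bijection becomes explicit: the $C$-edge of $\Sigma_1$ at $h$ crosses the $C$-edge of $\Sigma_2$ at the same $h$, and the $B$-edge of $\Sigma_1$ from $Ah$ to $BAh$ crosses the $ABA$-edge of $\Sigma_2$ from $h$ to $ABAh$. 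So in your notation $w$ and $w'$ are literally the same level-set element in the $(C,C)$ case, making ``$\gamma_2$ moves $w$ iff $\gamma_2$ moves $w'$'' a tautology; and in the $(B,ABA)$ case $w=\alpha_2(w')$, so ``$\beta_2$ moves $w$ iff $\beta_2$ moves $\alpha_2(w')$'' is again a tautology. What actually remains is only Lemma~\ref{lem:bgcommute}: the $\Sigma_1$ $C$-edge at $h$ lies in $\Sigma_{1,v}$ iff $\gamma_1$ moves $h$ iff $\gamma_2$ fixes $h$ iff the crossing $\Sigma_2$ $C$-edge is \emph{not} in $\Sigma_{2,v}$; and the $\Sigma_2$ $ABA$-edge at $h$ lies in $\Sigma_{2,v}$ iff $\beta_2$ moves $Ah$ iff $\beta_1$ fixes $Ah$ iff the crossing $\Sigma_1$ $B$-edge at $Ah$ is \emph{not} in $\Sigma_{1,v}$.

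Thus your inductive bookkeeping, while doable, is unnecessary: once you record that $\zeta_1$ and $\zeta_2$ name the same group element, the ``delicate'' part of your plan evaporates and the proof is a direct case check against Lemma~\ref{lem:bgcommute}.
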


It follows that $\rho_v$ separates the trees $\Sigma_{1, v}=\Gamma_{1,
  v}$ and $\Sigma_{2, v}$, and that the graphs $\Sigma_{1, v}$ and
$\Sigma_{2, v}$ describe adjacency of the cells
on the corresponding side of the curve $\rho_v$.

\begin{proof}
Connect the basepoints $\zeta_1$ and $\zeta_2$ by a straight
segment $\ell$. The points $\zeta_1$ and $\zeta_2$ as vertices of
the Cayley graphs $\Sigma_1$ and $\Sigma_2$ correspond to the
identity in the group $\group{H}$. It follows that the images of $\ell$
under the action of $\group{H}$ connect the vertices of $\Sigma_1$ and
$\Sigma_2$ corresponding to the same elements of $\group{H}$. It follows
now from the construction of the graphs $\Sigma_i$ (see
Figure~\ref{fig:crossings}) that the edge connecting $h\in\group{H}$ to
$Ch$ in $\Sigma_1$ intersects with the edge connecting the
corresponding vertices in $\Sigma_2$. The same statement for the
edges connecting $h$ to $Ah$ is true. The edge in $\Sigma_1$
connecting $Ah$ to $BAh$ intersects the edge in $\Sigma_2$
connecting $h$ to $ABAh$.

\begin{figure}[h]
\centering
\includegraphics{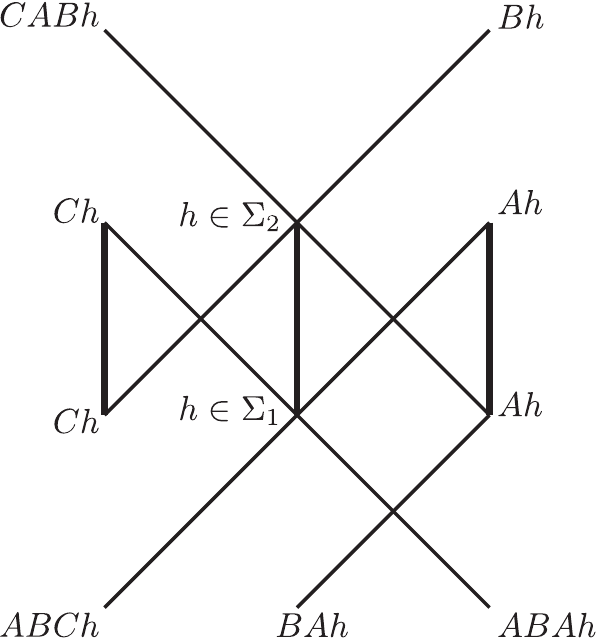}
\label{fig:crossings} \caption{Proof of
Proposition~\ref{pr:corridors}}
\end{figure}

If the graph $\Sigma_{1, v}$ contains an edge from $w$ to
$\gamma_1(w)=C(w)$, then the graph $\Sigma_{2, v}$ does not
contain an edge connecting $w$ to $\gamma_2(w)$, and vice versa.
The edge $w$ to $\gamma_1(w)$ coincides with the edge from some
$h$ to $Ch$ in $\Sigma_1$, while the edge from $w$ to
$\gamma_2(w)$ coincides with the edge from the same element $h$ to
$Ch$ in $\Sigma_2$. This settles the statement for the edges $(w,
\gamma_i(w))$.

The edge from $w$ to $\alpha_2(w)$ corresponds to the edge from
$h$ to $Ah$, which is not included into $\Sigma_1$. Similarly, the
edge from $w$ to $\alpha_1(w)=CAB(w)$ is not included into
$\Sigma_2$.

If $\Sigma_2(w)$ contains an edge from $w$ to
$\alpha_2\beta_2\alpha_2(w)=ABA(w)$, then
$w\ne\alpha_2\beta_2\alpha_2(w)$, i.e., $\alpha_2(w)\ne
\beta_2\alpha_2(w)$, which is equivalent to the condition that the
graph $\Sigma_1(w)$ does not have an edge from $A(w)$ to $BA(w)$.
\end{proof}

\begin{proposition}
The graphs $\Sigma_{1, v}$ and $\Sigma_{2, v}$ are isomorphic for
every word $v$.
\end{proposition}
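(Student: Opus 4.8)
The plan is to express both $\Sigma_{1,v}$ and $\Sigma_{2,v}$ as Schreier graphs $\Gamma_w$ of $\group{G}$ relative to its standard generating set $\{\alpha,\beta,\gamma\}$, and then to check that the two finite words $w$ produced coincide. For $\Sigma_{1,v}$ this is immediate: the generators used to define it are $\{\alpha_1=CAB,\beta_1,\gamma_1\}$, which is literally the standard generating set of $\group{G}_1$ (recall $\alpha_1=CAB$ as elements), so under the natural identification $\group{G}_1\cong\group{G}$ the graph $\Sigma_{1,v}$ is, exactly as $\Gamma_{1,v}$, the graph $\Gamma_{w_1}$, where $w_1\in\{0,1\}^n$ is the word attached to $\Gamma_{1,v}$ in Proposition~\ref{pr:Ghatbiset} and the ensuing discussion. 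For $\Sigma_{2,v}$ the situation differs only in that the $\beta_2$-edges of $\Gamma_{2,v}$ are replaced by $\alpha_2\beta_2\alpha_2$-edges, the vertex set, the $\alpha_2$-edges and the $\gamma_2$-edges being unchanged; hence, identifying $\group{G}_2\cong\group{G}$, the graph $\Sigma_{2,v}$ is the Schreier graph of $\group{G}$ acting on the $n$-th level $E_{w_2}$ of the subtree $T_{w_2}$ with generating set $\{\alpha,\alpha\beta\alpha,\gamma\}$, where $w_2$ is obtained from $w_1$ by changing its first letter.

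The one new ingredient is a ``first-letter symmetry'' of the family $\group{G}_w$ that I would extract from the recursions in Subsection~\ref{ss:propertiesofgroups}. There $\alpha_w=\sigma$ and $\gamma_w=(\gamma_{\overline w},\beta_{\overline w})$ do not depend on the first letter $x_1$ of $w$, whereas $\beta_w=(1,\alpha_{\overline w})$ when $x_1=0$ and $\beta_w=(\alpha_{\overline w},1)$ when $x_1=1$, and conjugation by $\alpha_w=\sigma$ interchanges these two forms. Consequently $\alpha_w\beta_w\alpha_w=\beta_{w'}$, $\alpha_{w'}=\alpha_w$ and $\gamma_{w'}=\gamma_w$ as elements of $\mathrm{Aut}(\{0,1\}^*)$, where $w'$ denotes $w$ with its first letter flipped. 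Restricting to the $n$-th level, the generating set $\{\alpha_{w_2},\alpha_{w_2}\beta_{w_2}\alpha_{w_2},\gamma_{w_2}\}$ equals the standard generating set $\{\alpha_{w_2'},\beta_{w_2'},\gamma_{w_2'}\}$ of $\group{G}$ acting on $E_{w_2}=E_{w_2'}$; therefore the graph described at the end of the previous paragraph is precisely $\Gamma_{w_2'}$ (after renaming the edge label $\alpha\beta\alpha$ to $\beta$), where $w_2'$ is $w_2$ with its first letter flipped.

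Putting the pieces together, $\Sigma_{2,v}\cong\Gamma_{w_2'}$, and since $w_2$ already differs from $w_1$ only in its first letter we obtain $w_2'=w_1$, so $\Sigma_{2,v}\cong\Gamma_{w_1}\cong\Sigma_{1,v}$; chasing the edge labels through these identifications shows that the $\alpha_1$-, $\beta_1$- and $\gamma_1$-edges are matched with the $\alpha_2$-, $\alpha_2\beta_2\alpha_2$- and $\gamma_2$-edges respectively, in agreement with Figure~\ref{fig:sigmai}. The only place where anything genuinely has to be verified is the identity $\alpha_w\beta_w\alpha_w=\beta_{w'}$, which is essentially the sole obstacle; once it is in hand the statement follows by bookkeeping on top of Proposition~\ref{pr:Ghatbiset}, and, as the computation above indicates, it is immediate from the explicit one-letter-dependent form of $\beta_w$.
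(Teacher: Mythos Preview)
Your proof is correct. The key identity $\alpha_w\beta_w\alpha_w=\beta_{w'}$ (with $\alpha_{w'}=\alpha_w$ and $\gamma_{w'}=\gamma_w$) is exactly what is needed, and your derivation of it from the recursions of Subsection~\ref{ss:propertiesofgroups} is clean: since $\alpha_w=\sigma$ and the two forms of $\beta_w$ are $(1,\alpha_{\overline w})$ and $(\alpha_{\overline w},1)$, conjugation by $\sigma$ swaps them. With this in hand, the bookkeeping via Proposition~\ref{pr:Ghatbiset} reduces both $\Sigma_{1,v}$ and $\Sigma_{2,v}$ to the same $\Gamma_{w_1}$.

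The paper proves the same statement by a closely related but slightly different route. Rather than working inside the family $\{\group{G}_w\}$, it uses the element $a\in\overline{\group{G}}$ from the index-two extension: Proposition~\ref{pr:Ghatbiset} says the $\group{G}_1$-biset and $a\cdot(\group{G}_2\text{-biset})\cdot a$ are isomorphic, and since $a\alpha a=\alpha$, $a\beta a=\alpha\beta\alpha$, $a\gamma a=\gamma$, the map $w\mapsto a(w)$ carries the generating set $\{\alpha_2,\alpha_2\beta_2\alpha_2,\gamma_2\}$ of $\Sigma_{2,v}$ to $\{\alpha_1,\beta_1,\gamma_1\}$ of $\Sigma_{1,v}$, giving an explicit graph isomorphism. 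Your first-letter-flip symmetry is precisely the shadow of conjugation by $a$ on the restricted groups $\group{G}_w$ (the relation $a\beta a=\alpha\beta\alpha$ is the global form of your $\alpha_w\beta_w\alpha_w=\beta_{w'}$), so the two arguments share the same core. What the paper's version buys is an explicit isomorphism on the common vertex set sitting inside $\C/\group{H}_n$; what your version buys is an argument internal to $\group{G}$ that avoids invoking the element $a$ from the larger group $\overline{\group{G}}$.
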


\begin{proof}
We know, see ~\ref{pr:Ghatbiset}, that for every finite sequences $v=X_1X_2\ldots X_n\in\{\bel,
\ber\}^*$ the $\group{G}_1\cong\group{G}$-biset $X_1\otimes
X_2\otimes\cdots\otimes X_n\cdot\group{G}_1$ is isomorphic to the
$\group{G}_2\cong\group{G}$-biset $a\cdot X_1\otimes
X_2\otimes\cdots\otimes X_n\cdot\group{G}_2\cdot a$. We have
$\alpha=a\alpha a, \alpha\beta\alpha=a\beta a$ and $\gamma=a\gamma
a$, which implies that the map
\[w\mapsto a(w)\]
is an isomorphism of the Schreier graphs $\Sigma_{2, v}\arr\Sigma_{1, v}$.
\end{proof}

\subsection{Paper-folding}

Consider a lift $\Sigma_{1, v}'$ of the graph $\Sigma_{1, v}$ to $\C$,
and
let $\rho_v'$ be the corresponding lift of the path $\rho_v$
(i.e., a closed path going around the lift of the tree $\Sigma_{1,
v}$).

Since the singular points $Z_A, Z_B, Z_C$, and $Z_{CAB}$ of the
orbifold $\C/\group{H}_n$ belong to the graph $Q_n$ for every $n$, the
path $\rho_v$ also passes through these points. The lift $\rho_v'$
will pass through preimages $Z_A', Z_B', Z_C'$ and $Z_{CAB}'$
of the singular points.

\begin{proposition}
\label{pr:foldrule}
The path $\rho_v'$ consists of four pieces: $\rho_{v}(A, B)$ from
$Z_A'$ to $Z_B'$, $\rho_v(B, CAB)$ from $Z_B'$ to $Z_{CAB}'$,
$\rho_v(CAB, C)$ from $Z_{CAB}'$ to $Z_C'$, and $\rho_v(C, A)$
from $Z_C'$ to $Z_A'$.

If the last letter of $v$ is $\bel$, or if $v$ is empty (resp., if
the last letter of $v$ is $\ber$), then the path, going consecutively
through $\rho_v(A, B), \rho_v(B, CAB)$, $\rho_v(CAB, C)$, and
$\rho_v(C, A)$ goes in the positive (resp., negative) direction
around $\Sigma_{1, v}'$. Each path $\rho_v(X_1, X_2)$ is equal to
the image of the previous path $\rho_v(X_0, X_1)$ under rotation
by $-\pi/2$ (resp.\ $\pi/2$) around $Z_{X_1}'$.

The path $\rho_{v\bel}(C, A)$ (resp.\ $\rho_{v\ber}(C, A)$) can be taken
equal to the union of the path $\rho_v(C, A)\cup\rho_v(A, B)$ with
its image under rotation by $-\pi/2$ (resp.\ $\pi/2$) around
$Z_B'$.
\end{proposition}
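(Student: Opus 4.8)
The plan is to prove all four assertions at once by induction on the length of $v$, taking as inductive hypothesis the first two: that $\rho_v'$ decomposes, at suitable lifts $Z_A',Z_B',Z_C',Z_{CAB}'$ of the four singular points of $\C/\group{H}_{|v|}$, into the four arcs named in the proposition, and that these satisfy the stated direction and quarter-turn relations with the sign governed by the last letter of $v$. The last assertion is then the heart of the inductive step, and the whole argument amounts to carrying the recursive description of the graphs $\Gamma_{1,v}$ over to the curves $\rho_v$ surrounding them.

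The base case $v=\emptyset$ is immediate: $\Sigma_{1,\emptyset}'$ is one vertex, sitting at the centre of a single square of the half-integral grid whose four corners one takes as $Z_C',Z_A',Z_B',Z_{CAB}'$, and $\rho_\emptyset'$ is the boundary of that square, its four sides being the four arcs. Counterclockwise traversal about the vertex and the quarter-turn relations are read off by inspection, and this is the $\bel$-case of the statement.

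For the inductive step I would use the recursive rule for $\Delta_{1,vx}$ from Section~\ref{s:matings} (illustrated in Figure~\ref{fig:LRL}, cf.\ Corollary~\ref{cor:rule1}): $\Delta_{1,vx}$ is the union of $\Delta_{1,v,0}=\Delta_{1,v}$ with $\Delta_{1,v,1}=R(\Delta_{1,v})$, where $R$ is the half-turn about $z_{\alpha,v}'$, the two copies of $z_{\alpha,v}$ being joined by the edge through $z_{\alpha,v}'$. Since the half-edge at $z_{\alpha,v}$ is half of an edge labelled $\alpha_1=CAB$, its hanging endpoint $z_{\alpha,v}'$ is a lift of $Z_{CAB}$, which by the inductive hypothesis is exactly the corner at which $\rho_v'$ passes from $\rho_v(B,CAB)$ into $\rho_v(CAB,C)$. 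Consequently the curve $\rho_{vx}'$ enclosing $\Sigma_{1,vx}'$ is assembled from $\rho_v'$ and its rotated copy $R(\rho_v')$, both of which pass through the fixed point $z_{\alpha,v}'$: where $\rho_v'$ would have turned from $\rho_v(B,CAB)$ into $\rho_v(CAB,C)$, the curve $\rho_{vx}'$ instead detours once entirely around $R(\Sigma_{1,v}')$ before resuming. Reading off the arcs of $\rho_{vx}'$ between the four lifts of the singular points of $\C/\group{H}_{|vx|}$ — which, after the relabelling forced by the doubling, are produced from $Z_C'$, $Z_B'$ and their $R$-images — one recovers the four-arc decomposition. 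In particular the arc playing the role of $\rho_{vx}(C,A)$ is precisely the path $\rho_v(C,A)\cup\rho_v(A,B)$ (which runs from $Z_C'$ through $Z_A'$ to $Z_B'$) continued by a copy of itself rotated about its endpoint $Z_B'$; comparing with the inductive quarter-turn data shows this rotation is by $-\pi/2$ when the appended letter is $\bel$ and by $\pi/2$ when it is $\ber$. The orientation statement and the quarter-turn relations for the other three arcs follow by the same computation, which closes the induction.

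The step I expect to cost the most work is the bookkeeping: at each level one must pin down which lifts of the four singular points are the designated corners and in which cyclic order $\rho_{vx}'$ meets them, and then check that the half-turn $R$ doubling the tree is genuinely responsible for the $\mp\pi/2$ rotation about $Z_B'$ in the last sentence. The sign dichotomy should reduce to the orientation of the auxiliary right isosceles triangle $z_{\alpha,v}'z_{\beta,v}'z_{\gamma,v}'$ recorded at the end of Section~\ref{s:matings} — counterclockwise for $v$ ending in $\bel$, clockwise for $v$ ending in $\ber$ — but matching all of this up is a finite, moderately intricate case check. As a cross-check one can instead run the argument through the broken line $\lambda_v$, which by Proposition~\ref{pr:lambda} encodes the action of $\tau$ and has a transparent recursion, and pass back to $\rho_v$ via the local replacement of Figure~\ref{fig:taup}.
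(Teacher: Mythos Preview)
Your proposal is correct and follows essentially the same route as the paper: a straightforward induction driven by the recursive construction of $\Delta_{1,vx}$ (Corollary~\ref{cor:rule1} and the rule at the end of Section~\ref{s:matings}). The paper's proof is in fact just the one-line remark ``straightforward induction, using the inductive rule of constructing the graphs $\Gamma_{1,v}$'' together with a figure; the only datum it records explicitly that you leave implicit is the full identification $z_{\alpha,v}'=Z_{CAB}'$, $z_{\beta,v}'=Z_B'$, $z_{\gamma,v}'=Z_C'$, which is exactly the bookkeeping you flag as the main cost and which makes the passage from the half-turn about $z_{\alpha,v}'$ to the quarter-turn about $Z_B'$ transparent.
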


\begin{proof}
Straightforward induction, using the inductive rule of
constructing the graphs $\Gamma_{1, v}$. See Figure~\ref{fig:foldrule}
for the inductive step. The points $z_{\alpha, v}'$,
$z_{\beta, v}'$ and $z_{\gamma, v}'$ will coincide with the points
$Z_{CAB}'$, $Z_B'$ and $Z_C'$ (which are denoted $CAB, B$ and $C$ on
the Figure~\ref{fig:foldrule}).
\begin{figure}[h]
\centering
\includegraphics{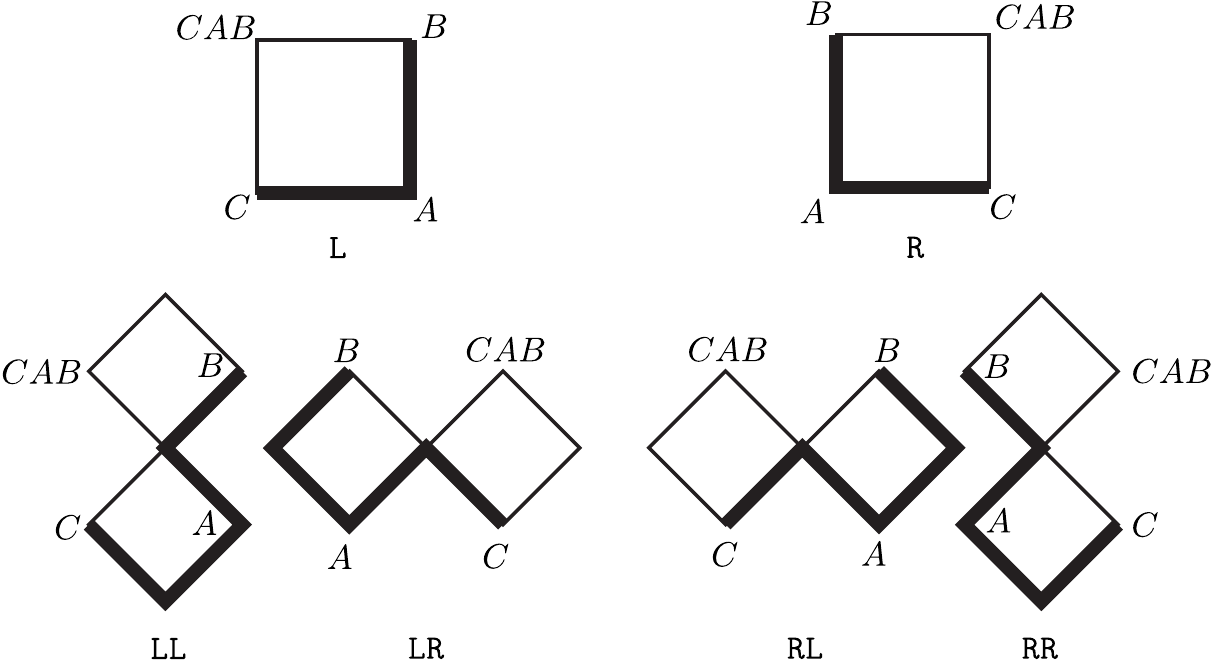}
\caption{Inductive proof of
Proposition~\ref{pr:foldrule}}\label{fig:foldrule}
\end{figure}
\end{proof}

Consider a strip of paper of length $2^{n-1}$. Let
us denote one end of the strip by $C$. For a given word
$v=X_1X_2\ldots X_n$ of letters $X_i\in\{\bel, \ber\}$ fold the strip in
two, fixing $C$ and moving the other end of the strip to $C$ on
the left side, if $X_n=\bel$, or on the right side, if $X_n=\ber$ (see
Figure~\ref{fig:paper}). Repeat now the procedure for the word $X_1X_2\ldots
X_{n-1}$. After $n$ steps unfold the strip so that all bends are at right
angles. We get in this way a broken line $P_v$. Take a copy of $P_v$,
rotate it by $180^\circ$ and connect its endpoints with $P_v$. We get
a closed broken line $\overline P_v$.

\begin{figure}[h]
\centering
\includegraphics{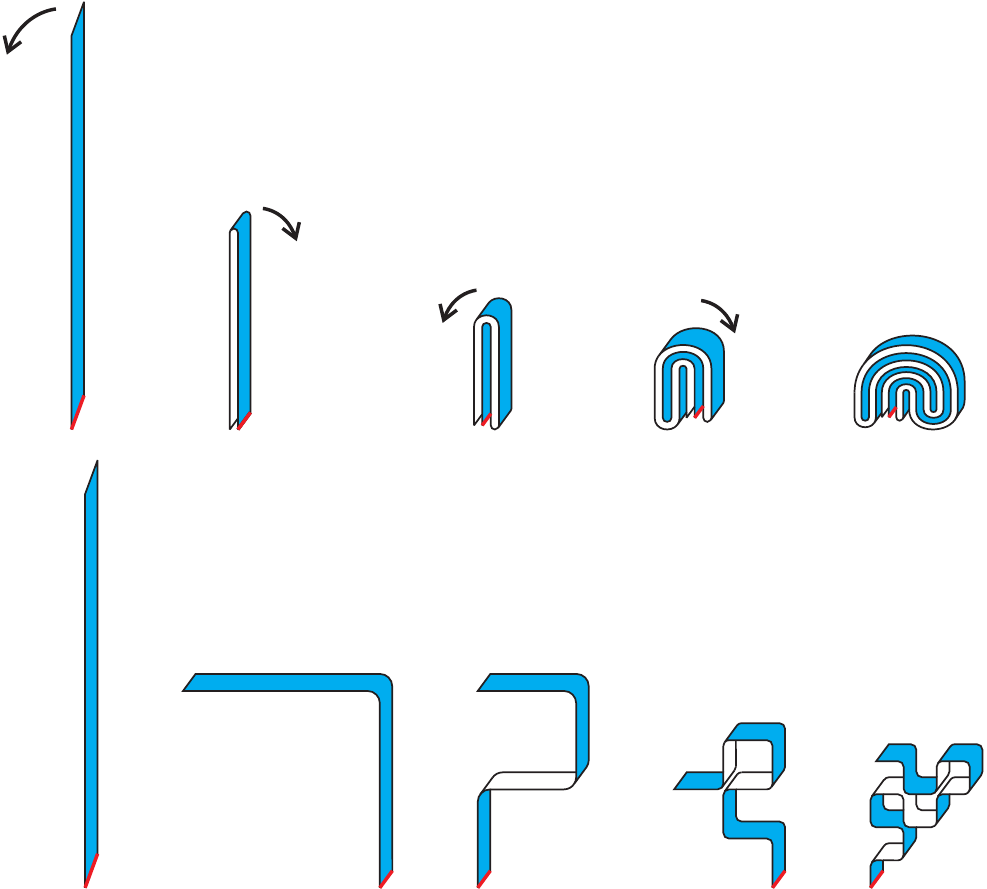}
\caption{Folding paper}
\label{fig:paper}
\end{figure}

The following statement is a direct corollary of Proposition~\ref{pr:foldrule}.

\begin{corollary}
The broken line $\overline P_v$ is isometric to the path $\rho_v'$
going around the graph $\Sigma_{1, v}'$.
\end{corollary}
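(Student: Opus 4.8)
The plan is to induct on $n=|v|$, deriving the isometry $\overline P_v\cong\rho_v'$ from the recursive description of $\rho_v'$ furnished by Proposition~\ref{pr:foldrule} and from the recursive (un-folding) description of $\overline P_v$ built into its definition. The inductive hypothesis I carry is slightly stronger than the bare isometry: I record an explicit isometry matching the two strip endpoints and the two first-fold creases making up $\overline P_v$ (the red and green dots of Figure~\ref{fig:maze}) with the four points $Z_A',Z_B',Z_{CAB}',Z_C'$ through which $\rho_v'$ passes, consistently with the identification $z_{\alpha,v}'=Z_{CAB}'$, $z_{\beta,v}'=Z_B'$, $z_{\gamma,v}'=Z_C'$ used in the proof of Proposition~\ref{pr:foldrule}. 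Under this matching the four arcs $\rho_v(A,B),\rho_v(B,CAB),\rho_v(CAB,C),\rho_v(C,A)$ correspond to the four half-strips of $\overline P_v$; in particular $P_v$ itself corresponds to the concatenation $\rho_v(C,A)\cup\rho_v(A,B)$, with the marked end of the strip at $Z_C'$. Since the segments of $P_v$ and the edges of the grid $Q_n$ on which $\rho_v'$ lives both have length $1/2$, ``isometric'' here is literal — no rescaling is needed.

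For the base case $v=\emptyset$ one checks directly that $\overline P_\emptyset$ is isometric to the boundary walk $\rho_\emptyset'$ around the one-vertex graph $\Sigma_{1,\emptyset}'$ with its attached half-edges, with the four distinguished points placed as prescribed; the cases $|v|=1$ can also be done by hand, or absorbed into the inductive step.

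For the inductive step, suppose $\overline P_v\cong\rho_v'$ with the distinguished points matched. By construction $P_{vx}$ is the union of $P_v$ with the copy of $P_v$ obtained by rotating it by $-\pi/2$ (if $x=\bel$) or $+\pi/2$ (if $x=\ber$) about the crease of the first fold, and $\overline P_{vx}$ adjoins to this its $180^\circ$-rotate. Proposition~\ref{pr:foldrule} provides exactly the parallel recursion on the other side: $\rho_{vx}(C,A)$ equals $\rho_v(C,A)\cup\rho_v(A,B)$ together with its image under rotation by $\mp\pi/2$ about $Z_B'$ (same sign convention), and the remaining three arcs of $\rho_{vx}'$ are the successive $\mp\pi/2$-rotates of this arc about $Z_A',Z_B',Z_{CAB}'$. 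Applying the inductive isometry piece by piece and following it through a rotation by $\mp\pi/2$ about a matched point on each side therefore extends it to an isometry $\overline P_{vx}\to\rho_{vx}'$, still compatible with the distinguished points; the sign and the clockwise/counterclockwise alternation agree because the orientation of the path (counterclockwise exactly when the last letter of $vx$ is $\bel$) is governed in the same way on both sides — by the paper-folding ``left/right'' choice on one side, and by the orientation of the triangle $z_{\alpha,v}'z_{\beta,v}'z_{\gamma,v}'$ together with Proposition~\ref{pr:foldrule} on the other.

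The only real labor is the bookkeeping in the inductive step: checking, in each of the four combinations of $x\in\{\bel,\ber\}$ and ``last letter of $v$'', that the distinguished points line up correctly (which of $Z_A',Z_B',Z_{CAB}',Z_C'$ plays the role of the marked end $C$, which the free end, which the two creases) and that the $\pm\pi/2$ signs match. There is no conceptual obstacle, because Proposition~\ref{pr:foldrule} was designed precisely to encode the paper-folding recursion geometrically; the corollary is obtained by rereading that proposition in the language of folded strips.
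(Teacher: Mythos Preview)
Your proposal is correct and is exactly the argument the paper has in mind: the paper's entire proof is the single sentence ``direct corollary of Proposition~\ref{pr:foldrule}'', and your write-up just makes explicit the induction on $|v|$ that this sentence encodes, carrying along the identification of the four marked points $Z_A',Z_B',Z_{CAB}',Z_C'$ with the endpoints and first-fold creases of the two strips. There is nothing to add beyond the routine sign/orientation bookkeeping you already flag.
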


\section{Boundaries of Fatou components of
  $f$ and rotated tunings}
\label{s:tuning}

Consider the group $\group{B}=\langle S, \gamma\alpha, \beta\rangle=\langle
S\rangle\times
\langle\gamma\alpha, \beta\rangle$. Let us
write the images of the generators under the wreath recursion.
\begin{eqnarray*}S &=& \sigma\pi(P\beta\alpha\gamma, P, S^{-1}\beta\alpha\gamma,
S^{-1}),\\
\gamma\alpha &=& \sigma(1, \gamma\beta, \alpha, \gamma\alpha\beta),\\
\beta &=& (1, \beta\alpha\beta, \alpha, 1).
\end{eqnarray*}

Conjugate the right hand side of the recursion by
$(34)(\beta, \beta, P\beta\alpha\gamma\beta, P\beta)$:
\begin{eqnarray*}S &=& (13)(24)(1, 1, T,
T),\\
\gamma\alpha &=& \sigma(1, \beta\gamma, 1, \beta\gamma),\\
\beta &=& (1, \alpha, 1, \alpha).
\end{eqnarray*}

We see that in the right hand side of the recursion we get elements of
the group $\group{A}=\langle T, \beta\gamma, \alpha\rangle=\langle
T\rangle\times
\langle\beta\gamma, \alpha\rangle$. Moreover, the corresponding
$(\group{B}-\group{A})$-biset $\bim_{\group{B},\group{A}}$ is the direct product of the
$(\langle S\rangle-\langle T\rangle)$-biset given by the binary
recursion
\[S=\sigma(1, T),\]
with the
$(\langle\gamma\alpha, \beta\rangle-\langle\beta\gamma,
\alpha\rangle)$-biset given by the recursion
\begin{eqnarray}\label{eq:BA1}\gamma\alpha &=& \sigma(1, \beta\gamma),\\
\label{eq:BA2}\beta &=& (1, \alpha).
\end{eqnarray}

The following proposition is proved by direct computation.

\begin{proposition}
\label{pr:BApol}
The biset over free groups of rank two given by the wreath
recursion~\eqref{eq:BA1}--\eqref{eq:BA2} is isomorphic to the biset associated with
the partial covering
\[\C\setminus\{0, 1\}\supset\C\setminus\{1, 0, 2\}\arr\C\setminus\{0, 1\}\]
defined by the polynomial $(1-z)^2$, where $\gamma\alpha$ and $\beta$ correspond
to the loops around punctures $0$ and $1$, respectively, and the
generators $\beta\gamma$ and $\alpha$ correspond to the loops around
$1$ and $0$, respectively.
\end{proposition}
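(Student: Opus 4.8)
The plan is to verify directly that the biset defined by the wreath recursion~\eqref{eq:BA1}--\eqref{eq:BA2} is the iterated monodromy biset of the polynomial $z\mapsto(1-z)^2$ on the twice-punctured plane. Since both objects are bisets over a free group of rank two (with chosen generating sets), and a biset over a free group is determined up to isomorphism by its associated wreath recursion relative to a basis, it suffices to compute the wreath recursion of the polynomial's monodromy action with a suitable choice of connecting paths and check it coincides with the given one after an inner automorphism.

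First I would fix notation: write $g_0=\gamma\alpha$ and $g_1=\beta$ for the generators on the source copy of $\C\setminus\{0,1\}$, which we want to be the loops around $0$ and $1$ respectively. The map $p(z)=(1-z)^2$ has critical point $z=1$ with critical value $0$, and $p(0)=1$, $p(2)=1$; the post-critical set is $\{0,1\}$ together with $\infty$, so the relevant punctured sphere is indeed $\C\setminus\{0,1\}$ and $p^{-1}(\C\setminus\{0,1\})=\C\setminus\{0,1,2\}$. The degree is two: $p^{-1}(t)$ consists of $1\pm\sqrt t$. Choosing a basepoint, say $t_0$ near $0$ but on the positive real axis, the two preimages are near $1-\sqrt{t_0}$ and $1+\sqrt{t_0}$, i.e.\ on either side of the critical point $1$. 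I would then pick connecting paths $\ell_1,\ell_2$ from $t_0$ to these two preimages and compute the lifts of the loops $g_0$ (around $0$) and $g_1$ (around $1$). The loop around $0$ has its critical value inside it, so its lift is a single path swapping the two sheets — this gives the transposition $\sigma$ in the recursion for $\gamma\alpha$, and the section is the product of the halves of the lift, which with the right path choice is trivial in one coordinate and a loop around $1$ (i.e.\ $\beta\gamma$) in the other. The loop $g_1$ around $1$ lifts to two loops, one of which is trivial and the other a loop around $0$ (i.e.\ $\alpha$), giving $\beta=(1,\alpha)$ up to ordering.

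The key computational step is to track which generator of the \emph{target} free group each lifted loop represents. The target copy of $\C\setminus\{0,1\}$ carries the generating set $\{\beta\gamma,\alpha\}$ = loops around $1$ and $0$ respectively (note the asymmetry: on the source, generator $g_0$ is around $0$; on the target, the section of $g_0$ winds around $1$, matching $\beta\gamma$). This is exactly the kind of bookkeeping done in~\cite{bartnek:mand} for quadratic polynomials; concretely $(1-z)^2$ is affinely conjugate to $z^2$ but with the post-critical set labeled as $\{0,1\}$ rather than $\{0,\infty\}$, so one can import the standard quadratic recursion and relabel. The main obstacle is purely one of matching conventions: making sure the path choices $\ell_1,\ell_2$, the orientation of the loops, and the ordering of the basis are all consistent so that the computed recursion is literally $\gamma\alpha=\sigma(1,\beta\gamma)$, $\beta=(1,\alpha)$ and not a conjugate or a coordinate-swapped variant. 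Since the proposition only claims isomorphism of bisets, any such discrepancy is absorbed by an inner automorphism of the rank-two wreath product (equivalently, a different choice of $\ell_1,\ell_2$), so once the combinatorial picture of Figure-type data is drawn the verification is immediate and the proposition follows.
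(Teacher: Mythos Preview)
Your proposal is correct and matches the paper's approach: the paper states only that the proposition ``is proved by direct computation'' and gives no further details, so your outline of the monodromy computation for $(1-z)^2$---identifying the critical value $0$, the preimages $1\pm\sqrt t$, and tracking lifts of the two generating loops---is exactly the intended verification, spelled out more explicitly than the paper itself does.
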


The generators of
the group $\group{A}$ are decomposed (in the original recursion) as follows:
\begin{eqnarray*}T &=& (P, \beta
P\beta, \gamma S\gamma, S),\\
\beta\gamma &=& (\gamma, \beta\alpha, \alpha\gamma, \beta),\\
\alpha &=& \sigma(\beta, \beta, \beta\alpha, \alpha\beta).
\end{eqnarray*}

Conjugate the wreath recursion by $(1, \beta, \alpha, \beta)$:
\begin{eqnarray*}T &=& (P, P, S, S),\\
\beta\gamma &=& (\gamma, \alpha\beta, \gamma\alpha, \beta),\\
\alpha &=& \sigma.\end{eqnarray*}

Restricting to the last two coordinates of the wreath recursion (i.e.,
to the biset $\bimR$) we get a biset $\bim_{\group{A}, \group{B}}$, which is a
direct product of the biset defined by the isomorphism
$T\mapsto S$ and the $(\langle\beta\gamma,
\alpha\rangle-\langle\gamma\alpha, \beta\rangle)$-biset given by
the recursion
\begin{eqnarray}\label{eq:AB1}
\beta\gamma &=& (\gamma, \alpha\beta),\\
\label{eq:AB2}\alpha &=& \sigma.
\end{eqnarray}

The proof of the following proposition is also straightforward.

\begin{proposition}
\label{pr:ABpol}
  The biset over the free groups defined by the wreath
  recursion~\eqref{eq:AB1}--\eqref{eq:AB2} is isomorphic to the biset associated
  with the partial covering
\[\C\setminus\{0, 1\}\supset\C\setminus\{0, 1, 1/2\}\arr\C\setminus\{0, 1\},\]
given by the polynomial $(2z-1)^2$, where the generators $\alpha$ and
$\beta\gamma$ correspond to the loops around $0$ and $1$, while the
generators $\gamma$ and $\alpha\beta$ correspond to the loops around
$1$ and $0$, respectively.
\end{proposition}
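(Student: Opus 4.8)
The plan is to compute the iterated monodromy biset of the partial self-covering $(2z-1)^2\colon\C\setminus\{0,1,1/2\}\arr\C\setminus\{0,1\}$ directly and to match it with the biset defined by the wreath recursion~\eqref{eq:AB1}--\eqref{eq:AB2}. First I would record the dynamics of $g(z)=(2z-1)^2$: its finite critical point is $1/2$, with critical value $0$, and the critical orbit is $1/2\mapsto 0\mapsto 1\mapsto 1$, so (after discarding the fixed point $\infty$) the post-critical set is $\{0,1\}$ and $g$ restricts to a degree two covering $\C\setminus\{0,1,1/2\}\arr\C\setminus\{0,1\}$, which is exactly the partial covering in the statement. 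Equivalently, the affine substitution $z\mapsto 4z-2$ conjugates $g$ to the Chebyshev polynomial $z^2-2$, sending $0,1,1/2$ to $-2,2,0$, so one may instead quote the well-known iterated monodromy biset of $z^2-2$, the basic infinite dihedral example (see~\cite{nek:book}); I will, however, do the computation directly, as in the proof of Proposition~\ref{pr:BApol}.

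Next I would fix a basepoint $t\in\C\setminus\{0,1\}$, its two preimages $z_0,z_1$ (with $2z_i-1=\pm\sqrt t$), and connecting paths $\ell_0,\ell_1$ from $t$ to $z_0,z_1$, chosen so that the arc near the critical point $1/2$ lifting a small positive loop around $0$ joins $\ell_0$ to $\ell_1$ without extra winding; these data form a basis of the biset. With such a basis the loop $\alpha$ around $0$ — whose only preimage is the critical point $1/2$, taken with multiplicity two — acts on $\{z_0,z_1\}$ as the transposition, and a short check shows both its sections are trivial (indeed $\alpha^2$ lifts to loops around the preimage $1/2$, which become null-homotopic in $\C\setminus\{0,1\}$), so $\alpha=\sigma$. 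The loop $\beta\gamma$ around $1$ — whose two preimages $1$ and $0$ are both simple — acts trivially on $\{z_0,z_1\}$ and lifts to a small loop around the preimage $1$ over one sheet and to a small loop around the preimage $0$ over the other; writing these two source loops as $\gamma$ (around $1$) and $\alpha\beta$ (around $0$), which is exactly the labelling of these punctures inherited from the ambient group and compatible with the relations~\eqref{eq:conj1}--\eqref{eq:conj3} and with the decomposition of $\group{A}$, one gets
\[\alpha=\sigma,\qquad \beta\gamma=(\gamma,\alpha\beta),\]
which is precisely~\eqref{eq:AB1}--\eqref{eq:AB2}, and hence the two bisets are isomorphic.

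The only genuine content is the combinatorial bookkeeping in the second paragraph: one must normalize the cyclic order of $z_0,z_1$ and of the connecting paths $\ell_0,\ell_1$ so that $\alpha$ comes out as $\sigma$ rather than, say, $\sigma(1,\beta\gamma)$, and so that the sections of $\beta\gamma$ appear in the order $(\gamma,\alpha\beta)$ rather than $(\alpha\beta,\gamma)$; once the basis is pinned down this way everything else is forced. This is the same kind of direct monodromy computation as the general formula for quadratic polynomials in~\cite{bartnek:mand}, and I would carry it out with an explicit picture of $g$ and the chosen paths.
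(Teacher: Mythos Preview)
Your proposal is correct and is exactly the ``straightforward direct computation'' the paper invokes without details: you correctly identify the critical orbit $1/2\mapsto 0\mapsto 1\mapsto 1$, observe the conjugacy to the Chebyshev polynomial $z^2-2$, and read off that the loop around the critical value $0$ lifts to the transposition with trivial sections while the loop around $1$ lifts to the pair of simple loops around its preimages $0$ and $1$. The only thing to add is that your remark about the freedom in ordering the basis (so that the sections of $\beta\gamma$ come out as $(\gamma,\alpha\beta)$ rather than in the opposite order) is exactly right and is the sole point requiring care; once the connecting paths are chosen consistently with those used for Proposition~\ref{pr:BApol}, the recursion~\eqref{eq:AB1}--\eqref{eq:AB2} is forced.
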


Taking tensor products $\bim_{\group{B}, \group{A}}\otimes_{\group{A}}\bim_{\group{A}, \group{B}}$ and
$\bim_{\group{A}, \group{B}}\otimes_{\group{B}}\bim_{\group{B},
 \group{A}}$ we see that $\group{A}$ and $\group{B}$ are self-similar
subgroups of $\img{F^{\circ 2}}$.

\begin{corollary}
\label{cor:tensorproducts}
  The bisets $\bim_{\group{B},
    \group{A}}\otimes_{\group{A}}\bim_{\group{A}, \group{B}}$
and $\bim_{\group{A}, \group{B}}\otimes_{\group{B}}\bim_{\group{B}, \group{A}}$ are isomorphic to the
bisets associated with
the post-critically finite polynomials $(2(1-z)^2-1)^2=(2z^2-4z+1)^2$ and
$(1-(2z-1)^2)^2=16z^2(1-z)^2$, respectively.
\end{corollary}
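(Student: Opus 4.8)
The plan is to deduce this from the functoriality of the iterated monodromy group construction under composition of topological correspondences (see~\cite{nek:filling,nek:models}), combined with the direct-product factorizations of $\bim_{\group{B},\group{A}}$ and $\bim_{\group{A},\group{B}}$ obtained just above and with Propositions~\ref{pr:BApol} and~\ref{pr:ABpol}. The mechanism is that the tensor product of correspondence bisets realizes composition of the correspondences, and that it distributes over direct products of bisets once the intermediate group is split compatibly; this is exactly our situation, since $\group{B}=\langle S\rangle\times\langle\gamma\alpha,\beta\rangle$ and $\group{A}=\langle T\rangle\times\langle\beta\gamma,\alpha\rangle$, and both $\bim_{\group{B},\group{A}}$ and $\bim_{\group{A},\group{B}}$ were written as direct products of a $\langle S\rangle$-$\langle T\rangle$ factor (the adding-machine biset $S=\sigma(1,T)$, respectively the degree-one biset of the isomorphism $T\mapsto S$) with a ``polynomial'' factor given by~\eqref{eq:BA1}--\eqref{eq:BA2}, respectively by~\eqref{eq:AB1}--\eqref{eq:AB2}.

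Carrying this out, I would first record
\[\bim_{\group{B},\group{A}}\otimes_{\group{A}}\bim_{\group{A},\group{B}}\ \cong\ \left(\bim_{S=\sigma(1,T)}\otimes_{\langle T\rangle}\bim_{T\mapsto S}\right)\times\left(\bim_{(1-z)^2}\otimes\bim_{(2z-1)^2}\right),\]
where $\bim_{(1-z)^2}$ and $\bim_{(2z-1)^2}$ are the bisets of the partial coverings identified in Propositions~\ref{pr:BApol} and~\ref{pr:ABpol}. Tensoring on either side with the degree-one isomorphism biset only transports the recursion, so the first factor is again the adding machine $S=\sigma(1,S)$; it is the (common) companion of the polynomial part and is carried along. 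By functoriality the second factor is the biset of the composite polynomial correspondence, composed in the order dictated by the $\group{B}$-$\group{A}$ versus $\group{A}$-$\group{B}$ structure of the two factors: for the first product one applies $(1-z)^2$ and then $w\mapsto(2w-1)^2$, which gives $z\mapsto\left(2(1-z)^2-1\right)^2=(2z^2-4z+1)^2$; for $\bim_{\group{A},\group{B}}\otimes_{\group{B}}\bim_{\group{B},\group{A}}$ one applies $(2z-1)^2$ and then $w\mapsto(1-w)^2$, which gives $z\mapsto\left(1-(2z-1)^2\right)^2=16z^2(1-z)^2$ (using $1-(2z-1)^2=4z(1-z)$). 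To finish one checks post-critical finiteness: for $(2z^2-4z+1)^2$ the critical orbits are $1\mapsto1$, $\{\text{roots of }2z^2-4z+1\}\mapsto0\mapsto1$, and $\infty\mapsto\infty$; for $16z^2(1-z)^2$ they are $0\mapsto0$, $1\mapsto0$, $\tfrac12\mapsto1\mapsto0$, and $\infty\mapsto\infty$; in either case the post-critical set is $\{0,1,\infty\}$, consistent with the base $\C\setminus\{0,1\}$ of all the correspondences in play, and one checks along the way that the domain of the composite partial covering is indeed the domain of the partial covering attached to the resulting quartic.

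The main obstacle I expect is bookkeeping rather than anything substantial: one must track consistently the left/right ($\group{B}$-$\group{A}$ versus $\group{A}$-$\group{B}$) structure of the two bisets, their realization as self-similarity bisets of the subgroups $\group{A},\group{B}<\img{F^{\circ 2}}$, and the separation of the degree-one and adding-machine factors from the genuinely polynomial ones, so as to be certain which of $(1-z)^2$, $(2z-1)^2$ is the inner and which the outer map in each tensor product --- a wrong orientation merely interchanges the two quartics. Everything else is the elementary identities $2(1-z)^2-1=2z^2-4z+1$ and $1-(2z-1)^2=4z(1-z)$ together with the short post-critical-orbit computations above. Alternatively --- and this is presumably closer to what is meant by ``direct computation'' here, as for the neighbouring propositions --- one can bypass the composition-order discussion altogether: write out the wreath recursion of each tensor-product biset directly from the recursions for $\group{A}$ and $\group{B}$, conjugate to a convenient basis, and compare with the wreath recursion of the iterated monodromy group of the corresponding quartic computed by path-lifting, in the spirit of the quadratic computations in~\cite{bartnek:rabbit,bartnek:mand}.
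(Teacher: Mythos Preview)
Your proposal is correct and follows essentially the same approach as the paper, whose proof is the single line ``It follows from Propositions~\ref{pr:BApol},~\ref{pr:ABpol}.'' You have simply unpacked what that line means: tensor product of bisets realizes composition of the underlying correspondences, so the two quartics arise as $(2z-1)^2\circ(1-z)^2$ and $(1-z)^2\circ(2z-1)^2$; your additional remarks on the direct-product splitting, the adding-machine factor, and the post-critical orbits are helpful elaborations but not required.
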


\begin{proof}
It follows from Propositions~\ref{pr:BApol},~\ref{pr:ABpol}.
\end{proof}

Note that the polynomials $16z^2(1-z)^2$ and $(2z^2-4z+1)^2$ coincide
with the restrictions of the second iteration of the endomorphism $F$
of $\CP$ to the post-critical lines $p=1$ and $p=0$,
respectively. The action of $F$ is written in homogeneous coordinates
as
\[[z:p:u]\mapsto [(2z-p-u)^2:(p-u)^2:(p+u)^2],\]
hence its restriction to the line $p=u$ is
\[[z:p:p]\mapsto [(2z-2p)^2:0:4p^2],\]
so that it acts on the first coordinate (in non-homogeneous
coordinates) as
\[z\mapsto (z-1)^2.\]
Restriction of $F$ onto the line $p=0$ is
\[[z:0:u]\mapsto [(2z-u)^2:u^2:u^2],\]
i.e.,
\[z\mapsto (2z-1)^2.\]

\begin{proposition}
The limit spaces of $(\group{A}, \bim_{\group{A}, \group{B}}\otimes\bim_{\group{B}, \group{A}})$
and $(\group{B}, \bim_{\group{B}, \group{A}}\otimes\bim_{\group{A},
\group{B}})$ are direct products of circles with
the Julia sets of the polynomials $16z^2(1-z)^2$ and $(2z^2-4z+1)^2$, respectively. The
images of $\lims[\group{A}]$ and $\lims[\group{B}]$ in $\lims[\img{F}]$ are identified
with the subsets of the Julia set projected by $(z, p)\mapsto p$ to the boundaries of the Fatou
components of $f(p)=\left(\frac{p-1}{p+1}\right)^2$, containing 1 and 0, respectively.
\end{proposition}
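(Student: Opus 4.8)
The plan is to combine the direct-product decompositions of $\bim_{\group{A},\group{B}}$ and $\bim_{\group{B},\group{A}}$ obtained above with Corollary~\ref{cor:tensorproducts}. Since $\group{A}=\langle T\rangle\times\langle\beta\gamma,\alpha\rangle$ and $\group{B}=\langle S\rangle\times\langle\gamma\alpha,\beta\rangle$, and each of the two transition bisets is a direct product of a biset of its cyclic factor with a biset of its rank-two factor, the $\group{A}$-biset $\bim_{\group{A},\group{B}}\otimes_{\group{B}}\bim_{\group{B},\group{A}}$ is again a direct product of two pieces: on the factor $\langle T\rangle$ it is the tensor product of the degree-one isomorphism biset $T\mapsto S$ with the biset of $S=\sigma(1,T)$, i.e.\ the binary adding-machine biset $T=\sigma(1,T)$; on the factor $\langle\beta\gamma,\alpha\rangle$ it is the tensor product of the bisets~\eqref{eq:AB1}--\eqref{eq:AB2} and~\eqref{eq:BA1}--\eqref{eq:BA2}, which by Corollary~\ref{cor:tensorproducts} is the biset associated with $16z^2(1-z)^2$. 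Symmetrically, $\bim_{\group{B},\group{A}}\otimes_{\group{A}}\bim_{\group{A},\group{B}}$ is a direct product of the binary adding machine on $\langle S\rangle$ with the biset of $(2z^2-4z+1)^2$ on $\langle\gamma\alpha,\beta\rangle$. Hence, as self-similar groups, $(\group{A},\bim_{\group{A},\group{B}}\otimes\bim_{\group{B},\group{A}})$ is the direct product of the adding-machine group $\langle T\rangle$ with the iterated monodromy group of $16z^2(1-z)^2$, and likewise for $\group{B}$ and $(2z^2-4z+1)^2$.

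I would then invoke that the limit space of a direct product of self-similar groups is the direct product of their limit spaces: an element $(g_1,g_2)$ of $G_1\times G_2$ acts coordinatewise on $X_1^{-\omega}\times X_2^{-\omega}$, and a sequence of such elements takes finitely many values precisely when each of its coordinate sequences does, so the asymptotic equivalence relation of the product is the product of the two relations. The adding-machine group is the iterated monodromy group of $z^2$, whose limit space is the circle $\R/\Z$. The groups $\img{16z^2(1-z)^2}$ and $\img{(2z^2-4z+1)^2}$ are contracting, being iterated monodromy groups of post-critically finite polynomials (which are expanding on neighbourhoods of their Julia sets), or, alternatively, self-similar subgroups of the contracting group $\img{F^{\circ 2}}$; so by Theorem~\ref{th:limspcontracting} their limit spaces are the Julia sets of $16z^2(1-z)^2$ and $(2z^2-4z+1)^2$. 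Taking products proves the first assertion.

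For the second assertion, recall that $\group{A}$ and $\group{B}$ are self-similar subgroups of $\img{F^{\circ 2}}$, whose limit space coincides with $\lims[\img{F}]=J_1$ (the limit space does not change when the correspondence is replaced by an iterate). By functoriality of the limit-space construction (the remark after Theorem~\ref{th:limspcontracting}) the inclusions induce continuous semiconjugacies $\Psi_{\group{A}}:\lims[\group{A}]\to J_1$ and $\Psi_{\group{B}}:\lims[\group{B}]\to J_1$, which commute with the projection $\pi:J_1\to\julia{f}$, $(z,p)\mapsto p$, that is induced (for $F^{\circ 2}$ and $f^{\circ 2}$) by the epimorphism $\overline{\group{G}}\to\group{K}$ of Proposition~\ref{pr:groupK}. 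Under this epimorphism $\alpha,\beta,\gamma$, and hence the rank-two factor $\langle\beta\gamma,\alpha\rangle$, become trivial, so $\group{A}$ projects onto the cyclic group $\langle\bar T\rangle$ generated by the image of $T$, carrying the binary adding-machine biset; its limit space is a circle, and it embeds into $\julia{f}$ as a Jordan curve $\partial\Omega$ bounding a Fatou component of $f$ (a superattracting periodic component of $f^{\circ 2}$, on whose boundary $f^{\circ 2}$ is a degree-two expanding circle map). Tracing which periodic cycle of $f$ this curve encloses — using the combinatorial model of $\julia{f}$ built after Proposition~\ref{pr:asequivangles} and the identification of the fixed points of $f$ in the proofs of Propositions~\ref{pr:quotSP} and~\ref{pr:externalraysonpp} — shows that $\partial\Omega$ is the boundary of the component $\Omega_1$ containing $1$ for $\group{A}$, and of the component $\Omega_0$ containing $0$ for $\group{B}$. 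Finally, over each $p\in\partial\Omega_1$ the map $\Psi_{\group{A}}$ sends the corresponding fibre onto the whole slice $J_1(p)$, so the image of $\lims[\group{A}]$ in $J_1$ is exactly $\pi^{-1}(\partial\Omega_1)\cap J_1$, and similarly for $\group{B}$ and $\partial\Omega_0$.

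The first assertion is essentially formal once the product decomposition is set up, and the identification of the correct Fatou component is routine combinatorial bookkeeping. The main obstacle is the last step of the second assertion: one must show that $\Psi_{\group{A}}$ maps $\lims[\group{A}]$ onto the full preimage $\pi^{-1}(\partial\Omega_1)\cap J_1$ and not merely onto a proper $F^{\circ 2}$-invariant sub-bundle of it. This comes down to analysing the asymptotic equivalence relation of $\img{F^{\circ 2}}$ on the $\group{A}$-admissible sequences and verifying that it induces, fibre by fibre, precisely the quotient map from the Julia set of $16z^2(1-z)^2$ onto the dendrite $J_1(p)$ — the same kind of nucleus-based analysis carried out in the proofs of Propositions~\ref{pr:slices} and~\ref{pr:externalraysonpp}, now with the (larger) nucleus of $\img{F^{\circ 2}}$.
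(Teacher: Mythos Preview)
Your argument for the first assertion is correct and essentially identical to the paper's: both rely on the direct-product decomposition of the bisets and Corollary~\ref{cor:tensorproducts}. Your treatment is in fact more explicit than the paper's one-line ``follows directly from the structure of the wreath recursion,'' which is fine.

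For the second assertion, your identification of the correct Fatou components is equivalent to the paper's: the paper appeals directly to the geometric meaning of $S$ and $P$ as loops in $\C\setminus\{0,1\}$ (Proposition~\ref{pr:quotSP}) and the post-critical dynamics of $f$, while you go through the combinatorial model of $\julia f$; both routes are short.

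The one place where you take a genuinely harder road is the surjectivity-on-fibres step, which you flag as ``the main obstacle'' and propose to attack by a nucleus-based analysis of the asymptotic equivalence relation of $\img{F^{\circ 2}}$. The paper bypasses this entirely with a single observation: the rank-two factors $\langle\alpha,\beta\gamma\rangle$ and $\langle\beta,\gamma\alpha\rangle$ are \emph{level-transitive} (they are iterated monodromy groups of connected degree-$4$ polynomials), and level-transitivity of a self-similar subgroup immediately forces the induced map on limit spaces to be surjective onto each fibre of $\lims[\img F]\to\lims[\img f]$. So you do not need to describe the fibre map at all, let alone compute with the $60$-element nucleus or its square; the proposition only asserts what the image is, not how the Julia set of $16z^2(1-z)^2$ is collapsed onto $J_1(p)$. (That finer description is exactly what the paper takes up \emph{after} the proposition, in the ``rotated tuning'' discussion.) Your approach would succeed, but it proves more than is asked and at considerably greater cost.
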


\begin{proof}
Description of the limit space follows directly from the structure of
the wreath recursion and Corollary~\ref{cor:tensorproducts}. The
groups $\langle\alpha, \beta\gamma\rangle$ and $\langle\beta,
\gamma\alpha\rangle$ are level-transitive, which implies that
the map from the limit spaces of $\group{A}$ and $\group{B}$ to the
limit space of $\img{F}$ is surjective on the fibers of the natural projection
$\lims[\img{F}]\arr\lims[\img{f}]$.

It follows from the post-critical dynamics of $f$ and the
interpretation of the maps $S$ and $P$ as loops in the space
$\C\setminus\{0, 1\}$ (see Proposition~\ref{pr:quotSP})
that the images of the limit spaces
$\lims[\group{A}]$ and $\lims[\group{B}]$ in the Julia set of $f$
are the boundaries of the Fatou components of $1$ and $0$, respectively.
\end{proof}

See Figure~\ref{fig:pjuliat}, where the Julia sets of the
polynomials $16z^2(1-z)^2$ and $(2z^2-4z+1)^2$ are shown.

\begin{figure}[h]
\centering
\includegraphics{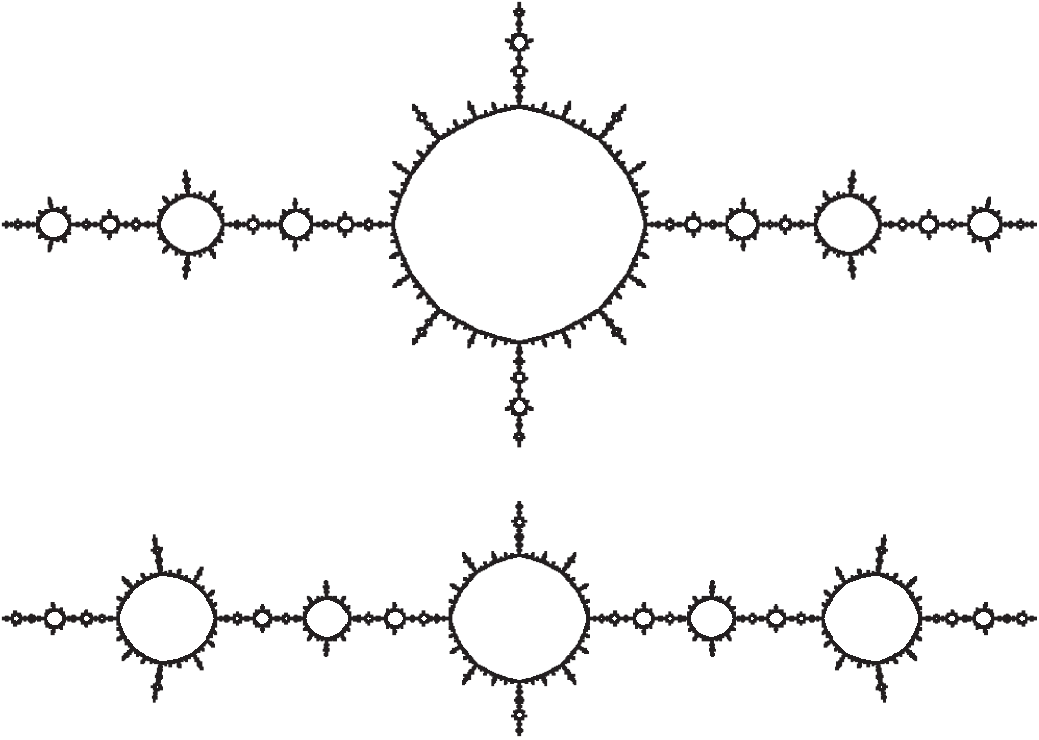}
\label{fig:pjuliat}\caption{The Julia sets of $16z^2(1-z)^2$ and
$(2z^2-4z+1)^2$}
\end{figure}

The natural map from $\lims[\group{A}]$ and $\lims[\group{B}]$ to the Julia set of
$F$ are not injective. Let us see which points of the limit space
are identified under these maps.

Consider the group $\group{B}_1=\langle\gamma, \alpha, S\rangle$. Its
generators are written as
\begin{eqnarray*}
\alpha &=& \sigma(\beta, \beta, \beta\alpha, \alpha\beta),\\
\gamma &=& (\gamma, \beta, \gamma, \beta),\\
S &=& \pi\sigma(P\beta\alpha\gamma, P, S^{-1}\beta\alpha\gamma,
S^{-1}),\end{eqnarray*}

Conjugating the right-hand side by $(1, \beta, P, P\beta\alpha)$
we get
\begin{eqnarray*}
\alpha &=& \sigma,\\
\gamma &=& (\gamma, \beta, \gamma, \beta),\\
S\gamma\alpha &=& \pi(1, 1, T\beta\gamma, T\beta\gamma).\end{eqnarray*}

Consider now the group $\group{A}_1=\langle\gamma, \beta, T\rangle$. We
have
\begin{eqnarray*}\beta &=& (1, \beta\alpha\beta, \alpha, 1),\\
\gamma &=& (\gamma, \beta, \gamma, \beta),\\
T &=& (P, \beta P\beta, \alpha S\alpha, S),\\
T\beta\gamma &=& (P\gamma, \beta P\alpha, S \gamma\alpha, S\beta).\end{eqnarray*}

Restriction to the third coordinate of the wreath recursion is the
homomorphism
\[\beta\mapsto \alpha\]
\[\gamma\mapsto\gamma\]
\[T\beta\gamma\mapsto S\gamma\alpha.\]

Note that the subgroups $\langle\gamma\alpha, S\rangle$ and
$\langle\beta\gamma, T\rangle$ are also subgroups of $\group{B}$ and $\group{A}$,
respectively. The corresponding wreath recursions for these groups
are
\[\gamma\alpha=\sigma(1, \beta\gamma),\]
\[S=\sigma(1, T),\]
\[S\gamma\alpha=(T, \beta\gamma),\]
(here we use a conjugated version~\eqref{eq:BA1}--\eqref{eq:BA2}),
and
\[\beta\gamma\mapsto\gamma\alpha,\]
\[T\mapsto S,\]
\[T\beta\gamma\mapsto S\gamma\alpha.\]

It follows from the recursions and post-critical dynamics of the
polynomials $16z^2(1-z)^2$ and $(2z^2-4z+1)^2$ that the limit
space of $\langle\gamma\alpha, S\rangle$ with respect to these
recursions is the natural direct product of the boundary of the
Fatou component of $f$ containing $0$ (which is the limit space of
the subgroup $\langle S\rangle$) and the boundary of the Fatou
component of $(2z^2-4z+1)^2$ containing $0$ (the limit space of
$\langle\gamma\alpha\rangle$). Similarly, the limit space of
$\langle\beta\gamma, T\rangle$ is the direct product of the the
boundary of the Fatou component of $f$ containing $1$ (the limit
space of $\langle T\rangle$) with the boundary of the Fatou
component of $16z^2(1-z)^2$ containing $1$ (the limit space of
$\langle\beta\gamma\rangle$). Hence, both limit spaces are tori.

In particular, the natural maps from the limit spaces of the
groups $\langle\gamma\alpha, S\rangle$ and $\langle\beta\gamma,
T\rangle$ to $\lims[\group{B}]$ and $\lims[\group{A}]$,
respectively, are injective.

Let us compare the limit spaces of these groups with the limit
spaces of their extensions $\group{B}_1$ and $\group{A}_1$. We have
$\group{B}_1=\langle\alpha, \gamma\rangle\times\langle
S\gamma\alpha\rangle$, $\group{A}_1=\langle\beta,
\gamma\rangle\times\langle T\beta\gamma\rangle$. The direct
factors $\langle\alpha, \gamma\rangle$ and $\langle\beta,
\gamma\rangle$ are infinite dihedral with the wreath recursions
\[\alpha=\sigma,\quad\gamma=(\gamma, \beta),\]
and
\[\beta\mapsto\alpha,\quad\gamma\mapsto\gamma.\]
It follows that the limit spaces of the subgroups $\langle\alpha,
\gamma\rangle$ and $\langle\beta, \gamma\rangle$ are segments with
singular endpoints (one with the isotropy group
$\langle\alpha\rangle$ or $\langle\beta\rangle$ and the other with
the isotropy group $\langle\gamma\rangle$).

Consequently, the limit spaces of $\group{B}_1$ and $\group{A}_1$ are annuli
(direct products of a circle and a segment) and that the natural
map from the limit spaces of $\langle\gamma\alpha, S\rangle$ and
$\langle\beta\gamma, T\rangle$ to the limit spaces of $\group{B}_1$ and
$\group{A}_1$ ``flattens'' the tori into annuli, by flattening the circles
$\lims[\langle\gamma\alpha\rangle]$ and
$\lims[\langle\beta\gamma\rangle]$ to the segments
$\lims[\langle\gamma, \alpha\rangle]$ and $\lims[\langle\beta,
\gamma\rangle]$. Note that the natural direct product decomposition of the spaces
$\lims[\langle\gamma\alpha, S\rangle]$ and
$\lims[\langle\beta\gamma, T\rangle]$ comes from the direct
product decompositions $\langle\gamma\alpha\rangle\times\langle
S\rangle$ and $\langle\beta\gamma\rangle\times\langle T\rangle$,
while we have decompositions of self-similar groups
$\group{B}_1=\langle\gamma, \alpha\rangle\times\langle
S\gamma\alpha\rangle$ and $\group{A}_1=\langle\beta,
\gamma\rangle\times\langle T\beta\gamma\rangle$. Consequently, the
preimages of the boundaries of the annuli in the tori are
diagonals with respect to the natural decomposition of the torus
into the product of the boundaries of the Fatou components. It
means that the diameter with respect to which we flatten the
boundary of the Fatou component of the polynomials $16z^2(1-z)^2$
and $(2z^2-4z+1)^2$ rotates as $p$ travels along the boundary of
the Fatou component of $f$.

This flattening of the circles into segments can be interpreted as
a ``rotated tuning'' of the polynomials $16z^2(1-z)^2$ and
$(2z^2-4z+1)^2$ by the polynomial $z^2-2$ (which is the quadratic
polynomial with the dihedral iterated monodromy group and the
Julia set a segment). It can be nicely illustrated by the slices
of the Julia set of $F$ as $p$ travels close to the boundary of
the Fatou component of $f$, but stays inside in it. Then the
slices are still homeomorphic to the Julia sets of $16z^2(1-z)^2$
and $(2z^2-4z+1)^2$, but are close to the dendrite slices of the
Julia set of $F$ along to boundary of the Fatou component of $f$.
 
See Figure~\ref{fig:tuning} where slices of the Julia set of $F$ are
shown when $p$ is traveling close to the boundary of the Fatou
component of $f$ containing 1 (top half of the figure). Corresponding
slices for $p$ on the boundary of the Fatou component are shown in the
bottom part of the figure.

\begin{figure}[h]
\centering
\includegraphics{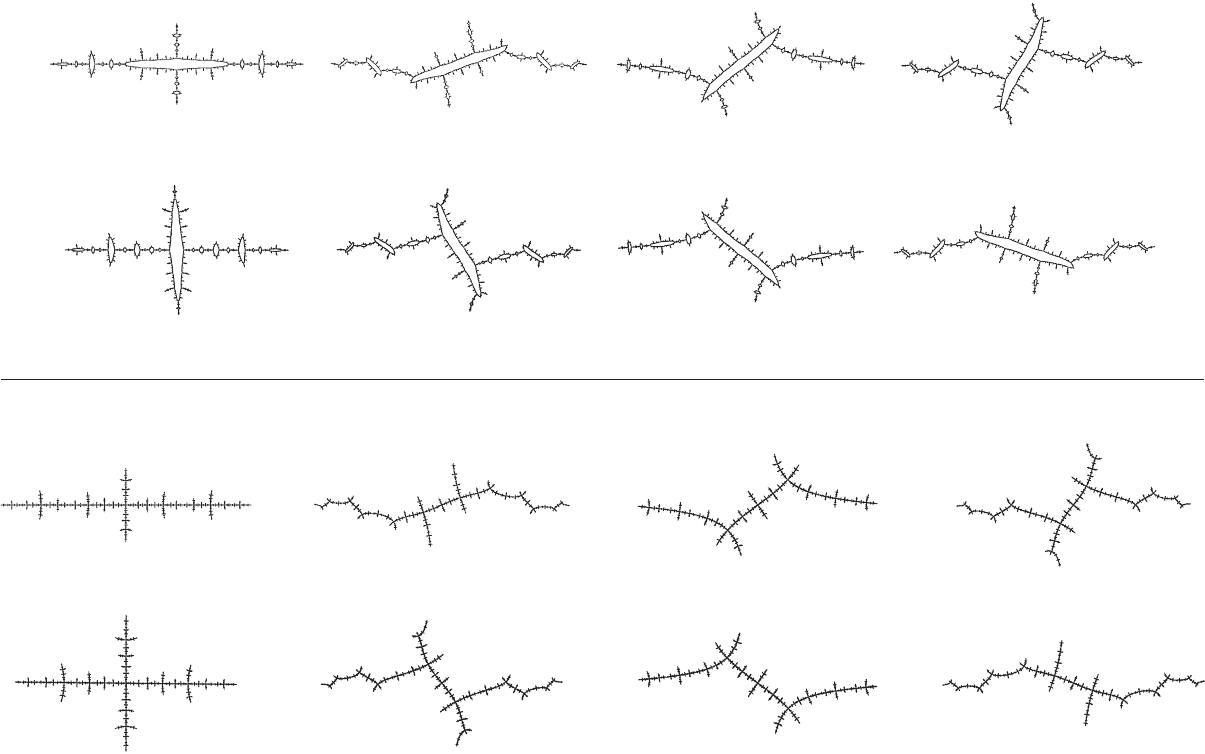}
\caption{Rotated tuning}
\label{fig:tuning}
\end{figure}

\def\cprime{$'$}\def\ocirc#1{\ifmmode\setbox0=\hbox{$#1$}\dimen0=\ht0
  \advance\dimen0 by1pt\rlap{\hbox to\wd0{\hss\raise\dimen0
  \hbox{\hskip.2em$\scriptscriptstyle\circ$}\hss}}#1\else {\accent"17 #1}\fi}
  \def\cprime{$'$}

\end{document}